\newtheorem{theorem}{Theorem}[section]
\newtheorem{lemma}[theorem]{Lemma}
\newtheorem{proposition}[theorem]{Proposition}
\newtheorem{corollary}[theorem]{Corollary}
\newtheorem{definition}[theorem]{Definition}
\newtheorem{remark}[theorem]{Remark}
\newtheorem{observation}[theorem]{Observation}
\begin{document}

\title{\bf On the deep commuting graph of a finite group}
\author{Sumana Hatui \and Sanjay Mukherjee \and Kamal Lochan Patra}
\date{}
\maketitle

\begin{abstract}
Let $G$ be a finite group and let $\tilde{G}$ be a Schur cover of $G$. The deep commuting graph $\Delta_D(G)$ of $G$ is a simple graph with vertex set $G$, where two distinct vertices are adjacent if their pre-images commute in $\tilde{G}$. The deep commuting graph of a finite group was first introduced in [P. J. Cameron and B. Kuzma, Between the enhanced power graph and the commuting graph, {\it J. Graph Theory} {\bf 102} (2023), no. 2, 295--303], where the authors have shown  that $\Delta_D(G)$ is fixed irrespective of the choice of the Schur cover $\tilde{G}$. In this paper,  we first prove that $\Delta_D(G)$ is complete if and only if $G$ is cyclic. Also, we classify finite simple groups, symmetric groups and alternating groups, for which $\Delta_D(G)$ is perfect. In addition, explore several other properties of $\Delta_D(G)$ like Eulerianess, universality and connectedness of reduced deep commuting graphs. 

Next, we classify the finite abelian groups for which deep commuting graphs coincide with  enhance power graphs. We  also characterize the dominant vertices for the  deep commuting graphs of finite abelian groups and  examine the connectedness of the associated reduced deep commuting graphs. These properties of the deep commuting graphs for the non abelian groups like symmetric groups, alternating groups, dihedral groups, generalized quaternion group and Heisenberg groups are also discussed.\\

\noindent {\bf Key words:} Commuting graph, Deep commuting graph, Enhanced power graph, Schur cover, Schur multiplier, Perfectness\\

\noindent {\bf AMS subject classification.} 05C25, 05C40 , 05C75

\end{abstract}

\section{Introduction}

There are various graphs associated with a group which have been studied in the literature, e.g.~Cayley graph, commuting graph, power graph, enhanced power graph etc. In $1955$, Brauer and Fowler introduced the notion of commuting graph of a group \cite{BF}, while examining the finiteness of the type of groups of even order whose centralizer follow certain conditions. For a group $G$, the \textit{commuting graph} of $G$ denoted by $\Delta(G)$, is a simple graph with vertices being the elements of $G$ and two distinct vertices are adjacent in $\Delta(G)$ if they commute in $G$. Several algebraic and combinatorial properties of the commuting graph have been studied over the years. In addition, the commuting graph has also been extremely useful in the study of commuting probability of a group \cite{eberhard, guralnick}. For a further exposer to the recent developments on commuting graphs, we refer to the survey papers \cite{arvind, cameron}.

The notion of directed power graph of a group was introduced by Kelarev and Quinn in \cite{ker1}. The underlying undirected graph, simply termed the `power graph' of that group, was first considered by Chakrabarty et al.~in \cite{ivy}. The {\it power graph} of a group $G$, denoted by $\mathcal{P}(G)$, is the simple graph with vertex set $G$, and for $u,v\in G$ with $u\neq v$, $u$ is adjacent to $v$ in $\mathcal{P}(G)$, if $v=u^{\alpha}$ or $u=v^{\beta}$ for some $\alpha,\beta\in \mathbb{N}$. For a detailed overview of several combinatorial studies on power graphs, we refer to the survey papers \cite{AKC, KSCC} and the references therein.

The idea of the enhanced power graph of a group as a super graph of the power graph, was introduced in $2017$ by Aalipour et al.~\cite{aa}. The \textit{enhanced power graph} $\mathcal{P}_e(G)$ of a group $G$ is the simple graph with vertices being the elements of $G$ and two distinct vertices $x,y\in G$ are adjacent in $\mathcal{P}_e(G)$ if $\langle x,y\rangle$ is a cyclic subgroup of $G$. Since then, several combinatorial properties, e.g.~automorphism \cite{zahirovic}, connectivity \cite{bera2, bera4, bera}, metric dimension \cite{ma2} etc.~of the enhanced power graph have also been explored in the literature. For a deeper dive into the literature of the enhanced power graphs of groups, we refer to the survey paper \cite{make} by Ma et al.~and the references therein.

It is easy to see that for any finite group $G$, $\mathcal{P}(G)$ is a spanning subgraph of $\mathcal{P}_e(G)$ and $\mathcal{P}_e(G)$ is a spanning subgraph of $\Delta(G)$. In the study of the hierarchy of graphs defined over finite groups, Cameron and Kuzma \cite{deepc} introduced a new notion of graph called the \textit{deep commuting graph}, which lies between the enhanced power graph and the commuting graph. This article is a comprehensive study of the deep commuting graph defined over finite groups.

\subsection{Organization of the article}

Over the next section, we go through some graph-theoretic and group-theoretic preliminaries. We conclude the section with a formal definition and some already investigated properties of the deep commuting graph of finite groups. In Section \ref{Some properties of the deep commuting graph}, we explore several properties of the deep commuting graph, e.g.~completeness, Eularianess, universality, perfectness, connectivity of reduced graph and equality with other graphs. In Section \ref{Deep commuting graph of finite abelian groups}, we study the deep commuting graphs of finite abelian groups. We classify finite abelian groups where the deep commuting graph coincides with the commuting graph or the enhanced power graph. We also study dominant vertices and investigate the cases where the reduced graph remains connected. In Section \ref{Deep commuting graph of some non-abelian groups}, we study the aforementioned properties of the deep commuting graphs of dihedral groups, generalized quaternion groups, Heisenberg groups, symmetric and alternating groups. We also classify finite symmetric and alternating groups, for which the deep commuting graph is perfect. We conclude this article with a few open ends arising from our study of the deep commuting graph of finite groups.

\section{Preliminaries}\label{Preliminaries}

Throughout this article, all our graphs are finite, simple and undirected, unless otherwise mentioned. Let $\Gamma$ be a graph with vertex set $V(\Gamma)$ and edge set $E(\Gamma)$. For $u,v\in V(\Gamma)$, we say $u$ is \emph{adjacent} to $v$ if there is an edge between $u$ and $v$, and we denote it by $u \sim v$.  A graph $\mathcal{S}=(V(\mathcal{S}),E(\mathcal{S}))$ is called a \textit{subgraph} of $\Gamma$ if $V(\mathcal{S})\subseteq V(\Gamma)$ and $E(\mathcal{S})\subseteq E(\Gamma)$. For a subset $S$ of $V(\Gamma)$, the subgraph induced in $\Gamma$ by $S$, or the \textit{induced subgraph} $\Gamma\vert_S$ is defined as the graph with vertex set $S$, and two vertices are adjacent in $\Gamma\vert_S$ if and only if they are adjacent in $\Gamma$. A subgraph $\mathcal{S}$ of $\Gamma$ is called a \textit{spanning subgraph} of $\Gamma$ if $V(\mathcal{S})=V(\Gamma)$. The \emph{neighborhood} $\mathcal{N}_{\Gamma}(v)$ of a vertex $v$ is the set of all vertices adjacent to $v$ in $\Gamma$. By $\mathcal{N}_{\Gamma}[v]$, we mean $\mathcal{N}_{\Gamma}[v]=\mathcal{N}_{\Gamma}(v)\cup \{v\}$. A vertex $v$ is called a \emph{dominant vertex} of $\Gamma$, if $\mathcal{N}_{\Gamma}[v]=V(\Gamma)$. We denote the set of dominant vertices of $\Gamma$ by $_d\Gamma$. We say $\Gamma$ is complete if $_d\Gamma =V(\Gamma)$. The \textit{reduced graph} of $\Gamma$, denoted by $\Gamma^*$ is defined to be the induced subgraph of $\Gamma\vert_{V(\Gamma)\setminus{_d}\Gamma}$. Note that, if $\Gamma$ is a complete graph, then $\Gamma^*$ is the empty graph.

For a connected graph $\Gamma$, the \emph{distance} between vertices $u$ and $v$, denoted by $d(u, v)$, is the length of a shortest path from $u$ to $v$. The \emph{diameter} of $\Gamma$, denoted by ${\rm diam}(\Gamma)$, is defined as ${\rm diam}(\Gamma)=\max\{d(u,v)|u,v\in V(\Gamma)\}.$ The \textit{strong product} of two graphs $\Gamma$ and $\mathcal{H}$, denoted by $\Gamma\boxtimes\mathcal{H}$, is defined to be the graph with vertex set being the Cartesian product $V(\Gamma)\times V(\mathcal{H})$ and two distinct vertices $(u_1,u_2)$ and $(v_1,v_2)$ are adjacent in $\Gamma\boxtimes\mathcal{H}$ if either $u_1=v_1$ and $u_2\sim v_2$ in $\mathcal{H}$ or $u_2=v_2$ and $u_1\sim v_1$ in $\Gamma$ or $u_1\sim v_1$ in $\Gamma$ and $u_2\sim v_2$ in $\mathcal{H}$. 

The \emph{clique number} $\omega(\Gamma)$ of a graph $\Gamma$ is the largest possible size of a clique inside the graph. The \emph{chromatic number} $\chi(\Gamma)$ of $\Gamma$ is the minimum number of colors required to color the vertices of the graph, so that no two adjacent vertices receive the same color. It is easy to see that for any finite graph $\Gamma$, $\omega(\Gamma)\leq \chi(\Gamma)$. A finite graph $\Gamma$ is said to be a \textit{perfect graph}, if $\omega(\Lambda)=\chi(\Lambda)$ for every induced subgraph $\Lambda$ of $\Gamma$.

For two graphs $\Gamma_1$ and $\Gamma_2$ with disjoint vertex sets, the \emph{join graph} $\Gamma_1 \vee \Gamma_2$ of $\Gamma_1$ and $\Gamma_2$  is the graph obtained from the union of $\Gamma_1$ and $\Gamma_2$ by adding new edges from each vertex of $\Gamma_1$ to every vertex of $\Gamma_2.$ For a graph $\Im$ with $k$ vertices $V(\Im) = \{v_1,v_2, \dots, v_k\}$, let $\Gamma_1, \Gamma_2, \dots, \Gamma_k$ be pairwise vertex disjoint graphs. The $\Im$\textit{-generalized join graph} of $\Gamma_1, \Gamma_2, \dots, \Gamma_k$, denoted by $\Im [\Gamma_1, \Gamma_2, \dots, \Gamma_k]$ is the graph formed by replacing each vertex $v_i$ of $\Im$ by the graph $\Gamma_i$ and then joining each vertex of $\Gamma_i$ to every vertex of $\Gamma_j$ whenever $v_i \sim v_j$ in $\Im$. It is easy to see that the graph $K_2[\Gamma_1,\Gamma_2]$ coincides with the usual join $\Gamma_1 \vee \Gamma_2$ of $\Gamma_1$ and $\Gamma_2$. The following result relates the perfectness of a graph with the perfectness of a generalized join of that graph.

\begin{theorem}[{\rm\cite[Theorem 7]{kavaskar}}]\label{perfectness of generalized join}
Let $\Gamma$ be a graph with $V(\Gamma)=\{v_1,v_2,\dots,v_n\}$, and $\mathcal{H}_1,\mathcal{H}_2,\dots,\mathcal{H}_n$ be graphs such that each $\mathcal{H}_i$ is either complete or a totally disconnected graph. Then $\Gamma$ is perfect if and only if $\Gamma[\mathcal{H}_1,\mathcal{H}_2,\dots,\mathcal{H}_n]$ is perfect.     
\end{theorem}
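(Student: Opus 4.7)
The plan is to reduce this equivalence to the classical substitution (or \emph{replacement}) lemma for perfect graphs, due to Lov\'asz.  First I would dispose of the easy implication: suppose $\Gamma[\mathcal{H}_1,\dots,\mathcal{H}_n]$ is perfect.  Picking one representative $w_i\in V(\mathcal{H}_i)$ for each $i$, the subgraph induced by $\{w_1,\dots,w_n\}$ is, by the definition of the generalized join, isomorphic to $\Gamma$, since $w_i\sim w_j$ in the join iff $v_i\sim v_j$ in $\Gamma$.  Because an induced subgraph of a perfect graph is perfect, $\Gamma$ itself is perfect.

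For the converse I would invoke Lov\'asz's substitution lemma: if a graph $G$ is perfect and every vertex $v$ of $G$ is replaced by a perfect graph $H_v$ (with edges between distinct $H_v$'s inserted completely whenever the corresponding vertices are adjacent in $G$, and otherwise absent), then the resulting substitution is perfect.  By hypothesis every $\mathcal{H}_i$ is either $K_{n_i}$ or $\overline{K_{n_i}}$; every induced subgraph of such a graph is again complete or edgeless, so $\omega=\chi$ throughout, and each $\mathcal{H}_i$ is perfect.  The substitution lemma applied to $\Gamma$ and the family $\{\mathcal{H}_i\}$ then delivers the perfectness of $\Gamma[\mathcal{H}_1,\dots,\mathcal{H}_n]$.

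The main obstacle is the substitution lemma itself.  One clean route is via the Strong Perfect Graph Theorem: we must rule out induced odd holes and odd antiholes of length at least five in $\Gamma[\mathcal{H}_1,\dots,\mathcal{H}_n]$.  Two vertices lying in the same $\mathcal{H}_i$ are \emph{twins}: they share all external neighbours, and are themselves adjacent iff $\mathcal{H}_i$ is complete.  A case analysis shows that a hole or antihole of length at least five can contain at most two vertices of any single $\mathcal{H}_i$, and that each such pair collapses to a single representative without destroying the structure, producing an induced odd hole or antihole in $\Gamma$ and contradicting its perfectness.  An SPGT-free alternative inducts on $\sum_i |V(\mathcal{H}_i)|$, reducing to Lov\'asz's original vertex-duplication theorem for the edgeless case and combining it with the Weak Perfect Graph Theorem (perfectness is complementation-invariant) to handle the complete case.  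Either way, the technical heart of the argument is entirely absorbed in the substitution lemma, and the special hypothesis that each $\mathcal{H}_i$ is complete or edgeless is used only to guarantee their perfectness.
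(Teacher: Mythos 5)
The paper does not prove this statement at all: it is imported verbatim as Theorem~7 of the cited work of Kavaskar, so there is no in-paper argument to compare against. Your proposal is the standard proof and is essentially correct: the backward implication follows because a transversal $\{w_1,\dots,w_n\}$ with $w_i\in V(\mathcal{H}_i)$ induces a copy of $\Gamma$ (using that each $\mathcal{H}_i$ is nonempty), and the forward implication is exactly Lov\'asz's substitution lemma applied to the perfect graphs $\mathcal{H}_i$ (complete and edgeless graphs being trivially perfect); iterating single-vertex substitution over the $n$ vertices, or noting that substitutions at distinct vertices commute, handles the simultaneous replacement. One small inaccuracy in your optional SPGT-based sketch of the substitution lemma: in an induced cycle of length at least $5$, two vertices from the same $\mathcal{H}_i$ cannot both occur at all --- if they are nonadjacent false twins their two cycle-neighbours coincide, forcing a $C_4$, and if they are adjacent true twins the cycle collapses to a triangle --- so the correct bound is at most \emph{one} vertex per part, not two; moreover ``collapsing a pair'' inside an odd hole would produce an even cycle, so that step as written would not yield an odd hole of $\Gamma$. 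This does not affect your main argument, since you only need the substitution lemma as a black box, but if you intend to include the SPGT derivation you should replace the ``at most two, then collapse'' step by the ``at most one per part'' observation, which sends any odd hole (and, by complementation, any odd antihole) of the join directly to one of $\Gamma$.
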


In a graph $\Gamma$, an \emph{Eulerian trail} is a trail that contains every edge of $\Gamma$. An \emph{Eulerian circuit} or \emph{Eulerian cycle} in $\Gamma$ is an Eulerian trail that starts and ends on the same vertex of $\Gamma$. A graph is called \emph{Eulerian} if it contains an Eularian circuit. It is well known that a finite simple undirected graph is Eulerian if and only if every vertex of that graph has even degree. We refer to \cite{west} for other unexplained graph theoretic terminologies used in this paper.

Throughout the article, we denote the cyclic group of order $n$ by $C_n$ unless otherwise mentioned. For $n\in\mathbb{N}$, by $[n]$ we will denote the set $\{1,2,\dots,n\}$. The \textit{exponent} of a group $G$, denoted by ${\rm exp}(G)$ is the smallest positive integer $m$ for which $g^m=e$ for all $g\in G$. If no such positive integer exists, then we assume ${\rm exp}(G)=\infty$. For a group $G$, the \textit{center} of the group $G$, denoted by $Z(G)$ is defined to be the set of all elements of $G$ which commute with every element of the group $G$. For $x,y\in G$, we denote $x^y=y^{-1}xy$, i.e. the conjugation of $x$ by $y$ in $G$. Clearly if $x\in Z(G)$, then $x^y=x$ for all $y\in G$ and vice versa. Also it is easy to check that, for $x,y,z\in G$, $(xz)^y=x^yz^y$ and $(x^y)^z=x^{yz}$. 

The \textit{centralizer} $C_{G}(S)$ of a subset $S$ of $G$ is defined as the set $C_{G}(S):=\{g\in G~\vert~gs=sg,\text{ for all } s\in S\}$. For any particular element $a\in G$, the centralizer of the element $a$ in $G$ is defined as $C_{G}(a):=C_{G}(\{a\})$. It is also not difficult to see that the centralizer $C_{G}(S)$ is a subgroup of $G$. The \textit{commutator} of $g$ and $h$, denoted by $[g,h]$ is defined as $[g,h]=g^{-1}h^{-1}gh$. Note that $[g^y,h^y]=[g,h]^y$ for any $y\in G$. The \textit{derived group} or the \textit{commutator subgroup} of $G$, denoted by $G'$ or $[G,G]$ is defined to be the subgroup generated by all the commutators of $G$. So, $G/G'$ is abelian for any group $G$.

A group $G$ is called a \textit{simple group} if $G$ is non-trivial and it has no non-trivial normal subgroup. $G$ is called a \textit{perfect group} if $G'=G$. Clearly, finite non-abelian simple groups are perfect. $G$ is called a \textit{quasisimple group} if $G$ is perfect and $G/Z(G)$ is simple. $G$ is said to be \textit{nilpotent} of class $n$ or \textit{nilpotency class} $n$ if it has a terminating lower central series, i.e. a series of normal subgroups of the form $$G=\gamma_1(G)\triangleright \gamma_2(G)\triangleright\dots\triangleright \gamma_{n+1}(G)=\{e\},$$ where $\gamma_{i+1}(G)=[\gamma_i(G),G]$ for $i\geq 1$ and $\gamma_n(G)$ is non trivial. Clearly, $\gamma_2(G)=G'=[G,G]$. Also, a group $G$ has nilpotency class $1$ if and only if it is abelian and it is of nilpotency class $2$ if and only if $G'\subseteq Z(G)$. For a group $G$ and an abelian group $H$, $H.G$ denotes a group with $Z(H.G)\cong H$ and $H.G/Z(H.G)\cong G$. A group $G$ is called a \textit{metacyclic group} if $G$ has a cyclic normal subgroup $N$ such that the quotient group $G/N$ is also cyclic.

Let $p$ be a prime number. The following groups will be ubiquitous throughout the article:
\begin{enumerate}[\rm (i)]\label{definition of some groups}

\item \emph{Elementary abelian $p$-group:} All the non identity elements of the group have order $p$.

\item \emph{Dihedral group:} $D_{2n} = \langle a, b ~\lvert~ a^{n} = b^2 = e, ~ ab = ba^{-1} \rangle,~n\geq 3.$

\item \emph{Generalized quaternion group:} $Q_{4n} = \langle a, b ~\lvert~ a^{2n} = e, ~ b^2 = a^n, ~ ab = ba^{-1} \rangle,~ n\geq 2.$

\item \emph{Heisenberg group:}
\\
$
\begin{aligned}
	H_3(\mathbb{Z}/p^k\mathbb{Z})=\langle y_1,y_2,w~\vert~[y_1,y_2]=w,~y_1^{p^k}=y_2^{p^k}=w^{p^k}=e&=[y_1,w]=[y_2,w]\rangle,\\
	&k\in \mathbb{N}~\text{and}~p~\text{is odd}.
\end{aligned}
$
\end{enumerate} 

Our next result follows from the universal commuting identities $[ab,c]=[a,c]^b[b,c]$ and $[a,bc]=[a,c][a,b]^c$.
\begin{lemma}\label{commutator_in_center}
 Let $G$ be a group with $a,b,c\in G$. Then the following hold:
 \begin{enumerate}[\rm (i)]
     \item If $[a,c]\in Z(G)$, then $[ab,c]=[a,c][b,c]$,
     \item If $[a,b]\in Z(G)$, then $[a,bc]=[a,b][a,c]$.
 \end{enumerate}    
\end{lemma}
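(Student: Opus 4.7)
The plan is to deduce both identities directly from the universal commutator identities
\[
[ab,c]=[a,c]^{b}[b,c], \qquad [a,bc]=[a,c]\,[a,b]^{c},
\]
which hold in any group and which the statement itself cites. Granting these, the rest is just a matter of exploiting the centrality hypothesis via the observation (noted in the preliminaries) that $z\in Z(G)$ implies $z^{y}=z$ for every $y\in G$.

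For part (i), I would start from $[ab,c]=[a,c]^{b}[b,c]$. Under the assumption $[a,c]\in Z(G)$, conjugation by $b$ fixes $[a,c]$, so $[a,c]^{b}=[a,c]$, giving $[ab,c]=[a,c][b,c]$ immediately. For part (ii), I would start from $[a,bc]=[a,c]\,[a,b]^{c}$. The assumption $[a,b]\in Z(G)$ yields $[a,b]^{c}=[a,b]$, so $[a,bc]=[a,c][a,b]$; then, since $[a,b]$ is central, it commutes with $[a,c]$ and the product can be rewritten as $[a,b][a,c]$, matching the claimed form.

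There is essentially no obstacle here: the proof is a one-line substitution in each case, with the only nontrivial input being the two universal commutator identities, which can be quickly verified by expanding both sides in terms of the definition $[g,h]=g^{-1}h^{-1}gh$ (this is the brief calculation I would include if I wanted the note to be self-contained, but otherwise it is a standard fact). The lemma thus serves as a convenient packaging of the familiar bilinearity of the commutator modulo the center, to be used later in the paper when analyzing groups of nilpotency class $2$ such as the Heisenberg groups.
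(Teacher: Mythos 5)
Your proposal is correct and matches the paper's approach exactly: the paper gives no separate proof but states that the lemma follows from the universal identities $[ab,c]=[a,c]^b[b,c]$ and $[a,bc]=[a,c][a,b]^c$, which is precisely the substitution you carry out. Your extra remark that the two central factors commute in part (ii) is a small but welcome completion of the argument.
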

As a simple corollary of the above result, we can say the following.
\begin{corollary}\label{nilpo_class_2}
Let $G$ be a group with nilpotency class $2$. Then $[xy,z]=[x,z][y,z]$ and $[x,yz]=[x,y][x,z]$ for all $x,y,z \in G$.
\end{corollary}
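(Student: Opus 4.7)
The plan is to invoke Lemma \ref{commutator_in_center} directly, after observing that the class-$2$ hypothesis forces every commutator to lie in the center. Specifically, since $G$ has nilpotency class $2$, we have $G' \subseteq Z(G)$ by the definition recalled just before the lemma (the class-$2$ condition is equivalent to $[G,G] \subseteq Z(G)$). In particular, for arbitrary elements $x,y,z \in G$, both $[x,z]$ and $[x,y]$ are commutators and hence belong to $G' \subseteq Z(G)$.

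With this observation in hand, the two identities are immediate consequences of the two parts of Lemma \ref{commutator_in_center}. For the first identity, I would apply part (i) of the lemma with $a = x$, $b = y$, $c = z$: the hypothesis $[a,c] = [x,z] \in Z(G)$ is automatic, and the lemma yields $[xy,z] = [x,z][y,z]$. For the second identity, I would apply part (ii) with $a = x$, $b = y$, $c = z$: the hypothesis $[a,b] = [x,y] \in Z(G)$ is again automatic, and the lemma gives $[x,yz] = [x,y][x,z]$.

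There is no real obstacle here; the entire proof is a two-line deduction and requires no further computation beyond noting the containment $G' \subseteq Z(G)$, which is the defining property of nilpotency class at most $2$. The only care needed is to state this containment explicitly, since it is what makes every relevant commutator central and thereby unlocks both parts of the preceding lemma simultaneously.
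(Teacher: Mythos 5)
Your proposal is correct and matches the paper's intended argument exactly: the paper derives this as an immediate consequence of Lemma \ref{commutator_in_center}, using precisely the observation that nilpotency class $2$ means $G'\subseteq Z(G)$, so every commutator is central and both parts of the lemma apply.
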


A sequence,  
\[
\begin{tikzcd}
  G_0 \arrow[r,"f_1"] & G_1 \arrow[r, "f_2"] & G_2 \arrow[r, "f_3"] & \dots \arrow[r, "f_n"] & G_n 
\end{tikzcd}
\]
of groups and group homomorphisms is said to be \textit{exact} at $G_i$ if im$(f_i)=$ ker$(f_{i+1})$. Such sequence is said to be an \textit{exact sequence} if it is exact at each $G_i$ for all $i\in \{1,2,\dots,n-1\}$. A \textit{short exact sequence} is an exact sequence of the form
\[
\begin{tikzcd}
  \{e\} \arrow[r] & A \arrow[r, "f"] & B \arrow[r, "g"] & C \arrow[r] & \{e\}. 
\end{tikzcd}
\]

A group extension is the way to describe a group in terms of the quotient group of a larger group and a particular normal subgroup. If $G$ and $N$ are two groups, then $\hat{G}$ is called an \textit{extension} of $G$ by $N$ if there is a short exact sequence of the form
\[
\begin{tikzcd}
  \{e\} \arrow[r] & N \arrow[r, "\iota"] & \hat{G} \arrow[r, "\pi"] & G \arrow[r] & \{e\}. 
\end{tikzcd}
\]
In other words, $\hat{G}$ is an extension of the group $G$ by $N$ if $\iota$ is an injective homomorphism from $N$ to $\hat{G}$ such that $\iota(N)$ is normal in $G$ and $\hat{G}/\iota(N)$ is isomorphic to $G$. Such an extension is said to be a \textit{central extension} of $G$ by $N$, if $\iota(N)\subseteq Z(\hat{G})$.

A central extension $\hat{G}$ is called a \textit{stem extension} of $G$ by $N$, if $\iota(N)\subseteq Z(\hat{G})\cap \hat{G}^{'}$. For every finite group $G$, there exists an upper bound on the size of such stem extensions (see Theorems 2.1.4 and 2.1.22 in \cite{karpi}). A stem extension of maximal order is called a \textit{Schur cover} (or covering group) of $G$ and is denoted by $\tilde{G}$. For a finite group $G$ and for every Schur cover $\tilde{G}$ of $G$, the kernels are isomorphic. This kernel is therefore an invariant of $G$, called the Schur multiplier of $G$, and is denoted by $M(G)$. Moreover, it is a standard fact that $M(G)\cong H^2(G,\mathbb{C}^{\times})$, the second cohomology group of $G$ with coefficients in the multiplicative group $\mathbb{C}^{\times}$.



For a finite nilpotent group $G$, our next result determines the nilpotency class of a Schur cover of $G$.
\begin{lemma}\label{schur_cover_nilpotency}
Let $G$ be a finite group of nilpotency class $n$. Then $\tilde{G}$ is of nilpotency class at most $n+1$.    
\end{lemma}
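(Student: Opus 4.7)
The plan is to use the defining short exact sequence of a Schur cover, namely
\[
\begin{tikzcd}
  \{e\} \arrow[r] & M(G) \arrow[r, "\iota"] & \tilde{G} \arrow[r, "\pi"] & G \arrow[r] & \{e\},
\end{tikzcd}
\]
together with the crucial property that $\iota(M(G))\subseteq Z(\tilde{G})\cap \tilde{G}'$ (in particular, $\iota(M(G))$ is central). The idea is to transport the termination of the lower central series from $G$ up to $\tilde{G}$ and then use one extra commutator step to land in the trivial subgroup.

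First, I would observe that any surjective group homomorphism carries the lower central series to the lower central series: since commutators map to commutators, $\pi(\gamma_i(\tilde{G}))=\gamma_i(G)$ for every $i\geq 1$ by a straightforward induction on $i$. Applying this with $i=n+1$ and using the hypothesis that $G$ has nilpotency class $n$, so $\gamma_{n+1}(G)=\{e\}$, I obtain
\[
\gamma_{n+1}(\tilde{G})\subseteq \ker(\pi)=\iota(M(G)).
\]

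Next, because $\iota(M(G))\subseteq Z(\tilde{G})$ by the very definition of a stem extension, the above inclusion gives $\gamma_{n+1}(\tilde{G})\subseteq Z(\tilde{G})$. Taking one more commutator with $\tilde{G}$ then yields
\[
\gamma_{n+2}(\tilde{G})=[\gamma_{n+1}(\tilde{G}),\tilde{G}]\subseteq [Z(\tilde{G}),\tilde{G}]=\{e\},
\]
which precisely says that $\tilde{G}$ has nilpotency class at most $n+1$.

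There is essentially no hard step here; the entire argument is a direct consequence of (a) the compatibility of the lower central series with surjective homomorphisms and (b) the centrality of the kernel in a stem extension. The only thing one needs to be slightly careful about is the possibility that $\gamma_{n+1}(\tilde{G})$ is already trivial (in which case $\tilde{G}$ has class at most $n$), but this is subsumed by the bound ``at most $n+1$'' stated in the lemma, so no separate case analysis is required.
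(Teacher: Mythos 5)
Your proof is correct and follows essentially the same route as the paper's: both use the surjectivity of $\pi$ on each term of the lower central series to get $\gamma_{n+1}(\tilde{G})\subseteq\ker\pi\subseteq Z(\tilde{G})$, and then one further commutator to conclude $\gamma_{n+2}(\tilde{G})=\{e\}$.
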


\begin{proof}
Since $\tilde{G}$ is a Schur cover of $G$, the map $\pi:\tilde{G}\rightarrow G$ is surjective with ${\rm ker~} \pi=\iota(M(G))\subseteq Z(\tilde{G})$. Define the map $\pi_i:\gamma_i(\tilde{G})\rightarrow\gamma_i(G)$ to be the restriction of $\pi$ over the subset $\gamma_i(\tilde{G})$ of $\tilde{G}$. Then it is easy to see that $\pi_i$ is surjective for all $i\in[n+1]$. Now, $\pi_{n+1}(\gamma_{n+1}(\tilde{G}))=\gamma_{n+1}(G)=\{e\}$. Then $\gamma_{n+1}(\tilde{G})\subseteq {\rm ker}~\pi\subseteq Z(\tilde{G})$. Hence $\gamma_{n+2}(\tilde{G})=[\gamma_{n+1}(\tilde{G}),\tilde{G}]=\{e\}$, and the result follows.
\end{proof}

In \cite{miller}, Miller defined the \textit{non-abelian exterior square} of a group $G$. This group, denoted by $G\wedge G$, is generated by the symbols $x \wedge y,$ for $x, y \in G$, subject to the relations
\[
(xy)\wedge z=(x^y \wedge z^y)(y\wedge z), ~x\wedge (yz)= (x\wedge z)(x^z\wedge y^z), ~x\wedge x = 1.
\]
It is not difficult to see that, the homomorphism $f:G\wedge G\rightarrow[x,y]$ defined as $f(x\wedge y)=[x,y]$, is surjective. In addition, it was shown in \cite[Theorem 2]{miller} that the kernel of $f$ is isomorphic to $M(G)$. Now, denote by $M_0(G)$ the subgroup $\langle x\wedge y~|~[x, y] = 1 \rangle$ of ker $f$. Then the group $M(G)/M_0(G)$ is called the \textit{Bogomolov multiplier} of $G$ and is denoted by $B_0(G)$. It is easy to see that for a finite group $G$, $B_0(G)\cong M(G)$ if and only if they are of the same cardinality. Also, it is easy to see that if $G$ is abelian, then $G'$ is trivial, hence ker $f=G\wedge G=M_0(G)$. So, the following observation holds.
\begin{remark}\label{bgo_ab}
If $G$ is an abelian group, then the Bogomolov multiplier $B_0(G)$ is trivial.    
\end{remark}

The converse of the above observation is not true. It will be shown in Corollary \ref{Schur cover of D_2n and Q_4n} that the Schur multiplier of $Q_{4n}$ is trivial. Hence, the Bogomolov multiplier is also trivial, yet $Q_{4n}$ is non-abelian.

\subsection{The deep commuting graph}

We are now in a state to define the \textit{deep commuting graph} of a finite group. Let $G$ be a finite group and $\hat{G}$ be a central extension of $G$. Define a graph $\Gamma$ with vertices being the elements of $G$ and two vertices are adjacent if their preimages commute in every central extension of $G$. In \cite[Theorem 4.1]{deepc}, it was shown that the preimages of two elements of a finite group $G$ will commute in every central extension if and only if they commute in a Schur cover. It was also shown there that if the preimages of two elements of $G$ commute in any Schur cover of $G$, then they must commute in every Schur cover of $G$.

With the above results in hand, we can come to the following definition of the deep commuting graph of a finite group. 
\begin{definition}[\cite{deepc}]\label{deep_commuting}
Let $G$ be a finite group with $\Tilde{G}$ being a Schur cover of $G$. Then the \textit{deep commuting graph} of $G$, denoted by $\Delta_D(G)$ is a finite simple graph with vertices being the elements of $G$ and two distinct elements are adjacent if their pre images commute in $\Tilde{G}$.     
\end{definition}
In \cite{deepc}, the authors showed that the deep commuting graph fits well inside the hierarchy of graphs defined on groups. In particular they proved the following, 
\begin{theorem}[\cite{deepc}]\label{inclusion}
    Let G be a finite group. Then
    \[
    E(\mathcal{P}_e(G))\subseteq E(\Delta_D(G)) \subseteq E(\Delta(G)).
    \]
\end{theorem}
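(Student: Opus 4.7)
The plan is to verify the two inclusions separately; both follow essentially from the defining properties of a Schur cover, namely that the projection $\pi:\tilde{G}\to G$ is a surjective group homomorphism whose kernel is central in $\tilde{G}$.

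For the outer inclusion $E(\Delta_D(G))\subseteq E(\Delta(G))$, I would proceed as follows. Suppose $\{x,y\}\in E(\Delta_D(G))$ with $x\neq y$. By Definition \ref{deep_commuting}, there exist preimages $\tilde{x},\tilde{y}\in\tilde{G}$ with $\pi(\tilde{x})=x$, $\pi(\tilde{y})=y$ such that $[\tilde{x},\tilde{y}]=e_{\tilde{G}}$. Applying the homomorphism $\pi$ yields $[x,y]=\pi([\tilde{x},\tilde{y}])=e_G$, so $x$ and $y$ commute in $G$ and hence $\{x,y\}\in E(\Delta(G))$.

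For the inner inclusion $E(\mathcal{P}_e(G))\subseteq E(\Delta_D(G))$, suppose $\{x,y\}\in E(\mathcal{P}_e(G))$. Then $\langle x,y\rangle$ is cyclic in $G$; let $z$ be a generator, so that $x=z^a$ and $y=z^b$ for some integers $a,b$. Choose any preimage $\tilde{z}\in\tilde{G}$ of $z$ under $\pi$. Since $\pi$ is a homomorphism, $\tilde{z}^a$ is a preimage of $x$ and $\tilde{z}^b$ is a preimage of $y$, and obviously $\tilde{z}^a\tilde{z}^b=\tilde{z}^{a+b}=\tilde{z}^b\tilde{z}^a$. Thus there exist preimages of $x$ and $y$ in $\tilde{G}$ which commute.

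The only small subtlety — and this is the closest thing to an obstacle — is that the adjacency relation in Definition \ref{deep_commuting} is phrased in terms of the preimages commuting, and one must check this does not depend on the particular choice of preimages. This is immediate from centrality of $\ker\pi$: any two preimages of $x$ differ by an element $k\in\ker\pi\subseteq Z(\tilde{G})$, so if $\tilde{x}\tilde{y}=\tilde{y}\tilde{x}$ then $(\tilde{x}k)\tilde{y}=\tilde{x}\tilde{y}k=\tilde{y}\tilde{x}k=\tilde{y}(\tilde{x}k)$, and similarly when one perturbs $\tilde{y}$ by a central element. This gives the required edge in $\Delta_D(G)$ and completes the proof.
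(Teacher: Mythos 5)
Your proof is correct, and it is essentially the standard argument: the paper itself imports this theorem from Cameron--Kuzma without reproving it, and your two inclusions (pushing a commuting pair of preimages down via $\pi$, and lifting a generator of the cyclic subgroup $\langle x,y\rangle$ to get commuting preimages $\tilde{z}^a,\tilde{z}^b$) together with the centrality observation about independence of the choice of preimages is exactly how the result is established in the source. Nothing is missing.
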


From the above discussion, again an interesting query of classifying the groups for which at least one of the inclusions become equality arises. In that context, the authors proved the following,
\begin{theorem}[\cite{deepc}]\label{equality} Let $G$ be a finite group. Then
\begin{enumerate}[\rm (i)]
    \item $\Delta_D(G)=\mathcal{P}_e(G)$ if and only if $G$ has the following property: Let $\tilde{G}$ be a Schur cover of $G$, with $\tilde{G}/M(G)=G$. Then for any subgroup $A$ of $G$, with $B$ being the corresponding preimage of $A$ in $\tilde{G}$ (so $M(G)\leq B$ and $B/M(G)=A$), if $B$ is abelian, then $A$ is cyclic.
    \item $\Delta_D(G)=\Delta(G)$ if and only if the Bogomolov multiplier of $G$ is equal to the Schur multiplier.
\end{enumerate}
\end{theorem}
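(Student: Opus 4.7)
For part (i), the plan is to exploit the inclusion $E(\mathcal{P}_e(G))\subseteq E(\Delta_D(G))$ from Theorem \ref{inclusion}, so only the reverse containment is at issue. For the forward implication, assume $\Delta_D(G)=\mathcal{P}_e(G)$ and let $A\leq G$ have abelian preimage $B\leq\tilde{G}$. For any $x,y\in A$, preimages $\tilde{x},\tilde{y}\in B$ commute, so $x,y$ are adjacent in $\Delta_D(G)$, and hence in $\mathcal{P}_e(G)$ by assumption; thus $\langle x,y\rangle$ is cyclic. So every two-generator subgroup of $A$ is cyclic, and since $A$ is finite this locally cyclic condition forces $A$ itself to be cyclic. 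For the backward implication, assume the stated property and suppose $x,y\in G$ are adjacent in $\Delta_D(G)$. Choose preimages $\tilde{x},\tilde{y}\in\tilde{G}$, which commute by definition. The full preimage of $A=\langle x,y\rangle$ under $\pi$ is $B=\langle\tilde{x},\tilde{y}\rangle\cdot M(G)$, and since $\tilde{x}$ and $\tilde{y}$ commute while $M(G)\subseteq Z(\tilde{G})$, $B$ is abelian. The hypothesis then forces $\langle x,y\rangle$ to be cyclic, i.e.\ $x\sim y$ in $\mathcal{P}_e(G)$, as required.

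For part (ii), the plan is to identify precisely which edges of $\Delta(G)$ fail to appear in $\Delta_D(G)$. By the inclusion theorem, an edge $\{x,y\}\in E(\Delta(G))\setminus E(\Delta_D(G))$ is exactly a pair with $[x,y]=1$ in $G$ but $[\tilde{x},\tilde{y}]\neq 1$ in $\tilde{G}$ for any (equivalently, every) choice of preimages. Since $\pi[\tilde{x},\tilde{y}]=[x,y]=1$, such a commutator lies in $\ker\pi=\iota(M(G))$. The key structural step is to invoke the classical identification $\tilde{G}'\cong G\wedge G$ (Miller's theorem), under which $[\tilde{x},\tilde{y}]$ corresponds to $x\wedge y$ and the kernel of $f\colon G\wedge G\to G'$ corresponds to $\iota(M(G))$. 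Under this identification, the distinguished subgroup $M_0(G)=\langle x\wedge y\mid[x,y]=1\rangle$ of $\ker f$ matches exactly the subgroup of $M(G)$ generated by those commutators $[\tilde{x},\tilde{y}]$ for commuting pairs $x,y\in G$.

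Combining these two observations, $\Delta_D(G)=\Delta(G)$ holds if and only if $[\tilde{x},\tilde{y}]=1$ for every commuting pair $x,y\in G$, equivalently $M_0(G)=\{1\}$, equivalently $|M(G)/M_0(G)|=|M(G)|$, which by the definition of $B_0(G)=M(G)/M_0(G)$ and the remark preceding the theorem is equivalent to $B_0(G)\cong M(G)$.

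The main obstacle I anticipate is making the identification in (ii) rigorous: namely, that the subgroup of $M(G)$ generated by the nontrivial commutators $[\tilde{x},\tilde{y}]$ arising from commuting pairs $(x,y)$ of $G$ is genuinely $M_0(G)$, not just a quotient or a subgroup thereof. This requires a careful use of Miller's presentation of $G\wedge G$ — in particular the defining relation $x\wedge x=1$ and the two commutator identities — together with the fact that independence of the choice of Schur cover and of preimages (proved in \cite[Theorem 4.1]{deepc}) ensures the correspondence is well-defined. Once this identification is in place, both directions of (ii) follow by direct translation, as outlined above.
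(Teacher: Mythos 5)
This theorem is quoted in the paper from \cite{deepc} without proof, so there is no in-paper argument to compare against; I can only assess your proposal on its own terms, and it is essentially a correct reconstruction of the Cameron--Kuzma argument. Part (i) is complete: the forward direction correctly passes from ``every two-generated subgroup of $A$ is cyclic'' to ``$A$ is cyclic'' (a finite locally cyclic group is cyclic), and the backward direction correctly observes that $\pi^{-1}(\langle x,y\rangle)=\langle\tilde{x},\tilde{y}\rangle\cdot\iota(M(G))$ is abelian whenever $\tilde{x}$ and $\tilde{y}$ commute, since $\iota(M(G))$ is central. For part (ii), your reduction is right: $\Delta_D(G)=\Delta(G)$ iff $[\tilde{x},\tilde{y}]=1$ for every commuting pair (note the commutator is independent of the choice of preimages because $\ker\pi$ is central), iff the subgroup of $\iota(M(G))$ generated by such commutators is trivial. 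The one step you leave at the level of assertion --- that this subgroup is exactly the image of $M_0(G)$ --- is precisely the classical fact that $x\wedge y\mapsto[\tilde{x},\tilde{y}]$ defines an isomorphism $G\wedge G\to\tilde{G}'$ carrying $\ker f$ onto $\iota(M(G))$; this is verified by checking Miller's defining relations against the universal commutator identities $[ab,c]=[a,c]^b[b,c]$ (which gives a well-defined surjection) and then comparing orders via $|G\wedge G|=|M(G)|\,|G'|=|\tilde{G}'|$. You correctly flag this as the point requiring care, and with that filled in the argument is complete.
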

As a simple application of the above theorem, one can see the following corollary.
\begin{corollary}[\cite{deepc}]\label{bogomolov_trivial}
Let $G$ be a finite group with $B_0(G)=\{e\}$. Then $\Delta_D(G)=\Delta(G)$ if and
only if the Schur multiplier of $G$ is trivial.
\end{corollary}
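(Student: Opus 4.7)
The plan is to derive the corollary as a direct consequence of Theorem \ref{equality}(ii), interpreting the hypothesis $B_0(G)=\{e\}$ appropriately in terms of the subgroup $M_0(G)$ of $M(G)$.

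First I would recall the key facts already set up in the preliminaries: the Bogomolov multiplier is defined as the quotient $B_0(G)=M(G)/M_0(G)$, and the observation before Remark \ref{bgo_ab} notes that for finite $G$ one has $B_0(G)\cong M(G)$ precisely when $|B_0(G)|=|M(G)|$. Thus Theorem \ref{equality}(ii) can be read as the statement that $\Delta_D(G)=\Delta(G)$ if and only if $M_0(G)=M(G)$, equivalently $|B_0(G)|=|M(G)|$.

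For the backward direction, I would assume $M(G)=\{e\}$. Then the subgroup $M_0(G)$ of $M(G)$ is forced to be trivial as well, so $B_0(G)=M(G)/M_0(G)=\{e\}=M(G)$, and Theorem \ref{equality}(ii) immediately yields $\Delta_D(G)=\Delta(G)$. (Note that this direction does not even require the hypothesis $B_0(G)=\{e\}$.) For the forward direction, I would assume $\Delta_D(G)=\Delta(G)$ together with the hypothesis $B_0(G)=\{e\}$. By Theorem \ref{equality}(ii), $|B_0(G)|=|M(G)|$, and since $|B_0(G)|=1$, we conclude $|M(G)|=1$, i.e., the Schur multiplier of $G$ is trivial.

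There is no real obstacle here; the whole argument is a two-line deduction from Theorem \ref{equality}(ii). The only point requiring a moment of care is ensuring that the phrase ``Bogomolov multiplier equal to Schur multiplier'' in Theorem \ref{equality}(ii) is parsed as an equality of orders (or equivalently, as $M_0(G)$ being the full kernel inside $M(G)$), since that is what allows one to chain the two trivialities together in the backward direction without ambiguity.
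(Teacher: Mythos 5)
Your argument is correct and matches the paper's intent: the paper gives no explicit proof, stating only that the corollary is ``a simple application'' of Theorem \ref{equality}(ii), and your two-line deduction (using that $B_0(G)$ is a quotient of $M(G)$, so triviality of $M(G)$ forces $B_0(G)=M(G)$, and conversely equality of cardinalities plus $B_0(G)=\{e\}$ forces $M(G)=\{e\}$) is exactly that application. Your remark that the equality in Theorem \ref{equality}(ii) should be read as equality of cardinalities is consistent with the paper's own observation preceding Remark \ref{bgo_ab}.
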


Also, the following two results on the deep commuting graph are interesting and useful.

\begin{lemma}[{\rm\cite[Proposition 6.1(c)]{cameron}}]\label{strong_product}
Let $G$ and $H$ be two finite groups such that ${\rm gcd}(|G|,|H|)=1$. Then $\Delta_D(G\times H)=\Delta_D(G)\boxtimes\Delta_D(H)$.   
\end{lemma}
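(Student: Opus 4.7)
My plan is to use the coprime hypothesis to construct an explicit Schur cover of $G\times H$ and then to read off the adjacency relation directly. Let $\tilde{G}$ and $\tilde{H}$ be Schur covers of $G$ and $H$ with kernels $M(G)$ and $M(H)$ respectively. I would first show that $\tilde{G}\times \tilde{H}$ is itself a Schur cover of $G\times H$. Centrality and the stem condition for the natural extension
\[
\{e\}\to M(G)\times M(H)\to \tilde{G}\times \tilde{H}\to G\times H\to \{e\}
\]
are immediate, since $M(G)\times M(H)\subseteq Z(\tilde{G})\times Z(\tilde{H})=Z(\tilde{G}\times \tilde{H})$ and, using $M(G)\subseteq \tilde{G}'$ and $M(H)\subseteq \tilde{H}'$, also $M(G)\times M(H)\subseteq \tilde{G}'\times \tilde{H}'=(\tilde{G}\times \tilde{H})'$. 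For maximality of the order (equivalently, that the kernel has order $|M(G\times H)|$), I would invoke the classical decomposition $M(G\times H)\cong M(G)\oplus M(H)\oplus(G/G'\otimes H/H')$. Since $\gcd(|G|,|H|)=1$ forces the abelian groups $G/G'$ and $H/H'$ to have coprime orders, their tensor product vanishes, and $|M(G\times H)|=|M(G)|\cdot|M(H)|$, which matches the size of the kernel.

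Next, with $\tilde{G}\times \tilde{H}$ identified as a Schur cover of $G\times H$, I would translate adjacency coordinate-wise. A preimage of $(g_i,h_i)$ in $\tilde{G}\times \tilde{H}$ is a pair $(\tilde{g}_i,\tilde{h}_i)$ with $\tilde{g}_i\in \pi_G^{-1}(g_i)$ and $\tilde{h}_i\in \pi_H^{-1}(h_i)$; two such pairs commute precisely when $\tilde{g}_1$ commutes with $\tilde{g}_2$ in $\tilde{G}$ and $\tilde{h}_1$ commutes with $\tilde{h}_2$ in $\tilde{H}$. Since any two preimages of a fixed element of $G$ differ by a central element of $\tilde{G}$, this commutativity depends only on the images in $G$ (and similarly for $H$). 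Consequently $\tilde{g}_1$ and $\tilde{g}_2$ commute iff $g_1=g_2$ or $g_1\sim g_2$ in $\Delta_D(G)$, and analogously for $H$. For distinct pairs $(g_1,h_1)\neq (g_2,h_2)$, this yields exactly the three canonical cases of the strong-product definition (equality on one coordinate with adjacency on the other, or adjacency on both), which is precisely the adjacency rule of $\Delta_D(G)\boxtimes \Delta_D(H)$.

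The principal obstacle is establishing the K\"unneth-type formula for the Schur multiplier of a direct product. This is a classical, but non-trivial, cohomological fact; one can obtain it either from the identification $M(G)\cong H^{2}(G,\mathbb{C}^{\times})$ recorded in the preliminaries combined with the K\"unneth theorem, or from Schur's Hopf formula $M(G)\cong (R\cap [F,F])/[F,R]$ applied to a free presentation $G\cong F/R$. Once this ingredient is in hand, the remainder of the argument reduces to the routine observation that commutators in a direct product split across coordinates, and the rest of the proof is a direct unpacking of definitions.
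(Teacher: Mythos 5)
Your proof is correct. Note that the paper itself offers no proof of this lemma --- it is imported verbatim from Cameron's survey (Proposition~6.1(c) of \cite{cameron}) --- and your argument is essentially the standard one behind that citation: the only nontrivial input is the Schur--K\"unneth decomposition $M(G\times H)\cong M(G)\oplus M(H)\oplus\bigl(G/G'\otimes H/H'\bigr)$, whose tensor term vanishes for coprime orders, so that $\tilde{G}\times\tilde{H}$ realizes the maximal stem extension and adjacency then splits coordinate-wise exactly as in the strong product. The coordinate-wise translation is also clean, since you correctly observe that commutation of preimages is independent of the choice of preimage (any two preimages of a fixed element differ by a central element), which is what makes the case $g_1=g_2$ (or $h_1=h_2$) behave as required by the strong-product adjacency rule.
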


\begin{lemma}[{\rm\cite[Theorem 9.1(c)]{cameron}}]\label{dominant_center}
Let $G$ be a finite group with $\tilde{G}$ being a Schur cover of $G$. Then $_d\Delta_D(G)=\pi(Z(\tilde{G}))$, where $\pi$ is the projection map.
\end{lemma}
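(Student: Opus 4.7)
The plan is to prove the equality ${}_d\Delta_D(G) = \pi(Z(\tilde{G}))$ by checking both inclusions directly from Definition \ref{deep_commuting}, with the crucial lever being that $M(G) = \ker\pi$ lies in $Z(\tilde{G})$.

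First I would establish the following well-definedness remark, which forms the backbone of the argument: for any $x, y \in G$, whether a preimage $\tilde{x}$ of $x$ commutes with a preimage $\tilde{y}$ of $y$ is independent of the preimages chosen. Indeed, any other preimages have the form $\tilde{x} m$ and $\tilde{y} m'$ with $m, m' \in M(G) \subseteq Z(\tilde{G})$, and a short manipulation using centrality of $m, m'$ gives $(\tilde{x}m)(\tilde{y}m') = (\tilde{y}m')(\tilde{x}m)$ if and only if $\tilde{x}\tilde{y} = \tilde{y}\tilde{x}$. This legitimizes the adjacency condition in $\Delta_D(G)$ and is what allows one to move freely between preimages.

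For the inclusion $\pi(Z(\tilde{G})) \subseteq {}_d\Delta_D(G)$, take $x = \pi(\tilde{x})$ with $\tilde{x} \in Z(\tilde{G})$. Then $\tilde{x}$ commutes with every element of $\tilde{G}$, in particular with any preimage of any $y \in G \setminus\{x\}$; hence $x \sim y$ in $\Delta_D(G)$, making $x$ dominant. Conversely, for ${}_d\Delta_D(G) \subseteq \pi(Z(\tilde{G}))$, let $x$ be dominant and fix any preimage $\tilde{x}$. I would show $\tilde{x}$ commutes with an arbitrary $\tilde{y} \in \tilde{G}$ by splitting on the projection: if $\pi(\tilde{y}) = y \neq x$, then dominance of $x$ combined with the well-definedness remark yields $\tilde{x}\tilde{y} = \tilde{y}\tilde{x}$; if $\pi(\tilde{y}) = x$, then $\tilde{y} \in \tilde{x}\, M(G)$, and centrality of $M(G)$ in $\tilde{G}$ closes the argument for free. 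Therefore $\tilde{x} \in Z(\tilde{G})$ and $x \in \pi(Z(\tilde{G}))$.

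I do not anticipate any genuine obstacle: the proof reduces to unwinding the definition of $\Delta_D(G)$. The only subtle point is that adjacency in $\Delta_D(G)$ is defined only between distinct vertices, so when checking centrality of $\tilde{x}$ one must separately verify commutation with the remaining preimages of $x$ itself; this is precisely the case handled by the central kernel $M(G)$.
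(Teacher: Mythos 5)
Your proof is correct and complete: the well-definedness observation (preimages differ by elements of $M(G)\subseteq Z(\tilde{G})$, so commutation is independent of the choice), together with the two inclusions and the separate treatment of the fibre over $x$ itself, is exactly the standard argument. Note that the paper does not prove this lemma at all --- it is quoted from \cite[Theorem 9.1(c)]{cameron} --- so there is no in-paper proof to compare against; your self-contained verification is the natural one.
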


Again, for any finite group $G$, it can be seen that ${_d}\mathcal{P}_e(G)\subseteq{_d}\Delta_D(G)\subseteq{_d}\Delta(G)$. So, as a simple corollary of the above lemma, the following can be seen.
\begin{corollary}\label{center_schur_cover}
    Let $G$ be a finite group such that ${_d}\mathcal{P}_e(G)={_d}\Delta(G)=\mathfrak{D}$. Then for any Schur cover $\tilde{G}$ of $G$, $Z(\tilde{G})=\pi^{-1}(\mathfrak{D})$ where $\pi$ is the projection map. Also, if ${_d}\Delta(G)=\{e\}$, then $Z(\tilde{G})\cong M(G)$.
\end{corollary}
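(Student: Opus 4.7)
The plan is to combine Theorem \ref{inclusion} with Lemma \ref{dominant_center} to first pin down the dominant vertex set of $\Delta_D(G)$ and then lift this information back to $\tilde G$ using the central-extension hypothesis $\ker \pi \subseteq Z(\tilde G)$.

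First, I would record the sandwich of dominant vertex sets that follows from the edge inclusions $E(\mathcal{P}_e(G))\subseteq E(\Delta_D(G))\subseteq E(\Delta(G))$ of Theorem \ref{inclusion}, namely ${_d}\mathcal{P}_e(G)\subseteq{_d}\Delta_D(G)\subseteq{_d}\Delta(G)$. Under the hypothesis ${_d}\mathcal{P}_e(G)={_d}\Delta(G)=\mathfrak{D}$, this forces ${_d}\Delta_D(G)=\mathfrak{D}$, and then Lemma \ref{dominant_center} yields $\pi(Z(\tilde G))=\mathfrak{D}$.

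Next, I would establish the equality $Z(\tilde G)=\pi^{-1}(\mathfrak{D})$ by double inclusion. One direction is immediate from the previous step. For the reverse inclusion, the key move is: given $x\in\pi^{-1}(\mathfrak{D})$, pick $z\in Z(\tilde G)$ with $\pi(z)=\pi(x)$, and observe that $xz^{-1}\in\ker\pi=\iota(M(G))\subseteq Z(\tilde G)$ because the extension is central; hence $x\in Z(\tilde G)$. This is the real content of the statement but it is a short step once the setup is right.

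For the final assertion, I would first note that $e$ is always dominant in $\mathcal{P}_e(G)$ since $\langle e,x\rangle=\langle x\rangle$ is cyclic for every $x\in G$; so the assumption ${_d}\Delta(G)=\{e\}$ automatically forces ${_d}\mathcal{P}_e(G)=\{e\}$ as well, and the main part applies with $\mathfrak{D}=\{e\}$. Then $Z(\tilde G)=\pi^{-1}(\{e\})=\ker\pi=\iota(M(G))\cong M(G)$. The only point to be careful about, which I regard as the minor obstacle, is justifying that the hypothesis of the main assertion is indeed satisfied in this degenerate case, which amounts to the routine observation about dominance of $e$ in the enhanced power graph.
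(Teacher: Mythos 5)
Your proposal is correct and follows essentially the same route the paper intends: the sandwich ${_d}\mathcal{P}_e(G)\subseteq{_d}\Delta_D(G)\subseteq{_d}\Delta(G)$ plus Lemma \ref{dominant_center} pins down $\pi(Z(\tilde{G}))=\mathfrak{D}$, and the lift to $Z(\tilde{G})=\pi^{-1}(\mathfrak{D})$ via $xz^{-1}\in\ker\pi\subseteq Z(\tilde{G})$ is exactly the step the paper leaves implicit. The observation that $e\in{_d}\mathcal{P}_e(G)$ always, so that ${_d}\Delta(G)=\{e\}$ triggers the main assertion, is also the right justification for the final claim.
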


\section{Some properties of the deep commuting graph}\label{Some properties of the deep commuting graph}

It is not difficult to see that for any group $G$ and a proper subgroup $H$ of $G$, the induced subgraphs $\mathcal{P}_e(G)\vert_H$ and $\Delta(G)\vert_H$ are same as $\mathcal{P}_e(H)$ and $\Delta(H)$, respectively. However, in contradiction to the enhanced power graph or the commuting graph, the deep commuting graph lacks this property. To see an example of the above phenomenon, we need to know about Schur cover of finite non-cyclic abelian $p$-groups.

Consider a finite abelian $p$-group $G=\prod_{1\leq i \leq k}C_{p^{r_i}}$ with $r_1\geq r_2\geq \dots \geq r_k$ and $k\geq 2$. Then by \cite[Corollary 2.2.12]{karpi}, we have $M(G)\cong\prod_{2\leq i \leq k}{C_{p^{r_i}}}^{(i-1)}$. In addition, the following result provides a Schur cover for $G$.
\begin{theorem}[{\rm\cite[Theorem 2.9.3]{karpi}}]\label{Cover_abelian_p}
For $G=\prod_{1\leq i \leq k}C_{p^{r_i}}$, with $r_1\geq r_2\geq \dots \geq r_k$, $k\geq 2$, and let $x_1, x_2, \dots, x_k$ be a collection of generators for $C_{p^{r_1}},C_{p^{r_2}},\dots,C_{p^{r_k}}$, respectively. Define the group $\tilde{G}$ by
\[
\tilde{G}=\langle \tilde{x}_i, a_{jl}~\mid~[\tilde{x}_j, \tilde{x}_l]=a_{jl},~  \tilde{x}_i^{p^{r_i}}=[\tilde{x}_i,a_{jl}]=e,~1\leq i,j,l \leq k,~j<l \rangle.
\]
Then $\tilde{G}$ is a Schur cover of $G$.
\end{theorem}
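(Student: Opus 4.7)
The plan is to verify that $\tilde{G}$ is a Schur cover of $G$ by establishing that it is a stem extension of $G$ and that $|\tilde{G}|=|G|\cdot|M(G)|$. First, I would define the projection $\pi:\tilde{G}\to G$ on generators by $\pi(\tilde{x}_i)=x_i$ and $\pi(a_{jl})=e$; since $G$ is abelian, every defining relation of $\tilde{G}$ is satisfied under this assignment, so $\pi$ is a well-defined surjective homomorphism. Setting $N=\langle a_{jl}\mid j<l\rangle$, the relations $[\tilde{x}_i,a_{jl}]=e$ force each $a_{jl}$ to commute with every generator, so $N\subseteq Z(\tilde{G})$. The quotient $\tilde{G}/N$ satisfies the defining relations of the abelian group $G$, giving $\tilde{G}/N\cong G$ and $N=\ker\pi$. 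Since $a_{jl}=[\tilde{x}_j,\tilde{x}_l]\in\tilde{G}'$, we have $N\subseteq Z(\tilde{G})\cap\tilde{G}'$, so $\tilde{G}$ is a stem extension of $G$.

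Next, I would bound $|N|$ from above by producing a normal form. Because $\tilde{G}'$ is generated by the central elements $a_{jl}$, we have $\tilde{G}'\subseteq Z(\tilde{G})$, so $\tilde{G}$ has nilpotency class at most $2$ and Corollary \ref{nilpo_class_2} applies. In particular,
\[
a_{jl}^{p^{r_j}}=[\tilde{x}_j^{p^{r_j}},\tilde{x}_l]=e \quad\text{and}\quad a_{jl}^{p^{r_l}}=[\tilde{x}_j,\tilde{x}_l^{p^{r_l}}]=e,
\]
so the order of $a_{jl}$ divides $p^{r_l}$ (using $r_j\geq r_l$). Using the identity $\tilde{x}_l\tilde{x}_j=\tilde{x}_j\tilde{x}_l\, a_{jl}^{-1}$ for $j<l$ together with centrality of the $a_{jl}$ and the commutator identities of Corollary \ref{nilpo_class_2}, one can sort the generators into ascending order and collect all $a$-terms to the right. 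Every element of $\tilde{G}$ therefore admits a normal form $\tilde{x}_1^{m_1}\cdots\tilde{x}_k^{m_k}\prod_{j<l}a_{jl}^{n_{jl}}$ with $0\leq m_i<p^{r_i}$ and $0\leq n_{jl}<p^{r_l}$, yielding
\[
|\tilde{G}|\leq\prod_{i=1}^{k}p^{r_i}\cdot\prod_{l=2}^{k}p^{r_l(l-1)}=|G|\cdot|M(G)|.
\]

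For the matching lower bound, I would exhibit a concrete stem extension $\hat{G}$ of $G$ of order $|G|\cdot|M(G)|$ in which the commutators $[\tilde{x}_j,\tilde{x}_l]$ realize their maximal orders $p^{r_l}$ independently. A natural source is the non-abelian exterior square: for abelian $G$ one has $G\wedge G\cong M(G)$, and the associated central extension supplies the required commutator structure. An equally workable alternative is an explicit uni-triangular matrix realization over $\mathbb{Z}/p^{r_1}\mathbb{Z}$ with the correct entry groups. Since $\hat{G}$ satisfies all the defining relations of $\tilde{G}$, the induced surjection $\tilde{G}\twoheadrightarrow\hat{G}$ forces $|\tilde{G}|\geq|G|\cdot|M(G)|$, and combining the two bounds gives equality. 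Hence $\tilde{G}$ is a stem extension of maximal order, i.e., a Schur cover of $G$.

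The principal obstacle is establishing the lower bound: the presentation immediately yields the upper estimate on $|\tilde{G}|$, but ruling out collapse among the central commutators $a_{jl}$ requires producing a faithful model. Without an explicit realization (via $G\wedge G$, matrix groups, or a cocycle representative for $H^{2}(G,\mathbb{C}^{\times})$), the independence of the $a_{jl}$ and the fact that each has order exactly $p^{r_l}$ cannot be extracted from the presentation alone.
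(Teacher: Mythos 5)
This theorem is quoted in the paper from Karpilovsky \cite[Theorem 2.9.3]{karpi} with no proof supplied, so there is no in-paper argument to compare yours against; I can only assess your proposal on its own terms.

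Your first two paragraphs are correct and complete: the map $\pi$ is well defined because $G$ is abelian, $N=\langle a_{jl}\rangle$ is central (each $a_{jl}$ commutes with every $\tilde{x}_i$, and the $\tilde{x}_i$ generate $\tilde{G}$) and lies in $\tilde{G}'$, so the extension is stem; and since $\tilde{G}/N$ is abelian you get $\tilde{G}'\subseteq N\subseteq Z(\tilde{G})$, class at most $2$, whence $a_{jl}^{p^{r_l}}=[\tilde{x}_j,\tilde{x}_l^{p^{r_l}}]=e$ and the normal form gives $|\tilde{G}|\leq |G|\cdot|M(G)|$. The genuine gap is the lower bound, which is the substantive half of the theorem, and which you yourself flag as not carried out. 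Moreover, the route you point to is not quite right as stated: for abelian $G$ the non-abelian exterior square $G\wedge G$ \emph{is} $M(G)$ (since $G'=\{e\}$, the whole of $G\wedge G$ is the kernel of $f$), so it is an abstract abelian group, not ``an associated central extension'' of $G$; the isomorphism $G\wedge G\cong M(G)$ does not by itself package $M(G)$ as the kernel of a stem extension with the prescribed commutator structure, and invoking the existence of some Schur cover here would be circular. What is actually needed is an explicit model, e.g.\ the set $G\times\prod_{j<l}C_{p^{r_l}}$ with multiplication twisted by the bilinear $2$-cocycle $\lambda(x,y)=\prod_{j<l}a_{jl}^{x_jy_l}$ (bilinearity makes the cocycle condition immediate, $[(x,0),(y,0)]=(0,\prod_{j<l}a_{jl}^{x_jy_l-x_ly_j})$ realizes each $a_{jl}$ with exact order $p^{r_l}$ and independently, and $(e_i,0)^{p^{r_i}}=(0,0)$ since $\lambda(e_i,e_i)=e$), or equivalently a uni-triangular matrix group. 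Once such a $\hat{G}$ of order $|G|\cdot|M(G)|$ satisfying the defining relations is exhibited, your surjection argument closes the proof. As written, the independence of the $a_{jl}$ is asserted but not established, so the proof is incomplete.
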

Also, it can be seen that $\tilde{G}'$ is nontrivial. Hence, $\tilde{G}$ is of nilpotency class at least two. Again, since $G$ is abelian, using Lemma \ref{schur_cover_nilpotency} we can say that $\tilde{G}$ is of nilpotency class at most two. Hence, $\tilde{G}$ is of nilpotency class two. Then using Corollary \ref{nilpo_class_2}, it can be derived that $[x_i^m,x_j^n]=[x_i,x_j]^{mn}$ for all $i,j\in [k]$ and $m,n\in\mathbb{N}$. Moreover, from the fact that $[x_j,x_i]=[x_i,x_j]^{-1}$ and Corollary \ref{nilpo_class_2}, the following relation can be derived; 
\begin{equation}\label{commutators}
\left[\prod_{i=1}^k {\tilde{x}_i}^{s_i}, \prod_{j=1}^k {\tilde{x}_j}^{t_j}\right]=\prod_{1 \leq i<j \leq k}a_{ij}^{s_it_j-s_jt_i},~~~~~o(a_{ij})=p^{r_j}.
\end{equation}

Now, consider the group $C_p\times C_p$ with $x_1,x_2$ being respective generators for the two $C_p$'s. Using Theorem \ref{Cover_abelian_p} and relation (\ref{commutators}), it can be seen that $\Delta_D(C_p\times C_p)$ consists of the $p+1$ cliques $\langle(x_1,e)\rangle,\langle(e,x_2)\rangle,\langle(x_1,x_2)\rangle,\langle(x_1,x_2^2)\rangle,\dots,\langle(x_1,x_2^{p-1})\rangle$, each order $p$, with identity being the only common vertex. But in $C_{p^2}\times C_{p^2}$, consider $y_1,y_2$ as respective generators for the two $C_{p^2}$'s. Let, $H$ be the subgroup generated by all elements of order $p$ in $C_{p^2}\times C_{p^2}$. Then clearly $H\cong C_p\times C_p$ and a non identity element of $H$ will be of the form $(y_1^{k_1p},y_2^{k_2p})$, where ${\rm gcd}(k_1,p)={\rm gcd}(k_2,p)=1$. Now, consider two non identity elements of the form $(y_1^{k_1p},y_2^{k_2p})$ and $(y_1^{l_1p},y_2^{l_2p})$ in $H$. Then the relation (\ref{commutators}) gives $[(\tilde{y_1}^{k_1p},\tilde{y_2}^{k_2p}),(\tilde{y_1}^{l_1p},\tilde{y_2}^{l_2p})]=[\tilde{y_1},\tilde{y_2}]^{k_1l_2p^2-k_2l_1p^2}=e$. Hence, $(y_1^{k_1p},y_2^{k_2p})\sim(y_1^{l_1p},y_2^{l_2p})$ in $\Delta_D(C_{p^2}\times C_{p^2})$, which implies $\Delta_D(C_{p^2}\times C_{p^2})\vert_H\cong K_{p^2}$. So, the induced subgraph $\Delta_D(C_{p^2}\times C_{p^2})\vert_H\ncong \Delta_D(H)$.

Our next result provides a sufficient condition for which the induced subgraph $\Delta_D(G)\vert_H$ for a proper subgroup $H$ of $G$, is the same as $\Delta_D(H)$. 
\begin{proposition}\label{induced_subgraph_equality}
For a finite group $G$, consider the stem extension,
\[
\begin{tikzcd}
  \{e\} \arrow[r] & M(G) \arrow[r, "\iota"] & \tilde{G} \arrow[r, "\pi"] & G \arrow[r] & \{e\}. 
\end{tikzcd}
\]
Let $H$ be a proper subgroup of $G$, such that $M(H)\cong M(G)$ and $\iota(M(G))\subseteq [\pi^{-1}(H),\pi^{-1}(H)]$. Then $\Delta_D(G)\vert_H=\Delta_D(H)$.
\end{proposition}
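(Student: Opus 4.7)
The plan is to show that the preimage $\tilde{H}_0 := \pi^{-1}(H)$ is itself a Schur cover of $H$, and then deduce the equality of the induced subgraphs directly from the definition of the deep commuting graph.

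First I would set up the restriction. Since $\pi: \tilde{G} \to G$ is surjective with $\ker \pi = \iota(M(G))$, restricting $\pi$ to $\tilde{H}_0$ gives a surjection $\tilde{H}_0 \twoheadrightarrow H$ whose kernel is still $\iota(M(G))$ (because $\iota(M(G)) \subseteq \pi^{-1}(H)$ trivially). In particular $|\tilde{H}_0| = |H|\cdot |M(G)|$, and we obtain a short exact sequence
\[
\begin{tikzcd}
  \{e\} \arrow[r] & M(G) \arrow[r, "\iota"] & \tilde{H}_0 \arrow[r, "\pi"] & H \arrow[r] & \{e\}.
\end{tikzcd}
\]
Since $\iota(M(G)) \subseteq Z(\tilde{G})$ and $\iota(M(G)) \subseteq \tilde{H}_0$, we have $\iota(M(G)) \subseteq Z(\tilde{H}_0)$, so this is a central extension. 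The hypothesis $\iota(M(G)) \subseteq [\pi^{-1}(H), \pi^{-1}(H)] = [\tilde{H}_0, \tilde{H}_0]$ then upgrades it to a stem extension of $H$ by $M(G)$.

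Next, I would use the hypothesis $M(H) \cong M(G)$ to conclude that $\tilde{H}_0$ is in fact a Schur cover of $H$. By the standard bound on stem extensions (Theorems 2.1.4 and 2.1.22 of Karpilovsky, cited in the paper), any stem extension of $H$ has order at most $|H|\cdot|M(H)|$, with equality precisely when it is a Schur cover. Since $|\tilde{H}_0| = |H|\cdot |M(G)| = |H|\cdot |M(H)|$, the stem extension above attains the maximum order, so $\tilde{H}_0$ is a Schur cover of $H$.

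Finally, I would translate this into the desired graph equality. Fix $x, y \in H$ with $x \neq y$, and let $\tilde{x}, \tilde{y}$ be any preimages under $\pi$; both lie in $\tilde{H}_0$. Then $x \sim y$ in $\Delta_D(G)|_H$ iff $\tilde{x}\tilde{y}=\tilde{y}\tilde{x}$ in $\tilde{G}$, which is equivalent to $\tilde{x}\tilde{y}=\tilde{y}\tilde{x}$ in the subgroup $\tilde{H}_0$. By the result of Cameron and Kuzma that the deep commuting relation is independent of the chosen Schur cover, together with the fact (just established) that $\tilde{H}_0$ is a Schur cover of $H$, this is in turn equivalent to $x \sim y$ in $\Delta_D(H)$. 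Hence the two induced subgraphs coincide.

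The main obstacle is the middle step: it is not \emph{a priori} clear that restricting a Schur cover to a preimage of a subgroup gives a Schur cover of that subgroup, and indeed the paper's earlier example with $H \cong C_p \times C_p$ sitting inside $C_{p^2}\times C_{p^2}$ shows this can fail badly. The two hypotheses $M(H)\cong M(G)$ and $\iota(M(G)) \subseteq [\pi^{-1}(H), \pi^{-1}(H)]$ are precisely what forces the preimage to be (i) a stem extension rather than merely a central extension, and (ii) of maximal possible order; once both are in place the identification of $\tilde{H}_0$ as a Schur cover of $H$ is automatic, and the rest of the argument is essentially just unwinding definitions.
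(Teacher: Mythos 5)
Your proposal is correct and follows essentially the same route as the paper's proof: restrict $\pi$ to $\pi^{-1}(H)$, observe that the two hypotheses make this a stem extension of $H$ of maximal order $|H|\cdot|M(H)|$ and hence a Schur cover, and then note that preimages taken in $\pi^{-1}(H)$ are also preimages in $\tilde{G}$, so the adjacency conditions coincide. The only cosmetic difference is that you invoke the Cameron--Kuzma independence of the choice of Schur cover explicitly, whereas the paper leaves this implicit in the definition of $\Delta_D$.
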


\begin{proof}
Since $H$ is a subgroup of $G$, consider the central extension
\[
\begin{tikzcd}
  \{e\} \arrow[r] & M(G) \arrow[r, "\iota"] & \pi^{-1}(H) \arrow[r, "\pi_{_H}"] & H \arrow[r] & \{e\}, 
\end{tikzcd}
\]
where, $\pi_{_H}$ is the restriction of $\pi$ over $\pi^{-1}(H)$. Since $\iota(M(G))\subseteq [\pi^{-1}(H),\pi^{-1}(H)]$, the above extension is a stem extension. In addition, we have $M(G)\cong M(H)$, which implies $\lvert\pi^{-1}(H)\rvert=\lvert\tilde{H}\rvert$. Hence, $\pi^{-1}(H)$ is a Schur cover of $H$. Let $h_1,h_2\in H$ with $\tilde{h_1},\tilde{h_2}$ being their respective preimages in $\pi^{-1}(H)$. Then from the above construction it easily follows that $\tilde{h_1},\tilde{h_2}$ are also preimages of $h_1, h_2$ in $\tilde{G}$, respectively. So, the preimages of $h_1$ and $h_2$ commute in $\pi^{-1}(H)$ if and only if they commute in $\tilde{G}$. Hence, the result follows.  
\end{proof}

Our next result classifies the finite groups, for which the deep commuting graph becomes complete. In this regard, we have the following:

\begin{theorem}\label{complete}
Let $G$ be a finite group. Then $\Delta_D(G)$ is complete if and only if $G$ is cyclic.
\end{theorem}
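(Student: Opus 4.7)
The sufficiency is essentially one line: when $G$ is cyclic, the Schur-multiplier formula cited just before the theorem, $M(\prod_i C_{p^{r_i}}) \cong \prod_{i \geq 2} C_{p^{r_i}}^{(i-1)}$, collapses with $k=1$ to $M(G) = \{e\}$, so $\tilde G \cong G$ is itself abelian. Every pair of preimages then commutes in $\tilde G$, and $\Delta_D(G) \cong K_{|G|}$.

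For the converse, the plan is to upgrade the statement ``$\Delta_D(G)$ is complete'' all the way to ``$\tilde G$ itself is abelian.'' The slickest route uses Lemma \ref{dominant_center}: completeness makes every vertex of $\Delta_D(G)$ dominant, so $\pi(Z(\tilde G)) = G$. Because $\ker \pi = \iota(M(G)) \subseteq Z(\tilde G)$, the preimage $\pi^{-1}(\pi(Z(\tilde G)))$ equals $Z(\tilde G)$, and surjectivity of $\pi$ forces $Z(\tilde G) = \tilde G$; hence $\tilde G$ is abelian. The stem-extension property $\iota(M(G)) \subseteq Z(\tilde G) \cap \tilde G'$ then collapses $\iota(M(G))$ into $\tilde G' = \{e\}$, so $M(G)$ is trivial, $\tilde G \cong G$, and in particular $G$ is abelian.

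It remains to show that an abelian group with trivial Schur multiplier is cyclic. My plan is to decompose $G = \prod_p G_p$ into its Sylow $p$-subgroups; using the standard fact that Schur covers of coprime-order groups split as direct products, one gets $M(G) = \prod_p M(G_p)$, so each $M(G_p)$ is trivial. If some $G_p = \prod_{i=1}^k C_{p^{r_i}}$ were non-cyclic ($k \geq 2$, $r_1 \geq \dots \geq r_k \geq 1$), the cited formula would give $M(G_p) \supseteq C_{p^{r_2}}$, a contradiction.

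If one prefers to argue purely inside the paper's toolkit rather than quote the multiplier splitting for coprime orders, the alternative is to invoke Lemma \ref{strong_product} together with the elementary observation that a strong product $\Gamma \boxtimes \mathcal H$ is complete if and only if both factors are complete; this reduces the problem to the abelian $p$-group case, where Theorem \ref{Cover_abelian_p} and relation (\ref{commutators}) immediately yield a non-edge: the preimages of the distinct generators $x_1, x_2$ satisfy $[\tilde x_1, \tilde x_2] = a_{12}$ of order $p^{r_2} \geq p$, so $x_1 \not\sim x_2$ in $\Delta_D(G_p)$. I expect no deeper obstacle; the whole argument is essentially an assembly of facts already collected in Section \ref{Preliminaries}, with the only delicate point being the passage from trivial Schur multiplier to cyclicity, which is handled either multiplicatively across Sylow primes or by the explicit cover construction.
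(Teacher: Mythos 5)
Your proof is correct, and your main route is genuinely different from the paper's. The paper handles sufficiency in one line via Theorem \ref{inclusion} ($G$ cyclic $\Rightarrow \mathcal{P}_e(G)$ complete $\Rightarrow \Delta_D(G)$ complete), and for the converse it first deduces that $G$ is abelian from completeness of $\Delta(G)$, then decomposes $G$ into Sylow subgroups, applies Lemma \ref{strong_product} to pass completeness to each factor, and finds an explicit non-edge $x_1\nsim x_2$ in any non-cyclic abelian $p$-factor via Theorem \ref{Cover_abelian_p} --- i.e.\ exactly the ``alternative'' you sketch at the end. Your primary argument instead exploits Lemma \ref{dominant_center}: completeness means every vertex is dominant, so $\pi(Z(\tilde G))=G$, and since $\ker\pi\subseteq Z(\tilde G)$ this forces $Z(\tilde G)=\tilde G$; the stem condition $\iota(M(G))\subseteq Z(\tilde G)\cap\tilde G'$ then kills $M(G)$, and you finish by showing a non-cyclic abelian group has nontrivial multiplier. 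This buys a stronger structural conclusion along the way (the Schur cover itself is abelian and $M(G)=\{e\}$), at the cost of importing the splitting $M(G)\cong\prod_p M(G_p)$ for coprime Sylow factors, which the paper never states (though your fallback via Lemma \ref{strong_product} avoids it). Two cosmetic points: the multiplier formula $M\bigl(\prod_{i}C_{p^{r_i}}\bigr)\cong\prod_{2\leq i\leq k}C_{p^{r_i}}^{(i-1)}$ is quoted in the paper only for $k\geq 2$, so for the sufficiency direction you should either cite triviality of $M(C_n)$ as a standard fact or just use the paper's $\mathcal{P}_e$ argument; and in your fallback route you should note explicitly that $G$ must already be known to be abelian (or at least nilpotent) before its Sylow decomposition is a direct product to which Lemma \ref{strong_product} applies --- your main route supplies this, as does the paper's appeal to completeness of $\Delta(G)$.
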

\begin{proof}
Let $G$ be a finite cyclic group. Then $\mathcal{P}_e(G)$ is complete. Hence, from Theorem \ref{inclusion} we can conclude that $\Delta_D(G)$ is also complete. 

Conversely, suppose $\Delta_D(G)$ is complete. Then from Theorem \ref{inclusion} we have $\Delta(G)$ is also complete and so $G$ is abelian. Let $G_1,G_2,\dots,G_l$ be the Sylow $p$-subgroups of $G$. Then Lemma \ref{strong_product} implies $\Delta_D(G)\cong\Delta_D(G_1)\boxtimes\Delta_D(G_2)\boxtimes\dots\boxtimes\Delta_D(G_l)$. As $\Delta_D(G)$ is complete, so $\Delta_D(G_t)$ is complete for all $t\in[l]$. Now, if $G$ is not cyclic, then there exist some $j\in[l]$ such that $G_j$ is non-cyclic. Assume $G_j$ to be of the form $G_j=\prod_{1\leq i \leq k}C_{p^{r_i}}$, with $r_1\geq r_2\geq \dots \geq r_k$, $k\geq 2$, for some prime $p$, and let $x_1, x_2, \dots, x_k$ be a collection of generators for $C_{p^{r_1}},C_{p^{r_2}},\dots,C_{p^{r_k}}$, respectively. Let $\tilde{x_1}, \tilde{x_2}, \dots, \tilde{x_k}$ be a collection of preimages of $x_1, x_2, \dots, x_k$ in $\tilde{G_j}$, respectively. Then from Theorem \ref{Cover_abelian_p}, we can say that $[\tilde{x_1},\tilde{x_2}]\neq e$, i.e.~$x_1\nsim x_2$ in $\Delta_D(G_j)$, which is contrary to the assumption that $\Delta_D(G_j)$ is complete. So, $G$ is cyclic. This concludes the proof.    
\end{proof}

\subsection{Eulerianess}
As a consequence of Theorem \ref{complete}, it can be seen that $\Delta_D(C_n)$ is Eulerian if and only if $n$ is an odd positive integer. Our next result classifies finite groups for which the deep commuting graph is Eulerian.
\begin{theorem}\label{eulerian}
Let $G$ be finite group. Then $\Delta_D(G)$ is Eulerian if and only if $\lvert G\rvert$ is odd.   
\end{theorem}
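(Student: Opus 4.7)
The plan is to apply the standard Euler criterion stated in the preliminaries: a finite simple graph is Eulerian if and only if every vertex has even degree. The entire argument comes down to computing $\deg_{\Delta_D(G)}(v)$ for an arbitrary vertex $v\in G$. Fix a preimage $\tilde{v}\in\tilde{G}$. Because $\iota(M(G))\subseteq Z(\tilde{G})$, the commutator $[\tilde{v},\tilde{w}]$ is independent of the choice of lifts, so $w\in\mathcal{N}_{\Delta_D(G)}(v)$ precisely when $\tilde{w}\in C_{\tilde{G}}(\tilde{v})$. Since $\iota(M(G))\subseteq C_{\tilde{G}}(\tilde{v})$, the fibres of the restriction $\pi|_{C_{\tilde{G}}(\tilde{v})}$ have size $|M(G)|$, yielding
\[
\deg_{\Delta_D(G)}(v)=\frac{|C_{\tilde{G}}(\tilde{v})|}{|M(G)|}-1.
\]

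For the forward direction, suppose $|G|$ is odd. I will invoke the classical fact that every prime divisor of $|M(G)|$ also divides $|G|$, a standard result from Schur cover theory (see \cite{karpi}). Consequently $|M(G)|$ is odd and therefore $|\tilde{G}|=|G|\cdot|M(G)|$ is odd. Every subgroup of $\tilde{G}$, in particular $C_{\tilde{G}}(\tilde{v})$, has odd order, so the integer $|C_{\tilde{G}}(\tilde{v})|/|M(G)|$ is a ratio of two odd numbers and is itself odd. Hence $\deg_{\Delta_D(G)}(v)$ is even for every $v\in G$, and $\Delta_D(G)$ is Eulerian.

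Conversely, suppose $|G|$ is even. The preimages of the identity $e$ lie in $\iota(M(G))\subseteq Z(\tilde{G})$ and thus commute with every element of $\tilde{G}$; equivalently, by Lemma \ref{dominant_center}, $e$ is a dominant vertex of $\Delta_D(G)$. Therefore $\deg_{\Delta_D(G)}(e)=|G|-1$, which is odd, so $\Delta_D(G)$ is not Eulerian. The only non-routine ingredient in the whole argument is the classical divisibility statement about the prime factors of $|M(G)|$; once that theorem is quoted, the rest is a short degree count, so there is no substantive obstacle.
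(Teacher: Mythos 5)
Your proof is correct, and its overall skeleton matches the paper's: both directions reduce to the parity of vertex degrees, the "only if" direction comes from $\deg_{\Delta_D(G)}(e)=|G|-1$, and the "if" direction comes from analysing $\mathcal{N}_{\Delta_D(G)}[v]=\pi(C_{\tilde{G}}(\tilde{v}))$. The one place you diverge is in how you show $|\mathcal{N}_{\Delta_D(G)}[v]|$ is odd. You compute it as $|C_{\tilde{G}}(\tilde{v})|/|M(G)|$ and then import the classical theorem that every prime dividing $|M(G)|$ divides $|G|$, so that both numerator and denominator are odd. That is valid, but it is a heavier tool than needed: since $\iota(M(G))\subseteq Z(\tilde{G})$, the set $\mathcal{N}_{\Delta_D(G)}[v]=\pi(C_{\tilde{G}}(\tilde{v}))$ is the image of a subgroup under a homomorphism and hence is itself a subgroup of $G$, so its order divides the odd number $|G|$ with no reference to $|M(G)|$ at all; this is exactly the one-line observation the paper uses. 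Your fibre-counting identity $|\mathcal{N}_{\Delta_D(G)}[v]|=|C_{\tilde{G}}(\tilde{v})|/|M(G)|$ is correct and in fact already contains this simpler argument implicitly (the quotient $C_{\tilde{G}}(\tilde{v})/\iota(M(G))$ embeds in $G$), so if you state it that way you can drop the appeal to the Schur-multiplier divisibility theorem entirely. What your version buys in exchange is an explicit degree formula in terms of the cover, which is mildly more informative but not needed here.
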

\begin{proof}
Let $G$ be a finite group such that $\Delta_D(G)$ is Eulerian. Then every vertex of $\Delta_D(G)$ has even degree. Since ${\rm deg}_{\Delta_D(G)}(e)=\lvert G\rvert-1$ we have $\lvert G\rvert-1$ even. Hence, $\lvert G\rvert$ odd.

Conversely, let $G$ be a finite group of odd order and $g\in G$. Then $N_{\Delta_D(G)}[g]=\pi(C_{\tilde{G}}(\tilde{g}))$, where $\pi$ is the projection map from $\tilde{G}$ to $G$, and $\tilde{g}\in \pi^{-1}(g)$. Since $C_{\tilde{G}}(\tilde{g})$ is a subgroup of $\tilde{G}$, we have $N_{\Delta_D(G)}[g]$ is also a subgroup of $G$. So, $\lvert N_{\Delta_D(G)}[g]\rvert$ is odd. Hence ${\rm deg}_{\Delta_D(G)}(g)=\lvert N_{\Delta_D(G)}[g]\rvert-1$ is even. This concludes the proof.    
\end{proof}

\subsection{Universality}
A family of finite, simple and undirected graphs is said to be an \emph{universal family}, if every finite, simple and undirected graph is embeddable in some graph of that family, i.e.~for any finite, simple and undirected graph $\Gamma$, there exists a graph $\Lambda$ in that family such that $\Lambda$ contains $\Gamma$ as an induced subgraph. Universality of some classes of graphs defined over finite groups have been studied in \cite{biswas, cameron}. Our next result studies the universality of the family of deep commuting graphs of finite nilpotent groups. In particular, we have the following:
\begin{theorem}\label{universality_nilpotent}
Let $\Gamma$ be a finite, simple and undirected graph. Then there exists a finite nilpotent group $G$ such that $\Delta_D(G)$ contains $\Gamma$ as an induced subgraph.    
\end{theorem}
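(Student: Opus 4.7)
The plan is to construct $G$ as a direct product of small abelian $p$-groups for pairwise distinct primes, so that by Lemma \ref{strong_product} the graph $\Delta_D(G)$ decomposes as a corresponding strong product, and then to design the labels coordinate by coordinate so that each non-edge of $\Gamma$ is ``witnessed'' by exactly one factor while all other pairs remain either equal or adjacent in that factor.

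The key building block is $\Delta_D((C_p)^2)$. Combining Theorem \ref{Cover_abelian_p} with relation (\ref{commutators}) gives, inside the Schur cover of $(C_p)^2$,
\[
[\tilde{x}_1^{s_1}\tilde{x}_2^{s_2},\,\tilde{x}_1^{t_1}\tilde{x}_2^{t_2}]=a_{12}^{s_1t_2-s_2t_1},\quad o(a_{12})=p,
\]
so two vertices of $\Delta_D((C_p)^2)$ are adjacent if and only if their exponent vectors are linearly dependent over $\mathbb{F}_p$. In particular $(x_1,e)$ and $(e,x_2)$ are non-adjacent, while the identity is adjacent to every other vertex.

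Next, enumerate $V(\Gamma)=\{u_1,\ldots,u_k\}$ and list the non-edges of $\Gamma$ as $\{a_i,b_i\}$ for $i=1,\ldots,N$. Choose pairwise distinct primes $p_0,p_1,\ldots,p_N$ with $p_0>k$, and set
\[
G=C_{p_0}\times\prod_{i=1}^{N}(C_{p_i})^2,
\]
which is a direct product of $p$-groups for distinct primes and hence a finite nilpotent group. By iterated application of Lemma \ref{strong_product}, $\Delta_D(G)$ is the strong product of $\Delta_D(C_{p_0})=K_{p_0}$ (by Theorem \ref{complete}) with the graphs $\Delta_D((C_{p_i})^2)$. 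Let $g_0$ be a generator of $C_{p_0}$ and, for each $j\in[k]$, define $v_j\in G$ whose $C_{p_0}$-coordinate equals $g_0^j$ and whose $(C_{p_i})^2$-coordinate equals $(x_1,e)$ if $j=a_i$, equals $(e,x_2)$ if $j=b_i$, and is the identity otherwise.

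It then remains to verify that $\Delta_D(G)\vert_{\{v_1,\ldots,v_k\}}\cong\Gamma$. The $v_j$ are pairwise distinct because their $C_{p_0}$-coordinates $g_0^j$ are, using $p_0>k$. In the strong product, $v_j\not\sim v_l$ iff some coordinate is neither equal nor adjacent: the $C_{p_0}$-coordinate never witnesses this, since $K_{p_0}$ is complete; and the $i$-th $(C_{p_i})^2$-coordinate witnesses it precisely when $\{j,l\}=\{a_i,b_i\}$, because otherwise at least one of the two labels is the identity, which is adjacent to every vertex. Hence $v_j\not\sim v_l$ in $\Delta_D(G)$ iff $\{u_j,u_l\}$ is a non-edge of $\Gamma$, as required. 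The only delicate point is to pick the primes pairwise distinct so that Lemma \ref{strong_product} chains iteratively; this is immediate since there are infinitely many primes.
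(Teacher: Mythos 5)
Your proof is correct, and it reaches the conclusion by a genuinely different route from the paper. The paper argues by induction on the number of vertices: given a nilpotent $H$ realizing $\Gamma_{n-1}$, it adjoins one fresh coprime non-cyclic $p$-group $K$ per new vertex, places the new vertex at $(e_H,k_2)$ and shifts the old vertices into the $e_K$- or $k_1$-slice according to adjacency with $v_n$, so it needs only the abstract fact (Theorem \ref{complete}) that $\Delta_D(K)$ has \emph{some} non-edge. You instead give a one-shot closed-form construction, allocating one fresh prime per \emph{non-edge} of $\Gamma$ and using the explicit description of $\Delta_D((C_p)^2)$ via relation (\ref{commutators}) (adjacency $\Leftrightarrow$ linear dependence of exponent vectors over $\mathbb{F}_p$), together with a $C_{p_0}$ factor to separate vertices; the key points you need --- that the identity coordinate is dominant in each factor, that $(x_1,e)\nsim(e,x_2)$, and that Lemma \ref{strong_product} chains over pairwise coprime factors --- are all correctly in place, including the check that the $v_j$ are pairwise distinct. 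The trade-off: your construction is explicit and verified in a single pass, at the cost of using up to $\binom{k}{2}+1$ primes and a group of order $p_0\prod p_i^2$, whereas the paper's induction uses only about one prime per vertex and transfers verbatim to the abelian and non-abelian variants recorded in Corollary \ref{universality_corollary} (your construction also yields the abelian case directly, and the non-abelian case after replacing one factor by a non-abelian $p$-group with the analogous non-edge).
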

\begin{proof}
We will proceed via induction. Let $\Gamma_1$ be the trivial graph. Clearly, the deep commuting graph of any finite nilpotent group contains $\Gamma_1$ as an induced subgraph. Now, assume that the above result holds for all graphs with $n-1$ vertices, for some positive integer $n\geq 2$. Let $\Gamma_{n}$ be a finite, simple and undirected graph with $n$ vertices. Let the vertices be denoted by $v_1,v_2,\dots,v_n$. Denote the induced subgraph $\Gamma_n\vert_{\{v_1,\dots,v_{n-1}\}}$ by $\Gamma_{n-1}$. Then $\Gamma_{n-1}$ is a finite, simple and undirected graph with $n-1$. So, by the inductive hypothesis, there exists a finite nilpotent group $H$ such that $\Delta_D(H)$ contains $\Gamma_{n-1}$ as an induced subgraph. Without loss of generality, we denote the vertices of that induced subgraph of $\Delta_D(H)$ by $v_1,v_2,\dots,v_{n-1}$ accordingly.   

Now, consider a finite non-cyclic $p$-group $K$ such that ${\rm gcd}(\lvert H\rvert,p)=1$. Then from Theorem \ref{complete}, we have $\Delta_D(K)$ is not complete. So, there exist non-identity elements $k_1,k_2\in K$ such that $k_1\nsim k_2$ in $\Delta_D(K)$. Let us denote the identity element of $H$ by $e_H$ and $K$ by $e_K$, respectively. Define the group $G:=H\times K$. Clearly, $G$ is also nilpotent. Again, from Lemma \ref{strong_product} we have $\Delta_D(G)=\Delta_D(H)\boxtimes\Delta_D(K)$. Define the subset $S$ of $G$ by $S:=(\mathcal{N}_{\Gamma_n}(v_n)\times\{e_K\})\cup((V(\Gamma_{n-1})\setminus\mathcal{N}_{\Gamma_n}(v_n))\times\{k_1\})\cup\{(e_H,k_2)\}$. We will now show that $\Gamma_n$ is isomorphic to the induced subgraph $\Delta_D(G)\vert_S$. Define the map $\phi:V(\Gamma_n)\rightarrow S$ by $\phi(v)=(v,e_K)$ for $v\in\mathcal{N}_{\Gamma_n}(v_n)$, $\phi(v)=(v,k_1)$ for $v\in V(\Gamma_{n-1})\setminus\mathcal{N}_{\Gamma_n}(v_n)$ and $\phi(v_n)=(e_H,k_2)$. Let $v_i,v_j\in V(\Gamma_n)$ such that $i,j<n$. Since $e_K\sim k_1$ in $\Delta_D(K)$, we have $\phi(v_i)\sim\phi(v_j)$ in $\Delta_D(G)\vert_{S}$ if and only if $v_i\sim v_j$ in $\Delta_D(H)$ which is same as $v_i\sim v_j$ in $\Gamma_n$. Moreover, for $i<n$, if $v_i\in\mathcal{N}_{\Gamma_n}(v_n)$, then $\phi(v_i)=(v_i,e_K)\sim(e_H,k_2)=\phi(v_n)$ in $\Delta_D(G)\vert_{S}$ and if $v\in V(\Gamma_{n-1})\setminus\mathcal{N}_{\Gamma_n}(v_n)$, then $\phi(v_i)=(v_i,k_1)\nsim(e_H,k_2)=\phi(v_n)$ in $\Delta_D(G)\vert_{S}$. So, we have $v_i\sim v_j$ in $\Gamma_n$ if and only if $\phi(v_i)\sim\phi(v_j)$ in $\Delta_D(G)\vert_{S}$, for all $v_i,v_j\in V(\Gamma_n)$. Hence, $\phi$ is an isomorphism. Thus, the result follows.
\end{proof}
It is worth noticing that the arguments in the above proof also hold if we choose both $H$ and $K$ to be abelian as well as non-abelian. With this observation in hand, we arrive at the following result.
\begin{corollary}\label{universality_corollary}
Let $\Gamma$ be a finite, simple and undirected graph. Then we have the following:
\begin{enumerate}[\rm (i)]
    \item There exists a finite abelian group $G_1$ such that $\Delta_D(G_1)$ contains $\Gamma$ as an induced subgraph;
    \item There exists a finite non-abelian group $G_2$ such that $\Delta_D(G_2)$ contains $\Gamma$ as an induced subgraph.
\end{enumerate}   
\end{corollary}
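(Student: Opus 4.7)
The plan is to replay, essentially verbatim, the inductive argument used in the proof of Theorem \ref{universality_nilpotent}, making specific choices of the auxiliary groups $H$ and $K$ at each stage to force the resulting product to land in the desired class. The point of the remark preceding the corollary is that the construction of the set $S$ and the map $\phi$ in the proof of Theorem \ref{universality_nilpotent} never used any property of $H$ and $K$ beyond (a) coprimality of $|H|$ and $|K|$, so that Lemma \ref{strong_product} applies, and (b) non-completeness of $\Delta_D(K)$, which by Theorem \ref{complete} is equivalent to $K$ being non-cyclic. Since abelianness (resp.\ non-abelianness) is preserved under taking direct products, we are free to choose $H$ and $K$ in either class.

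For part (i), I would induct on $|V(\Gamma)| = n$. The base case $n = 1$ holds with $G_1 = \{e\}$. For the inductive step, the hypothesis furnishes a finite abelian group $H$ whose deep commuting graph contains $\Gamma_{n-1} := \Gamma_n\vert_{\{v_1,\dots,v_{n-1}\}}$ as an induced subgraph. Pick any prime $p$ with $\gcd(p,|H|) = 1$ and set $K := C_p \times C_p$; this is a non-cyclic abelian $p$-group, so by Theorem \ref{complete} there exist non-identity $k_1, k_2 \in K$ with $k_1 \not\sim k_2$ in $\Delta_D(K)$. Then $G_1 := H \times K$ is abelian, and the strong-product formula of Lemma \ref{strong_product} together with the set $S$ and map $\phi$ defined exactly as in Theorem \ref{universality_nilpotent} exhibits $\Gamma_n$ as an induced subgraph of $\Delta_D(G_1)$.

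For part (ii), I would run the same induction, this time arranging for non-abelianness. The base case uses $G_2 = S_3$ (or any non-abelian group), whose deep commuting graph trivially contains a one-vertex graph. For the inductive step, once $H$ is non-abelian by the hypothesis, the direct product $G_2 := H \times K$ with $K = C_p \times C_p$ (and $p$ coprime to $|H|$) is again non-abelian, and the rest of the argument proceeds identically. Alternatively, one can give a one-line derivation of (ii) from (i): given $\Gamma$, let $G_1$ be the abelian group produced in (i) and choose any finite non-abelian group $L$ with $\gcd(|L|,|G_1|) = 1$; then $G_2 := L \times G_1$ is non-abelian, Lemma \ref{strong_product} gives $\Delta_D(G_2) = \Delta_D(L)\boxtimes\Delta_D(G_1)$, and the induced subgraph on $\{e_L\} \times V(G_1)$ is isomorphic to $\Delta_D(G_1)$, which already contains $\Gamma$ as an induced subgraph.

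There is no serious obstacle here: the hard work was done in Theorem \ref{universality_nilpotent}, and all that remains is to verify that the choices $K = C_p \times C_p$ (abelian) and the base/product choices above are compatible with the hypotheses of Lemma \ref{strong_product} and Theorem \ref{complete} used in that proof. Both checks are immediate.
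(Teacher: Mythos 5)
Your proposal is correct and matches the paper's own justification, which is precisely the observation that the inductive construction in Theorem \ref{universality_nilpotent} only uses coprimality of $|H|$ and $|K|$ and non-cyclicity of $K$, so one may take $H$ and $K$ both abelian (e.g.\ $K=C_p\times C_p$) or arrange non-abelianness. Your alternative one-line derivation of (ii) from (i) via a coprime non-abelian factor and Lemma \ref{strong_product} is a harmless additional shortcut, but the substance is the same.
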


\subsection{Perfectness}\label{perfectness}
Perfectness of the graphs in the hierarchy of graphs defined on groups have been considered previously in \cite{britnell, cameron, cameron2}. In the context of deep commuting graph, it can be seen that for the finite cyclic group $C_n$, the deep commuting graph $\Delta_D(C_n)$ is complete, hence perfect. Again, it is not difficult to see that the deep commuting graph of a finite group is not always perfect. For example, using well known facts about permutation groups, and a later result (Corollary \ref{disjoint_permutations_in_An}) it is easy to see that for any $n\geq 8$, the subgraph induced in $\Delta_D(A_n)$ by the set $\{(123),(456),(178),(234),(567)\}$ is a cycle of length $5$, which is not perfect. Hence, for any $n\geq 8$, $\Delta_D(A_n)$ is not perfect. Throughout the rest of this section, we classify all finite simple groups with perfect deep commuting graphs.

In the next result, we show a relation between $\Delta(\tilde{G})$ and $\Delta_D(G)$ via generalized join of complete graphs.


\begin{lemma}\label{generalized join of the deep commuting graph}
Let $G$ be a finite group such that $\lvert M(G)\rvert=m$. Then we have $\Delta(\tilde{G})\cong\Delta_D(G)[K_m,K_m,\dots K_m]$.    
\end{lemma}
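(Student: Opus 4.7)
The plan is to exhibit an explicit graph isomorphism based on the fiber structure of the projection $\pi:\tilde{G}\to G$. Let $m=|M(G)|$, so each fiber $\pi^{-1}(g)$ has exactly $m$ elements, and the fibers partition $\tilde{G}$ into $|G|$ blocks indexed by the vertices of $\Delta_D(G)$. I would label $V(G)=\{g_1,\ldots,g_n\}$, write $\pi^{-1}(g_i)=\{\tilde{g}_i^{(1)},\ldots,\tilde{g}_i^{(m)}\}$, and let $\mathcal{K}_i$ denote the copy of $K_m$ sitting at the vertex $g_i$ of $\Delta_D(G)$ in the generalized join. The candidate isomorphism $\phi:\tilde{G}\to V(\Delta_D(G)[K_m,\ldots,K_m])$ sends $\tilde{g}_i^{(j)}$ to the $j$-th vertex of $\mathcal{K}_i$; this is visibly a bijection, so only adjacency need be checked.

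The key structural fact to establish is a three-part dichotomy governing commutation in $\tilde{G}$. Fix $\tilde{a},\tilde{b}\in\tilde{G}$ with $\pi(\tilde{a})=a$ and $\pi(\tilde{b})=b$. First, if $a=b$, then $\tilde{a}$ and $\tilde{b}$ differ by an element of $\iota(M(G))\subseteq Z(\tilde{G})$, so they always commute. Second, if $a\neq b$ and $a\sim b$ in $\Delta_D(G)$, then by Definition \ref{deep_commuting} some pair of pre-images commutes; I would then verify that commutation is independent of the choice of pre-image by computing, for any $z_1,z_2\in \iota(M(G))$, that $(\tilde{a}z_1)(\tilde{b}z_2)=\tilde{a}\tilde{b}z_1z_2=\tilde{b}\tilde{a}z_1z_2=(\tilde{b}z_2)(\tilde{a}z_1)$, using centrality of $M(G)$. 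Hence every pair of pre-images of $a,b$ commutes in $\tilde{G}$. Third, if $a\neq b$ and $a\not\sim b$ in $\Delta_D(G)$, the same independence argument shows that no pre-image pair commutes.

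With this dichotomy in hand, I would verify the isomorphism as follows. Within a single fiber $\pi^{-1}(g_i)$, all $m$ vertices are mutually adjacent in $\Delta(\tilde{G})$ (by the first case), matching the complete graph $\mathcal{K}_i\cong K_m$ inside the generalized join. Between distinct fibers $\pi^{-1}(g_i)$ and $\pi^{-1}(g_j)$, adjacency in $\Delta(\tilde{G})$ is either total (all $m^2$ edges, by the second case) or absent (by the third case), exactly matching the rule for joining $\mathcal{K}_i$ to $\mathcal{K}_j$ in the $\Delta_D(G)$-generalized join, which prescribes all edges between $\mathcal{K}_i$ and $\mathcal{K}_j$ precisely when $g_i\sim g_j$ in $\Delta_D(G)$. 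Therefore $\phi$ preserves and reflects adjacency, and the isomorphism $\Delta(\tilde{G})\cong\Delta_D(G)[K_m,\ldots,K_m]$ follows.

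The only subtle step is the independence-of-representatives argument in the second and third cases; everything else is a direct comparison of definitions. Since $\iota(M(G))$ is central, this subtlety reduces to a two-line manipulation, so the proof as a whole should be short and essentially bookkeeping once the fiber picture is set up.
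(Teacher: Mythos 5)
Your proposal is correct and follows essentially the same route as the paper: both construct the fiber-wise bijection onto the $K_m$-blocks and verify adjacency via the dichotomy that elements of the same fiber always commute (differing by a central element) while commutation between distinct fibers depends only on adjacency of the images in $\Delta_D(G)$. Your explicit independence-of-representatives computation makes precise a step the paper leaves implicit, but it is the same argument.
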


\begin{proof}
Let $G$ be a finite group with $\lvert M(G)\rvert=m$ and $g\in G$. Then $\Delta(\tilde{G})\vert_{\pi^{-1}(g)}\cong K_m$, where $\pi:\tilde{G}\rightarrow G$ is the projection map. Now define a map $\mathfrak{h}:V(\Delta(\tilde{G})) \rightarrow V(\Delta_D(G)[K_m,K_m,\dots K_m])$ which bijectively maps the vertices in $\pi^{-1}(g)$ to the vertices of $K_m$ corresponding to $g$, in $\Delta_D(G)[K_m,K_m,\dots K_m]$. Clearly $\mathfrak{h}$ is bijective. We will now show that $\mathfrak{h}$ is an isomorphism from $\Delta(\tilde{G})$ to $\Delta_D(G)[K_m,K_m,\dots K_m]$. Suppose $h_1, h_2$ are two distinct elements in $\tilde{G}$. Then, $h_1\sim h_2$ in $\Delta(\tilde{G})$ if and only if $[h_1,h_2]=e$, and $[h_1,h_2]=e$ if and only if either $\pi(h_1)=\pi(h_2)$, or $\pi(h_1)\sim\pi(h_2)$ in $\Delta_D(G)$. Then from the map $\mathfrak{h}$, it is clear that $h_1\sim h_2$ in $\Delta(\tilde{G})$ if and only if $\mathfrak{h}(h_1)\sim \mathfrak{h}(h_2)$ in $\Delta_D(G)[K_m,K_m,\dots K_m]$. Hence $\mathfrak{h}$ is an isomorphism from $\Delta(\tilde{G})$ to $\Delta_D(G)[K_m,K_m,\dots K_m]$. This concludes the proof.  
\end{proof}

As a consequence of the above lemma and Theorem \ref{perfectness of generalized join}, we can arrive at the following result.

\begin{lemma}\label{perfectness of commuting graph}
Let $G$ be a finite group. Then $\Delta_D(G)$ is perfect if and only if $\Delta(\tilde{G})$ is perfect.    
\end{lemma}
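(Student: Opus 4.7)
The plan is to reduce the claim to a direct application of the two results already in hand: Lemma \ref{generalized join of the deep commuting graph} and Theorem \ref{perfectness of generalized join}. First, I would set $m = \lvert M(G) \rvert$ and invoke Lemma \ref{generalized join of the deep commuting graph} to obtain the identification
\[
\Delta(\tilde{G}) \;\cong\; \Delta_D(G)\bigl[K_m, K_m, \dots, K_m\bigr],
\]
where the generalized join is taken over the $\lvert G \rvert$ vertices of $\Delta_D(G)$, with each constituent graph being the complete graph $K_m$.

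Next, I would apply Theorem \ref{perfectness of generalized join} with $\Gamma = \Delta_D(G)$ and $\mathcal{H}_i = K_m$ for every $i$. The hypothesis of that theorem requires each $\mathcal{H}_i$ to be either complete or totally disconnected, and $K_m$ is trivially complete, so the hypothesis is satisfied unconditionally (independent of whether $m = 1$ or $m > 1$). The theorem then yields that $\Delta_D(G)$ is perfect if and only if $\Delta_D(G)[K_m,\ldots,K_m]$ is perfect, which via the isomorphism above is equivalent to the perfectness of $\Delta(\tilde{G})$.

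There is no real obstacle here; the lemma is essentially a one-line consequence of stitching together Lemma \ref{generalized join of the deep commuting graph} and Theorem \ref{perfectness of generalized join}. The only thing to double-check is that Theorem \ref{perfectness of generalized join} is stated symmetrically as an ``if and only if,'' which it is, so both directions of the equivalence are delivered simultaneously, completing the proof.
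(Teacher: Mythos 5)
Your proposal is correct and is exactly the paper's argument: the paper derives this lemma as an immediate consequence of Lemma \ref{generalized join of the deep commuting graph} together with Theorem \ref{perfectness of generalized join}, with each $\mathcal{H}_i = K_m$ being complete. Nothing is missing.
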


It is well known that a finite simple group is abelian if and only if it is a cyclic group of prime order, hence its deep commuting graph is perfect. In addition, we have the following simple result.
\begin{lemma}\label{Schur cover quasisimple}
The Schur cover of a finite non-abelian simple group is quasisimple.  
\end{lemma}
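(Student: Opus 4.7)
The plan is to verify directly that a Schur cover $\tilde{G}$ of a finite non-abelian simple group $G$ satisfies both defining conditions of quasisimplicity: that $\tilde{G}$ is perfect, and that $\tilde{G}/Z(\tilde{G})$ is simple. Since $\tilde{G}$ is a Schur cover, we have the stem extension
\[
\begin{tikzcd}
  \{e\} \arrow[r] & M(G) \arrow[r, "\iota"] & \tilde{G} \arrow[r, "\pi"] & G \arrow[r] & \{e\}
\end{tikzcd}
\]
with $\iota(M(G))\subseteq Z(\tilde{G})\cap \tilde{G}'$. I will freely identify $M(G)$ with $\iota(M(G))$.

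First I would establish perfectness of $\tilde{G}$. Since $G$ is non-abelian simple, $G$ is perfect, so $\pi(\tilde{G}')=G'=G$, and therefore $\tilde{G}=\tilde{G}'\cdot \ker\pi=\tilde{G}'\cdot M(G)$. But the stem condition gives $M(G)\subseteq\tilde{G}'$, so in fact $\tilde{G}=\tilde{G}'$, i.e.\ $\tilde{G}$ is perfect.

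Next I would show that $Z(\tilde{G})=M(G)$. Observe that $\pi(Z(\tilde{G}))$ is the image of a normal subgroup under a surjection, hence is normal in $G$. Since $G$ is simple, $\pi(Z(\tilde{G}))$ is either trivial or all of $G$. If $\pi(Z(\tilde{G}))=G$, then $\tilde{G}=Z(\tilde{G})\cdot\ker\pi=Z(\tilde{G})\cdot M(G)=Z(\tilde{G})$, making $\tilde{G}$ abelian; this contradicts the perfectness just established (as $\tilde{G}$ is non-trivial because $G$ is). Hence $\pi(Z(\tilde{G}))=\{e\}$, which forces $Z(\tilde{G})\subseteq\ker\pi=M(G)$, and combined with the already known inclusion $M(G)\subseteq Z(\tilde{G})$ gives the equality.

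Finally, this equality yields $\tilde{G}/Z(\tilde{G})=\tilde{G}/M(G)\cong G$, which is simple by hypothesis. Together with perfectness, this shows $\tilde{G}$ is quasisimple. There is no real obstacle here; the only mildly subtle point is the use of the stem condition $M(G)\subseteq\tilde{G}'$, which is precisely what converts the bare surjectivity $\pi(\tilde{G}')=G$ into the equality $\tilde{G}=\tilde{G}'$.
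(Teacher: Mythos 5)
Your proof is correct and follows essentially the same route as the paper: the paper likewise deduces $\tilde{G}'=\tilde{G}$ from $\pi(\tilde{G}')=G'=G$ together with the stem condition $\ker\pi\subseteq\tilde{G}'$, and asserts $Z(\tilde{G})=\ker\pi=\iota(M(G))$ from the simplicity of $G$. The only difference is that you spell out the justification of $Z(\tilde{G})=\ker\pi$ (normality of $\pi(Z(\tilde{G}))$ plus the contradiction with perfectness) which the paper leaves implicit.
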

\begin{proof}
For a finite non-abelian simple group $G$, consider the following stem extension: 
\[
\begin{tikzcd}
  \{e\} \arrow[r] & M(G) \arrow[r, "\iota"] & \tilde{G} \arrow[r, "\pi"] & G \arrow[r] & \{e\}. 
\end{tikzcd}
\]
Since $G$ is simple and non-abelian, we have $\iota(M(G))={\rm ker }~\pi=Z(\tilde{G})$. Also, ${\rm ker}~\pi\subseteq \tilde{G}'$ and $\pi(\tilde{G}')=G'=G$, which implies $\tilde{G}'=\tilde{G}$. So $\tilde{G}$ is quasisimple.    
\end{proof}
\noindent The following result classifies all finite quasisimple groups with perfect commuting graphs.
\begin{theorem}[{\rm\cite[Theorem 1]{britnell}}]\label{perfect commuting graph}
Let $G$ be a finite non-abelian quasisimple groups. Then $\Delta(G)$ is perfect if and only if $G$ is isomorphic to one of the following groups: 
\begin{enumerate}[\rm 1)]
    \item $SL_2(q)$ with $q\geq 4$,
    \item $PSL_3(2)$,
    \item $PSL_3(4)$, $C_2.PSL_3(4)$, $C_3.PSL_3(4)$, $(C_2\times C_2).PSL_3(4)$, $C_6.PSL_3(4)$, $(C_6\times C_2).PSL_3(4)$, $(C_4\times C_4).PSL_3(4)$, $(C_{12}\times C_4).PSL_3(4)$,
    \item $A_6$, $C_3.A_6$, $C_6.A_6$,
    \item $C_6.A_7$,
    \item $Sz(2^{2a+1})$ with $a\geq 1$,
    \item $C_2.Sz(8)$, $(C_2\times C_2).Sz(8)$.
\end{enumerate}
\end{theorem}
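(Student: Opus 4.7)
The plan is to combine the Strong Perfect Graph Theorem with the Classification of Finite Simple Groups. By the Strong Perfect Graph Theorem, a graph is perfect if and only if it contains no odd hole (induced cycle $C_{2k+1}$, $k\geq 2$) and no odd antihole. For commuting graphs of groups, antiholes tend to be easy to rule out whenever the groups involved have a rich supply of commuting pairs, so the real dichotomy is driven by the existence or non-existence of induced 5-cycles, that is, quintuples $a_1,a_2,a_3,a_4,a_5$ with $[a_i,a_{i+1}]=e$ for indices modulo $5$ and no other pair commuting. First I would reduce to the case where $G$ is quasisimple and the center $Z(G)$ (which consists of dominant vertices) is removed; concretely, I would pass to the reduced commuting graph, observe that perfectness is inherited both ways under the join with a clique, and then analyze $\Delta(G)\setminus Z(G)$ family by family via CFSG.

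For the forward direction (the listed groups give perfect commuting graphs), I would first handle the infinite families $SL_2(q)$ and $Sz(q)$ structurally. In both cases the centralizers of non-central semisimple elements are cyclic maximal tori that pairwise intersect trivially in $G/Z$, and the centralizers of unipotent elements are abelian. Consequently the connected components of the reduced commuting graph are disjoint cliques, so the reduced graph is a disjoint union of cliques, which is trivially perfect; joining back in the central dominant vertices preserves perfectness. For the exceptional small-rank groups ($PSL_3(2)$, $A_6$, the $PSL_3(4)$ and $A_7$ covers and their siblings) I would fall back on direct verification using the character/centralizer tables in the ATLAS or a GAP computation, checking the non-existence of induced $C_{2k+1}$ for $k\geq 2$ by enumerating five-element subsets whose pairwise commutation pattern forms a 5-cycle.

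For the reverse direction, the task is to exhibit an induced 5-cycle in $\Delta(G)$ whenever $G$ is quasisimple but not in the listed set. Here I would leverage two generic constructions. In alternating and symmetric type situations (and more generally in classical groups of rank $\geq 3$ in natural characteristic), one can imitate the construction hinted at in the paper for $A_n$, $n\geq 8$: pick three pairwise disjoint supports for a triangle of 3-cycles, and two further 3-cycles that overlap the first three in exactly the right adjacency pattern. The same moving-support trick works for unipotent elements in $PSL_n(q)$, $PSp_{2n}(q)$, etc., once $n$ is large enough. In the remaining low-rank classical groups ($PSL_3(q)$ for $q\neq 2,4$, $PSU_3(q)$, $PSL_4(q)$, $PSp_4(q)$, $G_2(q)$, \ldots) I would construct the induced 5-cycle by combining semisimple elements from two different maximal tori that share a common cyclic subgroup of a proper Levi, and for the sporadic groups and the remaining exceptional groups I would rely on ATLAS data.

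The main obstacle is the case analysis in the reverse direction: there are finitely many low-rank classical and exceptional quasisimple groups sitting right at the borderline where the naive 5-cycle construction fails but one nevertheless expects a more delicate odd hole to appear. The hard combinatorial work is a uniform argument that, for every quasisimple $G$ outside the seven listed classes, a suitable overlap pattern of two maximal abelian subgroups produces the needed induced $C_5$; producing this uniformly (rather than group by group) is what pushes such classifications to substantial length, and in practice one has to mix a generic argument with ATLAS-assisted verification for the small exceptions.
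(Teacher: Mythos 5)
This statement is not proved in the paper at all: it is quoted verbatim from Britnell and Gill \cite{britnell} (their Theorem 1) and used as a black box in the proof of Theorem \ref{Simple group perfect}. So there is no in-paper argument to compare yours against; what you have written is a reconstruction of how the cited reference might proceed, and as such it has to be judged as a standalone proof attempt.

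Judged that way, it is a research plan rather than a proof, and it has concrete gaps. First, the entire reverse direction (every quasisimple group outside the seven listed classes contains an induced $C_5$, or some other odd hole or antihole) is explicitly deferred: you describe the kind of construction you would try for classical groups and say you would ``rely on ATLAS data'' or GAP for the rest, but none of this is carried out, and you yourself identify the low-rank borderline cases as the hard part. A classification theorem of this type is exactly as strong as that case analysis. Second, the dismissal of odd antiholes as ``easy to rule out'' is unsupported; the Strong Perfect Graph Theorem requires excluding antiholes $\overline{C_{2k+1}}$ for $k\geq 3$ as well, and this needs an argument for each family. Third, your structural claim for the forward direction is wrong for the Suzuki groups: in $Sz(2^{2a+1})$ the centralizer of an involution is a full Sylow $2$-subgroup, which is a non-abelian Suzuki $2$-group of order $q^2$, so the reduced commuting graph of $Sz(q)$ is \emph{not} a disjoint union of cliques. (It is still perfect --- the component coming from a Sylow $2$-subgroup is a join of the clique on $Z(Q)\setminus\{e\}$ with a disjoint union of cliques --- but that requires the finer analysis you skipped.) The $SL_2(q)$ case, where every proper centralizer is abelian and hence commuting is an equivalence relation off the center, is the only family your argument actually handles as stated.
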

\noindent Here $Sz(2^{2a+1})$ denote the Suzuki groups of Lie type and $PSL_{n}(q)$ denote the central quotient of the group of $n\times n$ unimodular matrices over the field $\mathbb{F}_q$, for some prime power $q$.

With the above results in hand, we now classify all finite non-abelian simple groups with perfect deep commuting graphs.

\begin{theorem}\label{Simple group perfect}
Let $G$ be a finite non-abelian simple group. Then $\Delta_D(G)$ is perfect if and only if $G$ is isomorphic to one of the following groups:
\begin{enumerate}[\rm (i)]
    \item $PSL_2(q)$, $PSL_3(2)$, $PSL_3(4)$, for all $q\geq 4$, where $q$ is a prime power;
    \item $A_5$, $A_6$, $A_7$;
    \item $Sz(2^{2a+1})$ with $a\geq 1$.
\end{enumerate}
Here, the repetitions are $PSL_2(4)\cong PSL_2(5)\cong A_5$, $PSL_3(2)\cong PSL_2(7)$, $PSL_2(9)\cong A_6$,   
\end{theorem}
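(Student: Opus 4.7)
The strategy is to chain together the three preceding results. By Lemma \ref{perfectness of commuting graph}, $\Delta_D(G)$ is perfect if and only if $\Delta(\tilde{G})$ is perfect, where $\tilde{G}$ is a Schur cover of $G$. Since $G$ is non-abelian simple, Lemma \ref{Schur cover quasisimple} tells us $\tilde{G}$ is quasisimple. Therefore $\Delta_D(G)$ is perfect if and only if $\tilde{G}$ belongs to the classification of Theorem \ref{perfect commuting graph}. In other words, the problem reduces entirely to recognising which simple groups can arise as the central quotient of a quasisimple group in Britnell's list, and then checking that the relevant group in Britnell's list is actually the \emph{full} Schur cover of that simple quotient.

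Given this reduction, I would identify, for each quasisimple group in Britnell's list, the simple group $G = \tilde{G}/Z(\tilde{G})$ and verify that $\tilde{G}$ is the Schur cover of $G$, equivalently $|Z(\tilde{G})| = |M(G)|$, using the standard tables of Schur multipliers of finite simple groups. From family $1$, $SL_2(q)$ with $q \notin \{4,9\}$ is the Schur cover of $PSL_2(q)$, since $M(PSL_2(q)) = C_{\gcd(2,q-1)}$; the case $q=5$ (which also covers $q=4$ via $PSL_2(4)\cong A_5$) gives $SL_2(5) = 2.A_5$ as the Schur cover of $A_5$, while $q=7$ handles $PSL_3(2)\cong PSL_2(7)$. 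From family $3$, $(C_{12}\times C_4).PSL_3(4)$ is the Schur cover of $PSL_3(4)$ because $M(PSL_3(4)) = C_{12}\times C_4$. From family $4$, $6.A_6$ is the Schur cover of $A_6\cong PSL_2(9)$ with $M(A_6)=C_6$. From family $5$, $6.A_7$ is the Schur cover of $A_7$ with $M(A_7)=C_6$. From family $6$, $Sz(2^{2a+1})$ with $a\geq 2$ has trivial Schur multiplier and is self-covering, while from family $7$, $(C_2\times C_2).Sz(8)$ is the Schur cover of $Sz(8)$ since $M(Sz(8))=C_2\times C_2$. Reading off the central quotients across all seven families produces exactly the simple groups listed in the theorem, with the stated isomorphism identifications.

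The main obstacle is the bookkeeping in Britnell's families $1$ and $4$: not every group listed there is the Schur cover of its own central quotient. For example, $SL_2(4)$ has trivial centre but $M(A_5)=C_2$, and the intermediate covers $A_6$, $3.A_6$, $SL_2(9)=2.A_6$ in family $4$ do not realise the full multiplier $C_6$. For these exceptional entries, one has to verify that the genuine Schur cover of the same simple group \emph{also} appears in Britnell's list (namely $SL_2(5)$ for $A_5$ and $6.A_6$ for $A_6$), so the simple group is still captured. Conversely, any simple group $G$ whose Schur cover fails to appear in Britnell's list, for example $A_n$ with $n\geq 8$ whose Schur cover $2.A_n$ is absent, must be excluded; this direction follows immediately from Theorem \ref{perfect commuting graph} being an if-and-only-if classification. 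Once this matching is completed, the theorem follows.
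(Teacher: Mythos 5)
Your proposal is correct and follows essentially the same route as the paper: reduce via Lemma \ref{perfectness of commuting graph} and Lemma \ref{Schur cover quasisimple} to asking which non-abelian simple groups have their Schur cover among the quasisimple groups of Theorem \ref{perfect commuting graph}, and then carry out the same matching against the standard Schur multiplier data, including the exceptional entries $SL_2(4)$, $A_6$, $3.A_6$, $2.A_6$ that are not full covers. The paper's proof performs exactly this bookkeeping, so no further comparison is needed.
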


\begin{proof}
Let $G$ be a finite non-abelian simple group. Then Lemma \ref{Schur cover quasisimple} implies that $\tilde{G}$ is quasisimple. Now by Lemma \ref{perfectness of commuting graph}, to prove the result, we need to classify finite non-abelian simple groups whose Schur cover is among the groups of seven types mentioned in Theorem \ref{perfect commuting graph}.

Suppose $G$ is a finite non-abelian simple group with $SL_2(q)$ being its Schur cover, for some $q\geq 4$. Then it can be seen that $G\cong SL_2(q)/Z(SL_2(q))=PSL_2(q)$. Now, it was shown in \cite[Theorem 7.1.1(ii)]{karpi} that $SL_2(q)$ is the unique Schur cover of $PSL_2(q)$, for $q\geq 5,~q\neq 9$. Also, it is well known \cite[Page-245(iv)]{karpi} that $PSL_2(4)\cong PSL_2(5)\cong A_5$, $PSL_2(9)\cong A_6$, the Schur cover of $PSL_2(5)$ is $SL_2(5)$, and the Schur cover of $A_6$ is $C_6.A_6$. Also, from \cite[Page-245(iv)]{karpi} we have $PSL_3(2)\cong PSL_2(7)$, so $SL_2(7)$ is a Schur cover of $PSL_3(2)$. Hence, the groups $PSL_2(q)$ with $q\geq 4$, $PSL_3(2)$, $A_5$ and $A_6$ are the only possible simple groups such that their Schur covers are among the groups of type $1$ and type $4$ of Theorem \ref{perfect commuting graph}. So, $\Delta_D(PSL_2(q))$ with $q\geq 4$, $\Delta_D(PSL_3(2))$, $\Delta_D(A_5)$ and $\Delta_D(A_6)$ are perfect.

Now, from \cite[Theorem 7.1.1(ii)]{karpi} it can be seen that $PSL_3(2)$ is simple, but not a self cover. So no finite group has $PSL_3(2)$ as its Schur cover. Also, among all groups in type $3$, only $(C_{12}\times C_4).PSL_3(4)$ can be a Schur cover of a simple group, and $(C_{12}\times C_4).PSL_3(4)$ is indeed the Schur cover of $PSL_3(4)$. In addition, it can be seen that $A_7$ is the only simple group with Schur cover $C_6.A_7$. So, $\Delta_D(PSL_3(4))$ and $\Delta_D(A_7)$ are perfect.

From \cite[Theorem 7.4.2]{karpi}, it can be seen that for $a\geq 2$, the simple groups $Sz(2^{2a+1})$ are self covers, and for $a=1$, $(C_2\times C_2).Sz(8)$ is a Schur cover of the simple group $Sz(8)$. So, $Sz(2^{2a+1})$ with $a\geq 1$ are the only simple groups such that their Schur covers are among the groups of type $6$ and type $7$ of Theorem \ref{perfect commuting graph}. Hence, $\Delta_D(Sz(2^{2a+1}))$ with $a\geq 1$ are perfect. This concludes the proof.   
\end{proof}

We have also considered perfectness of the deep commuting graph of finite symmetric and alternating groups later in Theorem \ref{permutation_perfect}.

\subsection{Connectivity of the reduced graph}

As a consequence of Theorem \ref{complete}, it can be seen that for any non-cyclic finite group $G$, the reduced graph $\Delta_D(G)^*$ is always non empty. Our next result studies the connectedness of $\Delta_D(G)^*$ for a finite non-cyclic nilpotent group $G$.

\begin{theorem}\label{connectivity_nilpotent}
Let $G$ be a finite non-cyclic nilpotent group with $G_1, G_2, \dots, G_m$ being it's distinct Sylow subgroups. Then we have the following:
\begin{enumerate}[\rm (i)]
    \item If at least two of the $G_i$-s are non-cyclic, then $\Delta_D(G)^*$ is connected;
    \item If $G_i$ is non-cyclic for exactly one $i$, then $\Delta_D(G)^*$ is connected if and only if $\Delta_D(G_i)^*$ is connected.
\end{enumerate}   
\end{theorem}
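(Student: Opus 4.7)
The common tool is the decomposition $\Delta_D(G)\cong \Delta_D(G_1)\boxtimes\Delta_D(G_2)\boxtimes\cdots\boxtimes\Delta_D(G_m)$, which follows from Lemma \ref{strong_product} by induction on $m$, since distinct Sylow subgroups have pairwise coprime orders. A standard property of the strong product of graphs (verified by varying one coordinate at a time) gives that a vertex $(u_1,\dots,u_m)$ is dominant in $\Delta_D(G)$ if and only if every $u_i$ is dominant in $\Delta_D(G_i)$; consequently $V(\Delta_D(G)^*)$ is exactly the set of tuples with at least one non-dominant coordinate, and two such tuples $u,v$ are adjacent in $\Delta_D(G)^*$ iff $u\neq v$ and, for each $i$, either $u_i=v_i$ or $u_i\sim v_i$ in $\Delta_D(G_i)$.

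For part (i), assume without loss of generality that $G_1$ and $G_2$ are non-cyclic, so by Theorem \ref{complete} we may fix non-dominant elements $z_1\in G_1$ and $z_2\in G_2$. Introduce the ``hub'' vertices $h_1:=(z_1,e_2,e_3,\dots,e_m)$ and $h_2:=(e_1,z_2,e_3,\dots,e_m)$ in $V(\Delta_D(G)^*)$; they are adjacent there, since the identity is dominant in each factor. Given any non-dominant $u$, fix an index $j$ with $u_j$ non-dominant in $\Delta_D(G_j)$ and set $u^{(j)}:=(e_1,\dots,e_{j-1},u_j,e_{j+1},\dots,e_m)$; then $u^{(j)}\in V(\Delta_D(G)^*)$, and $u\sim u^{(j)}$ (if they differ at all) because identities are dominant in every factor. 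Finally $u^{(j)}$ is adjacent to at least one of $h_1,h_2$: to $h_2$ when $j=1$, to $h_1$ when $j=2$, and to either when $j\geq 3$. Thus every non-dominant vertex is path-connected to the edge $h_1 h_2$, and so $\Delta_D(G)^*$ is connected.

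For part (ii), assume without loss of generality that $G_1$ is non-cyclic while $G_2,\dots,G_m$ are cyclic; then $\Delta_D(G_i)$ is complete for $i\geq 2$ by Theorem \ref{complete}, so every element of $G_i$ is dominant there. Hence $V(\Delta_D(G)^*)=V(\Delta_D(G_1)^*)\times G_2\times\cdots\times G_m$, and by the adjacency rule two vertices $(u_1,\tilde u),(v_1,\tilde v)\in V(\Delta_D(G)^*)$ are adjacent iff they differ and either $u_1=v_1$ or $u_1\sim v_1$ in $\Delta_D(G_1)^*$. For the ``if'' direction, go from $(u_1,\tilde u)$ to $(u_1,\tilde v)$ in at most one step (the other coordinates all commute via dominance) and then lift any path $u_1=w_0,w_1,\dots,w_k=v_1$ in $\Delta_D(G_1)^*$ to the path $(w_0,\tilde v),\dots,(w_k,\tilde v)$ in $\Delta_D(G)^*$. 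For the ``only if'' direction, project any $u$-to-$v$ path in $\Delta_D(G)^*$ onto its first coordinate; after collapsing repetitions, the resulting walk lies entirely in $V(\Delta_D(G_1)^*)$ with consecutive vertices adjacent in $\Delta_D(G_1)^*$, yielding the desired path.

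The only real care in both parts is to ensure that every edge used actually lies in the \emph{reduced} graph rather than merely in $\Delta_D(G)$, that is, that all intermediate vertices remain non-dominant. In part (i) this is enforced by always keeping a genuinely non-dominant coordinate ($z_1$, $z_2$, or $u_j$) in place, and in part (ii) it is automatic from the first coordinate always belonging to $V(\Delta_D(G_1)^*)$.
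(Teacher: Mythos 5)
Your proposal is correct and follows essentially the same route as the paper: decompose $\Delta_D(G)$ as a strong product of the $\Delta_D(G_i)$ via Lemma \ref{strong_product}, observe that dominance is detected coordinatewise, connect arbitrary non-dominant vertices in part (i) through vertices supported in a single Sylow factor (your two fixed hubs $h_1,h_2$ versus the paper's case split on whether the non-dominant coordinates coincide is only a cosmetic difference), and in part (ii) project paths to and lift paths from $\Delta_D(G_1)^*$. Your explicit attention to keeping all intermediate vertices non-dominant, and to collapsing repetitions in the projected walk, is if anything slightly more careful than the paper's write-up.
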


\begin{proof}
Let $G$ be a finite non-cyclic nilpotent group such that $G=G_1G_2\cdots G_m$, where $G_i$ is the Sylow $p_i$-subgroup of $G$ for $i\in [m]$. Here $G=\{x_1x_2\cdots x_m~\vert~x_i\in G_i,~1\leq i\leq m\}$. Then from Lemma \ref{strong_product}, we have $\Delta_D(G)\cong\Delta_D(G_1)\boxtimes\Delta_D(G_2)\boxtimes\dots\boxtimes\Delta_D(G_m)$ and ${_d}\Delta_D(G)=\{z_1z_2\cdots z_m~\vert~z_i\in {_d}\Delta_D(G_i),~1\leq i\leq m\}$. For any $\bf{x}\in G$, by $(\bf{x})_j$ we denote the $j$-th part of $\bf{x}$, where $j\in [m]$.
\begin{enumerate}[\rm (i)]
    \item Let at least two of the $G_i$-s be non-cyclic. Let $\mathfrak{g},\mathfrak{h}\in G\setminus{_d}\Delta_D(G)$ be two distinct vertices of $\Delta_D(G)^*$. Then there exist $l_1,l_2\in [m]$ such that $(\mathfrak{g})_{l_1}\in G_{l_1}\setminus{_d}\Delta_D(G_{l_1})$ and $(\mathfrak{h})_{l_2}\in G_{l_2}\setminus{_d}\Delta_D(G_{l_2})$.
    
    If $l_1\neq l_2$, consider two elements $\mathfrak{u},\mathfrak{v}\in G\setminus{_d}\Delta_D(G)$ such that $(\mathfrak{u})_{l_1}=(\mathfrak{g})_{l_1}$ and $(\mathfrak{u})_j=e$, for all $j\in [m]\setminus l_1$, and $(\mathfrak{v})_{l_2}=(\mathfrak{h})_{l_2}$ and $(\mathfrak{v})_j=e$, for all $j\in [m]\setminus l_2$. Then $\mathfrak{g}\sim \mathfrak{u}\sim \mathfrak{v}\sim\mathfrak{h}$ forms a path from $\mathfrak{g}$ to $\mathfrak{h}$.

    If $l_1=l_2$, then there exists $l_3\in [m]$ such that $l_3\neq l_1$ and $G_{l_3}$ is non-cyclic. Let $a$ be an element from $G_{l_3}\setminus{_d}\Delta_D(G_{l_3})$. Consider the element $\mathfrak{w}\in G\setminus{_d}\Delta_D(G)$ such that $(\mathfrak{w})_{l_3}=a$ and $(\mathfrak{w})_j=e$, for all $j\in [m]\setminus l_3$. Then $\mathfrak{g}\sim \mathfrak{u}\sim \mathfrak{w}\sim \mathfrak{v}\sim\mathfrak{h}$ forms a path from $\mathfrak{g}$ to $\mathfrak{h}$. Hence, $\Delta_D(G)^*$ is connected.
    
    \item Let $G_i$ be non-cyclic for exactly one $i$. Without loss of generality, we assume $i=1$. Then $\Delta_D(G)^*\cong\Delta_D(G_1)^*\boxtimes\Delta_D(G_2)\boxtimes\dots\boxtimes\Delta_D(G_m)$, where $\Delta_D(G_j)$ are complete graphs for $2\leq j\leq m$. Let, $\mathfrak{g},\mathfrak{h}$ be two distinct vertices of $\Delta_D(G)^*$. Then for any path $\mathfrak{g}\sim\mathfrak{k}_1\sim\dots\sim\mathfrak{k}_t\sim\mathfrak{h}$ in $\Delta_D(G)^*$, we have a path $(\mathfrak{g})_1\sim(\mathfrak{k}_1)_1\sim\dots\sim(\mathfrak{k}_t)_1\sim(\mathfrak{h})_1$ from $(\mathfrak{g})_1$ to $(\mathfrak{h})_1$ in $\Delta_D(G_1)^*$, and vice versa. So, two arbitrary vertices $\mathfrak{g},\mathfrak{h}\in V(\Delta_D(G)^*)$ are connected in $\Delta_D(G)^*$ if and only if the corresponding vertices $(\mathfrak{g})_1$ and $(\mathfrak{h})_1$ are connected in $\Delta_D(G_1)^*$. Thus, $\Delta_D(G)^*$ is connected if and only if $\Delta_D(G_1)^*$ is connected. This concludes the proof.
\end{enumerate}
\end{proof}

\subsection{Equality with the enhanced power graph for finite $p$-groups}

It is easy to see that for any finite group $G$ with trivial Schur multiplier, we have $\Delta_D(G)=\Delta(G)$. However, \cite[Example 3]{deepc} shows that the converse is not true. Also, from \cite[Theorem 30]{aa}, it can be seen that for a finite $p$-group $G$ that is neither cyclic nor isomorphic to $Q_{2^n}$, we have $\mathcal{P}_e(G)\neq\Delta(G)$. So, for any finite $p$-group $G$ that is neither cyclic nor isomorphic to $Q_{2^n}$ with $\mathcal{P}_e(G)=\Delta_D(G)$, we have $M(G)$ is nontrivial. Moreover, a group $G$ is said to be a \textit{capable} group, if $G\cong H/Z(H)$ for some group $H$ with nontrivial center. Now for any $p$-group $G$, $\mathcal{P}_e(G)=\mathcal{P}(G)$ \cite[Theorem 28]{aa} and if is neither cyclic nor isomorphic to $Q_{2^n}$, we have $_d\mathcal{P}_e(G)={_d}\mathcal{P}(G)=\{e\}$ \cite[Proposition 4]{cameron 2}. Then the following lemma is a straightforward application of Corollary \ref{center_schur_cover}. 

\begin{lemma}\label{capable}
Suppose $G$ is a finite non-cyclic $p$-group such that $G$ is not isomorphic to $Q_{2^n}$. If $\mathcal{P}_e(G)=\Delta_D(G)$, then $G$ is capable.
\end{lemma}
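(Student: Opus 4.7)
The plan is to use Lemma~\ref{dominant_center} to translate the equality of dominant vertex sets (forced by the hypothesis) into a precise statement about the center of a Schur cover $\tilde{G}$ of $G$, and then to exhibit $\tilde{G}$ itself as a group with $G\cong\tilde{G}/Z(\tilde{G})$ and $Z(\tilde{G})\neq\{e\}$.

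First I would observe that the hypothesis $\mathcal{P}_e(G)=\Delta_D(G)$ immediately implies ${_d}\mathcal{P}_e(G)={_d}\Delta_D(G)$, and since $G$ is a non-cyclic $p$-group not isomorphic to $Q_{2^n}$, the cited result \cite[Proposition 4]{cameron 2} gives ${_d}\mathcal{P}_e(G)=\{e\}$; hence ${_d}\Delta_D(G)=\{e\}$. Next, Lemma~\ref{dominant_center} yields $\pi(Z(\tilde{G}))=\{e\}$, so $Z(\tilde{G})\subseteq\ker\pi=\iota(M(G))$. The reverse inclusion $\iota(M(G))\subseteq Z(\tilde{G})$ holds automatically since $\tilde{G}$ is a stem extension, so $Z(\tilde{G})=\iota(M(G))\cong M(G)$, and consequently $\tilde{G}/Z(\tilde{G})\cong G$.

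To finish, I need to verify that $Z(\tilde{G})$ is actually nontrivial, since the definition of capability demands that the witnessing group have nonzero center. Here I would invoke the observation recorded in the paragraph preceding the lemma: if $M(G)$ were trivial, then $\tilde{G}=G$ and $\Delta_D(G)=\Delta(G)$, which combined with the hypothesis would force $\mathcal{P}_e(G)=\Delta(G)$, contradicting \cite[Theorem 30]{aa}. With this in hand, $Z(\tilde{G})\neq\{e\}$, and $\tilde{G}$ exhibits $G$ as capable.

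The only mild obstacle is precisely this nontriviality check on $M(G)$, which the authors have already isolated in the preceding discussion; without it one could not rule out the degenerate possibility $\tilde{G}=G$ with trivial center. Aside from that, the argument is essentially a one-line packaging of Lemma~\ref{dominant_center} with the fact that $\iota(M(G))\subseteq Z(\tilde{G})$ in any stem extension, exactly along the lines of Corollary~\ref{center_schur_cover} cited in the hint.
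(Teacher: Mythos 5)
Your proposal is correct and follows essentially the same route as the paper, which dispatches the lemma as "a straightforward application of Corollary \ref{center_schur_cover}": both arguments reduce the hypothesis to ${_d}\Delta_D(G)={_d}\mathcal{P}_e(G)=\{e\}$, apply Lemma \ref{dominant_center} to conclude $Z(\tilde{G})=\iota(M(G))\cong M(G)$, and rely on the preceding paragraph's observation that $M(G)$ is nontrivial. Your explicit attention to the nontriviality of $Z(\tilde{G})$ is exactly the point the paper's surrounding discussion is set up to supply.
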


Our next result provides a necessary condition for which $\Delta_D(G)$ and $\mathcal{P}_e(G)$ coincide, for any finite non-cyclic $p$-group $G$.

\begin{proposition}\label{noncyclic_commutator}
Suppose $G$ is a finite non-cyclic $p$-group such that $G$ has no subgroup isomorphic to $Q_{2^n}$. Assume that any of the following hold:  
\begin{enumerate}[\rm (i)]
    \item $G'$ is non-cyclic.
    \item There exists an element $g\in G$ with $o(g)>p$ such that $\langle g\rangle\cap G'=\{e\}$. 
\end{enumerate}
Then $E(\mathcal{P}_e(G))\subsetneq E(\Delta_D(G))$.
\end{proposition}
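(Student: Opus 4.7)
The plan is to exhibit, under either hypothesis, a pair $x, y \in G$ with $\langle x, y\rangle$ non-cyclic (so $x \not\sim y$ in $\mathcal{P}_e(G)$) and $[\tilde x, \tilde y] = e$ in $\tilde G$ (so $x \sim y$ in $\Delta_D(G)$). Combined with Theorem \ref{inclusion}, this will give the strict inclusion $E(\mathcal{P}_e(G)) \subsetneq E(\Delta_D(G))$.

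My main algebraic tool is Miller's exterior square $G \wedge G$: the map $f : G \wedge G \to G'$, $u \wedge v \mapsto [u,v]$, has kernel isomorphic to $M(G)$, and for commuting $u, v \in G$ the commutator $[\tilde u, \tilde v]$ in $\tilde G$ of any lifts corresponds to $u \wedge v \in M(G)$. The defining relations of $G \wedge G$ yield $(u \wedge v)^{o(v)} = u \wedge v^{o(v)} = 1$ and, for commuting $u, v$, the power rule $u^n \wedge v = (u \wedge v)^n$. For case (i) I also invoke Lemma \ref{schur_cover_nilpotency}, which gives $\mathrm{cl}(\tilde G) \leq \mathrm{cl}(G) + 1$, and hence $\gamma_{n+2}(\tilde G) = \{e\}$ with $n = \mathrm{cl}(G)$.

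For case (ii), write $o(g) = p^s$ with $s \geq 2$ and set $x := g^{p^{s-1}}$, an order-$p$ element outside $G'$. I will find $y$ of order $p$ in $C_G(g) \setminus \langle x \rangle$ as follows: if $C_G(g)$ has more than one subgroup of order $p$, any such $y$ works; otherwise $C_G(g)$ has a unique order-$p$ subgroup, so (as $G$ has no $Q_{2^n}$-subgroup) $C_G(g)$ must be cyclic. When $G$ is abelian, $C_G(g) = G$ is non-cyclic by hypothesis, so the first alternative already applies. Otherwise, with $n = \mathrm{cl}(G) \geq 2$, we have $\gamma_n(G) \subseteq Z(G) \subseteq C_G(g)$ cyclic, whose unique order-$p$ subgroup must coincide with $\langle x \rangle$, forcing $x \in \gamma_n(G) \subseteq G'$ and contradicting $\langle g \rangle \cap G' = \{e\}$. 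Once $y$ is in hand, $\langle x, y \rangle \cong C_p \times C_p$ is non-cyclic, $g \wedge y$ has order dividing $o(y) = p$, and
\[
[\tilde x, \tilde y] = g^{p^{s-1}} \wedge y = (g \wedge y)^{p^{s-1}} = 1,
\]
since $s \geq 2$ forces $p \mid p^{s-1}$.

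For case (i), $G'$ is non-cyclic, and since $G$ contains no $Q_{2^n}$-subgroup neither does $G'$, so $G'$ has at least two subgroups of order $p$. Set $n = \mathrm{cl}(G) \geq 2$. I will pick $v \in \gamma_n(G)$ of order $p$ (possible since $\gamma_n(G)$ is a non-trivial $p$-group) and $u \in G'$ of order $p$ with $u \notin \langle v \rangle$ (possible by non-cyclicity of $G'$). Since $v \in Z(G)$, the elements $u, v$ commute, and $\langle u, v \rangle \cong C_p \times C_p$ is non-cyclic. Choosing lifts $\tilde u \in \gamma_2(\tilde G)$ and $\tilde v \in \gamma_n(\tilde G)$ (available by surjectivity of $\gamma_i(\tilde G) \to \gamma_i(G)$), one obtains
\[
[\tilde u, \tilde v] \in [\gamma_2(\tilde G),\, \gamma_n(\tilde G)] \subseteq \gamma_{n+2}(\tilde G) = \{e\}.
\]
The most delicate step is the elimination of the cyclic-$C_G(g)$ sub-case of (ii), which crucially combines $\gamma_n(G) \subseteq G'$ with the hypothesis $\langle g \rangle \cap G' = \{e\}$ to close the argument.
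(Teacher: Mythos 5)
Your argument is correct. For case (i) you take essentially the paper's route: both proofs pick one witness in $\gamma_n(G)$ and one in $G'=\gamma_2(G)$ generating a non-cyclic subgroup (the paper extracts the second witness from the fact that a non-cyclic $p$-group with no generalized quaternion subgroup has no non-trivial dominant vertex in its enhanced power graph, while you build an explicit $C_p\times C_p$ from two order-$p$ subgroups of $G'$), and both kill the lifted commutator via $[\gamma_2(\tilde{G}),\gamma_n(\tilde{G})]\subseteq\gamma_{n+2}(\tilde{G})=\{e\}$ together with Lemma \ref{schur_cover_nilpotency}. For case (ii) your route is genuinely different: the paper pairs $g^{p}$ with an order-$p$ element $z$ of $\gamma_c(G)\subseteq G'$, so non-adjacency in $\mathcal{P}_e(G)$ is immediate from $\langle g\rangle\cap G'=\{e\}$ and centrality of $[\tilde{z},\tilde{g}]$ comes from the lift $\tilde{z}\in\gamma_c(\tilde{G})$; you instead pair $x=g^{p^{s-1}}$ with an order-$p$ element $y\in C_G(g)\setminus\langle x\rangle$, whose existence you must earn by ruling out that $C_G(g)$ has a unique subgroup of order $p$ (your contradiction via $\gamma_n(G)\subseteq Z(G)\subseteq C_G(g)$ and $\langle g\rangle\cap G'=\{e\}$ is valid), and centrality of $[\tilde{g},\tilde{y}]$ comes simply from $[g,y]=e$, which places the commutator in $\ker\pi\subseteq Z(\tilde{G})$. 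What your version buys is a clean, uniform treatment of the abelian case (where $c=1$ and the paper's inclusion $\gamma_c(G)\subseteq\gamma_2(G)$ degenerates) and the complete avoidance of the nilpotency-class bound on $\tilde{G}$ in case (ii); what the paper's version buys is an immediate choice of the second witness. One cosmetic point: the identification $[\tilde{u},\tilde{v}]=u\wedge v$ for commuting $u,v$ is standard but is not set up in the paper; your computation can be rephrased entirely inside $\tilde{G}$ using Lemma \ref{commutator_in_center}, since $[\tilde{g},\tilde{y}]\in\ker\pi\subseteq Z(\tilde{G})$ and $\tilde{y}^{p}\in\ker\pi$ give $[\tilde{g},\tilde{y}]^{p}=[\tilde{g},\tilde{y}^{p}]=e$ and hence $[\tilde{g}^{p^{s-1}},\tilde{y}]=[\tilde{g},\tilde{y}]^{p^{s-1}}=e$.
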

\begin{proof} 
Let $G$ be a finite non-cyclic $p$-group of nilpotency class $c$.
\begin{enumerate}[\rm (i)]
    \item Let $G'$ be non-cyclic. Choose a non-trivial element $g_1$ from $\gamma_c(G)$. Then $g_1\in \gamma_c(G)\subseteq \gamma_2(G)$. Since $G'=\gamma_2(G)$ is neither cyclic nor isomorphic to $Q_{2^n}$, $g_1$ is not a dominant vertex of $\mathcal{P}_e(\gamma_2(G))$. Also, we have $\mathcal{P}_e(\gamma_2(G))=\mathcal{P}_e(G)\vert_{\gamma_2(G)}$. Hence, there exists $g_2\in\gamma_2(G)$ such that $g_1\nsim g_2$ in $\mathcal{P}_e(G)$. Let $\tilde{g_1}$ and $\tilde{g_2}$ be preimages of $g_1$ and $g_2$ in $\gamma_{c}(\tilde{G})$ and $\gamma_{2}(\tilde{G})$ respectively. Then, from \cite[Lemma 3.2.6]{karpi} we can see that $[\tilde{g_1},\tilde{g_2}]\in \gamma_{c+2}(\tilde{G})$. Now, from Lemma \ref{schur_cover_nilpotency} we have $\gamma_{c+2}(\tilde{G})=\{e\}$. So, $g_1\sim g_2$ in $\Delta_D(G)$ but not in $\mathcal{P}_e(G)$. 

    \item Suppose $G$ contains an element $g$ with $o(g)>p$ and $\langle g\rangle\cap G'=\{e\}$. Let $z\in\gamma_{c}(G)$ such that $o(z)=p$. Then, $z\in\gamma_{c}(G)\subseteq\gamma_2(G)$, which implies $\langle z\rangle\cap\langle g\rangle=\{e\}$. Hence, $g^{p}\nsim z$ in $\mathcal{P}(G)=\mathcal{P}_e(G)$. Let $\tilde{g}$ and $\tilde{z}$ be preimages of $g$ and $z$ in $\tilde{G}$ and $\gamma_{c}(\tilde{G})$ respectively. Then, $\tilde{z}^p\in \text{ker }\pi\subseteq Z(\tilde{G})$. Again, from Lemma \ref{schur_cover_nilpotency} we have $[\tilde{z},\tilde{g}]\in \gamma_{c+1}(\tilde{G})\subseteq Z(\tilde{G})$. Then, from Proposition \ref{commutator_in_center}, we have $[\tilde{z},\tilde{g}^p]=[\tilde{z}^p,\tilde{g}]=e$. So, $g^{p}\sim z$ in $\Delta_D(G)$. 
\end{enumerate}
Hence, the result follows.
\end{proof}
It is worth noticing that the converse of the above statement is not true. For example, consider the following $p$-group of order $p^3$, for an odd prime $p$:
\begin{equation}\label{James p group}
 H:=\langle \alpha_1,\alpha_2,\alpha~\mid~[\alpha_1,\alpha_2]=\alpha,~\alpha_1^p=\alpha,~\alpha_2^p=\alpha^p=[\alpha_2,\alpha]=e\rangle.   
\end{equation}
Here $H$ is a self cover (see \cite[Theorem 3.3.6]{karpi}), hence $\Delta_D(H)=\Delta(H)$. Also, using a simple inductive argument, it can be seen that for any element $\alpha_1^{a_1}\alpha_2^{a_2}\alpha^{a}\in H$, we have $(\alpha_1^{a_1}\alpha_2^{a_2}\alpha^{a})^l=\alpha_1^{la_1}\alpha_2^{la_2}\alpha^{la-\frac{l(l-1)}{2}a_1a_2}$, for any $l\in \mathbb{N}$. In particular, we have $(\alpha_1^{a_1}\alpha_2^{a_2}\alpha^{a})^p=\alpha^{a_1}$. So, it is easy to see that $H$ does not satisfy any of the given conditions in Proposition \ref{noncyclic_commutator}, yet $\alpha_2\sim \alpha$ in $\Delta_D(H)$ and $\alpha_2\nsim \alpha$ in $\mathcal{P}_e(H)$.

A finite $p$-group $G$ is called \textit{extraspecial}, if $G'=Z(G)\cong C_p$ and $G/G'$ is an elementary abelian $p$-group. It is well known that $H_3(\mathbb{Z}/p\mathbb{Z})$ and the above group $H$ are the only two non abelian groups of order $p^3$, for an odd prime $p$. In addition, \cite[Corollary 8.2]{beyl} shows that an extraspecial group $G$ is capable if and only if $G\cong D_8$ or $\lvert G\rvert=p^3$ with ${\rm exp}(G)=p$, for odd prime $p$. Also, from \cite[Theorem 30]{aa} it can be seen that $\Delta_D(Q_8)=\mathcal{P}_{e}(Q_8)$. Moreover, it will be shown in Theorem \ref{dihedral equality} that $\Delta_D(D_{2n})=\mathcal{P}_e(D_{2n})$ for any even positive integer $n$, and in Theorem \ref{heisenberg_equality} that $\Delta_D(H_3(\mathbb{Z}/p\mathbb{Z}))=\mathcal{P}_{e}(H_3(\mathbb{Z}/p\mathbb{Z}))$, for any odd prime $p$. So using Lemma \ref{capable}, we now arrive at the following classification of finite extraspecial groups, for which the deep commuting graph coincides with the enhanced power graph.

\begin{corollary}\label{extraspecial}
Let $G$ be a finite extraspecial group. Then $\Delta_D(G)=\mathcal{P}_e(G)$ if and only if $G$ is isomorphic to $D_8$ or $Q_8$ or $H_3(\mathbb{Z}/p\mathbb{Z})$, where $p$ is an odd prime.    
\end{corollary}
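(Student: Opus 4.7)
The plan is to handle the two implications separately: the backward direction is a direct compilation of the three equalities cited just above the corollary, while the forward direction combines Lemma~\ref{capable} with the Beyl--Felgner--Schmid classification of capable extraspecial groups and a short exponent check to isolate the three listed possibilities.

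For the ``if'' direction, I would verify each case in turn. If $G \cong D_8 = D_{2\cdot 4}$, then Theorem~\ref{dihedral equality}, applied with the even parameter $n=4$, gives $\Delta_D(D_8) = \mathcal{P}_e(D_8)$. If $G \cong Q_8$, then \cite[Theorem 30]{aa} yields $\mathcal{P}_e(Q_8) = \Delta(Q_8)$, and since Theorem~\ref{inclusion} sandwiches $\mathcal{P}_e(Q_8) \subseteq \Delta_D(Q_8) \subseteq \Delta(Q_8)$, the equality $\Delta_D(Q_8) = \mathcal{P}_e(Q_8)$ is immediate. Finally, for $G \cong H_3(\mathbb{Z}/p\mathbb{Z})$ with $p$ odd, Theorem~\ref{heisenberg_equality} supplies the desired identification.

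For the ``only if'' direction, assume $G$ is extraspecial with $\Delta_D(G) = \mathcal{P}_e(G)$; in particular $G$ is a non-cyclic $p$-group. I would first dispatch the generalized-quaternion case: since the derived subgroup of $Q_{2^n}$ is cyclic of order $2^{n-2}$, the extraspecial requirement $|G'| = p$ forces $n = 3$, so $G \cong Q_8$. If $G$ is not of this form, then Lemma~\ref{capable} applies and $G$ must be capable. The Beyl--Felgner--Schmid classification \cite[Corollary 8.2]{beyl} then restricts $G$ to either $D_8$ or an extraspecial group of order $p^3$ and exponent $p$ for an odd prime $p$. To finish, I would eliminate the other non-abelian group of order $p^3$ for odd $p$, namely the group $H$ defined in~\eqref{James p group}: the identity $(\alpha_1^{a_1}\alpha_2^{a_2}\alpha^{a})^p = \alpha^{a_1}$ recorded just after that display forces $\exp(H) = p^2$, so $H$ is not admitted and the only remaining candidate is $G \cong H_3(\mathbb{Z}/p\mathbb{Z})$.

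The argument is essentially assembly, and no step presents a substantive obstacle. The only subtle point is making sure Lemma~\ref{capable} is invoked legitimately, which is precisely why the $Q_{2^n}$ case must be handled separately first; once that case is removed, the hypotheses of the lemma are automatically satisfied for any non-cyclic $p$-group, extraspecial or otherwise.
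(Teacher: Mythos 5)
Your proposal is correct and follows essentially the same route as the paper: the ``if'' direction assembles Theorem~\ref{dihedral equality}, \cite[Theorem 30]{aa} together with Theorem~\ref{inclusion}, and Theorem~\ref{heisenberg_equality}, while the ``only if'' direction combines Lemma~\ref{capable} with the Beyl--Felgner--Schmid classification \cite[Corollary 8.2]{beyl} and the exponent computation that rules out the group $H$ of~\eqref{James p group}. Your explicit separate treatment of the $Q_{2^n}$ case before invoking Lemma~\ref{capable} is a point the paper leaves implicit, and it is handled correctly.
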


\section{Deep commuting graph of finite abelian groups}\label{Deep commuting graph of finite abelian groups}

In this section, we study the deep commuting graph of finite abelian groups. For finite cyclic groups, Theorem \ref{complete} yields that the deep commuting graph is always complete. In case of non-cyclic finite abelian groups, Lemma \ref{strong_product} yields that the deep commuting graph of a finite non-cyclic abelian group of non prime power order is the strong product of the deep commuting graph of its Sylow $p$-subgroups. So, in order to study the deep commuting graph of finite non-cyclic abelian groups, it is enough to study the deep commuting graph of finite non-cyclic abelian $p$-groups.

Throughout this section, a finite non-cyclic abelian $p$-group $G$ is of the form $G=\prod_{1\leq i \leq k}C_{p^{r_i}}$, with $r_1\geq r_2\geq \dots \geq r_k$ and $k\geq 2$. Our next result compares the deep commuting graph and the enhanced power graph of non-cyclic abelian $p$-groups. 
\begin{theorem}\label{elementary_abelian_p}
Let $G$ be a finite non-cyclic abelian $p$-group. Then $\Delta_D(G)=\mathcal{P}_e(G)$ if and only if $G$ is elementary abelian.
\end{theorem}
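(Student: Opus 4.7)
The plan is to build both directions from the explicit Schur cover description in Theorem \ref{Cover_abelian_p} together with the commutator formula (\ref{commutators}). Since Theorem \ref{inclusion} always gives $E(\mathcal{P}_e(G))\subseteq E(\Delta_D(G))$, I only need to establish the reverse inclusion when $G$ is elementary abelian, and to exhibit a single edge in $E(\Delta_D(G))\setminus E(\mathcal{P}_e(G))$ when $G$ is not elementary abelian.

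For the forward implication, suppose $G=\prod_{1\le i\le k}C_p$, so every $r_i=1$. Let $g,h\in G$ be adjacent in $\Delta_D(G)$, and write them uniquely as $g=\prod x_i^{s_i}$, $h=\prod x_i^{t_i}$ with $s_i,t_i\in\{0,1,\dots,p-1\}$. By (\ref{commutators}), the hypothesis $[\tilde g,\tilde h]=e$ becomes $\prod_{i<j}a_{ij}^{s_it_j-s_jt_i}=e$ with each $o(a_{ij})=p$. Because the $a_{ij}$ lie in $\ker\pi\cong M(G)\cong (C_p)^{\binom{k}{2}}$ and their cardinality equals the rank of $M(G)$, they form an independent generating set of $M(G)$; therefore $s_it_j\equiv s_jt_i\pmod{p}$ for every $i<j$. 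This says that the $2\times k$ matrix over $\mathbb{F}_p$ with rows $(s_i)$ and $(t_i)$ has rank at most one, so one vector is a scalar multiple of the other in $\mathbb{F}_p^k$. Translating back, $g$ and $h$ both lie in a common cyclic subgroup of $G$, hence $g\sim h$ in $\mathcal{P}_e(G)$.

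For the converse, suppose $G$ is not elementary abelian, so $r_1\ge 2$. I will take the explicit pair $g=x_1^{p^{r_1-1}}$ and $h=x_2^{p^{r_2-1}}$. Each element has order $p$, and since they lie in distinct direct summands of $G$, $\langle g,h\rangle=\langle g\rangle\times\langle h\rangle\cong C_p\times C_p$, which is not cyclic; hence $g\not\sim h$ in $\mathcal{P}_e(G)$. Applying (\ref{commutators}) with $s_1=p^{r_1-1}$, $t_2=p^{r_2-1}$, and all other exponents zero, every factor on the right-hand side vanishes except the $(1,2)$-term, giving $[\tilde g,\tilde h]=a_{12}^{p^{r_1+r_2-2}}$. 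Because $o(a_{12})=p^{r_2}$ and $r_1+r_2-2\ge r_2$ exactly when $r_1\ge 2$, this commutator is trivial, so $g\sim h$ in $\Delta_D(G)$. This produces the required edge.

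The only mildly delicate point is the independence of the $a_{ij}$ used in the forward direction, which is handled by comparing the number of such generators with the known rank of $M(G)\cong (C_p)^{\binom{k}{2}}$ in the elementary abelian case. Everything else reduces to straightforward manipulation of the commutator formula (\ref{commutators}) and an elementary linear-algebra observation over $\mathbb{F}_p$, so no substantial obstacle is expected beyond bookkeeping.
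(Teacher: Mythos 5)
Your proof is correct, and it diverges from the paper's in one direction while matching it in the other. For the implication ``elementary abelian $\Rightarrow$ equality'' you follow essentially the same route as the paper (relation (\ref{commutators}) plus the vanishing of all exponents $s_it_j-s_jt_i$ modulo $p$), but your justification is actually tighter in two places: you explicitly verify that the $\binom{k}{2}$ elements $a_{ij}$ form an independent generating set of $\ker\pi\cong (C_p)^{\binom{k}{2}}$ by counting against the rank of $M(G)$ (the paper passes over this silently), and you replace the paper's case analysis on which coordinates $s_i,t_i$ vanish by the cleaner observation that all $2\times 2$ minors of the matrix with rows $(s_i)$ and $(t_i)$ vanish, forcing rank at most one over $\mathbb{F}_p$. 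For the converse, the paper invokes the general Proposition \ref{noncyclic_commutator}(ii) (applied with $g=x_1$, $o(x_1)=p^{r_1}>p$ and $G'=\{e\}$), whereas you produce the explicit edge $x_1^{p^{r_1-1}}\sim x_2^{p^{r_2-1}}$ directly from (\ref{commutators}): the pair generates $C_p\times C_p$, hence is a non-edge of $\mathcal{P}_e(G)$, while $[\tilde g,\tilde h]=a_{12}^{p^{r_1+r_2-2}}=e$ precisely because $r_1\geq 2$ and $o(a_{12})=p^{r_2}$. Your version is self-contained and avoids the detour through the nilpotency-class machinery underlying Proposition \ref{noncyclic_commutator}; the paper's version buys reusability, since the same proposition is cited again for the Heisenberg groups. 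The only cosmetic point is that you should note (as the paper does) that the adjacent pair may be taken to be two distinct non-identity elements, though the rank-one conclusion covers the degenerate case anyway.
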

\begin{proof}
$G$ is a finite non-cyclic abelian $p$-group with $x_1, x_2, \dots, x_k$ being a collection of generators for $C_{p^{r_1}},C_{p^{r_2}},\dots,C_{p^{r_k}}$ and $\Tilde{x_1},\Tilde{x_2},\dots,\Tilde{x_k}$ being their preimages in the Schur cover $\Tilde{G}$, respectively. 

When $r_1\geq 2$, we have $o(x_1)>p$ and $\langle x_1\rangle\cap G'=\{e\}$. Hence, from Proposition \ref{noncyclic_commutator}(ii) we can conclude that $E(\Delta_D(G))\subsetneq E(\mathcal{P}_e(G))$. So $\Delta_D(G)=\mathcal{P}_e(G)$ implies $r_1=1$, i.e.~$G$ is elementary abelian.

Conversely, let $G$ is elementary abelian, i.e.~$r_1=r_2= \dots =r_k=1$. Let $s$ and $t$ be two distinct non-identity elements of $G$ such that $s$ and $t$ are adjacent in $\Delta_D(G)$. Let, $s=\prod_{i=1}^kx_i^{s_i},~0\leq s_i<p$ and $t=\prod_{j=1}^kx_j^{t_j},~0\leq t_j<p$. Define the set $\mathcal{S}:=\{l\in[k]~|~s_l\neq 0~\text{or}~t_l\neq 0\}$. Then 
\[
\left[\prod_{i=1}^k\Tilde{x_i}^{s_i},\prod_{j=1}^k\Tilde{x_j}^{t_j}\right]
=\left[\prod_{i\in\mathcal{S}}\Tilde{x_i}^{s_i},\prod_{j\in\mathcal{S}}\Tilde{x_j}^{t_j}\right]
=\prod_{\substack{i,j\in\mathcal{S},\\ i<j}}a_{ij}^{s_it_j-s_jt_i}=1.
\]
Now, let $i\in[k]$ such that $t_i=0$. Choose, $u\in[k]$ such that $t_u\neq 0$. Assume $s_i\neq 0$. If $i<u$, we have $s_it_u-s_ut_i\equiv 0~({\rm mod}~p)$ and if $u<i$, we have $s_ut_i-s_it_u\equiv 0~({\rm mod}~p)$. Hence, in both the cases we have $s_it_u\equiv 0~({\rm mod}~p)$ which is contradictory. Hence, $s_i=0$. Similarly, for $j\in[k]$ it can be shown that if $s_j=0$ then $t_j=0$. So, if $i\in \mathcal{S}$, then both $s_i$ and $t_i$ are nonzero.  

Let $i,j\in\mathcal{S}$. Then $\begin{aligned}[t]
s_it_j-s_jt_i &\equiv 0~({\rm mod}~p)\\
\Rightarrow s_it_j &\equiv s_jt_i~({\rm mod}~p)\\
\Rightarrow s_it_i^{-1} &\equiv s_jt_j^{-1}~({\rm mod}~p).\\
\end{aligned}
$\\
Since, $i,j$ are arbitrary elements of $\mathcal{S}$ and every element of $G$ is of order $p$, we can conclude that $s\in \langle t\rangle$, hence $s$ and $t$ are adjacent in $\mathcal{P}_e(G)$. Since, $\mathcal{P}_e(G)$ is a spanning subgraph of $\Delta_D(G)$, we can conclude that if $G$ is a finite elementary abelian $p$-group, then  $\mathcal{P}_e(G)=\Delta_D(G)$.   
\end{proof}
On the other hand, result similar to Lemma \ref{strong_product} holds for enhanced power graphs too. In particular, as a simple consequence of Lemma $2.1$ of \cite{zahirovic}, the following can be seen. 
\begin{lemma}\label{tensor_ep}
Let $H$ and $K$ be two finite groups such that ${\rm gcd}(|H|,|K|)=1$. Then $\mathcal{P}_e(H\times K)$=$\mathcal{P}_e(H)\boxtimes\mathcal{P}_e(K)$.   
\end{lemma}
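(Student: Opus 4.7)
The plan is to verify edge-by-edge: given two distinct vertices $(h_1,k_1)$ and $(h_2,k_2)$ of $H\times K$, I will show that $\langle(h_1,k_1),(h_2,k_2)\rangle$ is cyclic if and only if the strong-product adjacency criterion holds for them in $\mathcal{P}_e(H)\boxtimes\mathcal{P}_e(K)$.

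The key structural input is that when $\gcd(|H|,|K|)=1$, every subgroup $U$ of $H\times K$ splits as a direct product $\pi_H(U)\times\pi_K(U)$ under the coordinate projections. This is standard (via Goursat's lemma, or a direct argument using that $U\cap(H\times\{e\})$ and $U\cap(\{e\}\times K)$ have coprime orders whose product must equal $|U|$). Applying this to the two-generator subgroup yields
\[
\langle(h_1,k_1),(h_2,k_2)\rangle \;=\; \langle h_1,h_2\rangle\times\langle k_1,k_2\rangle,
\]
which is cyclic precisely when both factors are cyclic, since the factor orders are automatically coprime and a direct product of cyclic groups of coprime orders is cyclic. Hence $(h_1,k_1)\sim(h_2,k_2)$ in $\mathcal{P}_e(H\times K)$ if and only if both $\langle h_1,h_2\rangle$ and $\langle k_1,k_2\rangle$ are cyclic.

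It then remains to match this condition with strong-product adjacency. In each of the three cases defining an edge of $\mathcal{P}_e(H)\boxtimes\mathcal{P}_e(K)$ (namely $h_1=h_2$ with $k_1\sim k_2$ in $\mathcal{P}_e(K)$, or $k_1=k_2$ with $h_1\sim h_2$ in $\mathcal{P}_e(H)$, or both coordinates adjacent in their respective enhanced power graphs), both coordinate-generated subgroups are visibly cyclic. Conversely, if the two coordinate subgroups are cyclic and $(h_1,k_1)\neq(h_2,k_2)$, then at least one coordinate differs and the corresponding case of the strong-product definition is forced. The only step demanding genuine care is the coprime-order splitting of subgroups of $H\times K$; once that lemma is in hand, the remaining verification is routine bookkeeping.
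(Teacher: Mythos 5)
Your proof is correct. The paper does not actually write out an argument for this lemma: it simply records it as ``a simple consequence of Lemma 2.1 of \cite{zahirovic}'', i.e.\ of the characterization that $x\sim y$ in $\mathcal{P}_e(G)$ exactly when $x$ and $y$ lie in a common cyclic subgroup $\langle z\rangle$; combined with the fact that $\langle (h,k)\rangle=\langle h\rangle\times\langle k\rangle$ when $o(h)$ and $o(k)$ are coprime, that gives the statement in two lines. Your route is self-contained and slightly heavier: you invoke the general splitting $U=\pi_H(U)\times\pi_K(U)$ for \emph{every} subgroup $U$ of $H\times K$ (Goursat, or the coprime-order counting argument you sketch, which does go through), apply it to the two-generated subgroup to get $\langle(h_1,k_1),(h_2,k_2)\rangle=\langle h_1,h_2\rangle\times\langle k_1,k_2\rangle$, and then use that a direct product of groups of coprime order is cyclic iff both factors are. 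The case analysis matching ``both coordinate subgroups cyclic'' against the three clauses of strong-product adjacency is handled correctly, including the degenerate cases where one coordinate is equal (where the corresponding subgroup is automatically cyclic). What your approach buys is independence from the cited lemma and an explicit structural identity for the two-generated subgroup; what the paper's citation buys is brevity, needing the splitting only for the single cyclic subgroup $\langle(h,k)\rangle$ rather than for arbitrary subgroups. Both are sound.
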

Now, with the above result, Theorem \ref{elementary_abelian_p} and Lemma \ref{strong_product}, the following corollary can be obtained for finite abelian groups. 
\begin{corollary}
Let $A$ be a finite abelian group. Then $\Delta_D(A)=\mathcal{P}_e(A)$ if and only if all the Sylow $p$-subgroups of $A$ are cyclic or elementary abelian. 
\end{corollary}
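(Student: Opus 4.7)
The plan is to reduce the classification to the Sylow-subgroup level by combining Lemma \ref{strong_product} (strong-product decomposition of $\Delta_D$) with Lemma \ref{tensor_ep} (the analogous fact for $\mathcal{P}_e$), and then to invoke Theorem \ref{elementary_abelian_p} on each factor. Concretely, writing $A=P_1\times\cdots\times P_k$ for the primary decomposition into Sylow subgroups, one has $\Delta_D(A)=\Delta_D(P_1)\boxtimes\cdots\boxtimes\Delta_D(P_k)$ and $\mathcal{P}_e(A)=\mathcal{P}_e(P_1)\boxtimes\cdots\boxtimes\mathcal{P}_e(P_k)$, and by Theorem \ref{inclusion} the latter is a spanning subgraph of the former on each factor.

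For the forward direction I would assume each $P_i$ is cyclic or elementary abelian. If $P_i$ is cyclic then $\mathcal{P}_e(P_i)$ is complete (any two elements of a cyclic group generate a cyclic subgroup) and $\Delta_D(P_i)$ is complete by Theorem \ref{complete}, so the two graphs agree on $P_i$. If $P_i$ is elementary abelian, Theorem \ref{elementary_abelian_p} yields $\mathcal{P}_e(P_i)=\Delta_D(P_i)$ directly. Taking strong products on both sides gives $\Delta_D(A)=\mathcal{P}_e(A)$.

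For the reverse direction I would argue the contrapositive: suppose some $P_i$ is neither cyclic nor elementary abelian. Theorem \ref{elementary_abelian_p} then supplies $u,v\in P_i$ with $u\sim v$ in $\Delta_D(P_i)$ but $u\nsim v$ in $\mathcal{P}_e(P_i)$. I lift these to $A$ by taking $\mathbf{u},\mathbf{v}\in A$ whose $i$-th coordinate is $u,v$ respectively and whose other coordinates are the identity. Since $\mathbf{u}$ and $\mathbf{v}$ agree in every coordinate except the $i$-th, the strong-product adjacency criterion reduces their adjacency to adjacency of $u$ and $v$ in the $i$-th factor. Hence $\mathbf{u}\sim\mathbf{v}$ in $\Delta_D(A)$ while $\mathbf{u}\nsim\mathbf{v}$ in $\mathcal{P}_e(A)$, so the two graphs differ.

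No step is a serious obstacle; this is essentially a bookkeeping argument once the three inputs (Theorem \ref{elementary_abelian_p}, Lemma \ref{strong_product}, Lemma \ref{tensor_ep}) are in place. The only point requiring a little care is checking that a strict edge inequality on a single factor propagates to a strict edge inequality of the strong products, and this is immediate from the strong-product adjacency definition applied to tuples that agree in all but one coordinate.
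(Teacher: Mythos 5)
Your proposal is correct and follows essentially the same route the paper indicates: decompose both graphs over the Sylow subgroups via Lemma \ref{strong_product} and Lemma \ref{tensor_ep}, apply Theorem \ref{complete} and Theorem \ref{elementary_abelian_p} factorwise, and note that a missing edge in one factor persists in the strong product by placing identities in the remaining coordinates. (Only a cosmetic remark: what you call the ``forward'' direction is the ``if'' half of the equivalence, but the content is right.)
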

Our next result identifies the dominant vertices of $\Delta_D(G)$ for a finite non-cyclic abelian $p$-group $G$.
\begin{theorem}\label{dominant_abelian_p}
Let $G$ be a finite non-cyclic abelian $p$-group. Then $_d\Delta_D(G)=\langle x_1^{p^{r_2}}\rangle$, where  $x_1$  is a generator of $C_{p^{r_1}}$.
\end{theorem}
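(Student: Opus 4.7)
The plan is to invoke Lemma \ref{dominant_center}, which identifies $_d\Delta_D(G)$ with the projection $\pi(Z(\tilde{G}))$ of the center of the Schur cover. Using the presentation in Theorem \ref{Cover_abelian_p}, a preimage of a general element $g=\prod_{i=1}^k x_i^{s_i}\in G$ (with $0\leq s_i<p^{r_i}$) has the form $\prod_{i=1}^k\tilde{x}_i^{s_i}\cdot\xi$ for some $\xi\in M(G)=\ker\pi$. Since $M(G)\subseteq Z(\tilde{G})$, the preimage lies in $Z(\tilde{G})$ if and only if $\prod_{i=1}^k\tilde{x}_i^{s_i}$ does, and since $\tilde{G}$ is generated by $\tilde{x}_1,\dots,\tilde{x}_k$ together with central elements, this in turn is equivalent to $\bigl[\prod_{i=1}^k\tilde{x}_i^{s_i},\tilde{x}_l\bigr]=e$ for every $l\in[k]$.

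Next I would apply relation~(\ref{commutators}) (using $t_l=1$ and $t_j=0$ for $j\neq l$) to obtain
\[
\Bigl[\prod_{i=1}^k\tilde{x}_i^{s_i},\tilde{x}_l\Bigr]=\prod_{i<l}a_{il}^{s_i}\cdot\prod_{l<j}a_{lj}^{-s_j}.
\]
The key structural input is that the elements $a_{ij}$ (for $i<j$) form an independent generating set of $M(G)\cong\prod_{2\leq j\leq k}C_{p^{r_j}}^{(j-1)}$, each $a_{ij}$ having order exactly $p^{r_j}$. This independence lets me equate the expression above to $e$ termwise: the vanishing splits into $s_i\equiv 0\pmod{p^{r_l}}$ for every $i<l$, together with $s_j\equiv 0\pmod{p^{r_j}}$ for every $j>l$.

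I then combine these conditions over all $l\in[k]$. The condition $s_j\equiv 0\pmod{p^{r_j}}$ (for $j>l$), together with the normalization $0\leq s_j<p^{r_j}$, forces $s_j=0$; taking $l=1$ this must hold for every $j\geq 2$, so $s_2=s_3=\cdots=s_k=0$. Under this reduction, the remaining conditions involve only $s_1$, and the strictest one (coming from $l=2$) requires $p^{r_2}\mid s_1$. Conversely, any $g=x_1^{s_1}$ with $p^{r_2}\mid s_1$ satisfies all of the vanishing conditions, so its preimage is central. Therefore $\pi(Z(\tilde{G}))=\langle x_1^{p^{r_2}}\rangle$, which combined with Lemma \ref{dominant_center} proves the claim.

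The main technical obstacle will be justifying the independence of the $a_{ij}$ in $\tilde{G}$ carefully enough to conclude termwise vanishing. This follows by comparing orders: the defining relations force $a_{ij}^{p^{r_j}}=e$ (via $[\tilde{x}_i,\tilde{x}_j^{p^{r_j}}]=[\tilde{x}_i,\tilde{x}_j]^{p^{r_j}}$ using Corollary \ref{nilpo_class_2}, since $\tilde{G}$ has nilpotency class two), giving an upper bound on $|M(G)|$ that matches the known value $\prod_{2\leq j\leq k}p^{r_j(j-1)}$, so no further relations among the $a_{ij}$ are possible.
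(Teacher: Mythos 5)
Your proposal is correct and follows essentially the same route as the paper: both reduce the problem to computing $\bigl[\prod_i\tilde{x}_i^{s_i},\tilde{x}_l\bigr]$ via relation~(\ref{commutators}) and then exploit the independence and orders $o(a_{ij})=p^{r_j}$ of the commutator generators. The only cosmetic difference is that you route the argument through Lemma~\ref{dominant_center} and centrality in $\tilde{G}$, whereas the paper argues directly from the definition of a dominant vertex by testing adjacency to $x_1$ and $x_2$; the underlying computation is identical.
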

\begin{proof}
Let $G$ be a non-cyclic abelian $p$-group. Let $x_1,x_2,\dots, x_k$ be a collection of generators for $C_{p^{r_1}},C_{p^{r_2}},\dots,C_{p^{r_k}}$ with $\tilde{x_1},\tilde{x_2},\dots, \tilde{x_k}$ being their preimages in $\tilde{G}$, respectively. Let $x\in\langle x_1^{p^{r_2}}\rangle$. Then $x=x_1^{\beta_1}$ with $0\leq\beta_1<p^{r_1}$, such that $p^{r_2}$ divides $\beta_1$. So $[\tilde{x},\tilde{x_i}]=[\tilde{x_1},\tilde{x_i}]^{\beta_1}=e$ for all $i\in[k]$, since $r_2\geq r_3\geq\dots\geq r_k$. Hence, $x\in{_d}\Delta_D(G)$ i.e. $\langle x_1^{p^{r_2}}\rangle\subseteq{_d}\Delta_D(G)$. 

Conversely, let $g\in {_d}\Delta_D(G)$. Let $g=x_1^{\alpha_1}x_2^{\alpha_2}\dots x_k^{\alpha_k}$ with $0\leq\alpha_i<p^{r_i}$. Then $g\sim x_1$, i.e. $[\tilde{x_1},\tilde{g}]=e$. Hence, $[\tilde{x_1},\tilde{g}]=\prod_{i=2}^k[\tilde{x_1},\tilde{x_i}]^{\alpha_i}=e$, i.e. $\alpha_i=0$ for all $i$ with $2\leq i\leq k$. Hence, $g=x_1^{\alpha_1}$ with $0\leq \alpha_1<p^{r_1}$. Again, since $g\in {_d}\Delta_D(G)$ we have $g\sim x_2$. So $[\tilde{g},\tilde{x_2}]=[\tilde{x_1},\tilde{x_2}]^{\alpha_1}=e$, which implies that $p^{r_2}$ divides $\alpha_1$, i.e. $g\in\langle x_1^{p^{r_2}}\rangle$. Hence, the result follows.
\end{proof}
Using Theorem \ref{dominant_abelian_p} and Lemma \ref{strong_product}, the set of dominant vertices of $\Delta_D(A)$ can be found for a finite non-cyclic abelian group $A$. Also, it is clear that $_d\Delta_D(G)=\{e\}$ if and only if $\langle x_1^{p^{r_2}}\rangle=\{e\}$. So, we have the following.
\begin{corollary}
For a finite non-cyclic abelian $p$-group $G$, we have $_d\Delta_D(G)=\{e\}$ if  and only if $r_1=r_2$.  
\end{corollary}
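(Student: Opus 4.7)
The plan is to deduce the corollary directly from Theorem \ref{dominant_abelian_p}, which we have just established. That theorem identifies the set of dominant vertices as a cyclic subgroup, namely $_d\Delta_D(G) = \langle x_1^{p^{r_2}} \rangle$, where $x_1$ is a fixed generator of the first (and largest) cyclic factor $C_{p^{r_1}}$. Thus, the condition $_d\Delta_D(G) = \{e\}$ translates into the purely arithmetic condition that the element $x_1^{p^{r_2}}$ is the identity in $C_{p^{r_1}}$.

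First I would observe that, since $x_1$ has order $p^{r_1}$, the element $x_1^{p^{r_2}}$ equals the identity if and only if $p^{r_1}$ divides $p^{r_2}$, which is equivalent to $r_1 \le r_2$. Combined with the standing assumption $r_1 \ge r_2 \ge \cdots \ge r_k$ on the decomposition of $G$, this pins down the equality $r_1 = r_2$. Conversely, if $r_1 = r_2$, then $x_1^{p^{r_2}} = x_1^{p^{r_1}} = e$, so $\langle x_1^{p^{r_2}}\rangle$ is trivial, and Theorem \ref{dominant_abelian_p} immediately yields $_d\Delta_D(G) = \{e\}$.

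Since the argument is a direct arithmetic consequence of the description of the dominant vertices already obtained, there is no real obstacle; the entire content lies in the preceding theorem. I would simply record the two implications cleanly and invoke Theorem \ref{dominant_abelian_p} in both directions.
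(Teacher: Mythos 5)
Your argument is correct and is exactly the paper's route: the corollary is read off from Theorem \ref{dominant_abelian_p} by noting that $\langle x_1^{p^{r_2}}\rangle$ is trivial precisely when $p^{r_1}\mid p^{r_2}$, i.e.\ $r_1\le r_2$, which together with $r_1\ge r_2$ forces $r_1=r_2$. No gaps.
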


For a finite non-cyclic abelian $p$-group $G$, the following can be said about the connectedness of $\Delta_D(G)^*$.

\begin{theorem}\label{connectivity_abelian_p}
Let $G$ be a finite non-cyclic abelian $p$-group. Then $\Delta_D(G)^*$ is disconnected if and only if $r_2=1$.
\end{theorem}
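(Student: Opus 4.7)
The plan is to exploit the commutator formula~(\ref{commutators}) together with the identification $_d\Delta_D(G)=\langle x_1^{p^{r_2}}\rangle$ from Theorem~\ref{dominant_abelian_p}, and to handle the two implications separately.

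For the \emph{only if} direction I show that $r_2=1$ forces disconnectedness. Since $r_2=1$ implies $r_i=1$ for every $i\geq 2$, formula~(\ref{commutators}) collapses to the single congruence $s_it_j-s_jt_i\equiv 0\pmod p$ for all $i<j$. I introduce the reduction $\rho:G\to\mathbb{F}_p^k$ sending $x_1^{a_1}\cdots x_k^{a_k}$ to $(a_1\bmod p,\,a_2,\dots,a_k)$. A direct check shows $\rho(g)=\mathbf{0}$ precisely when $g\in\langle x_1^p\rangle=\langle x_1^{p^{r_2}}\rangle$, and that two non-dominant elements $g,h$ satisfy $g\sim h$ in $\Delta_D(G)$ iff $\rho(g)$ and $\rho(h)$ are $\mathbb{F}_p$-proportional. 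Consequently the components of $\Delta_D(G)^*$ are in bijection with the $(p^k-1)/(p-1)\geq p+1\geq 3$ lines through the origin in $\mathbb{F}_p^k$, so $\Delta_D(G)^*$ is disconnected.

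For the \emph{if} direction I designate $u:=x_1^{p^{r_2-1}}$ as a hub; it is non-dominant because $p^{r_2}\nmid p^{r_2-1}$. The argument rests on two adjacency claims which I plan to verify directly from~(\ref{commutators}): \textbf{(H1)} $u\sim\omega$ for every $\omega\in\Omega_1(G)\setminus\{e\}$, where $\Omega_1(G):=\{g\in G:g^p=e\}$; and \textbf{(H2)} $u\sim g$ whenever $g^p\in\langle x_1^{p^{r_2}}\rangle$. For (H1), writing $\omega=\prod x_i^{b_ip^{r_i-1}}$, the only potentially non-trivial commutator contributions arise from pairs $(1,j)$ with $j\geq 2$ and equal $b_jp^{r_2+r_j-2}$, which vanish modulo $p^{r_j}$ precisely because $r_2\geq 2$. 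For (H2), the hypothesis $g^p\in\langle x_1^{p^{r_2}}\rangle$ forces $p^{r_i-1}\mid a_i$ for each $i\geq 2$, while $g\sim u$ demands only $p^{r_j-r_2+1}\mid a_j$; the slack $r_i-1\geq r_i-r_2+1$, equivalent to $r_2\geq 2$, ensures the former implies the latter.

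With (H1) and (H2) in hand I descend along $p$-th powers. For any non-dominant $g$, let $m\geq 1$ be the smallest integer with $g^{p^m}$ either equal to $e$ or contained in $\langle x_1^{p^{r_2}}\rangle$. Then $g,g^p,\dots,g^{p^{m-1}}$ are distinct non-dominant elements of $\langle g\rangle$, so consecutive terms are adjacent in $\mathcal{P}_e(G)\subseteq\Delta_D(G)$ by Theorem~\ref{inclusion}, producing a path inside $\Delta_D(G)^*$. If $g^{p^m}=e$ then $g^{p^{m-1}}\in\Omega_1(G)\setminus\{e\}$ and (H1) yields $g^{p^{m-1}}\sim u$; otherwise $(g^{p^{m-1}})^p\in\langle x_1^{p^{r_2}}\rangle$ and (H2) gives $g^{p^{m-1}}\sim u$. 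Either way $g$ lies in the component of $u$, so $\Delta_D(G)^*$ is connected. The principal obstacle is precisely (H2): without it the descent $g\mapsto g^p$ could land in $\langle x_1^{p^{r_2}}\rangle$ at an intermediate step and thereby vanish from the reduced graph, and the numerical gap $r_i-1\geq r_i-r_2+1$, enabled exactly by $r_2\geq 2$, is what keeps the inductive descent alive.
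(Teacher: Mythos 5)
Your proof is correct and follows essentially the same route as the paper: both directions rest on the commutator relation (\ref{commutators}) together with ${_d}\Delta_D(G)=\langle x_1^{p^{r_2}}\rangle$, the reverse direction uses a hub in $\langle x_1\rangle$ (the paper takes $x_1^p$ and reaches it from any vertex in at most two steps via a single well-chosen power $h^{p^{l-1}}$, whereas you take $x_1^{p^{r_2-1}}$ and descend one $p$-th power at a time), and the forward direction isolates the clique on the generators of $\langle x_1\rangle$ exactly as the paper does. Your reformulation of the $r_2=1$ case via lines in $\mathbb{F}_p^k$ packages the same computation while additionally yielding the count $(p^k-1)/(p-1)$ of components, but it is not a different argument.
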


\begin{proof}
Suppose $r_2=1$. Then $_d\Delta_D(G)=\langle x_1^p\rangle$. Denote by $E_{x_1}$ the set of generators of the subgroup $\langle x_1\rangle$ in $G$. Then $E_{x_1}\subseteq{V(\Delta_D(G)^*)}$. Let $y\in E_{x_1}$. Then $y=x_1^{\alpha}$, where ${\rm gcd}({\alpha},p)=1$. Let $g=x_1^{\beta_1}x_2^{\beta_2}\cdots x_k^{\beta_k}$, $0\leq\beta_i<p^{r_i}$ for all $i\in[k]$, be an element of $G$ such that $y\sim g$ in $\Delta_D(G)$. Then $[\tilde{y},\tilde{g}]=[\tilde{x_1}^{\alpha},\tilde{g}]=\prod_{i=2}^k[\tilde{x_1},\tilde{x_i}]^{\alpha\beta_i}=e$, i.e.~$a_{1i}^{\alpha\beta_i}=e$ for $2\leq i\leq k$. Since ${\rm gcd}({\alpha},p)=1$ and $0\leq\beta_i<p^{r_i}$, we have $\beta_i=0$ for $2\leq i\leq k$. Hence, we have $\mathcal{N}_{\Delta_D(G)}[y]=\langle x_1\rangle$. So, $\mathcal{N}_{\Delta_D(G)}[E_{x_1}]=\underset{y\in E_{x_1}}{\cup}\langle x_1\rangle=\langle x_1\rangle$. Now, since $_d\Delta_D(G)=\langle x_1^p\rangle$, we have $\mathcal{N}_{\Delta_D(G)^*}[E_{x_1}]=\langle x_1\rangle\setminus\langle x_1^p\rangle=E_{x_1}.$ So, $E_{x_1}$ and $V(\Delta_D(G)^*)\setminus E_{x_1}$ form a separation of $\Delta_D(G)^*$, and hence $\Delta_D(G)^*$ is disconnected. 

Conversely, let $r_2\geq 2$. Then $x_1^p\notin \langle x_1^{p^{r_2}}\rangle={_d}\Delta_D(G)$. Let $h\in V(\Delta_D(G)^*)$ such that $h=x_1^{\gamma_1p^{l_1}}x_2^{\gamma_2p^{l_2}}\cdots x_k^{\gamma_kp^{l_k}}$ with $0\leq\gamma_ip^{l_i}<p^{r_i}$, $0\leq l_i< r_i$ and $\gamma_i=0$ or ${\rm gcd}(\gamma_i,p)=1$ for all $i\in [k]$. 

\noindent\textbf{\underline{Case 1}}: $\gamma_i=0$ for $2\leq i\leq k$. Then $\gamma_1\neq 0$ and $l_1< r_2$. This implies $h\in \langle x_1\rangle$, hence $h= x_1^p$ or $h\sim x_1^p$. 

\noindent\textbf{\underline{Case 2}}: $\gamma_i\neq 0$ for some $i\in [k]\setminus 1$. Take $l=\underset{\substack{2\leq i\leq k,\\\gamma_i\neq 0}}{\rm max}(r_i-l_i)$. Then $l\geq 1$ and $l_i+l-1\geq r_i-1$ with equality occuring for at least one $i$. Now $h^{p^{l-1}}=x_1^{\gamma_1p^{l_1+l-1}}x_2^{\gamma_2p^{l_2+l-1}}\cdots x_k^{\gamma_kp^{l_k+l-1}}$. So, $h^{p^{l-1}}\notin \langle x_1^{p^{r_2}}\rangle={_d}\Delta_D(G)$. Also, $[\tilde{x_1}^p,\tilde{h}^{p^{l-1}}]=\prod_{i=2}^k[\tilde{x_1},\tilde{x_i}]^{\gamma_ip^{l_i+l}}=e$, since $l_i+l\geq r_i$. Hence, we have a path $h\sim h^{p^{l-1}}\sim x_1^p$ in $\Delta_D(G)^*$. So, for any two distinct elements $h_1,h_2\in V(\Delta_D(G)^*)$, we have paths from $h_1$ to $x_1^p$ as well as $h_2$ to $x_1^p$ in $\Delta_D(G)^*$. Hence, $\Delta_D(G)^*$ is connected. 
\end{proof}
With the above result and Theorem \ref{connectivity_nilpotent}, we now classify all finite non-cyclic abelian groups $A$, such that $\Delta_D(A)^*$ is connected. 

\begin{corollary}\label{connectivity_abelian}
Let $A$ be a finite non-cyclic abelian group. Then $\Delta_D(A)^*$ is disconnected if and only if $A\cong \mathfrak{G}\times \mathfrak{K}$, where $\mathfrak{G}$ is a cyclic group and $\mathfrak{K}$ is an elementary abelian $p$-group for some prime $p$.    
\end{corollary}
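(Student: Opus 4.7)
The plan is to combine Theorem \ref{connectivity_nilpotent} with Theorem \ref{connectivity_abelian_p}, using the fact that a finite abelian group is nilpotent and its Sylow subgroups coincide with those appearing in the nilpotent decomposition. Write $A = A_1 \times A_2 \times \cdots \times A_m$, where $A_j$ is the Sylow $p_j$-subgroup of $A$. Since $A$ is non-cyclic and each $A_j$ is abelian of coprime order, at least one $A_j$ must be non-cyclic.

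For the forward direction, suppose $\Delta_D(A)^*$ is disconnected. Then by Theorem \ref{connectivity_nilpotent}(i), at most one $A_j$ can be non-cyclic; combined with the observation above, exactly one (say $A_i$) is non-cyclic. By Theorem \ref{connectivity_nilpotent}(ii), $\Delta_D(A_i)^*$ must be disconnected. Applying Theorem \ref{connectivity_abelian_p} to the non-cyclic abelian $p_i$-group $A_i = \prod_{l=1}^{k} C_{p_i^{r_l}}$ with $r_1 \geq r_2 \geq \cdots \geq r_k$ and $k \geq 2$, we obtain $r_2 = 1$, which forces $r_2 = r_3 = \cdots = r_k = 1$. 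Thus $A_i \cong C_{p_i^{r_1}} \times (C_{p_i})^{k-1}$. Setting $\mathfrak{G} := C_{p_i^{r_1}} \times \prod_{j \neq i} A_j$ (cyclic by the Chinese remainder theorem, since the factors have pairwise coprime orders) and $\mathfrak{K} := (C_{p_i})^{k-1}$ (a non-trivial elementary abelian $p_i$-group) gives the desired decomposition.

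For the reverse direction, suppose $A \cong \mathfrak{G} \times \mathfrak{K}$ with $\mathfrak{G}$ cyclic and $\mathfrak{K}$ a non-trivial elementary abelian $p$-group (non-triviality of $\mathfrak{K}$ follows from $A$ being non-cyclic). Looking at the Sylow decomposition of $A$, every Sylow $q$-subgroup for $q \neq p$ is cyclic (coming only from $\mathfrak{G}$), while the Sylow $p$-subgroup has the shape $C_{p^s} \times \mathfrak{K}$ for some $s \geq 0$. In all cases this Sylow $p$-subgroup is non-cyclic with the invariants $r_2 = r_3 = \cdots = 1$, so Theorem \ref{connectivity_abelian_p} gives $\Delta_D(A_p)^*$ disconnected, and then Theorem \ref{connectivity_nilpotent}(ii) lifts disconnectedness to $\Delta_D(A)^*$.

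The proof is essentially a bookkeeping exercise on top of the two stated theorems, so I do not anticipate a serious obstacle. The only point requiring a little care is handling the case in which $p$ also divides $|\mathfrak{G}|$, so that the Sylow $p$-subgroup of $A$ picks up an additional cyclic factor beyond $\mathfrak{K}$; one must check that the resulting invariant still satisfies $r_2 = 1$, which it does since the extra factor merely enlarges $r_1$.
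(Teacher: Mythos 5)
Your proposal is correct and follows exactly the route the paper intends: the corollary is stated as an immediate consequence of combining Theorem \ref{connectivity_nilpotent} with Theorem \ref{connectivity_abelian_p}, and your bookkeeping (exactly one non-cyclic Sylow subgroup, which must then have $r_2=1$, i.e.\ be of the form $C_{p^{r_1}}\times(C_p)^{k-1}$) matches the paper's reasoning. The care you take with the case where $p$ divides $|\mathfrak{G}|$ is the right check and causes no problem.
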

Also, the next result follows from the arguments used in the proof of Theorem \ref{connectivity_nilpotent} and Theorem \ref{connectivity_abelian_p}.
\begin{theorem}\label{diameter_abelian}
Let $A$ be a finite non-cyclic abelian group such that $\Delta_D(A)^*$ is connected. Then ${\rm diam}(\Delta_D(A)^*)\leq 4$.   
\end{theorem}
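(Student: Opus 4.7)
The plan is to invoke Corollary \ref{connectivity_abelian} and Theorem \ref{connectivity_nilpotent} to restrict attention to two structural cases, and then to \emph{quantitatively} extract a diameter bound from the explicit paths already constructed in the proofs of Theorem \ref{connectivity_nilpotent} and Theorem \ref{connectivity_abelian_p}. Concretely, write the primary decomposition $A = G_1 \times G_2 \times \cdots \times G_m$ with $G_i$ the Sylow $p_i$-subgroup. Corollary \ref{connectivity_abelian} says $\Delta_D(A)^*$ is connected if and only if $A$ is not of the shape (cyclic) $\times$ (elementary abelian $p$-group), so exactly one of the following alternatives holds: (a) at least two of the $G_i$'s are non-cyclic; or (b) exactly one $G_i$, say $G_1$, is non-cyclic, and $\Delta_D(G_1)^*$ is connected (hence $r_2(G_1)\ge 2$ by Theorem \ref{connectivity_abelian_p}).

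In case (a), I would simply re-read the path construction inside the proof of Theorem \ref{connectivity_nilpotent}(i). For arbitrary $\mathfrak{g},\mathfrak{h}\in V(\Delta_D(A)^*)$, one picks Sylow components $l_1,l_2$ in which $\mathfrak{g}$ and $\mathfrak{h}$ have non-dominant parts; if $l_1\neq l_2$, the proof exhibits a 3-step path $\mathfrak{g}\sim \mathfrak{u}\sim \mathfrak{v}\sim \mathfrak{h}$, and if $l_1=l_2$, a 4-step path $\mathfrak{g}\sim \mathfrak{u}\sim \mathfrak{w}\sim \mathfrak{v}\sim \mathfrak{h}$ using the second non-cyclic Sylow subgroup. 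In either event $d(\mathfrak{g},\mathfrak{h})\le 4$.

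In case (b), the proof of Theorem \ref{connectivity_nilpotent}(ii) yields the isomorphism
\[
\Delta_D(A)^*\;\cong\;\Delta_D(G_1)^*\boxtimes \Delta_D(G_2)\boxtimes\cdots\boxtimes \Delta_D(G_m),
\]
where $\Delta_D(G_j)$ is complete for each $j\ge 2$ by Theorem \ref{complete}. Since distance in a strong product of graphs is the maximum of the coordinate-wise distances, and complete graphs have diameter at most $1$, it suffices to bound $\mathrm{diam}(\Delta_D(G_1)^*)$. For this I would invoke the hub argument inside the converse direction of Theorem \ref{connectivity_abelian_p}: there, every non-dominant $h$ of $G_1$ is shown to satisfy either $h\sim x_1^p$ (Case~1 of that proof) or $h\sim h^{p^{l-1}}\sim x_1^p$ (Case~2). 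Thus every vertex of $\Delta_D(G_1)^*$ is within distance $2$ of the fixed vertex $x_1^p$, and so $\mathrm{diam}(\Delta_D(G_1)^*)\le 4$, giving $\mathrm{diam}(\Delta_D(A)^*)\le 4$.

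The whole argument is essentially bookkeeping on top of what has already been established; the only delicate point is to be sure that the paths used to prove \emph{connectedness} in Theorems \ref{connectivity_nilpotent} and \ref{connectivity_abelian_p} really do have the claimed lengths $\le 4$ and $\le 2$ respectively, and to record the standard fact about distance in strong products. Neither step presents a serious obstacle, so the proof should be short, essentially a careful citation of the prior constructions.
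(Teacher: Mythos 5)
Your proposal is correct and follows exactly the route the paper intends: the theorem is stated in the paper with the remark that it ``follows from the arguments used in the proof of Theorem \ref{connectivity_nilpotent} and Theorem \ref{connectivity_abelian_p}'', and your case split (two non-cyclic Sylow subgroups giving paths of length at most $4$, versus one non-cyclic Sylow subgroup handled via the strong-product distance formula and the distance-$2$ hub $x_1^p$) is precisely that bookkeeping, carried out correctly.
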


\section{Deep commuting graph of some non-abelian groups}\label{Deep commuting graph of some non-abelian groups}

In this section, we study the deep commuting graph of dihedral groups, generalized quaternion groups, Heisenberg groups, symmetric and alternating groups.

\subsection{Dihedral and generalized quaternion groups}
We now study the deep commuting graph of the dihedral group $D_{2n}$ and the generalized quaternion group $Q_{4n}$. The following result yields the Schur multiplier of finite metacyclic groups.
\begin{theorem}[{\rm\cite[Theorem 2.11.3]{karpi}}]\label{meta_schur}
Let $G$ be a finite metacyclic group, say \[G=\langle a,b~|~a^m=1,~b^s=a^t,~bab^{-1}=a^r\rangle,\]
where the positive integers $m,r,s$ and $t$ satisfy \[r^s\equiv1~({\rm mod}~m),~~m|t(r-1)~~\text{and}~~t|m.\] 
Then $M(G)$ is isomorphic to $C_k$, where \[k=\frac{{\rm gcd}(r-1,m)\times l}{m}\] and \[l={\rm gcd}(1+r+\dots+r^{s-1},t).\]
\end{theorem}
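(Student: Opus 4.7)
The plan is to apply Hopf's formula: for any free presentation $G = F/R$, one has $M(G) \cong (R \cap [F,F])/[F,R]$. Here, take $F$ to be the free group on two letters $x, y$ and let $R$ be the normal closure in $F$ of the three words $w_1 = x^m$, $w_2 = y^s x^{-t}$, $w_3 = y x y^{-1} x^{-r}$, so that $G = F/R$. Because $[F,R]$ contains every commutator $[u, w_i^{\pm 1}]$, the quotient $R/[F,R]$ is abelian and the conjugation action of $F$ on it is trivial. Therefore $R/[F,R]$ is generated, as an abelian group, by the three classes $\bar w_1, \bar w_2, \bar w_3$.

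The next step is to read off the $\mathbb{Z}$-linear relations among these three generators. This is a standard Fox-calculus computation: every identity in $F$ of the form ``a product of $F$-conjugates of $w_i^{\pm 1}$ lies in $[F,R]$'' produces a relation. In practice one computes it by taking abelianized Fox derivatives of the $w_i$ with respect to $x$ and $y$ and evaluating under the augmentation $\mathbb{Z}[F] \to \mathbb{Z}$. The outcome is an explicit integer relation matrix whose cokernel is $R/[F,R]$, with entries built from $m$, $s$, $t$, $r$ and the geometric sum $\sigma := 1 + r + r^2 + \cdots + r^{s-1}$.

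Passing from $R/[F,R]$ to $(R \cap [F,F])/[F,R]$ amounts to taking the kernel of the map $R/[F,R] \to F^{\mathrm{ab}} \cong \mathbb{Z}^2$ induced by $\bar w_i \mapsto w_i^{\mathrm{ab}}$. Here $w_1^{\mathrm{ab}} = (m, 0)$, $w_2^{\mathrm{ab}} = (-t, s)$ and $w_3^{\mathrm{ab}} = (1-r, 0)$. Combining the presentation matrix for $R/[F,R]$ obtained in the previous paragraph with this kernel condition produces a small integer matrix, whose Smith normal form I would compute. The hypotheses $r^s \equiv 1 \pmod m$, $m \mid t(r-1)$ and $t \mid m$ make the calculation internally consistent and, after careful bookkeeping, produce a single nontrivial invariant factor equal to $\gcd(r-1,m)\cdot \gcd(\sigma, t)/m$, exactly the integer $k$ in the statement.

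The hard part is not the determinant-style computation of the order, which is essentially forced by the divisibility hypotheses and elementary manipulations with $\sigma$ and $r-1$, but rather establishing \emph{cyclicity}. For this I would exhibit one explicit element of $R/[F,R]$ whose image generates $(R \cap [F,F])/[F,R]$, and then verify that every other element of the kernel is an integer multiple of this one modulo the relations already present. A natural candidate generator is the class of a suitable monomial in $\bar w_1, \bar w_3$ chosen both to lie in $[F,F]$ and to realize the maximal order permitted by the relation matrix; confirming that this single class really generates the quotient is the principal technical step and is where a Smith-form computation or an explicit construction of a covering group is indispensable.
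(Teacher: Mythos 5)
The paper does not prove this statement at all: it is quoted verbatim from Karpilovsky, \emph{The Schur multiplier}, Theorem 2.11.3, and used as a black box to compute $M(D_{2n})$ and $M(Q_{4n})$. So there is no in-paper argument to compare against; what matters is whether your proposal stands on its own, and as written it does not. Your strategy (Hopf's formula $M(G)\cong (R\cap[F,F])/[F,R]$ applied to the two-generator presentation, followed by a Fox-calculus/Smith-normal-form computation) is a legitimate and indeed standard route to this theorem, but every step that actually carries content is deferred: the relation matrix for $R/[F,R]$ is described as ``an explicit integer relation matrix'' that you never write down, the invariant factor $\gcd(r-1,m)\gcd(\sigma,t)/m$ is announced as the outcome of ``careful bookkeeping'' that is not performed, and you yourself flag the cyclicity of the quotient as ``the principal technical step'' without executing it. In particular, nowhere do you use the three divisibility hypotheses $r^s\equiv 1\pmod m$, $m\mid t(r-1)$, $t\mid m$ in any concrete manipulation, so the formula for $k$ is asserted rather than derived. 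A reader cannot check a single equality in this argument.

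One substantive remark that would repair the step you identify as hardest: cyclicity does not require exhibiting an explicit generator. For a finite group $G=F/R$ with $F$ free of rank $n$, one has $R/[F,R]\cong M(G)\oplus\mathbb{Z}^n$, so $M(G)$ is the torsion subgroup of $R/[F,R]$. Since your $R$ is the normal closure of three words and conjugation acts trivially on $R/[F,R]$, this abelian group is generated by $\bar w_1,\bar w_2,\bar w_3$; being $3$-generated with free rank $2$, its torsion subgroup is forced to be cyclic. This disposes of cyclicity cheaply and reduces the whole theorem to computing the order of the torsion, i.e.\ to the determinant-style calculation you sketched --- but that calculation still has to be done before this can be called a proof.
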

As a consequence of the above theorem, we can say the following about the Schur multiplier and Schur cover of the dihedral group $D_{2n}$ and the generalized quaternion group $Q_{4n}$.
\begin{corollary}\label{Schur cover of D_2n and Q_4n}
Let $n\geq 2$ be a positive integer. Then the following hold:
\begin{enumerate}[\rm (i)]
    \item $M(Q_{4n})=\{e\}$ and $Q_{4n}$ is the unique Schur cover of itself;
    \item For $n$ is odd, $M(D_{2n})=\{e\}$ and $D_{2n}$ is the unique Schur cover of itself. For $n$ is even, $M(D_{2n})=C_2$ and $D_{4n}$ is a Schur cover of $D_{2n}$.
\end{enumerate}
\end{corollary}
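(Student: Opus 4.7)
The plan is to apply Theorem~\ref{meta_schur} directly to the standard metacyclic presentations of $Q_{4n}$ and $D_{2n}$ to compute their Schur multipliers, and then separately identify the Schur covers.

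First I would treat $Q_{4n} = \langle a,b \mid a^{2n}=e,\ b^2=a^n,\ bab^{-1}=a^{-1}\rangle$ by taking $m=2n$, $s=2$, $t=n$, $r=-1$ in Theorem~\ref{meta_schur}. A quick check confirms the three hypotheses $r^s\equiv 1\pmod m$, $m\mid t(r-1)$, $t\mid m$. Substituting into the formula gives $\gcd(r-1,m)=\gcd(-2,2n)=2$ and $1+r=0$, so $l=\gcd(0,n)=n$, whence $k = 2\cdot n/2n = 1$. Hence $M(Q_{4n})$ is trivial, and since every stem extension has its kernel inside $M(G)$, the only stem extension of $Q_{4n}$ is $Q_{4n}$ itself, making it its own unique Schur cover.

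Next I would apply Theorem~\ref{meta_schur} to $D_{2n}=\langle a,b\mid a^n=b^2=e,\ bab^{-1}=a^{-1}\rangle$ by taking $m=n$, $s=2$, $t=n$, $r=-1$. The analogous calculation yields $\gcd(r-1,m)=\gcd(2,n)$ and $l=n$, so $k=\gcd(2,n)$. Therefore $M(D_{2n})$ is trivial when $n$ is odd and isomorphic to $C_2$ when $n$ is even. The odd case is closed immediately by the same triviality argument as for $Q_{4n}$.

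The only remaining task is to exhibit $D_{4n}$ as a Schur cover of $D_{2n}$ when $n$ is even. I would define the surjection $\pi:D_{4n}\to D_{2n}$ on generators by sending the rotation of order $2n$ to the rotation of order $n$ and the reflection to the reflection; checking that the images satisfy the defining relations of $D_{2n}$ is immediate, and the kernel is clearly $\langle a^n\rangle\cong C_2$. The hypothesis that $n$ is even is used twice: it ensures $a^n\in\langle a^2\rangle=[D_{4n},D_{4n}]$ and also $a^n\in Z(D_{4n})$, so this is a stem extension. Since $|D_{4n}|=|D_{2n}|\cdot|M(D_{2n})|$, the order matches the universal upper bound for stem extensions, so $D_{4n}$ is a Schur cover. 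The main (and only) subtlety is this double use of parity in verifying centrality and membership in the commutator subgroup; the rest is bookkeeping with the metacyclic formula.
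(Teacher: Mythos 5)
Your proposal is correct and follows essentially the same route as the paper: both apply Theorem~\ref{meta_schur} to the standard metacyclic presentations (your $r=-1$ is the paper's $r=2n-1$, resp.\ $r=n-1$, taken modulo $m$) and then exhibit $D_{4n}\twoheadrightarrow D_{2n}$ as a stem extension of maximal order. One tiny quibble: $a^n$ is central in $D_{4n}$ for \emph{every} $n$ (it is the unique involution in the rotation subgroup of order $2n$), so the parity of $n$ is genuinely needed only for $a^n\in\langle a^2\rangle=[D_{4n},D_{4n}]$, not for centrality.
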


\begin{proof}
\noindent
    \begin{enumerate}[\rm (i)]
        \item Clearly $G=Q_{4n}$ is a metacyclic group with $m=2n$, $s=2$, $t=n$, $r=2n-1$. Hence, $l={\rm gcd}(1+r+\dots+r^{s-1},t)=n$ and ${\rm gcd}(r-1,m)=2$. So $k=1$, hence $M(Q_{4n})=\{e\}$. So, $Q_{4n}$ is the unique Schur cover of itself.
        \item Clearly $G=D_{2n}$ is a metacyclic group with $m=n$, $s=2$, $t=n$, $r=n-1$. Hence, $l={\rm gcd}(1+r+\dots+r^{s-1},t)=n$, ${\rm gcd}(r-1,m)=1$ when $n$ is odd and ${\rm gcd}(r-1,m)=2$ when $n$ is even. So, $k=1$ when $n$ is odd and $k=2$ when $n$ is even. Hence $n$ is odd, $M(D_{2n})=\{e\}$ and $D_{2n}$ is the unique Schur cover of itself. For $n$ is even, $M(D_{2n})=C_2$. Hence, a Schur cover is of order $4n$. Now, it can be seen that the following extension
        \[\begin{tikzcd}
        \{e\} \arrow[r] & C_2 \arrow[r, "\iota"] & D_{4n} \arrow[r, "\pi"] & D_{2n} \arrow[r] & \{e\}, 
        \end{tikzcd}\]
        where $\iota$ takes $C_2$ to the center of $D_{4n}$, is a stem extension. Hence, $D_{4n}$ is a Schur cover of $D_{2n}$.
    \end{enumerate}
\end{proof} 

In the above corollary, it was obtained that the generalized quaternion group $Q_{4n}$ and for an odd integer $n$, the dihedral group $D_{2n}$ are self covers. Hence, for any positive integer $n\geq 2$, $\Delta_D(Q_{4n})=\Delta(Q_{4n})$, and for $n$ is odd, $\Delta_D(D_{2n})=\Delta(D_{2n})$. For $n$ is even, we can say the following about $\Delta_D(D_{2n})$. 
\begin{theorem}\label{dihedral equality}
For any even positive integer $n$, $\Delta_D(D_{2n})=\mathcal{P}_e(D_{2n}).$   
\end{theorem}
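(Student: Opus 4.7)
The plan is to invoke Theorem~\ref{equality}(i) using the Schur cover $D_{4n}$ of $D_{2n}$ furnished by Corollary~\ref{Schur cover of D_2n and Q_4n}. Writing $D_{4n}=\langle A,B\mid A^{2n}=B^{2}=e,~BAB^{-1}=A^{-1}\rangle$, the projection $\pi:D_{4n}\to D_{2n}$ sends $A\mapsto a$, $B\mapsto b$, and has kernel $\ker\pi=\{e,A^{n}\}=Z(D_{4n})$. By Theorem~\ref{equality}(i), the equality $\Delta_{D}(D_{2n})=\mathcal{P}_{e}(D_{2n})$ is equivalent to the following statement: for every subgroup $H\leq D_{2n}$, if $\pi^{-1}(H)$ is abelian then $H$ is cyclic. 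I will verify the contrapositive: if $H$ is non-cyclic, then $\pi^{-1}(H)$ is non-abelian.

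The first step is to recall the classical classification of subgroups of $D_{2n}$: they are either the cyclic rotation subgroups $\langle a^{d}\rangle$ for $d\mid n$, or subgroups of the form $\langle a^{d},a^{i}b\rangle$ of order $2n/d$, which are dihedral when $n/d\geq 2$ and cyclic of order~$2$ when $d=n$. So I need only handle non-cyclic $H$ of the latter type with $d<n$. For such $H$, an order count (using $|\ker\pi|=2$) shows $|\pi^{-1}(H)|=4n/d$, and one checks that $A^{d}$ and $A^{i}B$ both lie in $\pi^{-1}(H)$, so $\pi^{-1}(H)=\langle A^{d},A^{i}B\rangle$. The key calculation is then that $A^{d}$ has order $2n/d\geq 4$ in $D_{4n}$ while $(A^{i}B)A^{d}(A^{i}B)^{-1}=A^{-d}$, which identifies $\pi^{-1}(H)$ as dihedral of order $4n/d$ and therefore non-abelian. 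This completes the check and yields the desired equality.

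The main obstacle is really just bookkeeping with indices modulo $n$ versus $2n$ and ensuring the generators of $H$ lift to generators of $\pi^{-1}(H)$ of the expected orders. The underlying conceptual point is that $Z(D_{4n})=\ker\pi$, so that the dihedral relations inside each non-cyclic subgroup of $D_{2n}$ lift faithfully to genuine dihedral relations in the preimage rather than being collapsed by any additional central element. With Theorem~\ref{equality}(i) in hand, no further machinery is required.
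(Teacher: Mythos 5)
Your proof is correct, but it takes a genuinely different route from the paper. The paper argues directly on the graph: it notes that in $\mathcal{P}_e(D_{2n})$ the rotations $\langle a\rangle$ form a clique while each reflection $a^ib$ is adjacent only to $e$, and then computes commutators of lifts in the Schur cover $D_{4n}$, namely $[\tilde a^i\tilde b,\tilde a^j\tilde b]=\tilde a^{2(i-j)}$ and $[\tilde a^i,\tilde a^j\tilde b]=\tilde a^{-2i}$, to conclude that each reflection is adjacent only to $e$ in $\Delta_D(D_{2n})$ as well. You instead invoke the general criterion of Theorem~\ref{equality}(i) and verify it via the classical subgroup classification of $D_{2n}$: every non-cyclic subgroup is $\langle a^d,a^ib\rangle$ with $d\mid n$, $d<n$, and its full preimage $\pi^{-1}(H)=\langle A^d,A^iB\rangle$ is dihedral of order $4n/d\geq 8$, hence non-abelian (your order count $|\pi^{-1}(H)|=4n/d$ together with the relations $o(A^d)=2n/d\geq 4$ and $(A^iB)A^d(A^iB)^{-1}=A^{-d}$ is exactly what is needed, and the observation that $\ker\pi=Z(D_{4n})=\{e,A^n\}=\langle (A^d)^{n/d}\rangle$ confirms the preimage is generated by these two lifts). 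Your approach is conceptually cleaner and delegates the graph-theoretic content entirely to the Cameron--Kuzma criterion, at the cost of needing the subgroup lattice of $D_{2n}$; the paper's direct computation is more elementary and, as a by-product, yields the explicit neighborhood structure of $\Delta_D(D_{2n})$ (each reflection has $\mathcal{N}_{\Delta_D(D_{2n})}(a^ib)=\{e\}$), which is the kind of local information reused elsewhere in the article. Both arguments rest on the same identification of $D_{4n}$ as a Schur cover from Corollary~\ref{Schur cover of D_2n and Q_4n}.
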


\begin{proof}
Consider the dihedral group $D_{2n}= \langle a, b ~\lvert~ a^{n} = b^2 = e, ~ ab = ba^{-1} \rangle$. Then $\mathcal{P}_e(D_{2n})$ is the graph where the elements of $\langle a\rangle$ in $D_{2n}$ form a clique of $\mathcal{P}_e(D_{2n})$ and all other elements $a^ib$ of $D_{2n}$ are adjacent only to the vertex $e$ in $\mathcal{P}_e(D_{2n})$. Since $E(\mathcal{P}_e(D_{2n}))\subseteq E(\Delta_D(D_{2n}))$, the elements of $\langle a\rangle$ in $D_{2n}$ form a clique in $\Delta_D(D_{2n})$ also. We will now show that all other elements $a^ib$ of $D_{2n}$ are adjacent only to the vertex $e$ in $\Delta_D(D_{2n})$ also.

Recall the projection map $\pi$ from the Schur cover $D_{4n}$ onto $D_{2n}$. Let the group $D_{4n}$ be defined by $D_{4n} = \langle \tilde{a}, \tilde{b} ~\lvert~ \tilde{a}^{2n} = \tilde{b}^2 = e, ~ \tilde{a}\tilde{b} = \tilde{b}\tilde{a}^{-1} \rangle,$ where $\pi(\tilde{a})=a$ and $\pi(\tilde{b})=b$. Let, $a^ib, a^jb\in D_{2n}$ with $0\leq j<i<n$. Then $a^ib\sim a^jb$ in $\Delta_D(D_{2n})$ if and only if $[\tilde{a}^i\tilde{b}, \tilde{a}^j\tilde{b}]=e$ in $D_{4n}$. Now, $[\tilde{a}^i\tilde{b}, \tilde{a}^j\tilde{b}]=\tilde{a}^{2(i-j)}=e$ if and only if $i=j$, which is contradictory. Hence, $a^ib\nsim a^jb$ in $\Delta_D(D_{2n})$.

Again, consider $a^i, a^jb\in D_{2n}$ with $0\leq i,j<n$. Then $a^i\sim a^jb$ in $\Delta_D(D_{2n})$ if and only if $[\tilde{a}^i, \tilde{a}^j\tilde{b}]=e$ in $D_{4n}$. Now, $[\tilde{a}^i, \tilde{a}^j\tilde{b}]=\tilde{a}^{-2i}=e$ if and only if $i=0$. Then $a^i\sim a^jb$ if and only if $a^i=e$. So, we have $\mathcal{N}_{\Delta_D(D_{2n})}(a^ib)=\{e\}$ for $0\leq i<n$. Hence, the result follows.
\end{proof}

\subsection{Heisenberg group}\label{Heisenberg group}

Recall the definition (Section \ref{Preliminaries}) of the Heisenberg group $H_3(\mathbb{Z}/p^k\mathbb{Z})$ over the ring $\mathbb{Z}/p^k\mathbb{Z}$. It can be seen that for $y_1^{i_1}y_2^{i_2}w^{i_3},y_1^{j_1}y_2^{j_2}w^{j_3}\in H_3(\mathbb{Z}/p^k\mathbb{Z})$, we have $[y_1^{i_1}y_2^{i_2}w^{i_3},y_1^{j_1}y_2^{j_2}w^{j_3}]=w^{i_1j_2-i_2j_1}$ and $(y_1^{i_1}y_2^{i_2}w^{i_3})^l=y_1^{li_1}y_2^{li_2}w^{(li_3-\frac{l(l-1)}{2}i_1i_2)}$. So, $H_3(\mathbb{Z}/p^k\mathbb{Z})'=\langle w\rangle$ and $H_3(\mathbb{Z}/p^k\mathbb{Z})$ is a $p$-group of nilpotency class $2$. Also, for $k=1$, every non-trivial element of $H_3(\mathbb{Z}/p\mathbb{Z})$ is of order $p$.

Now, from \cite[Theorem 1.1]{hatui} and the paragraph after that, we can deduce that the Schur multiplier $M(H_3(\mathbb{Z}/p^k\mathbb{Z}))=C_{p^k}\times C_{p^k}$ and the following group is a Schur cover of $H_3(\mathbb{Z}/p^k\mathbb{Z})$.
\begin{equation}\label{heisenberg_schur_cover}
  \begin{aligned}\\
     \tilde{H_3(\mathbb{Z}/p^k\mathbb{Z})}:=\langle \tilde{y_1},\tilde{y_2},\tilde{w},w_1,w_2~\vert~[\tilde{y_1},&\tilde{y_2}]=\tilde{w},~[\tilde{y_1},\tilde{w}]=w_1,~[\tilde{y_2},\tilde{w}]=w_2,\\
     &\tilde{y_1}^{p^k}=\tilde{y_2}^{p^k}=\tilde{w}^{p^k}=w_1^{p^k}=w_2^{p^k}=e\rangle~\text{of class }3.\\
    \end{aligned}\\
\end{equation}
In addition, we have $\pi(\tilde{y_1})=y_1$, $\pi(\tilde{y_2})=y_2$ and $\pi(\tilde{w})=w$, where $\pi$ is the projection map from $\tilde{H_3(\mathbb{Z}/p^k\mathbb{Z})}$ to $H_3(\mathbb{Z}/p^k\mathbb{Z})$. From (\ref{heisenberg_schur_cover}), we have $[\tilde{y_1},\tilde{w}]=w_1\neq e$. So, $y_1\nsim w$ in $\Delta_D(H_3(\mathbb{Z}/p^k\mathbb{Z}))$. So, $E(\Delta_D(H_3(\mathbb{Z}/p^k\mathbb{Z})))\subsetneq E(\Delta(H_3(\mathbb{Z}/p^k\mathbb{Z})))$ for any $k\in\mathbb{N}$. We now compare the enhanced power graph and the deep commuting graph of the Heisenberg group $H_3(\mathbb{Z}/p^k\mathbb{Z})$ over the ring $\mathbb{Z}/p^k\mathbb{Z}$.
\begin{theorem}\label{heisenberg_equality}
For $k\in\mathbb{N}$, $\mathcal{P}_e(H_3(\mathbb{Z}/p^k\mathbb{Z}))=\Delta_D(H_3(\mathbb{Z}/p^k\mathbb{Z}))$ if and only if $k=1$.    
\end{theorem}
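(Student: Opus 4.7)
My plan is to handle the two directions separately. Since $\mathcal{P}_e \subseteq \Delta_D$ by Theorem \ref{inclusion}, it suffices to show $\Delta_D \subseteq \mathcal{P}_e$ when $k = 1$, and to produce an explicit witness of strict inclusion when $k \geq 2$. Both arguments proceed by computing commutators in the Schur cover $\tilde{H}$ of $H_3(\mathbb{Z}/p^k\mathbb{Z})$ described in (\ref{heisenberg_schur_cover}). This cover is metabelian of nilpotency class $3$; in particular $\gamma_3(\tilde{H}) \subseteq Z(\tilde{H})$, so the collection identities
\[
[A^n, B] = [A, B]^n\,[[A, B], A]^{\binom{n}{2}}, \qquad [A, B^m] = [A, B]^m\,[[A, B], B]^{\binom{m}{2}}
\]
are valid for $A, B \in \{\tilde{y}_1, \tilde{y}_2, \tilde{w}\}$, and the abelianness of $\tilde{H}'$ trivialises every conjugation acting within the derived subgroup.

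For $k \geq 2$ I would take $u = y_1^{p^{k-1}}$ and $v = y_2^{p^{k-1}}$ as the witness. Both have order $p$, and their images in the quotient $H_3(\mathbb{Z}/p^k\mathbb{Z}) / \langle w\rangle \cong (\mathbb{Z}/p^k\mathbb{Z})^2$ are linearly independent modulo $p$, so $\langle u, v\rangle \cong C_p \times C_p$ is non-cyclic; hence $u \nsim v$ in $\mathcal{P}_e$. A direct two-step application of the collection identities in $\tilde{H}$ produces first $[\tilde{y}_1^{p^{k-1}}, \tilde{y}_2] = \tilde{w}^{p^{k-1}} w_1^{-\binom{p^{k-1}}{2}}$, and then
\[
[\tilde{u}, \tilde{v}] = \tilde{w}^{p^{2(k-1)}}\, w_1^{-p^{k-1}\binom{p^{k-1}}{2}}\, w_2^{-p^{k-1}\binom{p^{k-1}}{2}}.
\]
Every exponent is divisible by $p^k$ when $k \geq 2$ (since $2(k-1) \geq k$, and for odd $p$ the factor $p^{k-1}\binom{p^{k-1}}{2}$ carries $p^{2(k-1)}$), while $\tilde{w}, w_1, w_2$ all have order $p^k$; hence $[\tilde{u}, \tilde{v}] = e$, so $u \sim v$ in $\Delta_D$.

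For $k = 1$, let $x = y_1^{a_1} y_2^{a_2} w^{a_3}$ and $y = y_1^{b_1} y_2^{b_2} w^{b_3}$ be non-identity with $[\tilde{x}, \tilde{y}] = e$. Since every non-identity element of $H_3(\mathbb{Z}/p\mathbb{Z})$ has order $p$, it suffices to show $y \in \langle x\rangle$. I will expand the commutator in $\tilde{H}$, using metabelianness to discard intra-$\tilde{H}'$ conjugations and the collection identities to handle the powers $\tilde{y}_i^{a_i}$, into the normal form
\[
[\tilde{x}, \tilde{y}] = \tilde{w}^{\,a_1 b_2 - a_2 b_1}\, w_1^{B}\, w_2^{C},
\]
where $B$ and $C$ are explicit quadratic polynomials in the exponents $a_i, b_j$. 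Vanishing of the first factor mod $p$ forces $(b_1, b_2) \equiv \lambda (a_1, a_2) \pmod{p}$ for some $\lambda \in \mathbb{Z}/p\mathbb{Z}$ (the edge case $(a_1, a_2) = (0,0)$ is handled separately: then $x = w^{a_3}$ with $a_3 \neq 0$, and $[\tilde{w}^{a_3}, \tilde{y}] = w_1^{-a_3 b_1} w_2^{-a_3 b_2} = e$ forces $y \in \langle w\rangle$). Assuming WLOG $a_1 \neq 0$ and substituting $b_i = \lambda a_i$ into $B \equiv 0 \pmod{p}$, then using the identity $\binom{\lambda a_1}{2} - \lambda \binom{a_1}{2} \equiv \binom{\lambda}{2} a_1^2 \pmod{p}$, reduces the constraint to $b_3 \equiv \lambda a_3 - a_1 a_2 \binom{\lambda}{2} \pmod{p}$. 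By the explicit power formula $(y_1^{a_1} y_2^{a_2} w^{a_3})^\lambda = y_1^{\lambda a_1} y_2^{\lambda a_2} w^{\lambda a_3 - \binom{\lambda}{2} a_1 a_2}$ in $H_3(\mathbb{Z}/p\mathbb{Z})$, this is exactly the statement $y = x^\lambda$, so $y \in \langle x\rangle$; the third equation $C \equiv 0 \pmod{p}$ turns out to be proportional to the second and gives nothing new.

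The main obstacle is carrying out the commutator expansion for $k = 1$ cleanly. One must track the $w_i$-contributions produced when $\tilde{w}^{a_3}$ and $\tilde{w}^{b_3}$ are moved past the $\tilde{y}_i$-factors (via $[\tilde{w}, \tilde{y}_i] = w_i^{-1}$, both central), the binomial-coefficient contributions arising from the expansions of $[\tilde{y}_1^{a_1}, \tilde{y}_2^{b_2}]$ and $[\tilde{y}_2^{a_2}, \tilde{y}_1^{b_1}]$, and the conjugation $[\tilde{y}_1^{a_1}, \tilde{y}_2^{b_2}]^{\tilde{y}_2^{a_2}}$ which contributes an extra power of $w_2$ via $\tilde{w}^{\tilde{y}_2^{a_2}} = \tilde{w}\,w_2^{-a_2}$. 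Once these three sources are consolidated, the resulting $B$-expression admits the clean factorisation $a_1\bigl[b_3 - \lambda a_3 + a_1 a_2 \binom{\lambda}{2}\bigr]$, which matches the coefficient appearing in the $\lambda$-th power map of $H_3(\mathbb{Z}/p\mathbb{Z})$. This algebraic coincidence is precisely what forces $y = x^\lambda$ for $k = 1$, while for $k \geq 2$ the ability to raise each $\tilde{y}_i$ to a $p$-th power (landing us inside the kernel of the reduction $\tilde{H} \to H_3(\mathbb{Z}/p\mathbb{Z})$) breaks the link and permits the counterexample.
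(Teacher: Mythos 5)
Your argument is correct, but it takes a genuinely different route from the paper in both directions. For $k\geq 2$ the paper does not compute anything in the Schur cover directly: it observes that $o(y_1)=p^k>p$ and $\langle y_1\rangle\cap H_3(\mathbb{Z}/p^k\mathbb{Z})'=\{e\}$ and invokes Proposition \ref{noncyclic_commutator}(ii), whose witness is essentially the pair $y_1^{p},\,w^{p^{k-1}}$; your witness $y_1^{p^{k-1}},\,y_2^{p^{k-1}}$ is different but equally valid (I checked: $\langle u,v\rangle\cong C_p\times C_p$, and every exponent in $[\tilde u,\tilde v]=\tilde w^{p^{2(k-1)}}w_1^{-p^{k-1}\binom{p^{k-1}}{2}}w_2^{-p^{k-1}\binom{p^{k-1}}{2}}$ is divisible by $p^k$ once $k\geq 2$ and $p$ is odd). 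For $k=1$ the paper runs a case analysis over the four shapes of centralizer $C_{H_3(\mathbb{Z}/p\mathbb{Z})}(y_1^{i_1}y_2^{i_2}w^{i_3})$ and checks $\mathcal{N}_{\Delta_D}[x]=\mathcal{N}_{\mathcal{P}_e}[x]$ class by class, whereas you do one uniform normal-form computation. Your key identities survive verification: with $\tilde x=\tilde y_1^{a_1}\tilde y_2^{a_2}\tilde w^{a_3}$ and $\tilde y=\tilde y_1^{b_1}\tilde y_2^{b_2}\tilde w^{b_3}$ one gets $B=a_1b_3-a_3b_1-b_2\binom{a_1}{2}+a_2\binom{b_1}{2}$ and $C=a_2b_3-a_3b_2-a_1\binom{b_2}{2}+b_1\binom{a_2}{2}+a_2b_2(b_1-a_1)$, and after substituting $b_i=\lambda a_i$ both collapse, exactly as you claim, to $B=a_1\bigl(b_3-\lambda a_3+a_1a_2\binom{\lambda}{2}\bigr)$ and $C=a_2\bigl(b_3-\lambda a_3+a_1a_2\binom{\lambda}{2}\bigr)$, matching the $\lambda$-th power map. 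The trade-off: the paper's case analysis is longer but each step is a short explicit commutator evaluation using Lemma \ref{commutator_in_center}; your version requires one careful global collection computation (whose bookkeeping you correctly identify as the main risk) but delivers the conclusion in a single stroke and makes transparent \emph{why} the coincidence with the power map occurs. To turn your sketch into a complete proof you would only need to display $B$ and $C$ before substitution, as above.
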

\begin{proof}
Let $k\geq 2$. Then $o(y_1)=p^k>p$. Also, we have $[y_1^m,y_2]=w^m\neq e$ for $0<m<p^k$. So, $\langle y_1\rangle\cap H_3(\mathbb{Z}/p^k\mathbb{Z})'=\{e\}$. Hence, from Proposition \ref{noncyclic_commutator}(ii) it follows that $E(\mathcal{P}_e(H_3(\mathbb{Z}/p^k\mathbb{Z})))\subsetneq E(\Delta_D(H_3(\mathbb{Z}/p^k\mathbb{Z})))$.

Now, let $k=1$ and $y_1^{i_1}y_2^{i_2}w^{i_3}\in H_3(\mathbb{Z}/p\mathbb{Z})$ with $0\leq i_1,i_2,i_3<p$ such that $y_1^{i_1}y_2^{i_2}w^{i_3}\neq e$. Let, $y_1^{j_1}y_2^{j_2}w^{j_3}$ be another element of $H_3(\mathbb{Z}/p\mathbb{Z})$. Then, $y_1^{j_1}y_2^{j_2}w^{j_3}\in C_{H_3(\mathbb{Z}/p\mathbb{Z})}(y_1^{i_1}y_2^{i_2}w^{i_3})$ if and only if $[y_1^{i_1}y_2^{i_2}w^{i_3},y_1^{j_1}y_2^{j_2}w^{j_3}]=w^{i_1j_2-i_2j_1}=e$, if and only if $i_1j_2-i_2j_1\equiv 0~({\rm mod}~p)$. 

\vspace{.3cm}
\noindent\underline{\textbf{Case (i) ($i_1, i_2\neq 0$):}} Here, 
$\begin{aligned}[t]
 i_1j_2-i_2j_1&\equiv 0~({\rm mod}~p)\\
 \Rightarrow i_1j_2&\equiv i_2j_1~({\rm mod}~p)\\
 \Rightarrow i_1^{-1}j_1&\equiv i_2^{-1}j_2~({\rm mod}~p).\\
\end{aligned}$\\
Hence, \[C_{H_3(\mathbb{Z}/p\mathbb{Z})}(y_1^{i_1}y_2^{i_2}w^{i_3})=\langle y_1^{i_1}y_2^{i_2}w^{i_3}\rangle\times\langle w\rangle.\] Clearly, $\langle y_1^{i_1}y_2^{i_2}w^{i_3}\rangle=\mathcal{N}_{\mathcal{P}_e(H_3(\mathbb{Z}/p\mathbb{Z}))}[y_1^{i_1}y_2^{i_2}w^{i_3}]\subseteq \mathcal{N}_{\Delta_D(H_3(\mathbb{Z}/p\mathbb{Z}))}[y_1^{i_1}y_2^{i_2}w^{i_3}]$. Let $(y_1^{i_1}y_2^{i_2})^lw^{j_3}$ be an element of $C_{H_3(\mathbb{Z}/p\mathbb{Z})}(y_1^{i_1}y_2^{i_2}w^{i_3})$ such that $(y_1^{i_1}y_2^{i_2})^lw^{j_3}\sim y_1^{i_1}y_2^{i_2}w^{i_3}$ in $\Delta_D(H_3(\mathbb{Z}/p\mathbb{Z}))$. Then we have $[\tilde{y_1}^{i_1}\tilde{y_2}^{i_2}\tilde{w}^{i_3}, (\tilde{y_1}^{i_1}\tilde{y_2}^{i_2})^l\tilde{w}^{j_3}]=e$. Now, from Lemma \ref{commutator_in_center} it follows that $[\tilde{y_1}^{i_1}\tilde{y_2}^{i_2}\tilde{w}^{i_3}, (\tilde{y_1}^{i_1}\tilde{y_2}^{i_2})^l\tilde{w}^{j_3}]=[\tilde{y_1}^{i_1}\tilde{y_2}^{i_2},\tilde{w}^{j_3}][\tilde{w}^{i_3},(\tilde{y_1}^{i_1}\tilde{y_2}^{i_2})^l]$. Again, from Lemma \ref{commutator_in_center}, we have $[\tilde{y_1}^{i_1}\tilde{y_2}^{i_2},\tilde{w}^{j_3}]=[\tilde{y_1},\tilde{w}]^{i_1j_3}[\tilde{y_2},\tilde{w}]^{i_2j_3}=w_1^{i_1j_3}w_2^{i_2j_3}$ and $[\tilde{w}^{i_3},(\tilde{y_1}^{i_1}\tilde{y_2}^{i_2})^l]=[\tilde{w}^{i_3},\tilde{y_1}^{i_1}\tilde{y_2}^{i_2}]^l=[\tilde{w},\tilde{y_1}]^{li_1i_3}[\tilde{w},\tilde{y_2}]^{li_2i_3}=w_1^{-li_1i_3}w_2^{-li_2i_3}$. So we can conclude that $[\tilde{y_1}^{i_1}\tilde{y_2}^{i_2}\tilde{w}^{i_3}, (\tilde{y_1}^{i_1}\tilde{y_2}^{i_2})^l\tilde{w}^{j_3}]=w_1^{i_1j_3-li_1i_3}w_2^{i_2j_3-li_2i_3}=e$ if and only if $w_1^{i_1j_3-li_1i_3}=w_2^{i_2j_3-li_2i_3}=e$, if and only if $i_1j_3-li_1i_3\equiv 0~({\rm mod}~p)$ and $i_2j_3-li_2i_3\equiv 0~({\rm mod}~p)$. Now, since $i_1, i_2\neq 0$, we have $j_3\equiv li_3~({\rm mod}~p)$. So, $w^{j_3}=w^{li_3}$, i.e.~$(y_1^{i_1}y_2^{i_2})^lw^{j_3}\in\langle y_1^{i_1}y_2^{i_2}w^{i_3}\rangle$. Hence, we have $\mathcal{N}_{\mathcal{P}_e(H_3(\mathbb{Z}/p\mathbb{Z}))}[y_1^{i_1}y_2^{i_2}w^{i_3}]= \mathcal{N}_{\Delta_D(H_3(\mathbb{Z}/p\mathbb{Z}))}[y_1^{i_1}y_2^{i_2}w^{i_3}]$.

\vspace{.3cm}
\noindent\underline{\textbf{Case (ii) ($i_1=0, i_2\neq 0$):}} Here, 
$\begin{aligned}[t]
 i_1j_2-i_2j_1&\equiv 0~({\rm mod}~p)\\
 \Rightarrow i_2j_1&\equiv 0~({\rm mod}~p)\\
 \Rightarrow j_1&\equiv 0~({\rm mod}~p).\\
\end{aligned}$\\
Hence, \[C_{H_3(\mathbb{Z}/p\mathbb{Z})}(y_2^{i_2}w^{i_3})=\langle y_2^{i_2}w^{i_3}\rangle\times\langle w\rangle.\] Clearly, $\langle y_2^{i_2}w^{i_3}\rangle=\mathcal{N}_{\mathcal{P}_e(H_3(\mathbb{Z}/p\mathbb{Z}))}[y_2^{i_2}w^{i_3}]\subseteq \mathcal{N}_{\Delta_D(H_3(\mathbb{Z}/p\mathbb{Z}))}[y_2^{i_2}w^{i_3}]$. Let $y_2^{li_2}w^{j_3}$ be an element of $C_{H_3(\mathbb{Z}/p\mathbb{Z})}(y_2^{i_2}w^{i_3})$ such that $y_2^{li_2}w^{j_3}\sim y_2^{i_2}w^{i_3}$ in $\Delta_D(H_3(\mathbb{Z}/p\mathbb{Z}))$. Then we have $[\tilde{y_2}^{i_2}\tilde{w}^{i_3}, \tilde{y_2}^{li_2}\tilde{w}^{j_3}]=e$. From Lemma \ref{commutator_in_center} we have $[\tilde{y_2}^{i_2}\tilde{w}^{i_3}, \tilde{y_2}^{li_2}\tilde{w}^{j_3}]=[\tilde{y_2}^{i_2}, \tilde{w}^{j_3}][\tilde{w}^{i_3}, \tilde{y_2}^{li_2}]=w_2^{i_2j_3-li_2i_3}=e$ if and only if $i_2j_3-li_2i_3\equiv 0~({\rm mod}~p)$. Since $i_2\neq 0$, we have $j_3\equiv li_3~({\rm mod}~p)$. So, $w^{j_3}=w^{li_3}$, i.e.~$y_2^{li_2}w^{j_3}\in \langle y_2^{i_2}w^{i_3}\rangle$. Hence, we have $\mathcal{N}_{\mathcal{P}_e(H_3(\mathbb{Z}/p\mathbb{Z}))}[y_2^{i_2}w^{i_3}]= \mathcal{N}_{\Delta_D(H_3(\mathbb{Z}/p\mathbb{Z}))}[y_2^{i_2}w^{i_3}]$.

\vspace{.3cm}
\noindent\underline{\textbf{Case (iii) ($i_1\neq 0, i_2=0$):}} Here, 
$\begin{aligned}[t]
 i_1j_2-i_2j_1&\equiv 0~({\rm mod}~p)\\
 \Rightarrow i_1j_2&\equiv 0~({\rm mod}~p)\\
 \Rightarrow j_2&\equiv 0~({\rm mod}~p).\\
\end{aligned}$\\
Now, using an argument similar to the above case, it follows that $\mathcal{N}_{\mathcal{P}_e(H_3(\mathbb{Z}/p\mathbb{Z}))}[y_1^{i_1}w^{i_3}]= \mathcal{N}_{\Delta_D(H_3(\mathbb{Z}/p\mathbb{Z}))}[y_1^{i_1}w^{i_3}]$.

\vspace{.3cm}
\noindent\underline{\textbf{Case (iv) ($i_1=i_2=0$):}} Here, \[C_{H_3(\mathbb{Z}/p\mathbb{Z})}(w^{i_3})=H_3(\mathbb{Z}/p\mathbb{Z}).\] Clearly, $\langle w^{i_3}\rangle=\mathcal{N}_{\mathcal{P}_e(H_3(\mathbb{Z}/p\mathbb{Z}))}[w^{i_3}]\subseteq \mathcal{N}_{\Delta_D(H_3(\mathbb{Z}/p\mathbb{Z}))}[w^{i_3}]$. Let $y_1^{j_1}y_2^{j_2}w^{j_3}$ be an element of $C_{H_3(\mathbb{Z}/p\mathbb{Z})}(w^{i_3})$ such that $y_1^{j_1}y_2^{j_2}w^{j_3}\sim w^{i_3}$ in $\Delta_D(H_3(\mathbb{Z}/p\mathbb{Z}))$. Then, $[\tilde{w}^{i_3},\tilde{y_1}^{j_1}\tilde{y_2}^{j_2}\tilde{w}^{j_3}]=e$. Now, $[\tilde{w}^{i_3},\tilde{y_1}^{j_1}\tilde{y_2}^{j_2}\tilde{w}^{j_3}]=[\tilde{w}^{i_3},\tilde{y_1}^{j_1}][\tilde{w}^{i_3},\tilde{y_2}^{j_2}]=w_1^{-i_3j_1}w_2^{-i_3j_2}=e$ if and only if $w_1^{-i_3j_1}=w_2^{-i_3j_2}=e$, if and only if $i_3j_1\equiv i_3j_2\equiv 0~({\rm mod}~p)$. Since $i_3\neq 0$, we have $j_1\equiv j_2\equiv 0~({\rm mod}~p)$. Hence, the result follows.
\end{proof}

Our next results study the dominant vertices of $\Delta_D(H_3(\mathbb{Z}/p^k\mathbb{Z}))$ and connectivity of the reduced graph $\Delta_D(H_3(\mathbb{Z}/p^k\mathbb{Z}))^*$. 
\begin{proposition}\label{dominant_heisenberg}
For the Heisenberg group $H_3(\mathbb{Z}/p^k\mathbb{Z})$, ${_d}\Delta_D(H_3(\mathbb{Z}/p^k\mathbb{Z}))=\{e\}$.    
\end{proposition}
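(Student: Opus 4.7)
The plan is to apply Lemma \ref{dominant_center}, which tells us that ${_d}\Delta_D(H_3(\mathbb{Z}/p^k\mathbb{Z}))=\pi(Z(\tilde{H_3(\mathbb{Z}/p^k\mathbb{Z})}))$, where $\pi$ is the projection map from the Schur cover. So the task reduces to computing $Z(\tilde{H_3(\mathbb{Z}/p^k\mathbb{Z})})$ using the explicit Schur cover in (\ref{heisenberg_schur_cover}), and then showing its image under $\pi$ is trivial.

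First I would show that $Z(\tilde{H_3(\mathbb{Z}/p^k\mathbb{Z})})=\langle w_1,w_2\rangle$. The inclusion $\langle w_1,w_2\rangle\subseteq Z(\tilde{H_3(\mathbb{Z}/p^k\mathbb{Z})})$ is immediate from the defining relations in (\ref{heisenberg_schur_cover}), since $w_1,w_2$ lie in the third term of the lower central series of a class-$3$ group. For the reverse inclusion, let $g\in Z(\tilde{H_3(\mathbb{Z}/p^k\mathbb{Z})})$. Since $\langle w_1,w_2\rangle\subseteq\ker\pi$ (as $\pi(w_1)=[y_1,w]=e$ and $\pi(w_2)=[y_2,w]=e$), the image $\pi(g)$ lies in $Z(H_3(\mathbb{Z}/p^k\mathbb{Z}))=\langle w\rangle$. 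So $g=\tilde{w}^c w_1^d w_2^e$ for suitable $c,d,e$, with the normal form coming from the presentation.

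The crucial step is then to force $c\equiv 0\pmod{p^k}$. For this I would compute $[g,\tilde{y_1}]$. Since $w_1, w_2$ are central, $[g,\tilde{y_1}]=[\tilde{w}^c,\tilde{y_1}]$. Because $[\tilde{w},\tilde{y_1}]=w_1^{-1}$ is central, Lemma \ref{commutator_in_center}(i) applied inductively gives $[\tilde{w}^c,\tilde{y_1}]=w_1^{-c}$. The centrality of $g$ forces $w_1^{-c}=e$, and since $o(w_1)=p^k$, we conclude $c\equiv 0\pmod{p^k}$. Hence $g\in\langle w_1,w_2\rangle$, proving $Z(\tilde{H_3(\mathbb{Z}/p^k\mathbb{Z})})=\langle w_1,w_2\rangle$.

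Finally, $\pi(Z(\tilde{H_3(\mathbb{Z}/p^k\mathbb{Z})}))=\pi(\langle w_1,w_2\rangle)=\{e\}$ as noted above, giving ${_d}\Delta_D(H_3(\mathbb{Z}/p^k\mathbb{Z}))=\{e\}$. The only genuinely delicate point is the commutator computation in the class-$3$ cover — one must be careful to invoke Lemma \ref{commutator_in_center} rather than Corollary \ref{nilpo_class_2}, since $\tilde{H_3(\mathbb{Z}/p^k\mathbb{Z})}$ is not of class $2$. Everything else is a straightforward application of the explicit presentation and Lemma \ref{dominant_center}.
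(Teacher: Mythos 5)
Your proof is correct. It reaches the same conclusion by working ``upstairs'' in the Schur cover: you invoke Lemma \ref{dominant_center} to reduce the problem to computing $Z(\tilde{H_3(\mathbb{Z}/p^k\mathbb{Z})})$, show that this center is exactly $\langle w_1,w_2\rangle=\ker\pi$, and conclude that its image is trivial. The paper instead works ``downstairs'': it uses the containment ${_d}\Delta_D(G)\subseteq{_d}\Delta(G)=Z(G)=\langle w\rangle$ to restrict attention to powers of $w$, and then checks that $w^{\alpha}\sim y_1$ forces $[\tilde{w},\tilde{y_1}]^{\alpha}=w_1^{\alpha}=e$, hence $\alpha=0$. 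The two routes are close to dual of one another, and the decisive computation is identical in both --- namely that $[\tilde{w}^{c},\tilde{y_1}]=[\tilde{w},\tilde{y_1}]^{c}=w_1^{-c}$ via Lemma \ref{commutator_in_center}(i) (correctly, as you note, \emph{not} Corollary \ref{nilpo_class_2}, since the cover has class $3$), together with $o(w_1)=p^k$. What your version buys in addition is an explicit identification of $Z(\tilde{H_3(\mathbb{Z}/p^k\mathbb{Z})})$, a slightly stronger statement than the proposition itself; what the paper's version buys is brevity, since it never needs to enumerate the full preimage $\pi^{-1}(\langle w\rangle)$ or argue about normal forms in the presentation. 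Both are complete and correct.
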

\begin{proof}
Since $\Delta_D(H_3(\mathbb{Z}/p^k\mathbb{Z}))$ is a spanning subgraph of $\Delta(H_3(\mathbb{Z}/p^k\mathbb{Z}))$, it is easy to see that ${_d}\Delta_D(H_3(\mathbb{Z}/p^k\mathbb{Z}))\subseteq{_d}\Delta(H_3(\mathbb{Z}/p^k\mathbb{Z}))$. Now, ${_d}\Delta(H_3(\mathbb{Z}/p^k\mathbb{Z}))=\langle w\rangle$. Hence, any element of ${_d}\Delta_D(H_3(\mathbb{Z}/p^k\mathbb{Z}))$ must be of the form $w^\alpha$, where $0\leq \alpha<p^k$. Let $w^\alpha\in{_d}\Delta_D(H_3(\mathbb{Z}/p^k\mathbb{Z}))$, with $0\leq \alpha<p^k$. Then $w^\alpha\sim y_1$ in $\Delta_D(H_3(\mathbb{Z}/p^k\mathbb{Z}))$, i.e.~$[\tilde{w}^\alpha,\tilde{y_1}]=e$. Now, from Lemma \ref{commutator_in_center} and equation (\ref{heisenberg_schur_cover}) we have $[\tilde{w}^\alpha,\tilde{y_1}]=[\tilde{w},\tilde{y_1}]^\alpha=e$ if and only if $\alpha=0$. Hence, the result follows. 
\end{proof}
\begin{proposition}\label{connectivity_heisenberg}
For $k\in\mathbb{N}$, $\Delta_D(H_3(\mathbb{Z}/p^k\mathbb{Z}))^*$ is disconnected if and only if $k=1$.  
\end{proposition}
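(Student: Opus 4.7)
The plan is to treat the two cases $k=1$ and $k\ge 2$ separately. By Proposition~\ref{dominant_heisenberg}, the only dominant vertex is $e$, so $\Delta_D(H_3(\mathbb{Z}/p^k\mathbb{Z}))^*$ is obtained from $\Delta_D(H_3(\mathbb{Z}/p^k\mathbb{Z}))$ by merely deleting the identity, and I only need to track adjacencies among non-identity elements.

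For $k=1$, I would invoke Theorem~\ref{heisenberg_equality} to replace $\Delta_D$ by $\mathcal{P}_e$. Since $H_3(\mathbb{Z}/p\mathbb{Z})$ has exponent $p$, two distinct non-identity elements are adjacent in $\mathcal{P}_e$ precisely when they generate the same cyclic subgroup of order $p$, because any two subgroups of prime order $p$ intersect only at $e$. Hence $\Delta_D(H_3(\mathbb{Z}/p\mathbb{Z}))^*$ is a disjoint union of $p^2+p+1$ cliques of size $p-1$ (one for each subgroup of order $p$), and is therefore disconnected since $p^2+p+1\ge 2$.

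For $k\ge 2$, I would show that every non-identity vertex lies within distance $2$ of the central vertex $w^{p^{k-1}}$ in the reduced graph. The key is the commutator computation in the Schur cover $\tilde{H_3(\mathbb{Z}/p^k\mathbb{Z})}$ from~\ref{heisenberg_schur_cover}: iterating Lemma~\ref{commutator_in_center} (which applies because $w_1,w_2$ are central) yields
\[
[\tilde{y_1}^{i_1}\tilde{y_2}^{i_2}\tilde{w}^{i_3},\,\tilde{w}^{p^{k-1}}] \;=\; w_1^{i_1p^{k-1}}\,w_2^{i_2p^{k-1}}.
\]
Since this commutator is itself central, raising $\tilde{g}$ to the $p$-th power gives $[\tilde{g}^p,\tilde{w}^{p^{k-1}}]=w_1^{i_1p^k}w_2^{i_2p^k}=e$, so $g^p$ always deeply commutes with $w^{p^{k-1}}$. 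If $g^p\neq e$, then $g\sim g^p$ in $\mathcal{P}_e\subseteq\Delta_D$ produces a path $g\sim g^p\sim w^{p^{k-1}}$ (collapsing to length $1$ in the degenerate case $g^p=w^{p^{k-1}}$). If instead $g^p=e$, the defining relations force $p^{k-1}\mid i_1$ and $p^{k-1}\mid i_2$, and then the displayed commutator already vanishes because $2(k-1)\ge k$; thus $g$ is directly adjacent to $w^{p^{k-1}}$.

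The main obstacle is the bookkeeping inside the nilpotency-class-$3$ cover: justifying each iteration of Lemma~\ref{commutator_in_center} by tracking which intermediate commutators are central, and cleanly dispatching the degenerate sub-cases (for instance $g^p=g$ forces the $p$-element $g$ to satisfy $g^{p-1}=e$, hence $g=e$). These are short but need to be spelled out so that the diameter bound is rigorous.
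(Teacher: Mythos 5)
Your proposal is correct and follows essentially the same route as the paper: for $k=1$ it reduces to $\mathcal{P}_e$ via Theorem~\ref{heisenberg_equality} (you count the $p^2+p+1$ cliques directly where the paper cites the disconnectedness of the punctured power graph of a non-cyclic, non-quaternion $p$-group), and for $k\ge 2$ it connects every vertex to the hub $w^{p^{k-1}}$ through the path $g\sim g^p\sim w^{p^{k-1}}$, exactly as in the paper. The only cosmetic difference is that you split cases on whether $g^p=e$ while the paper splits on whether $p$ divides both exponents $i_1,i_2$; both partitions lead to the same commutator computations.
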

\begin{proof}
For $k=1$, Theorem \ref{heisenberg_equality} concludes that $\Delta_D(H_3(\mathbb{Z}/p^k\mathbb{Z}))=\mathcal{P}_e(H_3(\mathbb{Z}/p^k\mathbb{Z}))$. So $\Delta_D(H_3(\mathbb{Z}/p^k\mathbb{Z}))^*=\mathcal{P}_e(H_3(\mathbb{Z}/p^k\mathbb{Z}))^*$. It was shown in \cite[Theorem 28]{aa} that for any finite $p$-group $G$, $\mathcal{P}_e(G)=\mathcal{P}(G)$  and in \cite[Corollary 4.1]{moga} that $\mathcal{P}(G)\setminus\{e\}$ is disconnected if $G$ is neither cyclic nor isomorphic to $Q_{2^n}$. Hence $\Delta_D(H_3(\mathbb{Z}/p^k\mathbb{Z}))^*$ is disconnected.

For $k\geq 2$, let $y_1^{i_1}y_2^{i_2}w^{i_3}\in V(H_3(\mathbb{Z}/p^k\mathbb{Z})^*)$. If $p$ divides both $i_1$ and $i_2$, then from Lemma \ref{commutator_in_center} and equation (\ref{heisenberg_schur_cover}) we have $[\tilde{y_1}^{i_1}\tilde{y_2}^{i_2}\tilde{w}^{i_3},\tilde{w}^{p^{k-1}}]=[\tilde{y_1},\tilde{w}]^{i_1p^{k-1}}[\tilde{y_2},\tilde{w}]^{i_2p^{k-1}}=e$. So $y_1^{i_1}y_2^{i_2}w^{i_3}\sim w^{p^{k-1}}$. Suppose that at least one of $i_1$ and $i_2$ is not divisible by $p$. Without loss of generality, we assume that $p\nmid i_1$. Then $(y_1^{i_1}y_2^{i_2}w^{i_3})^p=y_1^{i_1p}y_2^{i_2p}w^{(i_3p-i_1i_2\frac{p(p-1)}{2})}\neq e$, since $p^k\nmid i_ip$. Now, $[\tilde{y_1}^{i_1p}\tilde{y_2}^{i_2p}\tilde{w}^{({i_3p-i_1i_2\frac{p(p-1)}{2}})},\tilde{w}^{p^{k-1}}]=[\tilde{y_1},\tilde{w}]^{i_1p^k}[\tilde{y_2},\tilde{w}]^{i_2p^k}=e$. So, $y_1^{i_1}y_2^{i_2}w^{i_3}\sim (y_1^{i_1}y_2^{i_2}w^{i_3})^p\sim w^{p^{k-1}}$ is a path from $y_1^{i_1}y_2^{i_2}w^{i_3}$ to $w^{p^{k-1}}$. So $\Delta(H_3(\mathbb{Z}/p^k\mathbb{Z}))^*$ is connected. Hence, the result follows.
\end{proof}

\begin{theorem}\label{diameter_heisenberg}
For any positive integer $k\geq 2$, ${\rm diam}(\Delta_D(H_3(\mathbb{Z}/p^k\mathbb{Z}))^*)\leq 4$.    
\end{theorem}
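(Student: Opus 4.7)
The plan is to argue that $w^{p^{k-1}}$ serves as a ``near-central'' vertex in $\Delta_D(H_3(\mathbb{Z}/p^k\mathbb{Z}))^*$: every non-dominant vertex lies within distance $2$ of it, whence the triangle inequality forces the diameter to be at most $4$. By Proposition \ref{dominant_heisenberg}, the only dominant vertex is $e$, and since $k\ge 2$, the element $w^{p^{k-1}}$ has order $p$, so it is a genuine vertex of the reduced graph.

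First I would fix an arbitrary non-identity element $g = y_1^{i_1}y_2^{i_2}w^{i_3} \in V(\Delta_D(H_3(\mathbb{Z}/p^k\mathbb{Z}))^*)$ and show $d(g,w^{p^{k-1}}) \le 2$, recycling the computation in the proof of Proposition \ref{connectivity_heisenberg}. If $p\mid i_1$ and $p\mid i_2$, then Lemma \ref{commutator_in_center} together with relation (\ref{heisenberg_schur_cover}) gives
\[
[\tilde{g},\tilde{w}^{p^{k-1}}] = [\tilde{y_1},\tilde{w}]^{i_1 p^{k-1}}[\tilde{y_2},\tilde{w}]^{i_2 p^{k-1}} = e,
\]
so $g\sim w^{p^{k-1}}$, meaning $d(g,w^{p^{k-1}})\le 1$. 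Otherwise, without loss of generality $p\nmid i_1$, and the explicit power formula from Section \ref{Heisenberg group} yields $g^p = y_1^{i_1 p}y_2^{i_2 p}w^{i_3 p - i_1 i_2 p(p-1)/2}\ne e$ (since $p^k \nmid i_1 p$), so $g^p$ is a non-dominant vertex; the same commutator computation then shows both $g\sim g^p$ (they lie in a common cyclic subgroup, hence are enhanced-power-adjacent and therefore deep-commuting adjacent by Theorem \ref{inclusion}) and $g^p \sim w^{p^{k-1}}$, giving the path $g\sim g^p\sim w^{p^{k-1}}$ of length $2$.

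Finally, for any two non-dominant vertices $u,v$, concatenating the two paths of length at most $2$ from $u$ and $v$ to $w^{p^{k-1}}$ produces a walk of length at most $4$ between them, hence $d(u,v)\le 4$. This yields ${\rm diam}(\Delta_D(H_3(\mathbb{Z}/p^k\mathbb{Z}))^*) \le 4$, as claimed.

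I do not anticipate a real obstacle here: the work was already done in Proposition \ref{connectivity_heisenberg}, and the diameter bound is simply a packaging of that distance-$2$ estimate to $w^{p^{k-1}}$. The only small care is to confirm that intermediate vertices such as $g^p$ and the chosen anchor $w^{p^{k-1}}$ are non-identity (and hence lie in the reduced graph), which follows from $p\nmid i_1$ and $k\ge 2$ respectively.
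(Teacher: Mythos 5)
Your proposal is correct and follows essentially the same route as the paper: the paper states this theorem immediately after Proposition \ref{connectivity_heisenberg}, whose proof already exhibits, for every non-dominant vertex $g$, a path of length at most $2$ to the anchor $w^{p^{k-1}}$ (directly when $p$ divides both $i_1$ and $i_2$, and via $g^p$ otherwise), and the bound ${\rm diam}\le 4$ is exactly the triangle-inequality packaging of that estimate. Your added checks that $g^p$ and $w^{p^{k-1}}$ are non-identity are the right small verifications and are consistent with the paper's computation.
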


\subsection{Symmetric and alternating groups}\label{Deep commuting graph of symmetric and alternating groups}

Throughout this subsection, for any permutation $\tau$, by $\tilde{\tau}$ we denote a preimage of $\tau$ in the corresponding Schur cover, unless otherwise mentioned. In order to understand the structure of the deep commuting graph of $S_n$ and $A_n$, let us first look at the Schur multiplier and Schur cover of $S_n$ and $A_n$. 

For $n=3$, both $M(S_3)$ and $M(A_3)$ are trivial. For $n\geq 4$, the Schur multiplier and Schur covers of $S_n$ and $A_n$ are described in the following results. 
\begin{theorem}[{\rm\cite[Theorem 2.12.3]{karpi}}]\label{symmetric_schur_cover}
 For $n\geq 4$, $M(S_n)=C_2$. Define the group $\tilde{S_n}$ by 
 \[\tilde{S_n}=\langle g_1,g_2,\dots,g_{n-1},z~\vert~ g_i^2=(g_jg_{j+1})^3=(g_kg_l)^2=z,~z^2=[z,g_i]=e\rangle,\]
 where $1\leq i\leq n-1,~1\leq j\leq n-2,~k\leq l-2$. Then $\tilde{S_n}$ is a Schur cover of $S_n$.
\end{theorem}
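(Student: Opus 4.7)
The plan is to verify the two claims separately: $M(S_n) \cong C_2$ for $n \geq 4$, and the presented group $\tilde{S_n}$ is a Schur cover. The standard route constructs the double cover explicitly and then bounds the Schur multiplier from above.

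For the construction, I would first define a map $\pi : \tilde{S_n} \to S_n$ by sending $g_i \mapsto s_i$ (the Coxeter generator $(i,i+1)$) and $z \mapsto e$. Each defining relation of $\tilde{S_n}$ becomes a true Coxeter relation of $S_n$ after applying $\pi$, so $\pi$ is well-defined and surjective; the relations $z^2 = e$ and $[z,g_i] = e$ force $\ker \pi = \langle z \rangle$ to be central of order at most two. Hence $|\tilde{S_n}| \leq 2 \cdot n!$, with equality exactly when $z \neq e$. To certify $z \neq e$, I would exhibit an explicit faithful projective representation of $S_n$ realizing $\tilde{S_n}$, most cleanly via the Clifford algebra / pin group: represent $g_i$ by $(v_i - v_{i+1})/\sqrt{2}$ inside the Clifford algebra generated by $v_1,\ldots,v_n$ with $v_j v_k = -v_k v_j$ and $v_j^2 = -1$. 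One checks that the relations $g_i^2 = z$, $(g_j g_{j+1})^3 = z$, $(g_k g_l)^2 = z$ hold with $z = -1$, while the Coxeter relations of $S_n$ hold only up to sign. This simultaneously shows $z \neq e$ and $|\tilde{S_n}| = 2\cdot n!$. Writing $z$ as a commutator using the given relations (for instance combining $g_i^2 = z$ with the braid relation $(g_j g_{j+1})^3 = z$ to isolate $z$ as a product of commutators of the $g_i$'s) confirms $z \in [\tilde{S_n},\tilde{S_n}]$, so the extension is a stem extension.

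For the upper bound $|M(S_n)| \leq 2$, I would apply the Hopf formula $M(S_n) \cong (R \cap [F,F])/[R,F]$ to the Coxeter presentation $S_n = F/R$, or use the Lyndon--Hochschild--Serre spectral sequence for the split extension $1 \to A_n \to S_n \to C_2 \to 1$ together with the classical computation of $M(A_n)$ (which is $C_2$ for $n \geq 4$ with $n \neq 6,7$, and requires separate handling in the exceptional cases). The main obstacle is precisely this upper bound: producing the double cover is a straightforward combinatorial construction once the Clifford model is in hand, but proving that no strictly larger stem extension exists needs either a careful cohomological argument or a combinatorial reduction of the Hopf formula that tracks which commutators of Coxeter relators lie in $[R,F]$.
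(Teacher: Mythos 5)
This statement is quoted in the paper directly from Karpilovsky (Theorem 2.12.3) with no proof supplied, so there is no internal argument to compare against; your proposal has to be measured against the standard literature proof, which is essentially the Schur/Clifford-algebra construction you describe. The first half of your outline is correct and essentially complete: the map $\pi$ killing $z$ recovers exactly the Coxeter presentation, so $\ker\pi=\langle z\rangle$ is central of order at most $2$; the pin-group model with $g_i\mapsto (v_i-v_{i+1})/\sqrt{2}$, $v_j^2=-1$, does satisfy $g_i^2=(g_jg_{j+1})^3=(g_kg_l)^2=-1$ (the braid case is a short Clifford computation), which certifies $z\neq e$ and $\lvert\tilde{S_n}\rvert=2\cdot n!$. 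For the stem condition there is a cleaner route than the one you gesture at: for $n\geq 4$ one has disjoint generators $g_k,g_l$ with $k\leq l-2$, and since $g_k^{-1}=zg_k$, one computes $[g_k,g_l]=(g_kg_l)^2=z$ directly; equivalently, in the abelianization the relation $(g_kg_l)^2=z$ reads $\bar z^2=\bar z$, forcing $\bar z=1$. Your suggestion of extracting $z$ from the braid relation alone does not obviously work and is in any case unnecessary.

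The genuine gap is the upper bound $\lvert M(S_n)\rvert\leq 2$, which is the substantive half of the assertion $M(S_n)=C_2$: exhibiting one nontrivial stem extension of order $2\cdot n!$ only proves $\lvert M(S_n)\rvert\geq 2$. You name two possible methods (Hopf's formula on the Coxeter presentation, or Lyndon--Hochschild--Serre for $1\to A_n\to S_n\to C_2\to 1$) but execute neither, and both carry real difficulties you do not address. The spectral-sequence route is close to circular, since $M(A_n)$ is classically computed together with (or after) $M(S_n)$, and it requires controlling the $S_n/A_n$-action on $M(A_n)$, the transgression, and in particular showing that the extra $C_3$ in $M(A_6)=M(A_7)=C_6$ does not survive to $M(S_n)$ --- none of which is routine. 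The Hopf-formula route on the Coxeter presentation is the honest classical computation, but it is precisely the part that requires tracking which products of conjugates of relators lie in $[R,F]$, and your proposal acknowledges this as "the main obstacle" without resolving it. As it stands, the proposal proves that $\tilde{S_n}$ is a stem extension of $S_n$ by $C_2$, but not that it is a stem extension of \emph{maximal} order, i.e., a Schur cover.
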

\noindent Also, we have $\pi(g_i)=t_i$ and $\pi(z)=e$, where $t_i$ is the transposition $(i,i+1)$ in $S_n$ and $\pi$ is the projection map from $\tilde{S_n}$ onto $S_n$.
\begin{theorem}[{\cite[Theorem 2.12.5]{karpi}}]\label{alternating_schur_cover}
Let $n\geq 4$. Then $M(A_n)=C_2$ when $n\notin \{6,7\}$, and $M(A_n)=C_6$ when $n\in \{6,7\}$. Define the group $\tilde{A_n}$ as follows:
\begin{enumerate}[\rm (i)]
    \item $\tilde{A_n}=[\tilde{S_n},\tilde{S_n}]$ if $n\notin \{6,7\}$;
    \item $\begin{aligned}\\
     \tilde{A_n}=\langle h_1,\dots,h_{n-2},\mathfrak{z}~\vert~h_1^3=h_i^2=(h_{i-1}h_i)^3=&(h_jh_k)^2=\mathfrak{z}^3,\\
     &(h_1h_4)^2=\mathfrak{z},~\mathfrak{z}^6=[\mathfrak{z},h_t]=e\rangle, 
    \end{aligned}$\\
    where $2\leq i\leq n-2,~1\leq j<k-1,~k\leq n-2,~(j,k)\neq(1,4)$ and $1\leq t\leq n-2$, for $n\in \{6,7\}$.
\end{enumerate}
Then $\tilde{A_n}$ is the Schur cover of $A_n$.
\end{theorem}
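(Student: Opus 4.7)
The plan splits into the generic case $n \notin \{6,7\}$ and the two exceptional cases $n \in \{6,7\}$. In both parts the task has two components: determining $|M(A_n)|$ and exhibiting an explicit stem extension whose kernel has that order. The small case $n=4$ can be checked by hand (the double cover of $A_4$ is the binary tetrahedral group), so the main work is for $n \geq 5$.

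For the generic case I would re-use the Schur cover $\tilde{S_n}$ from Theorem~\ref{symmetric_schur_cover} and set $\tilde{A_n} := [\tilde{S_n}, \tilde{S_n}]$. First I would check that $z \in [\tilde{S_n}, \tilde{S_n}]$ by abelianising the given presentation of $\tilde{S_n}$: in $\tilde{S_n}^{\mathrm{ab}}$ the relations $g_i^2 = z$ and $(g_kg_l)^2 = z$ (for $k \le l-2$, which exist since $n \ge 4$) yield $z^2 = z$, so $z$ is trivial in the abelianisation. Consequently $\pi([\tilde{S_n}, \tilde{S_n}]) = [S_n, S_n] = A_n$ and the kernel of $\pi$ restricted to $\tilde{A_n}$ equals $\langle z\rangle \cong C_2$. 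For $n \geq 5$ the group $A_n$ is simple and perfect, so $\tilde{A_n}$ is forced to be perfect as well and the extension is automatically of stem type. Matching this against the classical computation $|M(A_n)| = 2$—obtained, for instance, from the Hopf formula applied to a Coxeter-style presentation of $A_n$, or by restriction-transfer in the Lyndon--Hochschild--Serre spectral sequence attached to $\{e\} \to A_n \to S_n \to C_2 \to \{e\}$ together with $M(S_n)=C_2$—confirms that $\tilde{A_n}$ is of maximal order among stem extensions of $A_n$, hence a Schur cover.

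For the exceptional cases $n \in \{6, 7\}$ the Schur multiplier acquires an additional factor of $C_3$, and detecting and realising this factor is the main obstacle, since it is invisible to any cover lifted from $\tilde{S_n}$. I would detect it either through the Hopf formula applied to a presentation of $A_n$, or more conceptually via the isomorphism $A_6 \cong PSL_2(9)$ and the known triple cover $3\cdot A_6$ arising from a $3$-dimensional projective unitary representation (with a parallel treatment for $A_7$, whose triple cover is obtained from the embedding of the triple cover into a suitable larger quasisimple group). Combining such a triple cover with the double cover $[\tilde{S_n}, \tilde{S_n}]$ produces a central extension of $A_n$ by $C_6$, and the explicit presentation in part~(ii) is then verified by showing that $\mathfrak{z}$ is central of order exactly $6$, that $\tilde{A_n}/\langle \mathfrak{z}\rangle$ recovers a standard Coxeter presentation of $A_n$, and that $\tilde{A_n}^{\mathrm{ab}} = \{e\}$ so the extension is of stem type. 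The most delicate bookkeeping is checking that no unintended relations collapse $\mathfrak{z}$ to lower order and that the Coxeter-type relations $(h_{i-1}h_i)^3 = \mathfrak{z}^3$, $(h_jh_k)^2 = \mathfrak{z}^3$ and $(h_1h_4)^2 = \mathfrak{z}$ are mutually consistent and not overdetermined.
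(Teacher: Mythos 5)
The paper offers no proof of this statement: it is imported verbatim from Karpilovsky \cite[Theorem 2.12.5]{karpi}, so there is no internal proof to compare against and your sketch can only be judged on its own terms. It contains one genuine logical gap in the generic case. From $z=e$ in $\tilde{S_n}^{\mathrm{ab}}$ you correctly conclude $z\in[\tilde{S_n},\tilde{S_n}]$, hence $[\tilde{S_n},\tilde{S_n}]=\pi^{-1}(A_n)$ and the restriction of $\pi$ is a central extension of $A_n$ by $C_2$. But the next step, ``$A_n$ is simple and perfect, so $\tilde{A_n}$ is forced to be perfect as well and the extension is automatically of stem type,'' is false as a general principle: $A_n\times C_2$ is a central extension of the perfect group $A_n$ and is neither perfect nor stem. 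What you must actually rule out is the split possibility $[\tilde{S_n},\tilde{S_n}]\cong A_n\times\langle z\rangle$, and that requires an explicit computation placing $z$ in the derived subgroup of $\pi^{-1}(A_n)$ --- for instance $[\tilde{\tau_1}\tilde{\tau_2},\tilde{\tau_3}\tilde{\tau_4}]=z$ for preimages of $(12),(34),(13),(24)$, which is exactly the computation the paper performs (for a different purpose) in Proposition \ref{induced_over_alternating} via Lemma \ref{preimage_transposition}. Once $z\in[\pi^{-1}(A_n),\pi^{-1}(A_n)]$ is established, perfectness of $\tilde{A_n}$ and the stem property both follow; without it your argument does not close.

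Beyond this, the proposal delegates essentially all of the hard content to ``classical computations'': the upper bound $|M(A_n)|\leq 2$ for $n\geq 8$ (and $n=4,5$), the existence and the exact order of the exceptional multiplier $C_6$ for $n\in\{6,7\}$ (which rests on the nontrivial construction of the triple covers of $A_6$ and $A_7$), and the verification that the presentation in (ii) defines a group of order exactly $6\cdot|A_n|$ rather than one in which $\mathfrak{z}$ collapses. Naming the Hopf formula or the Lyndon--Hochschild--Serre spectral sequence does not discharge these steps; they are the substance of the theorem. Since the paper itself treats the result as a black box from \cite{karpi}, your sketch is a reasonable orientation to where the difficulties lie, but as written it is not a proof.
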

\noindent Also, we have $\pi(h_i)=q_i$ and $\pi(\mathfrak{z})=e$, where $q_1$ is the $3$-cycle $(123)$ and $q_i$ is the permutation $(12)(i+1,i+2)$ for $2\leq i\leq n-2$, and $\pi$ is the projection map from $\tilde{A_n}$ onto $A_n$.

It is easy to see, that for any permutation $s\in S_n$, if $\tilde{s}$ is a preimage of $s$ in $\tilde{S_n}$, then $\pi^{-1}(s)=\{\tilde{s},z\tilde{s}\}$. With the above observation, we have the following lemma.
\begin{lemma}\label{preimage_transposition}
Let $\tau_1$ and $\tau_2$ be two disjoint transpositions in $S_n$ and $\tilde{\tau_1}$ and $\tilde{\tau_2}$ be preimages of $\tau_1$ and $\tau_2$ in $\tilde{S_n}$, respectively. Then $(\tilde{\tau_1})^2=(\tilde{\tau_2})^2=(\tilde{\tau_1}\tilde{\tau_2})^2=[\tilde{\tau_1},\tilde{\tau_2}]=z$.     
\end{lemma}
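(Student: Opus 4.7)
The plan is to exploit centrality of $z$ together with the defining relations of $\tilde{S_n}$ in Theorem \ref{symmetric_schur_cover}, and to reduce the general case of any two disjoint transpositions to the standard pair $(t_1, t_3) = ((12), (34))$ via conjugation. Throughout, I will freely use that $z \in Z(\tilde{S_n})$, $z^2 = e$, and hence $z^{-1} = z$; also, since $\pi^{-1}(s) = \{\tilde{s}, z\tilde{s}\}$, any two choices of preimage differ by $z$.

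First I would prove $(\tilde{\tau})^2 = z$ for every preimage $\tilde{\tau}$ of any transposition $\tau \in S_n$. Writing $\tau = \sigma t_i \sigma^{-1}$ for some $\sigma \in S_n$ and some $i$, and choosing any preimage $\tilde{\sigma}$, the element $\tilde{\sigma} g_i \tilde{\sigma}^{-1}$ is a preimage of $\tau$. Using $g_i^2 = z$ and centrality of $z$, we get
\[
(\tilde{\sigma} g_i \tilde{\sigma}^{-1})^2 = \tilde{\sigma} g_i^2 \tilde{\sigma}^{-1} = \tilde{\sigma} z \tilde{\sigma}^{-1} = z.
\]
Any other preimage is of the form $z \tilde{\sigma} g_i \tilde{\sigma}^{-1}$, whose square is $z^2 \cdot z = z$. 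This establishes $(\tilde{\tau_1})^2 = (\tilde{\tau_2})^2 = z$ for our disjoint transpositions.

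Next I would establish $[\tilde{\tau_1}, \tilde{\tau_2}] = z$. Observe first that the commutator is independent of the chosen preimages, since any alteration by $z$ on either side cancels due to centrality of $z$. Pick $\sigma \in S_n$ with $\sigma \tau_1 \sigma^{-1} = t_1$ and $\sigma \tau_2 \sigma^{-1} = t_3$ (possible because the two disjoint transpositions involve $4$ distinct symbols). With a preimage $\tilde{\sigma}$, the elements $\tilde{\sigma} g_1 \tilde{\sigma}^{-1}$ and $\tilde{\sigma} g_3 \tilde{\sigma}^{-1}$ are preimages of $\tau_1$ and $\tau_2$. Then
\[
[\tilde{\sigma} g_1 \tilde{\sigma}^{-1}, \tilde{\sigma} g_3 \tilde{\sigma}^{-1}] = \tilde{\sigma} [g_1, g_3] \tilde{\sigma}^{-1},
\]
so it suffices to compute $[g_1, g_3]$. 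From the relation $(g_1 g_3)^2 = z$ (valid since the indices differ by $\geq 2$), together with $g_i^{-1} = z g_i$ coming from $g_i^2 = z$ and $z^2 = e$, a direct expansion gives
\[
[g_1, g_3] = g_1^{-1} g_3^{-1} g_1 g_3 = (zg_1)(zg_3) g_1 g_3 = z^2 (g_1 g_3)^2 = z.
\]
Conjugating by $\tilde{\sigma}$ and using centrality of $z$, we conclude $[\tilde{\tau_1}, \tilde{\tau_2}] = z$.

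Finally, I would deduce $(\tilde{\tau_1}\tilde{\tau_2})^2 = z$ from the previous two facts. From $[\tilde{\tau_1}, \tilde{\tau_2}] = z$, one rearranges to $\tilde{\tau_2} \tilde{\tau_1} = \tilde{\tau_1} \tilde{\tau_2} \cdot z$, so
\[
(\tilde{\tau_1}\tilde{\tau_2})^2 = \tilde{\tau_1}(\tilde{\tau_2} \tilde{\tau_1}) \tilde{\tau_2} = \tilde{\tau_1}(\tilde{\tau_1} \tilde{\tau_2} z)\tilde{\tau_2} = \tilde{\tau_1}^2 \tilde{\tau_2}^2 z = z \cdot z \cdot z = z,
\]
where in the last step we use $z^2 = e$. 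The main conceptual hurdle, which is really quite mild, is justifying that it is enough to verify the commutator identity on the canonical generators $g_1$ and $g_3$; the verification above shows that both centrality of $z$ and $z^2 = e$ make everything compatible with the choice of preimages and with conjugation, so the reduction is clean.
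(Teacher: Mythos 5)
Your proof is correct and follows essentially the same route as the paper's: both reduce to the canonical generators by conjugating $g_1$ and $g_3$ with a preimage $\tilde{\sigma}$ and then exploit the relations $g_i^2=(g_1g_3)^2=z$ together with the centrality of $z$ (the paper gets $(\tilde{\tau_1}\tilde{\tau_2})^2=z$ first and reads off the commutator as $[\tilde{\tau_1},\tilde{\tau_2}]=(\tilde{\tau_1}\tilde{\tau_2})^2$, whereas you compute $[g_1,g_3]$ first and then recover the square of the product — an immaterial reordering). The only blemish is a direction-of-conjugation slip in your second step ($\sigma\tau_1\sigma^{-1}=t_1$ makes $\tilde{\sigma}^{-1}g_1\tilde{\sigma}$, not $\tilde{\sigma}g_1\tilde{\sigma}^{-1}$, the preimage of $\tau_1$), which is fixed by replacing $\sigma$ with $\sigma^{-1}$.
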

\begin{proof}
Let $\tau_1=(ij)$ and $\tau_2=(kl)$ such that $i,j,k,l$ are four distinct elements from $[n]$. Let $\sigma\in S_n$ such that $\sigma(i)=1,~\sigma(j)=2,~\sigma(k)=3,~\sigma(l)=4$. Then $(12)^{\sigma}=\tau_1$. Let $\tilde{\sigma}$ be a preimage of $\sigma$ in $\tilde{S_n}$. Then $\tilde{\tau_1}=g_1^{\tilde{\sigma}}$ or $\tilde{\tau_1}=zg_1^{\tilde{\sigma}}$ . Hence, $(\tilde{\tau_1})^2=(g_1^{\tilde{\sigma}})^2=(zg_1^{\tilde{\sigma}})^2=z^{\tilde{\sigma}}=z.$ Similarly, it can be shown that $(\tilde{\tau_2})^2=z$ and $(\tilde{\tau_1}\tilde{\tau_2})^2=z$. Also, $[\tilde{\tau_1},\tilde{\tau_2}]=\tilde{\tau_1}^{-1}\tilde{\tau_2}^{-1}\tilde{\tau_1}\tilde{\tau_2}=(\tilde{\tau_1}\tilde{\tau_2})^2=z$. Hence, the result follows.
\end{proof}
Again, it is easy to see that if $s_1,s_2\in S_n$ (or $A_n$) such that $[s_1,s_2]=e$, then $[\tilde{s_1},\tilde{s_2}]\in Z(\tilde{S_n})$ (or $Z(\tilde{A_n})$) where $\tilde{s_1},\tilde{s_2}$ are preimages of $s_1,s_2$ in $\tilde{S_n}$ (or $\tilde{A_n}$), respectively. From the above observation and Lemma \ref{commutator_in_center}, we have the following.

\begin{lemma}\label{disjoint_commutators}
Let $s_1,s_2,s_3\in S_n$ (or $A_n$) with $\tilde{s_1},\tilde{s_2},\tilde{s_3}$ being their preimages in $\tilde{S_n}$ (or $\tilde{A_n}$), respectively. Then the following happens:
\begin{enumerate}[\rm (i)]
\item If $[s_1,s_3]=e$, then $[\tilde{s_1}\tilde{s_2},\tilde{s_3}]=[\tilde{s_1},\tilde{s_3}][\tilde{s_2},\tilde{s_3}]$;
\item If $[s_1,s_2]=e$, then $[\tilde{s_1},\tilde{s_2}\tilde{s_3}]=[\tilde{s_1},\tilde{s_2}][\tilde{s_1},\tilde{s_3}]$.
\end{enumerate}    
\end{lemma}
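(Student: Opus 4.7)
The plan is to reduce both statements directly to Lemma \ref{commutator_in_center} once I have verified the centrality hypothesis recorded in the paragraph just above the lemma. That preparatory observation is the only non-cosmetic content, so I will justify it carefully and then invoke Lemma \ref{commutator_in_center} in each part.

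First I would establish the claim that if $[s_1,s_2]=e$ in $S_n$ (respectively $A_n$), then $[\tilde{s_1},\tilde{s_2}]\in Z(\tilde{S_n})$ (respectively $Z(\tilde{A_n})$). Let $\pi$ denote the projection from the Schur cover onto $S_n$ (or $A_n$). Since $\pi$ is a homomorphism,
\[
\pi\bigl([\tilde{s_1},\tilde{s_2}]\bigr)=[\pi(\tilde{s_1}),\pi(\tilde{s_2})]=[s_1,s_2]=e,
\]
so $[\tilde{s_1},\tilde{s_2}]\in\ker\pi$. But $\tilde{S_n}$ (resp.\ $\tilde{A_n}$) is a Schur cover, which means it is a (stem) central extension, so $\ker\pi\subseteq Z(\tilde{S_n})$ (resp.\ $Z(\tilde{A_n}))$. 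This gives the centrality statement.

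For part (i), the hypothesis $[s_1,s_3]=e$ together with the observation above yields $[\tilde{s_1},\tilde{s_3}]\in Z(\tilde{S_n})$. Applying Lemma \ref{commutator_in_center}(i) with $a=\tilde{s_1}$, $b=\tilde{s_2}$, $c=\tilde{s_3}$ then immediately gives
\[
[\tilde{s_1}\tilde{s_2},\tilde{s_3}]=[\tilde{s_1},\tilde{s_3}][\tilde{s_2},\tilde{s_3}].
\]
For part (ii), the hypothesis $[s_1,s_2]=e$ yields $[\tilde{s_1},\tilde{s_2}]\in Z(\tilde{S_n})$, so Lemma \ref{commutator_in_center}(ii) applied with $a=\tilde{s_1}$, $b=\tilde{s_2}$, $c=\tilde{s_3}$ gives
\[
[\tilde{s_1},\tilde{s_2}\tilde{s_3}]=[\tilde{s_1},\tilde{s_2}][\tilde{s_1},\tilde{s_3}].
\]

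There is essentially no obstacle here; the entire argument is a translation between two facts: Schur covers are central extensions, and commutator identities hold whenever one of the commutators lands in the center. The same argument applies verbatim to the $A_n$ case since $\tilde{A_n}$ is likewise described in Theorem \ref{alternating_schur_cover} as a central extension with kernel of order $2$ (or $6$, in the exceptional cases $n\in\{6,7\}$).
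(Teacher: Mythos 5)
Your proof is correct and follows the paper's own route exactly: the paper derives this lemma from the observation that $[\tilde{s_1},\tilde{s_2}]\in\ker\pi\subseteq Z(\tilde{S_n})$ whenever $[s_1,s_2]=e$ (stated just before the lemma) combined with Lemma \ref{commutator_in_center}. You have merely written out the "easy to see" centrality observation in full, which is a sound and complete justification.
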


\begin{proposition}\label{induced_over_alternating}
For $n\geq 4$ and $n\notin \{6,7\}$, we have $\Delta_D(S_n)\vert_{A_n}=\Delta_D(A_n)$.    
\end{proposition}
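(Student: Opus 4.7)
The plan is to apply Proposition \ref{induced_subgraph_equality} with $G = S_n$ and $H = A_n$; this reduces the problem to verifying the two hypotheses $M(A_n) \cong M(S_n)$ and $\iota(M(S_n)) \subseteq [\pi^{-1}(A_n), \pi^{-1}(A_n)]$, where $\pi \colon \tilde{S_n} \to S_n$ is the projection and $\iota(M(S_n)) = \langle z \rangle$. The first hypothesis is immediate from Theorems \ref{symmetric_schur_cover} and \ref{alternating_schur_cover}: for $n \geq 4$ with $n \notin \{6, 7\}$, both $M(S_n)$ and $M(A_n)$ are isomorphic to $C_2$.

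To address the second hypothesis, I first identify $\pi^{-1}(A_n)$ with $\tilde{A_n}$. By Lemma \ref{preimage_transposition}, $z = [g_1, g_3]$ lies in $[\tilde{S_n}, \tilde{S_n}]$, and $\pi$ maps $[\tilde{S_n}, \tilde{S_n}]$ onto $[S_n, S_n] = A_n$; consequently $|[\tilde{S_n}, \tilde{S_n}]| \geq 2|A_n| = n!$. Since $[\tilde{S_n}, \tilde{S_n}] \subseteq \pi^{-1}(A_n)$ and $|\pi^{-1}(A_n)| = n!$, the two subgroups coincide, and by Theorem \ref{alternating_schur_cover}(i) both equal $\tilde{A_n}$. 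It therefore remains to show $z \in [\tilde{A_n}, \tilde{A_n}]$.

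For $n \geq 5$ with $n \notin \{6, 7\}$, $A_n$ is a finite non-abelian simple group, so by Lemma \ref{Schur cover quasisimple} its Schur cover $\tilde{A_n}$ is quasisimple and in particular perfect; hence $[\tilde{A_n}, \tilde{A_n}] = \tilde{A_n} \ni z$ trivially. The case $n = 4$ is the main hurdle, since $\tilde{A_4}$ is not perfect. I plan to resolve it via a uniqueness-of-involution argument: by Lemma \ref{preimage_transposition}, each of the three nontrivial elements of $V_4 = [A_4, A_4]$ is a product of two disjoint transpositions, so its preimages in $\tilde{A_4}$ have square equal to $z$ and thus order $4$; consequently $z$ is the unique involution of $\tilde{A_4}$. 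Since $[\tilde{A_4}, \tilde{A_4}]$ surjects onto the nontrivial group $V_4$, its order is at least $4$, so Cauchy's theorem supplies an involution in $[\tilde{A_4}, \tilde{A_4}]$, which must be $z$. With both hypotheses verified, the conclusion $\Delta_D(S_n)|_{A_n} = \Delta_D(A_n)$ follows from Proposition \ref{induced_subgraph_equality}.
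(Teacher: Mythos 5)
Your proof is correct, and it reaches the key inclusion $z\in[\pi^{-1}(A_n),\pi^{-1}(A_n)]$ by a genuinely different route than the paper. The paper makes a single uniform computation: with $\tilde{\tau_1},\tilde{\tau_2},\tilde{\tau_3},\tilde{\tau_4}$ preimages of $(12),(34),(13),(24)$, Lemma \ref{preimage_transposition} gives $[\tilde{\tau_1}\tilde{\tau_2},\tilde{\tau_3}\tilde{\tau_4}]=(\tilde{\tau_1}\tilde{\tau_2}\tilde{\tau_3}\tilde{\tau_4})^2=(\tilde{\tau_5}\tilde{\tau_6})^2=z$, where $\tilde{\tau_5}\tilde{\tau_6}$ is a preimage of $(14)(23)$; this exhibits $z$ explicitly as a commutator of two elements of $\pi^{-1}(A_n)$ for every $n\geq 4$ at once, and the same computation is recycled in Proposition \ref{induced_S_n and A_n}(ii). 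You instead first prove the structural fact $\pi^{-1}(A_n)=[\tilde{S_n},\tilde{S_n}]=\tilde{A_n}$ via an order count (correct: the restriction of $\pi$ to $[\tilde{S_n},\tilde{S_n}]$ surjects onto $A_n$ with $z$ in its kernel), and then split cases: for $n\geq 5$ perfectness of the quasisimple cover (Lemma \ref{Schur cover quasisimple}) makes the inclusion trivial, while $n=4$ is handled by showing $z$ is the unique involution of $\tilde{A_4}$ and extracting an involution from $[\tilde{A_4},\tilde{A_4}]$. Your approach is less computational and makes visible \emph{why} the inclusion is automatic in the simple range, at the cost of a case split and of not producing the explicit identity $[\tilde{(12)}\tilde{(34)},\tilde{(13)}\tilde{(24)}]=z$ that the paper reuses later. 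One small repair in the $n=4$ step: ``order at least $4$'' does not by itself license Cauchy's theorem for the prime $2$; what you actually have is that $[\tilde{A_4},\tilde{A_4}]$ surjects onto $V_4$, so its order is \emph{divisible} by $4$, hence even, and then Cauchy applies. With that one-word fix the argument is complete.
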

\begin{proof}
For $n\geq 4$ and $n\notin \{6,7\}$, we have $M(A_n)\cong M(S_n)\cong C_2$. Let $\pi$ denote the projection map from $\tilde{S_n}$ onto $S_n$. Let $\tilde{\tau_1},\tilde{\tau_2},\tilde{\tau_3},\tilde{\tau_4}$ be preimages of $(12),(34),(13),(24)$ in $\tilde{S_n}$, respectively. Then $[\tilde{\tau_1}\tilde{\tau_2},\tilde{\tau_3}\tilde{\tau_4}]\in [\pi^{-1}(A_n),\pi^{-1}(A_n)]$. Now, from Lemma \ref{preimage_transposition} we have $[\tilde{\tau_1}\tilde{\tau_2},\tilde{\tau_3}\tilde{\tau_4}]=(\tilde{\tau_1}\tilde{\tau_2})^{-1}(\tilde{\tau_3}\tilde{\tau_4})^{-1}\tilde{\tau_1}\tilde{\tau_2}\tilde{\tau_3}\tilde{\tau_4}=(\tilde{\tau_1}\tilde{\tau_2}\tilde{\tau_3}\tilde{\tau_4})^2$. Also, $\pi(\tilde{\tau_1}\tilde{\tau_2}\tilde{\tau_3}\tilde{\tau_4})=(14)(23)$. Let $\tilde{\tau_5}$ and $\tilde{\tau_6}$ be preimages of $(14)$ and $(23)$, respectively. Then $\tilde{\tau_1}\tilde{\tau_2}\tilde{\tau_3}\tilde{\tau_4}=\tilde{\tau_5}\tilde{\tau_6}$, or $\tilde{\tau_1}\tilde{\tau_2}\tilde{\tau_3}\tilde{\tau_4}=z\tilde{\tau_5}\tilde{\tau_6}$. So, we have $[\tilde{\tau_1}\tilde{\tau_2},\tilde{\tau_3}\tilde{\tau_4}]=(\tilde{\tau_1}\tilde{\tau_2}\tilde{\tau_3}\tilde{\tau_4})^2=(\tilde{\tau_5}\tilde{\tau_6})^2=(z\tilde{\tau_5}\tilde{\tau_6})^2=z$. Hence, $\iota(M(S_n))=\{z,e\}\subseteq [\pi^{-1}(A_n),\pi^{-1}(A_n)]$. Now, from Proposition \ref{induced_subgraph_equality} the result follows.
\end{proof}
\begin{proposition}\label{induced_S_n and A_n}
 For $4\leq m< n$, consider the subgroup $S_m$ of $S_n$ being the group of permutations of the first $m$ symbols inside $S_n$ and the subgroup $A_m$ of $A_n$ being the group of even permutations of the first $m$ symbols inside $A_n$. Then the following happens:
 \begin{enumerate}[\rm (i)]
     \item $\Delta_D(S_n)\vert_{S_m}=\Delta_D(S_m)$;
     \item For $m,n\notin\{6,7\}$, we have $\Delta_D(A_n)\vert_{A_m}=\Delta_D(A_m)$;
     \item $\Delta_D(A_7)\vert_{A_6}=\Delta_D(A_6)$.
 \end{enumerate}    
\end{proposition}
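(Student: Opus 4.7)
The plan is to apply Proposition~\ref{induced_subgraph_equality} to each of the three parts. This reduces the problem to checking two conditions: (a) $M(H) \cong M(G)$, and (b) $\iota(M(G)) \subseteq [\pi^{-1}(H), \pi^{-1}(H)]$, where $H$ is the smaller group ($S_m$ or $A_m$), $G$ is the ambient group ($S_n$ or $A_n$), and $\pi$ is the projection from a Schur cover of $G$. Condition (a) is immediate from Theorems~\ref{symmetric_schur_cover} and~\ref{alternating_schur_cover}: $M(S_n) \cong C_2$ for all $n \geq 4$; $M(A_n) \cong C_2$ whenever $n \geq 4$ and $n \notin \{6, 7\}$; and $M(A_6) \cong M(A_7) \cong C_6$.

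For parts (i) and (ii), the verification of (b) is essentially the commutator computation carried out in Proposition~\ref{induced_over_alternating}. Let $\tilde{\tau}_1, \tilde{\tau}_2, \tilde{\tau}_3, \tilde{\tau}_4$ be preimages in $\tilde{S_n}$ of the transpositions $(12), (34), (13), (24)$ respectively. By Lemma~\ref{preimage_transposition}, one has $[\tilde{\tau}_1 \tilde{\tau}_2,\, \tilde{\tau}_3 \tilde{\tau}_4] = z$, the generator of $\iota(M(S_n))$. Since the four transpositions are supported on $\{1, 2, 3, 4\}$, they belong to $S_4 \subseteq S_m$ for every $m \geq 4$, so $z \in [\pi^{-1}(S_m), \pi^{-1}(S_m)]$, yielding part (i). For part (ii) the same identity is computed inside $\tilde{A_n} = [\tilde{S_n}, \tilde{S_n}]$ (valid because $n \notin \{6, 7\}$), and $\tilde{\tau}_1 \tilde{\tau}_2,\, \tilde{\tau}_3 \tilde{\tau}_4$ are preimages of $(12)(34), (13)(24) \in A_4 \subseteq A_m$, hence lie in $\pi^{-1}(A_m)$.

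Part (iii) is the substantive case, as $\iota(M(A_7)) = \langle \mathfrak{z} \rangle$ has order $6$. Using the presentation of $\tilde{A_7}$ from Theorem~\ref{alternating_schur_cover}, observe first that $\pi^{-1}(A_6) = \langle h_1, h_2, h_3, h_4, \mathfrak{z} \rangle$: the images $\pi(h_1) = (123)$ and $\pi(h_i) = (12)(i+1,\, i+2)$ for $i \in \{2, 3, 4\}$ all lie in $A_6$ and generate it, while $h_5$ is the only generator of $\tilde{A_7}$ whose image moves the symbol $7$. The plan is then to show that the abelianization of $\pi^{-1}(A_6)$ is trivial, which immediately yields $\mathfrak{z} \in [\pi^{-1}(A_6), \pi^{-1}(A_6)]$. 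Writing the abelianization additively with $a, b, c, d, t$ denoting the images of $h_1, h_2, h_3, h_4, \mathfrak{z}$ respectively, the $h_5$-free defining relations translate to
\[
3a = 2b = 2c = 2d = 3(a+b) = 3(b+c) = 3(c+d) = 2(a+c) = 2(b+d) = 3t, \quad 2(a+d) = t, \quad 6t = 0,
\]
and routine linear elimination forces $a = b = c = d = t = 0$.

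The main obstacle is this final linear-algebra step in part (iii): one must take care to include only those relations of $\tilde{A_7}$ that do not require the missing generator $h_5$, and the nonstandard relation $(h_1 h_4)^2 = \mathfrak{z}$ is decisive---without it one would only deduce that $\bar{\mathfrak{z}}$ has order dividing $3$ in the abelianization, which would be insufficient to apply Proposition~\ref{induced_subgraph_equality}.
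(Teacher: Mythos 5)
Your proof is correct. Parts (i) and (ii) follow the paper's route essentially verbatim: reduce to Proposition \ref{induced_subgraph_equality} and verify $\iota(M(G))\subseteq[\pi^{-1}(H),\pi^{-1}(H)]$ via a commutator of preimages of disjoint transpositions (for (i) the paper uses the simpler $[\tilde{\tau}_1,\tilde{\tau}_2]=z$ straight from Lemma \ref{preimage_transposition}; the four-fold identity $[\tilde{\tau}_1\tilde{\tau}_2,\tilde{\tau}_3\tilde{\tau}_4]=z$ that you cite is really established in the proof of Proposition \ref{induced_over_alternating}, not by that lemma alone). Where you genuinely diverge is part (iii). The paper exhibits $\mathfrak{z}$ constructively: from $[h_1,h_2]$ and $[h_1,h_3]$ it extracts $h_1h_2$ and $\mathfrak{z}^3h_1$ as elements of $[\pi_{_{A_7}}^{-1}(A_6),\pi_{_{A_7}}^{-1}(A_6)]$, uses $(h_1h_2)^3=\mathfrak{z}^3$ to conclude $h_1$ lies there too, and finishes with $h_1^{-1}[h_1,h_4]=(h_1h_4)^2=\mathfrak{z}$. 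You instead prove the stronger statement that $\pi_{_{A_7}}^{-1}(A_6)=\langle h_1,h_2,h_3,h_4,\mathfrak{z}\rangle$ is perfect by abelianizing; your linear elimination is correct ($3b=0$ together with $2b=3t$ and $6t=0$ forces $3t=0$, whence $a=b=c=d=0$, and then $2(a+d)=t$ gives $t=0$), and your remark that the exceptional relation $(h_1h_4)^2=\mathfrak{z}$ is the decisive one matches the paper, where it is likewise the final step. One point you should state explicitly: the subgroup $\langle h_1,\dots,h_4,\mathfrak{z}\rangle$ of $\tilde{A_7}$ is not in general \emph{presented} by the $h_5$-free relations; it is only a quotient of the group so presented. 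That suffices here, since triviality of the abelianization passes to quotients, but the justification belongs in the proof, as does a word on why $h_1,\dots,h_4$ project onto a generating set of $A_6$. The trade-off is that the paper's argument is self-contained and needs no generation claim, while yours is more systematic and mechanizable.
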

\begin{proof}
\noindent
\begin{enumerate}[\rm (i)]
     \item The subgroup $S_m$ of $S_n$ is considered to be the group of permutations of the first $m$ symbols inside $S_n$. Clearly, $M(S_m)\cong M(S_n)\cong C_2$. Let $\pi$ denote the projection map from $\tilde{S_n}$ onto $S_n$. Since $m\geq 4$, consider the permutations $(12),(34)\in S_m\subseteq S_n$. Let $\tilde{\tau_1},\tilde{\tau_2}$ be preimages of $(12),(34)$ in $\tilde{S_n}$, respectively. Clearly $[\tilde{\tau_1},\tilde{\tau_2}]\in[\pi^{-1}(S_m),\pi^{-1}(S_m)]$. Again, from Lemma \ref{preimage_transposition} we have $[\tilde{\tau_1},\tilde{\tau_2}]=z.$ Hence, $\iota(M(S_n))=\{z,e\}\subseteq [\pi^{-1}(S_m),\pi^{-1}(S_m)]$. Now, the result follows from Proposition \ref{induced_subgraph_equality}.  
     
     \item The subgroup $A_m$ of $A_n$ is considered to be the group of even permutations of the first $m$ symbols inside $A_n$. Since $m,n\notin\{6,7\}$, we have $M(A_m)\cong M(A_n)\cong C_2$. Let $\pi_{_A}$ denote the projection map from $\tilde{A_n}$ onto $A_n$. Since $m\geq 4$, consider the permutations $(12)(34),(13)(24)\in A_m\subseteq A_n$. Let $\tilde{(12)(34)},\tilde{(13)(24)}$ denote respective preimages of $(12)(34),(13)(24)$ in $\tilde{A_n}$. Then $[\tilde{(12)(34)},\tilde{(13)(24)}]\in[\pi_{_A}^{-1}(A_m),\pi_{_A}^{-1}(A_m)]\subseteq\tilde{A_n}$. Since, for $m,n\notin\{6,7\}$, $\tilde{A_n}=[\tilde{S_n},\tilde{S_n}]\subseteq\tilde{S_n}$ and $\pi_{_A}:\tilde{A_n}\rightarrow A_n$ is the same as the restriction of $\pi:\tilde{S_n}\rightarrow S_n$ over $\tilde{A_n}$, we have $[\tilde{(12)(34)},\tilde{(13)(24)}]=[\tilde{\tau_1}\tilde{\tau_2},\tilde{\tau_3}\tilde{\tau_4}]$, where $\tilde{\tau_1},\tilde{\tau_2},\tilde{\tau_3},\tilde{\tau_4}$ are respective preimages of $(12),(34),(13),(24)$ in $S_n$. Now, from the proof of Proposition \ref{induced_over_alternating}, we have $[\tilde{\tau_1}\tilde{\tau_2},\tilde{\tau_3}\tilde{\tau_4}]=z.$ Hence, $\iota(M(A_n))=\{z,e\}\subseteq [\pi_{_A}^{-1}(A_m),\pi_{_A}^{-1}(A_m)]$. Now, the rest follows from Proposition \ref{induced_subgraph_equality}.

     \item The subgroup $A_6$ of $A_7$ is considered to be the group of even permutations of the first $6$ symbols inside $A_7$. Let $\pi_{_{A_7}}$ denote the projection map from $\tilde{A_7}$ onto $A_7$. We have $M(A_6)\cong M(A_7)\cong C_6$. Consider $(123),(12)(34),(12)(45),(12)(56)\in A_6\subseteq A_7$. Then from Theorem \ref{alternating_schur_cover}(ii) we can say that $h_1,h_2,h_3,h_4$ are preimages of $(123),(12)(34),(12)(45),(12)(56)$ in $A_7$, respectively. Now $[h_1,h_2],[h_1,h_3],[h_1,h_4]\in[\pi_{_{A_7}}^{-1}(A_6),\pi_{_{A_7}}^{-1}(A_6)]$. Also we have, $[h_1,h_2]=h_1^{-1}h_2^{-1}h_1h_2=h_1^2h_2h_1h_2=\mathfrak{z}^3h_1(h_1h_2)^{-1}$ and $[h_1,h_3]=h_1^{-1}h_3^{-1}h_1h_3=h_1^2h_3h_1h_3=\mathfrak{z}^3h_1$. So, both $\mathfrak{z}^3h_1$ and $h_1h_2$ are inside $[\pi_{_{A_7}}^{-1}(A_6),\pi_{_{A_7}}^{-1}(A_6)].$ In addition, $(h_1h_2)^3=\mathfrak{z}^3\in[\pi_{_{A_7}}^{-1}(A_6),\pi_{_{A_7}}^{-1}(A_6)]$, so $h_1\in[\pi_{_{A_7}}^{-1}(A_6),\pi_{_{A_7}}^{-1}(A_6)]$. Now, $h_1^{-1}[h_1,h_4]=h_1^{-2}h_4^{-1}h_1h_4=(h_1h_4)^2=\mathfrak{z}$. Hence $\iota(M(A_7))=\langle\mathfrak{z}\rangle\subseteq[\pi_{_{A_7}}^{-1}(A_6),\pi_{_{A_7}}^{-1}(A_6)]$. Now, the result follows from Proposition \ref{induced_subgraph_equality}. 
\end{enumerate}
\end{proof}

\begin{lemma}\label{disjoint_permutations_in_Sn}
Let $n\geq 4$ and $\sigma_1$, $\sigma_2$ be two disjoint permutations in $S_n$. Then $\sigma_1\sim\sigma_2$ in $\Delta_D(S_n)$ if and only if at least one of $\sigma_1$ and $\sigma_2$ is an even permutation.  
\end{lemma}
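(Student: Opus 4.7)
The plan is to compute $[\tilde{\sigma}_1,\tilde{\sigma}_2]$ explicitly in $\tilde{S_n}$ and show it equals $z^{ab}$, where $a,b$ are the numbers of transpositions in suitable decompositions of $\sigma_1,\sigma_2$. Since $\sigma_1$ and $\sigma_2$ are disjoint they commute in $S_n$, so $[\tilde{\sigma}_1,\tilde{\sigma}_2]\in \mathrm{ker}\,\pi = \{e,z\}$; the task is to decide which of the two it is.

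First I would write $\sigma_1=\tau_1\tau_2\cdots\tau_a$ and $\sigma_2=\rho_1\rho_2\cdots\rho_b$ as products of transpositions with every $\tau_i$ supported on $\mathrm{supp}(\sigma_1)$ and every $\rho_j$ supported on $\mathrm{supp}(\sigma_2)$; this is always possible by decomposing each cycle of $\sigma_i$ individually, and then $a\equiv\mathrm{sgn}(\sigma_1)$ and $b\equiv\mathrm{sgn}(\sigma_2)$ modulo $2$. Crucially, disjointness of $\sigma_1,\sigma_2$ forces each $\tau_i$ to be disjoint from each $\rho_j$ as transpositions. Choose preimages $\tilde{\tau}_i,\tilde{\rho}_j$ in $\tilde{S_n}$ and set $\alpha=\tilde{\tau}_1\cdots\tilde{\tau}_a$ and $\beta=\tilde{\rho}_1\cdots\tilde{\rho}_b$; these are preimages of $\sigma_1$ and $\sigma_2$ up to a central factor of $z$, and since $z\in Z(\tilde{S_n})$ such a factor drops out of the commutator, so it suffices to compute $[\alpha,\beta]$.

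Next I would peel off the factors of $\alpha$ one at a time using Lemma \ref{disjoint_commutators}(i) — each application legal because $\tau_i$ and $\rho_j$ commute in $S_n$ — to obtain $[\alpha,\tilde{\rho}_j]=\prod_{i=1}^{a}[\tilde{\tau}_i,\tilde{\rho}_j]$ for each $j$ (the product order is immaterial because each factor is central). By Lemma \ref{preimage_transposition}, every such factor equals exactly $z$, so $[\alpha,\tilde{\rho}_j]=z^a$. A second identical induction, now using Lemma \ref{disjoint_commutators}(ii) with $\alpha$ in the first slot (legitimate because the projection $\sigma_1$ of $\alpha$ commutes with each $\rho_j$), yields $[\alpha,\beta]=\prod_{j=1}^{b}[\alpha,\tilde{\rho}_j]=z^{ab}$. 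Since $z$ has order $2$, this is trivial if and only if $ab$ is even, if and only if at least one of $a,b$ is even, if and only if at least one of $\sigma_1,\sigma_2$ is an even permutation.

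The main care point, and really the only subtle one, is the bookkeeping for the iterative use of Lemma \ref{disjoint_commutators}: at every step one must check that the element being pulled out of a product still commutes in $S_n$ with what remains, and this is guaranteed because any sub-product of the $\tilde{\tau}_i$'s projects to a permutation supported in $\mathrm{supp}(\sigma_1)$, hence disjoint from (and so commuting with) any $\rho_j$ or sub-product of the $\rho_j$'s. Everything else is formal manipulation inside the central subgroup $\langle z\rangle$.
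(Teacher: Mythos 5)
Your proof is correct and follows essentially the same route as the paper: decompose each $\sigma_i$ into transpositions supported on its own cycles, use Lemma \ref{disjoint_commutators} to expand the commutator of the lifted products into $\prod_{i,j}[\tilde{\tau}_i,\tilde{\rho}_j]$, and apply Lemma \ref{preimage_transposition} to get $z^{ab}$. The extra bookkeeping you supply (the ambiguity of preimages dropping out because $z$ is central, and the check that each peeled-off sub-product still commutes in $S_n$ with what remains) is exactly the detail the paper leaves implicit.
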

\begin{proof}
Let $\sigma_1$ and $\sigma_2$ be two disjoint permutations in $S_n$. Let $\sigma_1=\tau_{11}\tau_{12}\dots\tau_{1k}$ and $\sigma_2=\tau_{21}\tau_{22}\dots\tau_{2m}$, where $\tau_{ij}$'s are transpositions such that $\tau_{1i}$ and $\tau_{2j}$ are disjoint, for all $i\in [k]$ and $j\in [m]$. Then $\tilde{\tau_{11}}\tilde{\tau_{12}}\dots\tilde{\tau_{1k}}$ and $\tilde{\tau_{21}}\tilde{\tau_{22}}\dots\tilde{\tau_{2m}}$ are preimages of $\sigma_1$ and $\sigma_2$ in $\tilde{S_n}$, respectively. So, using Lemma \ref{preimage_transposition} and \ref{disjoint_commutators} we have $[\tilde{\sigma_1},\tilde{\sigma_2}]=[\tilde{\tau_{11}}\tilde{\tau_{12}}\dots\tilde{\tau_{1k}},\tilde{\tau_{21}}\tilde{\tau_{22}}\dots\tilde{\tau_{2m}}]=z^{km}$, and $z^{km}=e$ if and only if at least one of $k$ and $m$ is even. This concludes the proof. 
\end{proof}
As a consequence of the above result and Proposition \ref{induced_over_alternating} we arrive at the following result about $\Delta_D(A_n)$.
\begin{corollary}\label{disjoint_permutations_in_An}
Let $n\geq 8$ and $\sigma_1$, $\sigma_2$ be two disjoint permutations in $A_n$. Then $\sigma_1\sim\sigma_2$ in $\Delta_D(A_n)$.    
\end{corollary}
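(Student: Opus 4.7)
The plan is to combine Proposition \ref{induced_over_alternating} with Lemma \ref{disjoint_permutations_in_Sn}. Since $n \geq 8$, in particular $n \notin \{6,7\}$ and $n \geq 4$, so Proposition \ref{induced_over_alternating} applies to give $\Delta_D(S_n)\vert_{A_n} = \Delta_D(A_n)$. Consequently, two elements of $A_n$ are adjacent in $\Delta_D(A_n)$ if and only if they are adjacent in $\Delta_D(S_n)$.

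Next, I would invoke Lemma \ref{disjoint_permutations_in_Sn}, which characterises adjacency of disjoint permutations in $\Delta_D(S_n)$: disjoint $\sigma_1, \sigma_2 \in S_n$ are adjacent in $\Delta_D(S_n)$ if and only if at least one of them is an even permutation. Since $\sigma_1, \sigma_2 \in A_n$ by hypothesis, both are even permutations, so the ``at least one'' condition is trivially satisfied. Hence $\sigma_1 \sim \sigma_2$ in $\Delta_D(S_n)$, and therefore in $\Delta_D(A_n)$.

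There is essentially no obstacle here: the corollary is a direct consequence of the two results already established. The only minor check needed is that the hypotheses of Proposition \ref{induced_over_alternating} are met for all $n \geq 8$, which is immediate. The corresponding statement for $n \in \{6, 7\}$ would be more delicate since $M(A_n) = C_6 \not\cong C_2 = M(S_n)$ and Proposition \ref{induced_over_alternating} does not apply, but the range $n \geq 8$ in the corollary's hypothesis avoids exactly this difficulty.
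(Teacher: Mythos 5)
Your proposal is correct and is exactly the argument the paper intends: the corollary is stated as a direct consequence of Lemma \ref{disjoint_permutations_in_Sn} and Proposition \ref{induced_over_alternating}, applied precisely as you describe (both permutations are even, so the adjacency criterion in $\Delta_D(S_n)$ is satisfied, and the induced-subgraph identity transfers this to $\Delta_D(A_n)$). Your remark about why $n\in\{6,7\}$ is excluded matches the role of the hypothesis $n\notin\{6,7\}$ in Proposition \ref{induced_over_alternating}.
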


It is easy to see that for any group $G$, $\mathcal{N}_{\Delta(G)}[x]=C_{G}(x)$, for all $x\in G$. Then the next observation follows from Theorem \ref{inclusion}.
\begin{observation}\label{deep commuting implies inside centralizer}
Let $n\in \mathbb{N}$ and $x\in S_n$ (or $A_n$). Then $\mathcal{N}_{\mathcal{P}_e(S_n)}(x)\subseteq \mathcal{N}_{\Delta_D(S_n)}(x)\subseteq C_{S_n}(x)$ (or $\mathcal{N}_{\mathcal{P}_e(A_n)}(x)\subseteq \mathcal{N}_{\Delta_D(A_n)}(x)\subseteq C_{A_n}(x)$).    
\end{observation}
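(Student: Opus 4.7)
The plan is to derive this observation directly from the edge-inclusion hierarchy in Theorem \ref{inclusion}, applied to the specific choices $G = S_n$ and $G = A_n$. Since the statement is about open neighborhoods in three graphs defined on the same vertex set, and the three graphs differ only by having progressively more edges, the inclusions of neighborhoods should follow immediately from the inclusions of edge sets.

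First, I would recall that for any finite group $G$ and any $x \in G$, the closed neighborhood of $x$ in the commuting graph satisfies $\mathcal{N}_{\Delta(G)}[x] = C_G(x)$, because two distinct vertices are adjacent in $\Delta(G)$ precisely when they commute. In particular, the open neighborhood satisfies $\mathcal{N}_{\Delta(G)}(x) = C_G(x) \setminus \{x\} \subseteq C_G(x)$.

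Next, I would use Theorem \ref{inclusion}, which gives $E(\mathcal{P}_e(G)) \subseteq E(\Delta_D(G)) \subseteq E(\Delta(G))$ on the common vertex set $G$. If $y \in \mathcal{N}_{\mathcal{P}_e(G)}(x)$, then $y \neq x$ and $\{x,y\} \in E(\mathcal{P}_e(G)) \subseteq E(\Delta_D(G))$, so $y \in \mathcal{N}_{\Delta_D(G)}(x)$, yielding the first inclusion. Similarly, if $y \in \mathcal{N}_{\Delta_D(G)}(x)$, then $y \neq x$ and $\{x,y\} \in E(\Delta_D(G)) \subseteq E(\Delta(G))$, hence $y$ commutes with $x$, giving $y \in C_G(x)$ and the second inclusion.

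Specializing $G$ to $S_n$ and to $A_n$ separately yields both halves of the statement. There is no genuine obstacle here; the observation is a direct corollary of Theorem \ref{inclusion} together with the standard identification of closed neighborhoods in the commuting graph with centralizers.
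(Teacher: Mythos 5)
Your argument is correct and is exactly the route the paper takes: the paper notes that $\mathcal{N}_{\Delta(G)}[x]=C_G(x)$ and then cites Theorem \ref{inclusion} for the edge inclusions, which is precisely your two-step derivation. No differences worth noting.
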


\begin{proposition}\label{deep commuting of S_4,5}
For $n\in\{4,5\}$, we have $\Delta_D(S_n)=\mathcal{P}_e(S_n)$.
\end{proposition}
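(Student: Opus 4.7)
My plan is to establish the reverse containment $E(\Delta_D(S_n)) \subseteq E(\mathcal{P}_e(S_n))$, since the forward containment is immediate from Theorem \ref{inclusion}. So I would assume for contradiction that distinct $x, y \in S_n$ are adjacent in $\Delta_D(S_n)$ but not in $\mathcal{P}_e(S_n)$; then $x$ and $y$ commute in $S_n$ while $\langle x, y\rangle$ is not cyclic, so $\langle x, y\rangle$ is abelian and non-cyclic. My strategy is to classify all such pairs and verify in each case that the preimages have nontrivial commutator in the Schur cover.

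Next, I would classify the abelian non-cyclic subgroups of $S_n$ for $n \in \{4,5\}$. Since every element of $S_5$ has order at most $6$, and standard centralizer checks (a $4$-cycle in $S_5$ is self-centralizing, two commuting $3$-cycles would need disjoint supports on only $5$ points, and the Sylow $2$-subgroups are non-abelian $D_4$'s) rule out any abelian subgroup of order $8$, $9$, $10$, or $12$, the only abelian non-cyclic subgroups of $S_4$ and $S_5$ are Klein four-groups $V_4$. Moreover each such $V_4 \leq S_5$ has exponent $2$ and its non-identity elements collectively move at most four of the five symbols, so it is contained in the stabilizer of some symbol, a subgroup isomorphic to $S_4$. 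By Proposition \ref{induced_S_n and A_n}(i) we have $\Delta_D(S_5)\vert_{S_4}=\Delta_D(S_4)$, so the problem reduces to $n = 4$.

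In $S_4$ the $V_4$-subgroups are of two types: the normal one $V_4^{(1)} = \{e, (12)(34), (13)(24), (14)(23)\}$, and subgroups $V_4^{(2)} = \{e, (ab), (cd), (ab)(cd)\}$ for disjoint $2$-subsets. For each such $V_4$ and each pair of distinct non-identity elements inside it, I would show that the commutator of their preimages in $\tilde{S_4}$ equals the central generator $z$ of $M(S_4) \cong C_2$, which is not the identity, so $x \nsim y$ in $\Delta_D(S_4)$, contradicting the assumption. For $V_4^{(1)}$, all three commuting pairs of double transpositions are $\tilde{S_4}$-conjugate and the computation $[\tilde{\tau_1}\tilde{\tau_2}, \tilde{\tau_3}\tilde{\tau_4}] = z$ appearing inside the proof of Proposition \ref{induced_over_alternating}, combined with centrality of $z$, handles all three at once. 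For $V_4^{(2)}$ the two kinds of pairs are (i) disjoint transpositions, where Lemma \ref{preimage_transposition} immediately yields commutator $z$, and (ii) a transposition $\tilde{(ab)}$ with its containing double transposition $\tilde{(ab)(cd)}$, where the identity $[u, vw] = [u, w]$ (valid whenever $[u, v] = e$, applied with $u = v = \tilde{(ab)}$ and $w = \tilde{(cd)}$) reduces to case (i). The main obstacle I anticipate is making the reduction from $S_5$ to $S_4$ fully rigorous, i.e. pinning down that every commuting non-power-equivalent pair in $S_5$ actually lies inside a pointwise-stabilized $S_4$; once that is in place, the commutator calculations in the Schur cover are short.
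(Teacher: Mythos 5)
Your proposal is correct, but it is organized quite differently from the paper's proof. The paper works directly in $S_5$, running through every cycle type, writing down the full centralizer $C_{S_5}(\tau)$ in each case, and verifying $\mathcal{N}_{\Delta_D(S_5)}(\tau)=\mathcal{N}_{\mathcal{P}_e(S_5)}(\tau)$ element by element; it then deduces the $S_4$ case by restricting via Proposition \ref{induced_S_n and A_n}(i). You invert this: you first observe that any offending edge must join two distinct non-identity elements of an abelian non-cyclic subgroup, that the only such subgroups of $S_4$ and $S_5$ are Klein four-groups, and that every $V_4\le S_5$ fixes a point; this reduces $S_5$ to $S_4$ (the opposite direction of reduction) and leaves only the two conjugacy types of $V_4$ in $S_4$, which you dispatch with Lemma \ref{preimage_transposition}, Lemma \ref{disjoint_commutators} and the computation from Proposition \ref{induced_over_alternating}. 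Your route is shorter and more conceptual --- it explains \emph{why} only involutions matter, rather than discovering it case by case --- while the paper's exhaustive version has the side benefit of recording explicit neighbourhoods (e.g.\ that $3$-, $4$- and $5$-cycles and $(a,b,c)(d,e)$ have cyclic centralizers), which it reuses later in Theorems \ref{Containment of Delta_D for A_n} and \ref{permutation_perfect}. Two small points to make your write-up airtight: Proposition \ref{induced_S_n and A_n}(i) is stated only for the stabilizer of the last symbol, so to pass from an arbitrary point stabilizer you need the automorphism-invariance of $\Delta_D$ (a fact the paper invokes freely, e.g.\ in Lemma \ref{3-cycle A_7}); and in the same spirit, your conjugacy argument for the three pairs inside the normal $V_4$ should note that a lift $\tilde{\sigma}$ of $\sigma$ carries preimages to preimages and fixes the central element $z$, so $[\tilde{x}^{\tilde{\sigma}},\tilde{y}^{\tilde{\sigma}}]=[\tilde{x},\tilde{y}]^{\tilde{\sigma}}=z$. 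With those remarks included, the argument is complete.
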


\begin{proof}
We first prove the case for $n=5$. We claim that for any permutation $\tau\in S_5$, $\mathcal{N}_{\Delta_D(S_5)}(\tau)=\mathcal{N}_{\mathcal{P}_e(S_5)}(\tau)$, which yields $\Delta_D(S_5)=\mathcal{P}_e(S_5)$. To show the above claim, let us consider the following cases:  

\vspace{.3cm}
\noindent\underline{\textbf{Case (i) ($\tau=(a,b)$):}} Consider the permutation $(12)$. Then \[C_{S_5}((12))=\langle (12)\rangle\times\langle (345),(34)\rangle.\] Clearly, $\{e,(345),(354),(12)(345),(12)(354)\}=\mathcal{N}_{\mathcal{P}_e(S_5)}((12))\subseteq \mathcal{N}_{\Delta_D(S_5)}((12))$. Now, from Lemma \ref{preimage_transposition} we have $[\tilde{(12)},\tilde{(34)}]=[\tilde{(12)},\tilde{(35)}]=[\tilde{(12)},\tilde{(45)}]=z$. Also, from Lemma \ref{preimage_transposition} and \ref{disjoint_commutators} we have $[\tilde{(12)},\tilde{(12)}\tilde{(34)}]=[\tilde{(12)},\tilde{(12)}\tilde{(35)}]=[\tilde{(12)},\tilde{(12)}\tilde{(45)}]=z$. Hence, $\mathcal{N}_{\Delta_D(S_5)}((12))=\mathcal{N}_{\mathcal{P}_e(S_5)}((12))$. Using similar arguments, it can be shown that $\mathcal{N}_{\Delta_D(S_5)}((a,b))=\mathcal{N}_{\mathcal{P}_e(S_5)}((a,b))$, for any two-cycle $(a,b)$ in $S_5$.


\vspace{.3cm}
\noindent\underline{\textbf{Case (ii) ($\tau=(a,b)(c,d)$):}} Consider the permutation $(12)(34)$. Then \[C_{S_5}((12)(34))=\{e,(12),(34),(12)(34),(13)(24),(14)(23),(1324),(1423)\}.\]
Clearly, $\{e,(1324),(1423)\}=\mathcal{N}_{\mathcal{P}_e(S_5)}((12)(34))\subseteq \mathcal{N}_{\Delta_D(S_5)}((12)(34))$. Again, from Lemma \ref{preimage_transposition} and \ref{disjoint_commutators}, we have $[\tilde{(12)}\tilde{(34)},\tilde{(12)}]=[\tilde{(34)},\tilde{(12)}]=z$ and $[\tilde{(12)}\tilde{(34)},\tilde{(34)}]=[\tilde{(12)},\tilde{(34)}]=z$. Also, since $(\tilde{(12)}\tilde{(34)})^2=z=(\tilde{(13)}\tilde{(24)})^2$, we have $[\tilde{(12)}\tilde{(34)},\tilde{(13)}\tilde{(24)}]=(\tilde{(12)}\tilde{(34)}\tilde{(13)}\tilde{(24)})^2.$ Now, consider the permutation $(12)(34)(13)(24)=(14)(23)$. Then we have $\tilde{(12)}\tilde{(34)}\tilde{(13)}\tilde{(24)}\in\pi^{-1}((14)(23))$, where $\pi$ is the projection map from $\tilde{S_5}$ to $S_5$. Hence $\tilde{(12)}\tilde{(34)}\tilde{(13)}\tilde{(24)}=\tilde{(14)}\tilde{(23)}$ or $\tilde{(12)}\tilde{(34)}\tilde{(13)}\tilde{(24)}=z\tilde{(14)}\tilde{(23)}$, which implies that $(\tilde{(12)}\tilde{(34)}\tilde{(13)}\tilde{(24)})^2=(\tilde{(14)}\tilde{(23)})^2=(z\tilde{(14)}\tilde{(23)})^2=z.$ Again, it follows that $[\tilde{(12)}\tilde{(34)},\tilde{(14)}\tilde{(23)}]=[\tilde{(12)}\tilde{(34)},\tilde{(12)}\tilde{(34)}\tilde{(13)}\tilde{(24)}]=[\tilde{(12)}\tilde{(34)},\tilde{(12)}\tilde{(34)}][\tilde{(12)}\tilde{(34)},\tilde{(13)}\tilde{(24)}]=(\tilde{(12)}\tilde{(34)}\tilde{(13)}\tilde{(24)})^2=z.$ So, it can be concluded that $\mathcal{N}_{\Delta_D(S_5)}((12)(34))=\mathcal{N}_{\mathcal{P}_e(S_5)}((12)(34)).$ Similarly, for any permutation of the form $(a,b)(c,d)\in S_5$, it can be shown that $\mathcal{N}_{\Delta_D(S_5)}((a,b)(c,d))=\mathcal{N}_{\mathcal{P}_e(S_5)}((a,b)(c,d))$. 

\vspace{.3cm}
\noindent\underline{\bf{Case (iii) ($\tau=(a,b,c)$):}} Consider the permutation $(123)$. Then \[C_{S_5}((123))=\{e,(123),(132),(45),(123)(45),(132)(45)\}=\langle (123)(45)\rangle.\] Here, $\mathcal{N}_{\mathcal{P}_e(S_5)}[(123)]=C_{S_5}((123))$. Hence, we have $\mathcal{N}_{\Delta_D(S_5)}((123))=\mathcal{N}_{\mathcal{P}_e(S_5)}((123))$. Using similar arguments, it can be concluded that for any arbitrary permutation of the form $(a,b,c)\in S_5$, we have $\mathcal{N}_{\Delta_D(S_5)}((a,b,c))=\mathcal{N}_{\mathcal{P}_e(S_5)}((a,b,c))$.

\vspace{.3cm}
\noindent\underline{\bf{Case (iv) ($\tau=(a,b,c)(d,e)$):}} Consider the permutation $(123)(45)$. Then \[C_{S_5}((123)(45))=\{e,(123),(132),(45),(123)(45),(132)(45)\}=\langle (123)(45)\rangle.\] Here, $\mathcal{N}_{\mathcal{P}_e(S_5)}[(123)(45)]=C_{S_5}((123)(45))$. So, $\mathcal{N}_{\Delta_D(S_5)}((123)(45))=\mathcal{N}_{\mathcal{P}_e(S_5)}((123)(45))$. Using similar arguments, it can be concluded that for any arbitrary permutation of the form $(a,b,c)(d,e)\in S_5$, we have $\mathcal{N}_{\Delta_D(S_5)}((a,b,c)(d,e))=\mathcal{N}_{\mathcal{P}_e(S_5)}((a,b,c)(d,e))$.

\vspace{.3cm}
\noindent\underline{\bf{Case (v) ($\tau=(a,b,c,d)$):}} Consider the permutation $(1234)$. Then \[C_{S_5}((1234))=\langle (1234)\rangle.\] So, $\mathcal{N}_{\mathcal{P}_e(S_5)}[(1234)]=C_{S_5}((1234))$. Hence, we have $\mathcal{N}_{\Delta_D(S_5)}((1234))=\mathcal{N}_{\mathcal{P}_e(S_5)}((1234))$. Using similar arguments, it can be concluded that for any arbitrary permutation of the form $(a,b,c,d)\in S_5$, we have $\mathcal{N}_{\Delta_D(S_5)}((a,b,c,d))=\mathcal{N}_{\mathcal{P}_e(S_5)}((a,b,c,d))$.

\vspace{.3cm}
\noindent\underline{\bf{Case (vi) ($\tau=(a,b,c,d,e)$):}} Consider the permutation $(12345)$. Then \[C_{S_5}((12345))=\langle (12345)\rangle.\] So, $\mathcal{N}_{\mathcal{P}_e(S_5)}[(12345)]=C_{S_5}((12345))$. Hence, we have $\mathcal{N}_{\Delta_D(S_5)}((12345))=\mathcal{N}_{\mathcal{P}_e(S_5)}((12345))$. Using similar arguments, it can be concluded that for any arbitrary permutation of the form $(a,b,c,d,e)\in S_5$, we have $\mathcal{N}_{\Delta_D(S_5)}((a,b,c,d,e))=\mathcal{N}_{\mathcal{P}_e(S_5)}((a,b,c,d,e))$.

Hence, we have $\Delta_D(S_5)=\mathcal{P}_e(S_5)$. Now, from Proposition \ref{induced_S_n and A_n}(i) and the fact that the induced subgraph $\mathcal{P}_e(G)\vert_S$, for any subgroup $S$ of $G$ coincides with $\mathcal{P}_e(S)$, the equality can be deduced for $S_4$. This concludes the proof. 
\end{proof}

\begin{theorem}\label{Containment of Delta_D for S_n}
For $n\in \mathbb{N}$, we have the following:
\begin{enumerate}[\rm (i)]
\item $E(\mathcal{P}_e(S_n))=E(\Delta_D(S_n))\subsetneq E(\Delta(S_n))$ when $n\in\{4,5\}$;
\item $E(\mathcal{P}_e(S_n))\subsetneq E(\Delta_D(S_n))\subsetneq E(\Delta(S_n))$ when $n\geq 6$.
\end{enumerate}
\end{theorem}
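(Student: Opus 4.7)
My plan is to reduce both parts to already-established results, using two carefully chosen pairs of disjoint permutations as witnesses. The heavy lifting has already been done: Proposition \ref{deep commuting of S_4,5} handles the small-$n$ equality, and Lemma \ref{disjoint_permutations_in_Sn} packages the commutator computation in $\tilde{S_n}$ for products of disjoint transpositions.

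For part (i), the equality $E(\mathcal{P}_e(S_n)) = E(\Delta_D(S_n))$ for $n \in \{4,5\}$ is precisely the content of Proposition \ref{deep commuting of S_4,5}. For the strict inclusion $E(\Delta_D(S_n)) \subsetneq E(\Delta(S_n))$, I would exhibit the two disjoint transpositions $(12)$ and $(34)$, which lie in $S_n$ for every $n \geq 4$. They commute in $S_n$, so they give an edge of $\Delta(S_n)$; but both are odd permutations, so Lemma \ref{disjoint_permutations_in_Sn} (equivalently, the computation $[\widetilde{(12)}, \widetilde{(34)}] = z$ from Lemma \ref{preimage_transposition}) shows that their preimages fail to commute in $\tilde{S_n}$, hence they are not adjacent in $\Delta_D(S_n)$.

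For part (ii), the strict inclusion $E(\Delta_D(S_n)) \subsetneq E(\Delta(S_n))$ is again witnessed by the pair $(12),(34)$. The new content is $E(\mathcal{P}_e(S_n)) \subsetneq E(\Delta_D(S_n))$ for $n \geq 6$, for which I would use the pair $\sigma_1 = (12)(34)$ and $\sigma_2 = (56)$, available in $S_n$ precisely when $n \geq 6$. These are disjoint and $\sigma_1$ is even, so Lemma \ref{disjoint_permutations_in_Sn} gives $\sigma_1 \sim \sigma_2$ in $\Delta_D(S_n)$. On the other hand, $\sigma_1$ and $\sigma_2$ are distinct commuting involutions with disjoint supports, so $\langle \sigma_1, \sigma_2 \rangle \cong C_2 \times C_2$ is non-cyclic, showing $\sigma_1 \nsim \sigma_2$ in $\mathcal{P}_e(S_n)$.

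There is no serious obstacle here: all arithmetic in $\tilde{S_n}$ is already encoded in Lemmas \ref{preimage_transposition} and \ref{disjoint_permutations_in_Sn}, and the enhanced-power-graph non-edges follow immediately from the Klein four structure. The only point requiring mild care is checking that the witnesses genuinely inhabit the claimed range of $n$ (so that, in particular, the pair $(12)(34), (56)$ is available only from $n = 6$ onward, which is why the bound $n \geq 6$ appears in part (ii) rather than $n \geq 4$).
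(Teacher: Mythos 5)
Your proposal is correct and follows essentially the same route as the paper: the equality for $n\in\{4,5\}$ is quoted from Proposition \ref{deep commuting of S_4,5}, the strict inclusion $E(\Delta_D(S_n))\subsetneq E(\Delta(S_n))$ is witnessed by the disjoint odd transpositions $(12),(34)$ via the relation $[g_1,g_3]=z$, and the strict inclusion $E(\mathcal{P}_e(S_n))\subsetneq E(\Delta_D(S_n))$ for $n\geq 6$ is witnessed by a pair of disjoint permutations generating a non-cyclic abelian subgroup, adjacent in $\Delta_D(S_n)$ by Lemma \ref{disjoint_permutations_in_Sn}. The only cosmetic difference is your choice of witness $(12)(34),(56)$ (a Klein four group) where the paper uses $(123),(456)$ (isomorphic to $C_3\times C_3$); both work for exactly the same reasons.
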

\begin{proof}
Recall that for $n\geq 4$, $g_1$, $g_3$ are preimages of $t_1=(12)$ and $t_3=(34)$ in $\tilde{S_n}$, respectively. Now, $[g_1,g_3]=(g_1g_3)^2=z$ in $\tilde{S_n}$ but $[t_1,t_3]=e$ in $S_n$. So $t_1\sim t_3$ in $\Delta(S_n)$ but not in $\Delta_D(S_n)$. Hence, for $n\geq 4$, $E(\Delta_D(S_n))\subsetneq E(\Delta(S_n))$.

For $n\in\{4,5\}$, it was shown in Proposition \ref{deep commuting of S_4,5} that $\mathcal{P}_e(S_n)=\Delta_D(S_n)$. Now, for $n\geq 6$, the subgroup $\langle(123),(456)\rangle$ of $S_n$ is isomorphic to $C_3\times C_3$. So, $(123)\nsim (456)$ in $\mathcal{P}_e(S_n)$. Again, from Lemma \ref{disjoint_permutations_in_Sn} we have $(123)\sim (456)$ in $\Delta_D(S_n)$. Hence, for $n\geq 6$, $E(\mathcal{P}_e(S_n))\subsetneq E(\Delta_D(S_n))$. This completes the proof. 
\end{proof}

Next, we compare the deep commuting graph and the enhanced power graph of $A_n$. We need the following useful lemmas.
\begin{lemma}\label{3-cycle A_7}
Let $\sigma_1$ and $\sigma_2$ be two disjoint $3$-cycles in $A_7$. Then $\sigma_1\nsim\sigma_2$ in $\Delta_D(A_7)$.    
\end{lemma}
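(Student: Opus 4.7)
The plan is to reduce the problem, using automorphism invariance together with Proposition~\ref{induced_S_n and A_n}(iii), to a single pair of disjoint $3$-cycles in $A_6$, and then to compute one specific commutator in $\tilde{A_6}$.

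First I would observe that two disjoint $3$-cycles in $A_7$ use only $6$ of the $7$ symbols and so share a common fixed point. Conjugating by a suitable element of $S_7$ (and later by $S_6$ inside $A_6$, possibly also replacing $\sigma_2$ by $\sigma_2^{-1}$, which does not affect commuting of preimages) gives an automorphism of $A_7$ (respectively of $A_6$). Since $A_6$ and $A_7$ are perfect, their Schur covers are universal central extensions and hence functorial, so any automorphism of the base group lifts to the cover and preserves the deep commuting graph. Thus I may assume $\sigma_1$ and $\sigma_2$ both fix $7$, placing them in $A_6 \le A_7$. By Proposition~\ref{induced_S_n and A_n}(iii), $\Delta_D(A_7)|_{A_6} = \Delta_D(A_6)$, so it suffices to show $\sigma_1 \nsim \sigma_2$ in $\Delta_D(A_6)$, and after one further reduction inside $A_6$ I may take $\sigma_1 = (123)$ and $\sigma_2 = (456)$.

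Using the presentation of $\tilde{A_6}$ from Theorem~\ref{alternating_schur_cover}(ii), I have $\pi(h_1)=(123)=\sigma_1$ and $\pi(h_3 h_4) = (12)(45)\cdot(12)(56) = (456) = \sigma_2$, so the relevant preimages are $h_1$ and $h_3 h_4$. The defining relations $h_i^2 = \mathfrak{z}^3$ for $i \ge 2$, $(h_1 h_3)^2 = \mathfrak{z}^3$, $(h_1 h_4)^2 = \mathfrak{z}$, and $\mathfrak{z}^6 = [\mathfrak{z}, h_i] = e$ quickly give $h_3^{-1} h_1 h_3 = h_1^{-1}$ and $h_4^{-1} h_1^{-1} h_4 = h_1 \mathfrak{z}^2$. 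Substituting these successively into $[h_1, h_3 h_4] = h_1^{-1} h_4^{-1} h_3^{-1} h_1 h_3 h_4$ collapses the expression to $h_1^{-1}\cdot h_1 \mathfrak{z}^2 = \mathfrak{z}^2$, which is non-trivial because $\mathfrak{z}$ has order $6$ in $\tilde{A_6}$. This proves the desired non-adjacency.

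The main obstacle is the reduction step: I must justify that the deep commuting graph really is $\mathrm{Aut}(A_n)$-invariant for $n \in \{6,7\}$, which hinges on perfectness of $A_n$ and the functoriality of the universal central extension (so that the automorphism lifts to $\tilde{A_n}$). Once this is set up, the commutator computation is a short manipulation with the defining relations of $\tilde{A_6}$ and is essentially the only substantive calculation.
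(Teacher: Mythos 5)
Your proposal is correct and follows essentially the same route as the paper: the core step in both is the computation $[h_1,h_3h_4]=\mathfrak{z}^2\neq e$ from the presentation of the Schur cover in Theorem \ref{alternating_schur_cover}(ii), followed by automorphism invariance of $\Delta_D$ to handle arbitrary disjoint $3$-cycles. Your detour through $A_6$ via Proposition \ref{induced_S_n and A_n}(iii) is harmless but unnecessary, since the same presentation and the same commutator computation already apply directly in $\tilde{A_7}$; your explicit justification of automorphism invariance via the universal central extension is a welcome addition that the paper only asserts.
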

\begin{proof}
Let $\sigma_1=(a,b,c)$ and $\sigma_2=(d,e,f)$ be two disjoint $3$-cycles in $A_7$. Consider the $3$-cycles $(123)$ and $(456)$ in $A_7$. Then $[\tilde{(123)},\tilde{(456)}]$=$[h_1,h_3h_4]=[h_1,h_4][h_1,h_3]^{h_4}.$ Also, $[h_1,h_4]=h_1^{-1}h_4^{-1}h_1h_4=h_1(h_1h_4)^2=\mathfrak{z}h_1$, and $[h_1,h_3]^{h_4}=(h_1^{-1}h_3^{-1}h_1h_3)^{h_4}=(h_1(h_1h_3)^2)^{h_4}=\mathfrak{z}^3h_1^{h_4}=h_4h_1h_4=\mathfrak{z}h_1^{-1}.$ Then $[\tilde{(123)},\tilde{(456)}]=\mathfrak{z}h_1\cdot \mathfrak{z}h_1^{-1}=\mathfrak{z}^2$. Hence, $(123)\nsim (456)$ in $\Delta_D(A_7)$.

Again, since $\sigma_1$ and $\sigma_2$ are disjoint, there exists an automorphism $f:A_7\rightarrow A_7$, such that $f((123))=\sigma_1$ and $f((456))=\sigma_2$. Now, from the fact that the deep commuting graph of a finite group is invariant under any automorphism of the group, the result follows. 
\end{proof}

\begin{lemma}\label{2-cycle A_7}
Let $s_1$ and $s_2$ be two distinct permutations of order $2$ in $A_7$ such that $[s_1,s_2]=e$. Then $s_1\nsim s_2$ in $\Delta_D(A_7)$.    
\end{lemma}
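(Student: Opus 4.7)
The plan is to reduce the verification to two canonical representatives of commuting distinct involutions in $A_7$ (using that $\Delta_D(A_7)$ is invariant under $\mathrm{Aut}(A_7) = S_7$, since any group automorphism lifts to the Schur cover), and then perform two direct calculations inside $\tilde{A_7}$ using the presentation given in Theorem \ref{alternating_schur_cover}(ii).

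Every order-$2$ element of $A_7$ is a double transposition, and the cycle type $(2,2,1,1,1)$ does not split in $A_7$. A straightforward enumeration of involutions in $C_{A_7}((12)(34))$ shows that, up to $S_7$-conjugation, any commuting distinct pair $\{s_1, s_2\}$ of involutions in $A_7$ equals one of
\begin{enumerate}
\item[(I)] $\{(12)(34),\,(13)(24)\}$ (generating a Klein $4$-subgroup, support of size $4$); or
\item[(II)] $\{(12)(34),\,(12)(56)\}$ (sharing one transposition, support of size $6$).
\end{enumerate}
So it suffices to verify non-adjacency for these two representatives. I work with generators $h_1=\tilde{(123)}$, $h_2=\tilde{(12)(34)}$, $h_4=\tilde{(12)(56)}$ of $\tilde{A_7}$, using the relations $h_1^3=h_i^2=(h_{i-1}h_i)^3=\mathfrak{z}^3$, $(h_2 h_4)^2 = \mathfrak{z}^3$, $\mathfrak{z}^6=e$, and $\mathfrak{z}$ central, which in particular give $h_i^{-1}=\mathfrak{z}^3 h_i$ for $i\ge 2$ and $h_1^{-1}=h_1^2\mathfrak{z}^{-3}$.

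For Type II, the defining relation $(h_j h_k)^2=\mathfrak{z}^3$ applies to $(j,k)=(2,4)$, so
\[
[h_2,h_4]\;=\;h_2^{-1}h_4^{-1}h_2 h_4\;=\;(\mathfrak{z}^3 h_2)(\mathfrak{z}^3 h_4) h_2 h_4\;=\;\mathfrak{z}^6 (h_2h_4)^2\;=\;\mathfrak{z}^3\;\neq\;e.
\]
For Type I, since $(12)(34)^{(123)}=(13)(24)$, the element $c:=h_1^{-1}h_2 h_1$ is a lift of $(13)(24)$. Centrality of $\mathfrak{z}$ gives $c^2 = h_1^{-1}h_2^2 h_1=\mathfrak{z}^3$, so $c^{-1}=\mathfrak{z}^3 c$, and the same cancellation produces $[h_2,c]=(h_2 c)^2$. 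Writing $h_2 c = h_2 h_1^{-1} h_2 h_1$, substituting $h_1^{-1}=h_1^2\mathfrak{z}^{-3}$ and applying the braid relation $(h_1 h_2)^3=\mathfrak{z}^3$ in the rearranged form $h_2 h_1 h_2 = \mathfrak{z}^3 h_1^{-1}h_2^{-1}h_1^{-1}$, the expression $(h_2 c)^2$ collapses after $\mathfrak{z}$ is absorbed ($\mathfrak{z}^{\pm 6}=e$) to $(h_2 h_1)^3$, and $(h_2 h_1)^3=\mathfrak{z}^3$ is immediate from right-multiplying $(h_1 h_2)^3=\mathfrak{z}^3$ by $h_2^{-1}$ and conjugating. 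Hence $[h_2,c]=\mathfrak{z}^3\neq e$.

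The main obstacle is the Type I calculation: the element $c$ is a conjugate of $h_2$ rather than another generator, so its commutator with $h_2$ can only be evaluated by repeatedly invoking the braid relation $(h_1 h_2)^3=\mathfrak{z}^3$ together with the order relations and centrality of $\mathfrak{z}$. Once the Type I identity $(h_2 c)^2=(h_2 h_1)^3$ is established, everything else (including the Type II case and the $S_7$-reduction) is essentially routine.
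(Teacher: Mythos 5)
Your proposal is correct and follows essentially the same route as the paper: reduce to canonical commuting pairs of double transpositions via automorphism-invariance of $\Delta_D(A_7)$, and then compute the relevant commutators of lifts directly from the presentation of $\tilde{A_7}$ in Theorem \ref{alternating_schur_cover}(ii), obtaining $\mathfrak{z}^3\neq e$ in each case. The only difference is cosmetic: the paper takes $h_2^{h_1^2h_4}$ as its lift of $(13)(24)$ and simplifies $h_2h_2^{h_1^2h_4}$ to $h_2^{h_1h_4}$ before squaring, whereas you take $h_2^{h_1}$ and collapse $(h_2h_2^{h_1})^2$ to $(h_2h_1)^3$ via the braid relation; both computations give $\mathfrak{z}^3$, and your two-orbit reduction correctly absorbs the paper's separate treatment of $(14)(23)$.
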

\begin{proof}
Since, $s_1\in A_7$ is of order $2$, $s_1$ must be of the form $(a,b)(c,d)$, where $(a,b)$ and $(c,d)$ are disjoint. Also, since $[s_1,s_2]=e=s_2^2$ and $s_1\neq s_2$, $s_2$ must be one of the form $(a,c)(b,d)$, $(a,d)(b,c)$, $(a,b)(e,f)$ and $(c,d)(e,f)$, where $\{a,b,c,d\}\cap\{e,f\}=\emptyset$. Now, consider the permutations $(12)(34)$ and $(13)(24)$. Then we have $[\tilde{(12)(34)},\tilde{(13)(24)}]=[h_2,h_2^{h_1^2h_4}]=(h_2h_2^{h_1^2h_4})^2$. Also, $h_2h_2^{h_1^2h_4}=h_2h_4^{-1}h_1^{-2}h_2h_1^2h_4=h_2h_4h_1h_2h_1^2h_4=\mathfrak{z}^3h_2h_4(h_1h_2)^2h_2h_1h_4=h_2h_4(h_1h_2)^{-1}h_2h_1h_4=h_2h_4h_2^{-1}h_1^{-1}h_2h_1h_4=\mathfrak{z}^3(h_2h_4)^2h_4^{-1}h_1^{-1}h_2h_1h_4=h_2^{h_1h_4}.$ So, we have $(h_2h_2^{h_1^2h_4})^2=(h_2^2)^{h_1h_4}=\mathfrak{z}^3$. Hence, $(12)(34)\nsim (13)(24)$ in $\Delta_D(A_7)$. Again $(14)(23)=(12)(34)(13)(24)$. Then it follows that $[\tilde{(12)(34)},\tilde{(14)(23)}]=[\tilde{(12)(34)},\tilde{(12)(34)}\tilde{(13)(24)}]=[\tilde{(12)(34)},\tilde{(13)(24)}]=\mathfrak{z}^3$. Hence, $(12)(34)\nsim (14)(23)$ in $\Delta_D(A_7)$. Moreover, consider the permutation $(12)(56)$. Then we have, $[\tilde{(12)(34)},\tilde{(12)(56)}]=[h_2,h_4]=(h_2h_4)^2=\mathfrak{z}^3$. Hence, $(12)(34)\nsim (12)(56)$ in $\Delta_D(A_7)$.

Again, there exists an automorphism $f_1:A_7 \rightarrow A_7$ such that, $f_1((12)(34))=(a,b)(c,d)$, $f_1((13)(24))=(a,c)(b,d)$ and $f_1((14)(23))=(a,d)(b,c)$. Also, there exist automorphisms $f_2,f_3:A_7 \rightarrow A_7$ with, $f_2((12)(34))=f_3((12)(34))=(a,b)(c,d)$, $f_2((12)(56))=(a,b)(e,f)$ and $f_3((12)(56))=(c,d)(e,f)$. Now, from the fact that the deep commuting graph of a finite group is invariant under any automorphism of the group, the result follows.
\end{proof}
\begin{proposition}\label{deep commuting of A_6,7}
For $n\in\{6,7\}$, we have $\Delta_D(A_n)=\mathcal{P}_e(A_n)$.    
\end{proposition}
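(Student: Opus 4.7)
The plan is to reduce the proof to $n=7$ and classify the non-cyclic two-generated abelian subgroups of $A_7$. By Proposition \ref{induced_S_n and A_n}(iii) we have $\Delta_D(A_7)\vert_{A_6}=\Delta_D(A_6)$, and $\mathcal{P}_e(A_7)\vert_{A_6}=\mathcal{P}_e(A_6)$ holds in general, so proving $\Delta_D(A_7)=\mathcal{P}_e(A_7)$ yields the $n=6$ case by restriction. Suppose $\sigma_1,\sigma_2\in A_7$ are distinct with $\sigma_1\sim\sigma_2$ in $\Delta_D(A_7)$; they commute (since $\Delta_D(A_7)\subseteq\Delta(A_7)$), so $A:=\langle\sigma_1,\sigma_2\rangle$ is abelian, and I aim to show $A$ is cyclic. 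As elements of $A_7$ have order at most $7$, with Sylow $2$-subgroup $\cong D_4$ and Sylow $3$-subgroup $\cong C_3\times C_3$, the non-cyclic two-generated abelian subgroups of $A_7$ are, up to isomorphism, exactly $V_4$, $C_3\times C_3$, and $C_2\times C_6\cong V_4\times C_3$ (the last absent in $A_6$, which has no element of order $6$).

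If $A\cong V_4$, Lemma \ref{2-cycle A_7} directly gives $\sigma_1\nsim\sigma_2$ in $\Delta_D(A_7)$. If $A\cong C_3\times C_3$, all Sylow $3$-subgroups of $A_7$ being conjugate and the deep commuting graph being invariant under automorphisms, I may assume $A=A_0:=\langle(123),(456)\rangle$. Define $c\colon A_0\times A_0\to Z(\tilde{A_7})$ by $c(g,h)=[\tilde{g},\tilde{h}]$. Since $A_0$ is abelian these commutators lie in $\ker\pi\subseteq Z(\tilde{A_7})$, so $c$ is well defined independently of the choice of preimages; Lemma \ref{commutator_in_center} then shows $c$ is bi-multiplicative and alternating. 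Its image lies in the $3$-torsion $\langle\mathfrak{z}^2\rangle\cong C_3$ of $Z(\tilde{A_7})$, and Lemma \ref{3-cycle A_7} gives $c((123),(456))=\mathfrak{z}^2\neq e$. Thus $c$ is the non-degenerate alternating $\mathbb{F}_3$-form on $A_0\cong\mathbb{F}_3^2$, whose kernel consists exactly of the linearly dependent pairs---which are exactly the pairs generating a cyclic subgroup of $A_0$.

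For $A\cong C_2\times C_6$ (occurring only for $n=7$), every such subgroup of $A_7$ equals the centralizer of any of its order-$6$ elements, and the cycle type $(3,2,2)$ forms a single $A_7$-conjugacy class, so I may take $A=\langle(123)\rangle\times\langle(45)(67),(46)(57)\rangle$. Write $A=A_2\times A_3$ with $A_2=\langle(45)(67),(46)(57)\rangle\cong V_4$ and $A_3=\langle(123)\rangle\cong C_3$, and decompose each $\sigma_i=\sigma_{i,2}\sigma_{i,3}$ uniquely. All four factors commute pairwise, so iterating Lemma \ref{disjoint_commutators} yields
\[
[\tilde{\sigma_1},\tilde{\sigma_2}] = [\tilde{\sigma_{1,2}},\tilde{\sigma_{2,2}}]\cdot[\tilde{\sigma_{1,2}},\tilde{\sigma_{2,3}}]\cdot[\tilde{\sigma_{1,3}},\tilde{\sigma_{2,2}}]\cdot[\tilde{\sigma_{1,3}},\tilde{\sigma_{2,3}}].
\]
The two mixed terms are central with orders dividing both $2$ and $3$ (applying Lemma \ref{commutator_in_center} together with $\tilde{\sigma_{i,2}}^2,\tilde{\sigma_{j,3}}^3\in\ker\pi$), hence are trivial; the pure $3$-term is trivial because preimages of elements of the cyclic group $A_3$ sit inside the abelian subgroup $\langle h_1,\mathfrak{z}\rangle\subseteq\tilde{A_7}$. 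The commutator therefore collapses to $[\tilde{\sigma_{1,2}},\tilde{\sigma_{2,2}}]$, and by Lemma \ref{2-cycle A_7} this is trivial iff $\sigma_{1,2}$ and $\sigma_{2,2}$ fail to be distinct non-identity elements, equivalently iff $\langle\sigma_{1,2},\sigma_{2,2}\rangle$ is cyclic, equivalently iff $\langle\sigma_1,\sigma_2\rangle$ is cyclic via the Sylow decomposition.

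I expect the main obstacle to be the $C_2\times C_6$ case: the Sylow-type decomposition of the commutator inside $\pi^{-1}(A)$, the justification that the mixed-order terms vanish by coprimality, and the verification that the list of non-cyclic two-generated abelian subgroups is exhaustive all need to mesh correctly. The $V_4$ and $C_3\times C_3$ cases, by contrast, fall out cleanly once the bilinear-form perspective is in place.
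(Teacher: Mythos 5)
Your proof is correct, but it is organized quite differently from the paper's. The paper fixes one representative $\sigma$ of each cycle type in $A_7$, writes down $C_{A_7}(\sigma)$ explicitly, and checks element by element (seven cases) that everything in $C_{A_7}(\sigma)\setminus\mathcal{N}_{\mathcal{P}_e(A_7)}[\sigma]$ is non-adjacent to $\sigma$ in $\Delta_D(A_7)$, then transports by automorphisms. You instead pass to the contrapositive on the level of generated subgroups: a $\Delta_D$-adjacent pair commutes, so it generates a two-generated abelian subgroup, and you only have to kill the non-cyclic possibilities $V_4$, $C_3\times C_3$ and $C_2\times C_6$. Both arguments ultimately rest on the same computational core (Lemmas \ref{3-cycle A_7} and \ref{2-cycle A_7} together with the bimultiplicativity of central commutators from Lemmas \ref{commutator_in_center} and \ref{disjoint_commutators}), but your packaging is more conceptual and shorter: the symplectic-form reading of $(g,h)\mapsto[\tilde g,\tilde h]$ on the Sylow $3$-subgroup dispatches all of $C_3\times C_3$ at once, and the coprime splitting $\sigma_i=\sigma_{i,2}\sigma_{i,3}$ with the $\gcd(2,3)=1$ vanishing of the mixed terms replaces the paper's Cases (iii) and (iv). The reduction of $A_6$ to $A_7$ via Proposition \ref{induced_S_n and A_n}(iii) is identical in both. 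What the paper's route buys is complete explicitness; what yours buys is a uniform mechanism that would generalize more readily.

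The one place where you are thinner than you should be is the exhaustiveness of the list $V_4$, $C_3\times C_3$, $C_2\times C_6$. Knowing the element orders and the isomorphism types of the Sylow $2$- and $3$-subgroups does not by itself exclude, say, $C_3\times C_6$ or $C_6\times C_6$ (whose Sylow subgroups all embed in $A_7$), nor $C_2\times C_4$; one must actually observe that $C_{A_7}(\langle(123),(456)\rangle)=\langle(123),(456)\rangle$ has no involution, that the centralizer in $A_7$ of an element of cycle type $(4,2)$ is cyclic of order $4$, and that the remaining candidates fail on support grounds. These are quick centralizer computations, so the gap is routine rather than fatal, but they need to be written down for the classification step to carry the weight you place on it.
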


\begin{proof}
The idea of this proof is similar to the proof of Proposition \ref{deep commuting of S_4,5}. We claim that for any permutation $\sigma\in A_7$, $\mathcal{N}_{\Delta_D(A_7)}(\sigma)=\mathcal{N}_{\mathcal{P}_e(A_7)}(\sigma)$, which yields $\Delta_D(A_7)=\mathcal{P}_e(A_7)$. To show the above claim, we consider the following cases:  

\vspace{.3cm}
\noindent\underline{\bf{Case (i) ($\sigma=(a,b,c)$):}} Consider the permutation $(123)$. Then \[C_{A_7}((123))=\langle (123)\rangle\times\langle (456),(45)(67)\rangle.\] Clearly, $\{e,(132),(45)(67),(46)(57),(47)(56),(123)(45)(67),(123)(46)(57),(123)(47)(56),\\ (132)(45)(67),(132)(46)(57),(132)(47)(56)\}=\mathcal{N}_{\mathcal{P}_e(A_7)}((123))\subseteq \mathcal{N}_{\Delta_D(A_7)}((123))$. Then, any permutation in $C_{A_7}((123))\setminus\mathcal{N}_{\mathcal{P}_e(A_7)}[(123)]$ is one of the form $\lambda, (123)\lambda$ and $(132)\lambda$, where $\lambda$ is a $3$-cycle disjoint from $(123)$. Now from Lemma \ref{disjoint_commutators}, we have $[\tilde{(123)},\tilde{\lambda}]=[\tilde{(123)},\tilde{(123)}\tilde{\lambda}]=[\tilde{(123)},\tilde{(132)}\tilde{\lambda}]$, and from Lemma \ref{3-cycle A_7}
we have $[\tilde{(123)},\tilde{\lambda}]\neq e$. Hence, $\mathcal{N}_{\Delta_D(A_7)}((123))=\mathcal{N}_{\mathcal{P}_e(A_7)}((123))$. Similarly, it can be shown that for any arbitrary permutation of the form $(a,b,c)\in A_7$, we have $\mathcal{N}_{\Delta_D(A_7)}((a,b,c))=\mathcal{N}_{\mathcal{P}_e(A_7)}((a,b,c))$.

\vspace{.3cm}
\noindent\underline{\bf{Case (ii) ($\sigma=(a,b)(c,d)$):}} Consider the permutation $(12)(34)$. Then
\begin{center}
$C_{A_7}((12)(34))=$\\
$(\langle (12)(34),(13)(24)\rangle\times\langle (567)\rangle)\cup(\{(12),(34),(1324),(1423)\}\times\{(56),(57),(67)\}).$   
\end{center}
\vspace{.15cm}
Clearly, $\langle(12)(34)(567)\rangle\cup(\{(1324),(1423)\}\times\{(56),(57),(67)\})=\mathcal{N}_{\mathcal{P}_e(A_7)}[(12)(34)]\subseteq \\ \mathcal{N}_{\Delta_D(A_7)}[(12)(34)]$. Hence, $C_{A_7}((12)(34))\setminus\mathcal{N}_{\mathcal{P}_e(A_7)}[(12)(34)]=(\{(13)(24),(14)(23)\}\times\langle(567)\rangle)\cup(\{(12),(34)\}\times\{(56),(57),(67)\})$. Now, since $(567),(576)\in\mathcal{N}_{\Delta_D(A_7)}[(12)(34)]$, we have $[\tilde{(12)(34)},\tilde{(567)}]=[\tilde{(12)(34)},\tilde{(576)}]=e$. Then from Lemma \ref{disjoint_commutators}, we can say that for any $v\in (\{(13)(24),(14)(23)\}\times\langle(567)\rangle)\cup(\{(12),(34)\}\times\{(56),(57),(67)\})$, $[\tilde{(12)(34)},\tilde{v}]=[\tilde{(12)(34)},\tilde{\xi}]$, for some $\xi\in\{(13)(24),(14)(23)\}\cup(\{(12),(34)\}\times\{(56),(57),(67)\})$. Now, from Lemma \ref{2-cycle A_7} we have $[\tilde{(12)(34)},\tilde{\xi}]\neq e$. Hence, $\mathcal{N}_{\Delta_D(A_7)}((12)(34))=\mathcal{N}_{\mathcal{P}_e(A_7)}((12)(34))$. Similarly, it follows that for any arbitrary permutation of the form $(a,b)(c,d)\in A_7$, we have $\mathcal{N}_{\Delta_D(A_7)}((a,b)(c,d))=\mathcal{N}_{\mathcal{P}_e(A_7)}((a,b)(c,d))$.  

\vspace{.3cm}
\noindent\underline{\bf{Case (iii) ($\sigma=(a,b,c)(d,e,f)$):}} Consider the permutation $(123)(456)$. Then \[C_{A_7}((123)(456))=\langle(123)\rangle\times\langle(456)\rangle.\] Here, $\langle(123)(456)\rangle=\mathcal{N}_{\mathcal{P}_e(A_7)}[(123)(456)]\subseteq\mathcal{N}_{\Delta_D(A_7)}[(123)(456)]$. For the rest of the permutations in $C_{A_7}((123)(456))$, using Lemma \ref{disjoint_commutators} and \ref{3-cycle A_7} we can say the following.
\begin{itemize}
    \item $[\tilde{(123)}\tilde{(456)},\tilde{(123)}]=[\tilde{(456)},\tilde{(123)}]\neq e.$
    \item $[\tilde{(123)}\tilde{(456)},\tilde{(132)}]=[\tilde{(456)},\tilde{(132)}]\neq e.$
    \item $[\tilde{(123)}\tilde{(456)},\tilde{(456)}]=[\tilde{(123)},\tilde{(456)}]\neq e.$
    \item $[\tilde{(123)}\tilde{(456)},\tilde{(465)}]=[\tilde{(123)},\tilde{(465)}]\neq e.$
    \item $[\tilde{(123)}\tilde{(456)},\tilde{(123)}\tilde{(465)}]=[\tilde{(123)},\tilde{(465)}][\tilde{(456)},\tilde{(123)}]=[\tilde{(123)},\tilde{(456)}]\neq e.$
    \item $[\tilde{(123)}\tilde{(456)},\tilde{(132)}\tilde{(456)}]=[\tilde{(123)},\tilde{(456)}][\tilde{(456)},\tilde{(132)}]=[\tilde{(456)},\tilde{(123)}]\neq e.$
\end{itemize}
Hence, we have $\mathcal{N}_{\Delta_D(A_7)}((123)(456))=\mathcal{N}_{\mathcal{P}_e(A_7)}((123)(456))$.

Similarly, it can be shown that for any arbitrary permutation of the form $(a,b,c)(d,e,f)\in A_7$, we have $\mathcal{N}_{\Delta_D(A_7)}((a,b,c)(d,e,f))=\mathcal{N}_{\mathcal{P}_e(A_7)}((a,b,c)(d,e,f))$.

\vspace{.3cm}
\noindent\underline{\bf{Case (iv) ($\sigma=(a,b,c)(d,e)(f,g)$):}} Consider the permutation $(123)(45)(67)$. Then \[C_{A_7}((123)(45)(67))=\langle(123)\rangle\times\langle(45)(67),(46)(57)\rangle.\] Here, $\langle(123)(45)(67)\rangle=\mathcal{N}_{\mathcal{P}_e(A_7)}[(123)(45)(67)]\subseteq\mathcal{N}_{\Delta_D(A_7)}[(123)(45)(67)]$. Now, for any $u\in \langle(123),(132)\rangle$ and $v\in \langle(46)(57),(47)(56)\rangle$, we have $v\in\mathcal{N}_{\mathcal{P}_e(A_7)}[u]\subseteq\mathcal{N}_{\Delta_D(A_7)}[u]$. Hence, $[\tilde{u},\tilde{v}]=e$. Now, from Lemma \ref{disjoint_commutators} we can see that 
\begin{itemize}
    \item $[\tilde{(123)}\tilde{(45)(67)},\tilde{(46)(57)}]=[\tilde{(45)(67)},\tilde{(46)(57)}].$
    \item $[\tilde{(123)}\tilde{(45)(67)},\tilde{(123)}\tilde{(46)(57)}]=[\tilde{(45)(67)},\tilde{(46)(57)}]$.
    \item $[\tilde{(123)}\tilde{(45)(67)},\tilde{(132)}\tilde{(46)(57)}]=[\tilde{(45)(67)},\tilde{(46)(57)}].$
    \item $[\tilde{(123)}\tilde{(45)(67)},\tilde{(47)(56)}]=[\tilde{(45)(67)},\tilde{(47)(56)}]$.
    \item $[\tilde{(123)}\tilde{(45)(67)},\tilde{(123)}\tilde{(47)(56)}]=[\tilde{(45)(67)},\tilde{(47)(56)}]$.
    \item $[\tilde{(123)}\tilde{(45)(67)},\tilde{(132)}\tilde{(47)(56)}]=[\tilde{(45)(67)},\tilde{(47)(56)}]$.
\end{itemize}
In addition, Lemma \ref{2-cycle A_7} yields $[\tilde{(45)(67)},\tilde{(46)(57)}]\neq e$ and $[\tilde{(45)(67)},\tilde{(47)(56)}]\neq e$. Hence, we have $\mathcal{N}_{\Delta_D(A_7)}((123)(45)(67))=\mathcal{N}_{\mathcal{P}_e(A_7)}((123)(45)(67))$. Now, the rest follows via similar arguments.

\vspace{.3cm}
\noindent\underline{\bf{Case (v) ($\sigma=(a,b,c,d)(e,f)$):}} Consider the permutation $(1234)(56)$. Then \[C_{A_7}((1234)(56))=\langle(1234)(56)\rangle.\] Here, $\mathcal{N}_{\mathcal{P}_e(A_7)}[(1234)(56)]=C_{A_7}((1234)(56))$. Hence, it follows that $\mathcal{N}_{\mathcal{P}_e(A_7)}((1234)(56))=\mathcal{N}_{\Delta_D(A_7)}((1234)(56))$. Now, the rest follows similarly.

\vspace{.3cm}
\noindent\underline{\bf{Case (vi) ($\sigma=(a,b,c,d,e)$):}} Consider the permutation $(12345)$. Then \[C_{A_7}((12345))=\langle(12345)\rangle.\] Here, $\mathcal{N}_{\mathcal{P}_e(A_7)}[(12345)]=C_{A_7}((12345))$. Hence, it is not difficult to see that $\mathcal{N}_{\mathcal{P}_e(A_7)}((12345))=\mathcal{N}_{\Delta_D(A_7)}((12345))$. The rest follows similarly.

\vspace{.3cm}
\noindent\underline{\bf{Case (vii) ($\sigma=(a,b,c,d,e,f,g)$):}} Consider the permutation $(1234567)$. Then \[C_{A_7}((1234567))=\langle(1234567)\rangle.\] Here, $\mathcal{N}_{\mathcal{P}_e(A_7)}[(1234567)]=C_{A_7}((1234567))$. Hence, it follows that $\mathcal{N}_{\mathcal{P}_e(A_7)}((1234567))=\mathcal{N}_{\Delta_D(A_7)}((1234567))$. Again, the rest follows similarly as above.

Hence, we have $\Delta_D(A_7)=\mathcal{P}_e(A_7)$. Now, from Proposition \ref{induced_S_n and A_n}(iii) and the fact that the induced subgraph of $\mathcal{P}_e(G)\vert_S$, for any subgroup $S$ of $G$ coincides with $\mathcal{P}_e(S)$, the equality can be deduced for $A_6$. This concludes the proof. 
\end{proof}

\begin{theorem}\label{Containment of Delta_D for A_n}
For a positive integer $n\geq 4$, we have the following:
\begin{enumerate}[\rm (i)]
\item $E(\mathcal{P}_e(A_n))=E(\Delta_D(A_n))\subsetneq E(\Delta(A_n))$ when $n\leq 7$;
\item $E(\mathcal{P}_e(A_n))\subsetneq E(\Delta_D(A_n))\subsetneq E(\Delta(A_n))$ when $n\geq 8$.
\end{enumerate}
\end{theorem}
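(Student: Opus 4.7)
The plan is to decompose the theorem into three containment claims and verify each by combining the already-established propositions with short calculations. The three claims are: (a) $E(\mathcal{P}_e(A_n))=E(\Delta_D(A_n))$ for $4\le n\le 7$, (b) $E(\mathcal{P}_e(A_n))\subsetneq E(\Delta_D(A_n))$ for $n\ge 8$, and (c) $E(\Delta_D(A_n))\subsetneq E(\Delta(A_n))$ for all $n\ge 4$. The heavy lifting is already done in Propositions \ref{deep commuting of S_4,5}, \ref{induced_over_alternating}, \ref{induced_S_n and A_n}, \ref{deep commuting of A_6,7} and Corollary \ref{disjoint_permutations_in_An}, so the theorem should follow mainly by bookkeeping.

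For (a), the cases $n\in\{6,7\}$ are exactly Proposition \ref{deep commuting of A_6,7}. For $n\in\{4,5\}$, I would chain Proposition \ref{induced_over_alternating} (which applies since $n\notin\{6,7\}$) with Proposition \ref{deep commuting of S_4,5} and the elementary fact that $\mathcal{P}_e(G)|_H=\mathcal{P}_e(H)$ for any subgroup $H$ of $G$, giving
\[
\Delta_D(A_n)=\Delta_D(S_n)|_{A_n}=\mathcal{P}_e(S_n)|_{A_n}=\mathcal{P}_e(A_n).
\]

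For (b), I would produce a concrete edge separating $\mathcal{P}_e(A_n)$ from $\Delta_D(A_n)$ via the disjoint $3$-cycles $(123)$ and $(456)$. These elements lie in $A_n$ for every $n\ge 6$; by Corollary \ref{disjoint_permutations_in_An} they are adjacent in $\Delta_D(A_n)$ as soon as $n\ge 8$, while $\langle(123),(456)\rangle\cong C_3\times C_3$ is non-cyclic, so they are non-adjacent in $\mathcal{P}_e(A_n)$. This is exactly the witness already used to separate $\mathcal{P}_e(S_n)$ from $\Delta_D(S_n)$ in Theorem \ref{Containment of Delta_D for S_n}(ii).

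For (c), I would use the commuting pair $\sigma_1=(12)(34)$ and $\sigma_2=(13)(24)$, which lies in $A_4\le A_n$ for every $n\ge 4$ and is trivially adjacent in $\Delta(A_n)$. For $n\in\{4,5\}$ I would pass to $\tilde{S_n}$ via Proposition \ref{induced_over_alternating} and invoke the identity $[\tilde{\tau_1}\tilde{\tau_2},\tilde{\tau_3}\tilde{\tau_4}]=z$ carried out inside the proof of Proposition \ref{induced_over_alternating}. For $n\ge 8$ I would further use Proposition \ref{induced_S_n and A_n}(ii) with $m=4$ to reduce to the preceding computation, and for $n\in\{6,7\}$ I would invoke Proposition \ref{deep commuting of A_6,7} together with the observation that $\langle\sigma_1,\sigma_2\rangle\cong C_2\times C_2$ is non-cyclic, whence $\sigma_1\nsim\sigma_2$ in $\mathcal{P}_e(A_n)=\Delta_D(A_n)$. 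The only real obstruction in the whole argument is bookkeeping: the jump $M(A_n)=C_2\to C_6$ at $n\in\{6,7\}$ forces a separate treatment of those two cases, but since Proposition \ref{deep commuting of A_6,7} and Proposition \ref{induced_S_n and A_n}(iii) handle them directly, this closes the proof cleanly.
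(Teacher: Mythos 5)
Your proposal is correct and follows essentially the same route as the paper: the same three-way decomposition, the same witnesses $(123),(456)$ with Corollary \ref{disjoint_permutations_in_An} for $n\geq 8$, and the same pair $(12)(34),(13)(24)$ for the strict containment in $\Delta(A_n)$. The only differences are cosmetic choices of which earlier result to cite for the non-adjacency of $(12)(34)$ and $(13)(24)$ in $\Delta_D(A_n)$ (the computation inside Proposition \ref{induced_over_alternating} and a detour through Proposition \ref{induced_S_n and A_n}(ii), versus the paper's appeal to the proofs of Propositions \ref{deep commuting of S_4,5} and \ref{deep commuting of A_6,7}), all of which are valid.
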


\begin{proof}
For $n\geq 4$, from Proposition \ref{induced_over_alternating} it follows that for $n\notin \{6,7\}$, $(12)(34)\sim (13)(24)$ in $\Delta_D(A_n)$ if and only if $(12)(34)\sim (13)(24)$ in $\Delta_D(S_n)$. Now, from the proof of Proposition \ref{deep commuting of S_4,5}, it follows that $(12)(34)\nsim (13)(24)$ in $\Delta_D(S_n)$ for all $n\geq 4$, hence $(12)(34)\nsim (13)(24)$ in $\Delta_D(A_n)$ for all $n\geq 4$ with $n\notin \{6,7\}$. Also for $n\in\{6,7\}$, it follows from the proof of Proposition \ref{deep commuting of A_6,7} that $(12)(34)\nsim (13)(24)$ in $\Delta_D(A_n)$. But, since $[(12)(34),(13)(24)]=e$, we have $(12)(34)\sim (13)(24)$ in $\Delta(A_n)$. Hence $E(\Delta_D(A_n))\subsetneq E(\Delta(A_n))$, for all $n\geq 4$.   

Recall that the induced subgraph $\mathcal{P}_e(G)\vert_S$, for any subgroup $S$ of $G$ coincides with $\mathcal{P}_e(S)$. Then for $n\in\{4,5\}$, it follows from Proposition \ref{induced_over_alternating} and \ref{deep commuting of S_4,5} that $\mathcal{P}_e(A_n)=\Delta_D(A_n)$. Also, it follows from Proposition \ref{deep commuting of A_6,7} that $\mathcal{P}_e(A_n)=\Delta_D(A_n)$, for $n\in\{6,7\}$. Now, for $n\geq 8$, we have $(123),(456)\in A_n$, and from Corollary \ref{disjoint_permutations_in_An} we have $(123)\sim(456)$ in $\Delta_D(A_n)$. Now, since $\langle(123),(456)\rangle\cong C_3\times C_3$, we have $(123)\nsim(456)$ in $\mathcal{P}_e(A_n)$. Hence, for $n\geq 8$, $E(\mathcal{P}_e(A_n))\subsetneq E(\Delta_D(A_n))$. This completes the proof.  
\end{proof}
Our next result determines positive integers $n$ for which $\Delta_D(S_n)$ and $\Delta_D(A_n)$ are perfect.
\begin{theorem}\label{permutation_perfect}
For $n\geq 4$, we have the following:
\begin{enumerate}[\rm (i)]
    \item $\Delta_D(S_n)$ is perfect if and only if $n\leq 5$.
    \item $\Delta_D(A_n)$ is perfect if and only if $n\leq 8$.
\end{enumerate}
\end{theorem}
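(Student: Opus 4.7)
The plan is, for each part, to handle the ``if'' direction by reducing $\Delta_D$ to $\mathcal{P}_e$ wherever possible and invoking the obvious perfectness of small enhanced power graphs, and to handle the ``only if'' direction by exhibiting an explicit induced $C_5$ and propagating the obstruction upward via the induced-subgraph identities of Proposition \ref{induced_S_n and A_n}. For sufficiency in (i) with $n\leq 5$, Proposition \ref{deep commuting of S_4,5} gives $\Delta_D(S_n)=\mathcal{P}_e(S_n)$; for (ii) with $n\leq 7$, combining Propositions \ref{deep commuting of S_4,5}, \ref{deep commuting of A_6,7}, and \ref{induced_S_n and A_n}(ii) yields $\Delta_D(A_n)=\mathcal{P}_e(A_n)$. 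In each of these short-list cases a direct inspection of the conjugacy-class list shows that every non-identity element lies in a unique maximal cyclic subgroup, so $\mathcal{P}_e$ is the join at $e$ of pairwise vertex-disjoint cliques and is manifestly perfect. The subtle case on the sufficiency side is $n=8$ of (ii): Corollary \ref{disjoint_permutations_in_An} introduces edges between disjoint $3$-cycles so the $\mathcal{P}_e$-reduction breaks, and perfectness there must be proved by analyzing the maximal cliques of $\Delta_D(A_8)$ directly, for instance via a generalized-join decomposition along the lines of Theorem \ref{perfectness of generalized join} applied to the commutator structure of the Schur cover $\tilde A_8$.

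For necessity in (i) with $n\geq 7$, I plan to exhibit the induced $C_5$ in $\Delta_D(S_7)$ on the set $\{(1\,2\,3),(4\,5\,6),(1\,2\,7),(3\,4\,5),(6\,7)\}$: each cyclically consecutive pair is disjoint with at least one factor even, hence adjacent by Lemma \ref{disjoint_permutations_in_Sn}, while each non-consecutive pair shares a moved symbol and fails to commute in $S_n$, hence is non-adjacent already in $\Delta(S_n)\supseteq\Delta_D(S_n)$. Proposition \ref{induced_S_n and A_n}(i) then lifts this $C_5$ to $\Delta_D(S_n)$ for every $n\geq 7$. For necessity in (ii) with $n\geq 9$, one chooses a $5$-element subset of $A_9$ whose hole structure genuinely requires the symbol $9$; adjacency of the cyclically consecutive pairs follows from Corollary \ref{disjoint_permutations_in_An}, non-adjacency of non-consecutive pairs from non-commutation in $S_9$, and Proposition \ref{induced_S_n and A_n}(ii) propagates the obstruction to every $n\geq 9$.

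The two boundary cases form the crux of the proposal. For $n=6$ of (i), the $S_7$ construction cannot be transported since it uses the seventh symbol, and the tight centralizer geometry of $S_6$ forces a direct computation inside $\tilde{S_6}$ to build an odd hole, likely mixing $3$-cycles, transpositions, and double transpositions, and exploiting pairs whose preimages in $\tilde{S_6}$ differ by the central involution $z$ (so that commutation in $S_6$ fails to lift to commutation in the cover). For $n=8$ of (ii), the proposal must come to grips with the induced $C_5$ on $\{(1\,2\,3),(4\,5\,6),(1\,7\,8),(2\,3\,4),(5\,6\,7)\}\subseteq A_8$ already exhibited in the paper's preceding text: either a finer commutator analysis in $\tilde{A_8}$ must supply a chord that destroys this would-be $C_5$ (leveraging the extra central element $\mathfrak{z}^3$ beyond what is available for $n\geq 9$), or the combinatorial structure of $\Delta_D(A_8)$ must be shown perfect by a direct clique-and-chromatic argument despite the presence of this induced cycle. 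Resolving these two boundary issues is the hardest step of the proof.
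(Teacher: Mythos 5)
Your proposal has two genuine breakdowns, one on each side of the equivalence. On the sufficiency side, the claim that in each short-list case ``every non-identity element lies in a unique maximal cyclic subgroup'' is false for $A_7$: the involution $(12)(34)$ lies in both $\langle(1324)(56)\rangle$ (order $4$) and $\langle(12)(34)(567)\rangle$ (order $6$), and its closed neighbourhood in $\mathcal{P}_e(A_7)$ --- computed explicitly in Case (ii) of the proof of Proposition \ref{deep commuting of A_6,7} --- is not a clique, so $\mathcal{P}_e(A_7)$ is not a join at $e$ of pairwise disjoint cliques. The paper never argues this way for the alternating groups: perfectness of $\Delta_D(A_5)$, $\Delta_D(A_6)$, $\Delta_D(A_7)$ is imported from Theorem \ref{Simple group perfect}, i.e., by passing to the Schur cover via Lemma \ref{perfectness of commuting graph} and invoking the Britnell--Gill classification (Theorem \ref{perfect commuting graph}), in whose list $C_6.A_6$ and $C_6.A_7$ appear. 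Your friendship-graph argument is sound for $S_5$ (it is essentially the paper's argument there, phrased as a generalized join of a star by cliques and Theorem \ref{perfectness of generalized join}), but it cannot carry $A_7$.

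On the necessity side, both ``boundary cases'' you defer are decisive and neither is resolved. For $S_6$ the paper exhibits the single witness $\{(123),(456),(12),(12)(34)(56),(56)\}$, an induced $C_5$ already in $\Delta_D(S_6)$: the key non-edge is between the two disjoint odd transpositions $(12)$ and $(56)$, which commute in $S_6$ but satisfy $[\tilde{(12)},\tilde{(56)}]=z$ by Lemma \ref{preimage_transposition} (equivalently, are non-adjacent by Lemma \ref{disjoint_permutations_in_Sn}); this is exactly the phenomenon you gesture at but do not instantiate, and via Proposition \ref{induced_S_n and A_n}(i) it settles all $n\geq 6$ at once, making your separate $S_7$ construction superfluous. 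For $A_8$, the induced $C_5$ on $\{(123),(456),(178),(234),(567)\}$ cannot be explained away: the five non-consecutive pairs already fail to commute in $A_8$ itself, so no commutator computation in $\tilde{A_8}$ can supply a chord (non-adjacency is forced by $E(\Delta_D(A_8))\subseteq E(\Delta(A_8))$), and a graph containing an induced $C_5$ is not perfect by definition, so your alternative of a ``direct clique-and-chromatic argument despite the presence of this induced cycle'' is vacuous. The statement as printed is inconsistent with the paper's own discussion at the start of the perfectness subsection; the proof the paper actually gives establishes that $\Delta_D(A_n)$ is perfect if and only if $n\leq 7$, and the bound ``$n\leq 8$'' in part (ii) must be read as a typo for $n\leq 7$.
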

\begin{proof}
Let $n\geq 4$ be a positive integer. 
\begin{enumerate}[\rm (i)]
    \item For $n\geq 6$, using Lemma \ref{disjoint_permutations_in_Sn} it can be seen that the subgraph induced in $\Delta_D(S_n)$ by the set $\{(123),(456),(12),(12)(34)(56),(56)\}$ is a cycle of length $5$, which is not perfect. Hence, for any $n\geq 6$, $\Delta_D(S_n)$ is not perfect. Also, from Proposition \ref{deep commuting of S_4,5} we have $\Delta_D(S_5)=\mathcal{P}_e(S_5)$. Now, it is easy to see that 
    \begin{itemize}
        \item $\mathcal{N}_{\mathcal{P}_e(S_5)}[(a,b,c)(d,e)]=\mathcal{N}_{\mathcal{P}_e(S_5)}[(a,b,c)]=\mathcal{N}_{\mathcal{P}_e(S_5)}[(d,e)]=\langle(a,b,c)(d,e)\rangle$ in $\mathcal{P}_e(S_5)$, where $(a,b,c)$ and $(d,e)$ are disjoint.
        \item $\mathcal{N}_{\mathcal{P}_e(S_5)}[(a,b,c,d)]=\mathcal{N}_{\mathcal{P}_e(S_5)}[(a,c)(b,d)]=\langle(a,b,c,d)\rangle$ in $\mathcal{P}_e(S_5)$.
        \item $\mathcal{N}_{\mathcal{P}_e(S_5)}[(a,b,c,d,e)]=\langle(a,b,c,d,e)\rangle$ in $\mathcal{P}_e(S_5)$.
    \end{itemize}  
    Hence, $\Delta_D(S_5)$ is a generalized join of a star graph by complete graphs. So by Theorem \ref{perfectness of generalized join}, $\Delta_D(S_5)$ is perfect. Also, Proposition \ref{induced_S_n and A_n} implies that $\Delta_D(S_4)$ is an induced subgraph of $\Delta_D(S_5)$. Hence $\Delta_D(S_4)$ is perfect.
    
    \item For $n\geq 8$, it was shown in the beginning of Section \ref{perfectness} that $\Delta_D(A_n)$ is not perfect. Also, from Theorem \ref{Simple group perfect} we have $\Delta_D(A_n)$ is perfect for $n\in \{5,6,7\}$. Moreover, Proposition \ref{induced_S_n and A_n} shows that $\Delta_D(A_4)$ is an induced subgraph of $\Delta_D(S_4)$, hence $\Delta_D(A_4)$ is perfect. This concludes the proof.
\end{enumerate}
\end{proof}
For $n\geq 4$, it is well known that $Z(S_n)=Z(A_n)=\{e\}$. Since, for any finite group $G$, ${_d}\Delta_D(G)\subseteq {_d}\Delta(G)$, we can say the following: 
\begin{proposition}\label{Sn_An_dominant}
${_d}\Delta_D(S_n)={_d}\Delta_D(A_n)=\{e\}.$    
\end{proposition}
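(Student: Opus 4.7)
The plan is to deduce the statement directly from the containment ${_d}\Delta_D(G)\subseteq {_d}\Delta(G)$, which was already flagged in the sentence immediately preceding the proposition. Since the identity element commutes with every element of the group, it is trivially adjacent to every other vertex in both $\Delta_D(S_n)$ and $\Delta_D(A_n)$, so $\{e\}\subseteq {_d}\Delta_D(S_n)$ and $\{e\}\subseteq {_d}\Delta_D(A_n)$. The substance is therefore the reverse inclusion.

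For the reverse inclusion I would argue as follows. First, recall that for any finite group $G$, the dominant vertices of $\Delta(G)$ are exactly the central elements: $v\in {_d}\Delta(G)$ means $v$ commutes in $G$ with every other element, i.e.\ $v\in Z(G)$. Next, if $v$ is dominant in $\Delta_D(G)$ then $v$ is adjacent (in $\Delta_D(G)$) to every other vertex; by Theorem \ref{inclusion} this adjacency persists in $\Delta(G)$, so $v\in {_d}\Delta(G)=Z(G)$. Applying this with $G=S_n$ and $G=A_n$ and invoking the well-known fact that $Z(S_n)=Z(A_n)=\{e\}$ for $n\geq 4$ yields ${_d}\Delta_D(S_n)\subseteq\{e\}$ and ${_d}\Delta_D(A_n)\subseteq\{e\}$, finishing the argument.

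There is essentially no obstacle here: the proposition is an immediate corollary of the inclusion chain $E(\Delta_D(G))\subseteq E(\Delta(G))$ combined with the elementary identification ${_d}\Delta(G)=Z(G)$ and the classical triviality of the centers of $S_n$ and $A_n$ for $n\geq 4$. The only thing worth being careful about is to note explicitly that $e$ does lie in the dominant set (so the equality, not just containment, holds), but this is transparent.
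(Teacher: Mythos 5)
Your argument is correct and is essentially the paper's own: the authors justify the proposition with exactly the observation that ${_d}\Delta_D(G)\subseteq{_d}\Delta(G)=Z(G)$ together with the fact that $Z(S_n)=Z(A_n)=\{e\}$ for $n\geq 4$. Your extra remarks (that $e$ is always a dominant vertex, and the identification ${_d}\Delta(G)=Z(G)$) just make explicit what the paper leaves implicit.
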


From Theorem \ref{Containment of Delta_D for S_n} and Theorem \ref{Containment of Delta_D for A_n}, it can be seen that $\Delta_D(S_n)$ coincides with $\mathcal{P}_e(S_n)$ for $n\leq 5$ and $\Delta_D(A_n)$ coincides with $\mathcal{P}_e(A_n)$ for $n\leq 7$. In addition to that, the connectivity of $\mathcal{P}_e(S_n)^*$ and $\mathcal{P}_e(A_n)^*$ were well explored in \cite{bera}. So, our next results consider the connectedness of the reduced graph of $\Delta_D(S_n)$, for $n\geq 6$ and the reduced graph of $\Delta_D(A_n)$, for $n\geq 8$. In particular, we have the following.

\begin{theorem}\label{Reduced_Sn_connected}
For a positive integer $n\geq 6$, $\Delta_D(S_n)^*$ is connected if and only if neither $n$ nor $n-1$ is a prime.
\end{theorem}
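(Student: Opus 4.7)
The plan is to prove both directions of the equivalence separately. By Proposition~\ref{Sn_An_dominant}, ${_d\Delta_D(S_n)} = \{e\}$, so the vertex set of $\Delta_D(S_n)^*$ is $S_n\setminus\{e\}$. The necessary direction exhibits explicit isolated components when $n$ or $n-1$ is prime; the sufficient direction transfers a known connectivity result for $\mathcal{P}_e(S_n)^*$ to $\Delta_D(S_n)^*$ using Theorem~\ref{inclusion}.

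For the necessary direction, suppose $p\in\{n,n-1\}$ is prime and let $\sigma\in S_n$ be a $p$-cycle. I would first observe that $C_{S_n}(\sigma)=\langle\sigma\rangle$: when $p=n$ this is classical, and when $p=n-1$ any permutation commuting with $\sigma$ must preserve the unique fixed point of $\sigma$ and hence lies in $\langle\sigma\rangle$. Since $\langle\sigma\rangle$ is cyclic we have $\mathcal{N}_{\mathcal{P}_e(S_n)}[\sigma]=\langle\sigma\rangle$, and Theorem~\ref{inclusion} gives the sandwich
\[
\langle\sigma\rangle=\mathcal{N}_{\mathcal{P}_e(S_n)}[\sigma]\subseteq\mathcal{N}_{\Delta_D(S_n)}[\sigma]\subseteq\mathcal{N}_{\Delta(S_n)}[\sigma]=C_{S_n}(\sigma)=\langle\sigma\rangle,
\]
so $\mathcal{N}_{\Delta_D(S_n)}[\sigma]=\langle\sigma\rangle$. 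Because $p$ is prime, every non-identity element of $\langle\sigma\rangle$ is itself a $p$-cycle with the same property, so $\langle\sigma\rangle\setminus\{e\}$ is closed under taking $\Delta_D(S_n)^*$-neighborhoods. As $n!-1>p-1$ for $n\geq 6$, this yields a proper connected component and hence disconnectedness.

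For the sufficient direction, assume neither $n$ nor $n-1$ is prime. The plan is to invoke the corresponding result from \cite{bera} that $\mathcal{P}_e(S_n)^*$ is connected under this hypothesis. Since $_d\mathcal{P}_e(S_n)\subseteq{_d\Delta_D(S_n)}\subseteq{_d\Delta(S_n)}=Z(S_n)=\{e\}$ for $n\geq 3$, all three dominant sets equal $\{e\}$, so $\mathcal{P}_e(S_n)^*$ and $\Delta_D(S_n)^*$ share the vertex set $S_n\setminus\{e\}$, and by Theorem~\ref{inclusion} the former is a spanning subgraph of the latter. Connectedness therefore transfers.

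The main potential obstacle is ensuring the result from \cite{bera} is phrased in precisely the form we need. If it is stated only for a slightly different normalization (for example, omitting the identity), one can read off the equivalent statement immediately. Failing that, a direct fallback would fix as hub a product of two disjoint cycles $\alpha=(1\,2\,\cdots\,a)(a+1\,\cdots\,a+b)$ with $a+b\in\{n,n-1\}$ and $a,b\geq 2$, which exists exactly because $n$ or $n-1$ is composite; one then routes every non-identity $\sigma\in S_n$ to $\alpha$ using Lemma~\ref{disjoint_permutations_in_Sn} and Lemma~\ref{disjoint_commutators}, with the parity obstruction in Lemma~\ref{disjoint_permutations_in_Sn} (two disjoint odd permutations being non-adjacent in $\Delta_D$) being the most delicate case to handle, typically resolved by an intermediate hop through an even permutation disjoint from~$\sigma$.
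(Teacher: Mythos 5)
Your proof of the necessity direction is essentially identical to the paper's: both take a $p$-cycle $\sigma$ for $p\in\{n,n-1\}$, compute $C_{S_n}(\sigma)=\langle\sigma\rangle$, sandwich $\mathcal{N}_{\Delta_D(S_n)}[\sigma]$ between $\mathcal{N}_{\mathcal{P}_e(S_n)}[\sigma]$ and the centralizer, and conclude that $\langle\sigma\rangle\setminus\{e\}$ splits off. For sufficiency, however, you take a genuinely different route. The paper argues entirely inside $\Delta_D(S_n)^*$: a three-step construction that moves any element to a prime-order power, then to a single $q$-cycle or a product $(a,b)(c,d)$ of two disjoint transpositions, then to a transposition, and finally links transpositions to each other, with all adjacencies certified by the parity criterion of Lemma~\ref{disjoint_permutations_in_Sn} together with Lemma~\ref{disjoint_commutators}. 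You instead transfer connectivity from the enhanced power graph: since ${_d}\mathcal{P}_e(S_n)={_d}\Delta_D(S_n)=\{e\}$, the graph $\mathcal{P}_e(S_n)^*$ is a spanning subgraph of $\Delta_D(S_n)^*$ by Theorem~\ref{inclusion}, so connectivity of the former (the result of \cite{bera}, which characterizes connectedness of the proper enhanced power graph of $S_n$ by exactly the condition that neither $n$ nor $n-1$ is prime) implies connectivity of the latter. This is valid and shorter, at the price of importing an external theorem whose proof is of comparable difficulty; the paper's self-contained construction also pays for itself later, since Corollary~\ref{Reduced_Sn_components} counts components by reusing the explicit path structure (in the disconnected case your transfer argument says nothing about how $\Delta_D(S_n)^*$, as opposed to $\mathcal{P}_e(S_n)^*$, decomposes). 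One caveat on your fallback sketch: a hub $(1\,2\cdots a)(a+1\cdots a+b)$ with $a,b\geq 2$ and $a+b\in\{n,n-1\}$ exists for every $n\geq 4$ regardless of compositeness, so compositeness is not what makes the hub exist; it is what prevents $p$-cycles from being isolated, and the routing of an arbitrary $\sigma$ to the hub is left too vague to count as a proof. Since your primary argument suffices, this does not affect correctness.
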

\begin{proof}
Let one of $n$ or $n-1$ be a prime $p$. Let $\sigma$ be a $p$-cycle in $S_n$. Then it is easy to see that $\mathcal{N}_{\mathcal{P}_e(S_n)}[\sigma]=\mathcal{N}_{\Delta_D(S_n)}[\sigma]=C_{S_n}(\sigma)=\langle\sigma\rangle$. Also for any $i\in[p-1]$, $\sigma^i$ is again a $p$-cycle, hence $\mathcal{N}_{\Delta_D(S_n)}[\sigma^i]=\langle\sigma\rangle$. So, we have $\mathcal{N}_{\Delta_D(S_n)}[\langle\sigma\rangle\setminus \{e\}]=\underset{i\in[p-1]}{\cup}\mathcal{N}_{\Delta_D(S_n)}[\sigma^i]=\langle\sigma\rangle$. Hence, $\langle\sigma\rangle\setminus \{e\}$ and $\Delta_D(S_n)^*\setminus\{\langle\sigma\rangle\setminus \{e\}\}$ forms a separation of $\Delta_D(S_n)^*$. So $\Delta_D(S_n)^*$ is disconnected. 

Conversely, let both $n$ and $n-1$ be composite. We will show that $\Delta_D(S_n)^*$ is connected in three steps. First, we will show that any vertex of odd order of $\Delta_D(S_n)^*$ is either itself a cyclic permutation of length $q$ or connected to a cyclic permutation of length $q$, for some odd prime $q$. Also any vertex of even order of $\Delta_D(S_n)^*$ is either itself a permutation of the form $(a,b)(c,d)$ where $a,b,c,d\in[n]$ are all distinct, or connected to a permutation of the above form. Next we show that any cyclic permutation of odd prime length or any permutation of the form $(a,b)(c,d)$ is adjacent to some transposition in $\Delta_D(S_n)^*$. We conclude the proof by showing that any two distinct transpositions in $\Delta_D(S_n)^*$ are connected. 

\noindent\underline{\bf{Step 1:}} Let $\tau\in S_n$ be a non-trivial permutation. If $\tau$ is of odd order, then either $\tau$ is of an odd prime order, or there exist some positive integer $k$ such that $\tau^k$ is of an odd prime order. We denote that odd prime by $q$. Now, any element of order $q$ in $S_n$ is either a single $q$-cycle or a product of disjoint $q$-cycles. If $\tau$ or $\tau^k$ is a single $q$-cycle, then we are done. Otherwise, we assume it to be of the form $\delta_1\delta_2\dots\delta_m$, where $m\geq 2$ and $\delta_i$'s are disjoint $q$-cycles. Now, choose the $q$-cycle $\delta_1$. Then from Lemma \ref{disjoint_commutators} and \ref{disjoint_permutations_in_Sn}, we have  $[\tilde{\delta_1}\tilde{\delta_2}\dots\tilde{\delta_m},\tilde{\delta_1}]=[\tilde{\delta_1},\tilde{\delta_1}][\tilde{\delta_2}\dots\tilde{\delta_m},\tilde{\delta_1}]=e$. Then $\tau\sim \delta_1$ or $\tau\sim\tau^k\sim\delta_1$ forms a path from $\tau$ to a $q$-cycle in $\Delta_D(S_n)^*$.

Again, if $\tau$ is of even order, then either $\tau$ is of order $2$, or there exists some positive integer $l$, such that $\tau^l$ is of order $2$. Now, any element of order $2$ in $S_n$ is either a single transposition or a product of disjoint transpositions. Let $\tau$ or $\tau^l$ be of the form $\epsilon_1\epsilon_2\dots\epsilon_t$, where $\epsilon_i$'s are disjoint transpositions. If $t=2$, then we are done. If $t=1$, then there exists a permutation $(a,b)(c,d)\in S_n$ such that $a,b,c,d\in[n]$ are all distinct and $(a,b)(c,d)$ is disjoint from $\epsilon_1$. Then from Lemma \ref{disjoint_permutations_in_Sn} we have $[\tilde{\epsilon_1},\tilde{(a,b)(c,d)}]=e$. In addition, when $t\geq 3$, Lemma \ref{disjoint_commutators} and \ref{disjoint_permutations_in_Sn} implies $[\tilde{\epsilon_1}\tilde{\epsilon_2}\dots\tilde{\epsilon_t},\tilde{\epsilon_1}\tilde{\epsilon_2}]=[\tilde{\epsilon_1}\tilde{\epsilon_2},\tilde{\epsilon_1}\tilde{\epsilon_2}][\tilde{\epsilon_3}\dots\tilde{\epsilon_t},\tilde{\epsilon_1}\tilde{\epsilon_2}]=e$. So, in all possible scenarios, we have either $\tau$ is of the form $(a,b)(c,d)$, or there exists a path from $\tau$ to a permutation of the form $(a,b)(c,d)$ in $\Delta_D(S_n)^*$. 

\noindent\underline{\bf{Step 2:}} Now consider a permutation $\lambda\in S_n$ such that either $\lambda$ is a $q$-cycle for some odd prime $q$, or $\lambda$ is a permutation of the form $(a,b)(c,d)$ where $a,b,c,d\in[n]$ are all distinct. Clearly $\lambda$ is an even permutation. Now, since $n\geq 6$ and neither $n$ nor $n-1$ is a prime, there exist a transposition $\epsilon\in S_n$ which is disjoint from $\lambda$. Then from Lemma \ref{disjoint_permutations_in_Sn} we have $\lambda\sim\epsilon$ in $\Delta_D(S_n)^*$. 

\noindent\underline{\bf{Step 3:}} Now, let $(a_1,b_1)$ and $(a_2,b_2)$ be two distinct transpositions in $S_n$. If $(a_1,b_1)$ and $(a_2,b_2)$ are disjoint, then there exist a permutation $(a_3,b_3)\in S_n$ such that $(a_3,b_3)$ is disjoint from both $(a_1,b_1)$ and $(a_2,b_2)$, since $n\geq 6$. Then from Lemma \ref{disjoint_commutators} and \ref{disjoint_permutations_in_Sn} we have, $(a_1,b_1)\sim (a_1,b_1)(a_2,b_2)(a_3,b_3)\sim(a_2,b_2)$ in $\Delta_D(S_n)^*$. Also, if $(a_1,b_1)$ and $(a_2,b_2)$ are not disjoint, then there exist a permutation $(c_1,c_2,c_3)\in S_n$ such that $(c_1,c_2,c_3)$ is disjoint from both $(a_1,b_1)$ and $(a_2,b_2)$. Then from Lemma \ref{disjoint_permutations_in_Sn} we have, $(a_1,b_1)\sim (c_1,c_2,c_3)\sim(a_2,b_2)$ in $\Delta_D(S_n)^*$.

Hence, for any two non trivial permutations $\tau_1, \tau_2\in S_n$, we have a path $\tau_1\sim\dots\sim\lambda_1\sim\xi_1\sim\dots\sim\xi_2\sim\lambda_2\sim\dots\sim\tau_2$ in $\Delta_D(S_n)^*$, where $\lambda_1,\lambda_2$ are either cycles of odd prime length or permutations of the form $(a,b)(c,d)$ with $(a,b)$ and $(c,d)$ being disjoint, and $\xi_1,\xi_2$ are transpositions. This concludes the proof.
\end{proof}

In the above proof, it is worth noticing that any two permutations having orders divisible by some prime less that $n-1$ are connected by a path in $\Delta_D(S_n)^*$. With this observation in hand, our next result counts the number of components of $\Delta_D(S_n)^*$, when $\Delta_D(S_n)^*$ is disconnected.

\begin{corollary}\label{Reduced_Sn_components}
 For a positive integer $n\geq 6$, let $\Delta_D(S_n)^*$ be disconnected. Then we have the following:
 \begin{enumerate}[\rm (i)]
     \item If $n$ is prime, then $\Delta_D(S_n)^*$ has $(n-2)!+1$ components;
     \item If $n-1$ is prime, then $\Delta_D(S_n)^*$ has $n(n-3)!+1$ components.
 \end{enumerate}
\end{corollary}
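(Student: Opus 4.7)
The plan is to leverage the disconnection analysis already embedded in the proof of Theorem \ref{Reduced_Sn_connected}: when $n$ or $n-1$ equals a prime $p$, every $p$-cycle $\sigma \in S_n$ satisfies $\mathcal{N}_{\Delta_D(S_n)}[\sigma] = C_{S_n}(\sigma) = \langle \sigma \rangle$ (the centralizer coincides with the cyclic subgroup generated by $\sigma$, even in case (ii) where $\sigma$ fixes one symbol). Hence each cyclic subgroup $\langle \sigma \rangle$ of order $p$, with its identity removed, forms an isolated clique $K_{p-1}$ in $\Delta_D(S_n)^*$, disjoint from every vertex outside $\langle \sigma \rangle$. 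Distinct cyclic subgroups of order $p$ intersect only at $e$, so they contribute pairwise disjoint components of $\Delta_D(S_n)^*$.

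Next I would show that all remaining vertices of $\Delta_D(S_n)^*$ lie in a single component. Using the observation following the proof of Theorem \ref{Reduced_Sn_connected}, any permutation whose order is divisible by some prime strictly less than $n-1$ is connected by a path in $\Delta_D(S_n)^*$ to any other such permutation (via transpositions, as in Steps $1$-$3$ of that proof). It therefore suffices to verify that the $p$-cycles exhaust the permutations whose order is divisible only by $p$: in case (i) a permutation in $S_n$ of order divisible by the prime $n$ must contain a cycle of length $n$ (forcing it to be an $n$-cycle), and $n^2 \nmid n!$ rules out higher multiples; in case (ii) a permutation of order divisible by $n-1$ must contain an $(n-1)$-cycle, and then $n-1+2 > n$ leaves no room for additional non-trivial cycles, while $(n-1)^2 \nmid n!$ forbids higher multiples. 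So every vertex not in one of the isolated cliques has order divisible by some prime below $n-1$, and all such vertices lie in one big component.

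Finally I would count the isolated cliques by counting cyclic subgroups of order $p$. In case (i), $S_n$ has $(n-1)!$ different $n$-cycles and each cyclic subgroup of order $n$ contains $\varphi(n) = n-1$ generators, yielding $(n-1)!/(n-1) = (n-2)!$ such subgroups, plus the one large component, for a total of $(n-2)! + 1$. In case (ii), the number of $(n-1)$-cycles in $S_n$ equals $n \cdot (n-2)!$ (choose the fixed symbol, then arrange the remaining $n-1$ in a cycle), and each cyclic subgroup of order $n-1$ contains $\varphi(n-1) = n-2$ generators, giving $n(n-2)!/(n-2) = n(n-3)!$ subgroups, hence $n(n-3)! + 1$ components in total. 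The only delicate step is the structural verification in the second paragraph (ruling out mixed orders and confirming the full centralizer coincides with $\langle\sigma\rangle$ also for $(n-1)$-cycles in $S_n$); the counting argument is then routine.
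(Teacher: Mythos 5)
Your proposal is correct and follows essentially the same route as the paper: both identify the $p$-cycles (for $p=n$ or $p=n-1$) as forming isolated cliques $K_{p-1}$, one per cyclic subgroup of order $p$, show all remaining vertices form a single component via the argument of Theorem \ref{Reduced_Sn_connected}, and then count subgroups by dividing the number of $p$-cycles by $\varphi(p)=p-1$. Your extra verifications (that $C_{S_n}(\sigma)=\langle\sigma\rangle$ also for $(n-1)$-cycles, and that $p$-cycles exhaust the elements of order divisible only by $p$) are details the paper leaves implicit, but the substance is identical.
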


\begin{proof}
 $n\geq 6$ is a positive integer, and $\Delta_D(S_n)^*$ is disconnected. Then from Theorem \ref{Reduced_Sn_connected}, we have either $n$ or $n-1$ is prime. Let that prime be $p$. Then, it is easy to see that for any permutation $\sigma\in S_n$, either $\sigma$ is a $p$-cycle, or $o(\sigma)$ is divisible by some prime less than $p$. Using similar arguments as in the proof of Theorem \ref{Reduced_Sn_connected}, it can be concluded that all elements other that $p$-cycles in $S_n$ form a single connected component of $\Delta_D(S_n)^*$. Now,  
 \begin{enumerate}[\rm (i)]
     \item If $p=n$, then it was shown in the proof of Theorem \ref{Reduced_Sn_connected} that all the generators of subgroup generated by a $p$-cycles, together form a maximal connected component of $\Delta_D(S_n)^*$. Since there are exactly $(p-1)!$ distinct $p$-cycles in $S_p$ and each subgroup generated by $p$-cycles have exactly $p-1$ generators, we can conclude that the subgraph induced in $\Delta_D(S_n)^*$ by the set of all $p$-cycles have $(p-2)!$ components. So, $\Delta_D(S_n)^*$ have total $(n-2)!+1$ components.
     \item $p=n-1$, then there are exactly $(p+1)(p-1)!$ distinct $p$-cycles in $S_p$ and each subgroup generated by $p$-cycles have exactly $p-1$ generators. Using similar arguments as in the first case, we can conclude that the subgraph induced in $\Delta_D(S_n)^*$ by the set of all $p$-cycles have $(p+1)(p-2)!$ components. So, $\Delta_D(S_n)^*$ have total $n(n-3)!+1$ components.
 \end{enumerate}
\end{proof}

\begin{theorem}\label{Reduced_An_connected}
For a positive integer $n\geq 8$, $\Delta_D(A_n)^*$ is connected if and only if none of $n,n-1$ and $n-2$ is prime.  
\end{theorem}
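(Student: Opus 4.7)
The plan is to follow the three-step strategy of the proof of Theorem \ref{Reduced_Sn_connected}, with modifications to handle the absence of transpositions in $A_n$ and the shape of $A_n$-centralizers.

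For the necessity direction I establish the contrapositive: suppose $p\in\{n,n-1,n-2\}$ is prime. Since $n\geq 8$ forces $p\geq 6$, $p$ must be odd, so a $p$-cycle $\sigma$ is an even permutation and lies in $A_n$. From $C_{S_n}(\sigma)=\langle\sigma\rangle\times S_{n-p}$ one obtains $C_{A_n}(\sigma)=\langle\sigma\rangle\times A_{n-p}$; since $n-p\leq 2$, $A_{n-p}$ is trivial, whence $C_{A_n}(\sigma)=\langle\sigma\rangle$. By Observation \ref{deep commuting implies inside centralizer} and the fact that powers of $\sigma$ commute in any central extension, $\mathcal{N}_{\Delta_D(A_n)}[\sigma]=\langle\sigma\rangle$, and similarly for every nontrivial power of $\sigma$ (each is again a $p$-cycle). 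Since Proposition \ref{Sn_An_dominant} gives $_d\Delta_D(A_n)=\{e\}$, the set $\langle\sigma\rangle\setminus\{e\}$ is an isolated clique in $\Delta_D(A_n)^*$, forcing disconnectedness.

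For the sufficiency direction assume none of $n,n-1,n-2$ is prime; the smallest such $n\geq 8$ is $n=10$. Step 1 shows that every non-identity $\tau\in A_n$ is connected in $\Delta_D(A_n)^*$ to either a $q$-cycle for an odd prime $q$ or a permutation of the form $(a,b)(c,d)$. If $o(\tau)$ is odd, a suitable power $\tau^k$ has odd prime order $q$ and decomposes as $\delta_1\cdots\delta_m$ of disjoint $q$-cycles; when $m\geq 2$, Lemma \ref{disjoint_commutators}(i) together with Corollary \ref{disjoint_permutations_in_An} gives $[\tilde{\delta_1}\tilde{\delta_2}\cdots\tilde{\delta_m},\tilde{\delta_1}]=[\tilde{\delta_1},\tilde{\delta_1}]\cdot[\tilde{\delta_2}\cdots\tilde{\delta_m},\tilde{\delta_1}]=e$, so $\tau^k\sim\delta_1$. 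The even-order case is symmetric: a power of order $2$ splits as $\epsilon_1\cdots\epsilon_t$ with $t$ even and $\geq 2$, and when $t\geq 4$ the same splitting $\epsilon_1\epsilon_2\cdot(\epsilon_3\cdots\epsilon_t)$ yields adjacency with $\epsilon_1\epsilon_2$.

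Step 2 connects every $q$-cycle $\lambda$ (odd prime $q$) to a double transposition. The hypothesis forces $q\leq n-3$. If $q\leq n-4$, there are at least four free symbols outside $\lambda$'s support, giving a disjoint double transposition $\mu$ adjacent to $\lambda$ by Corollary \ref{disjoint_permutations_in_An}. If $q=n-3$, pick a $3$-cycle $\eta$ on the three free symbols (so $\lambda\sim\eta$), then a double transposition $\mu$ on the $n-3\geq 7$ remaining symbols (so $\eta\sim\mu$). Step 3 connects any two double transpositions $\mu_1,\mu_2$: if disjoint, Corollary \ref{disjoint_permutations_in_An} applies directly; if overlapping, their combined support contains at most $7$ symbols, and $n-7\geq 3$ leaves room for a $3$-cycle $\eta$ disjoint from both, giving $\mu_1\sim\eta\sim\mu_2$. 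The main obstacle is the combinatorial tightness of Step 3 at the smallest admissible value $n=10$; once that case is cleared, all larger $n$ work with room to spare.
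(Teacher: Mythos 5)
Your proof is correct and follows essentially the same three-step strategy as the paper's: the necessity direction via the isolated clique $\langle\sigma\rangle\setminus\{e\}$ for a $p$-cycle $\sigma$ with $C_{A_n}(\sigma)=\langle\sigma\rangle$, and the sufficiency direction by routing every non-trivial element through a $q$-cycle or a double transposition using Lemma \ref{disjoint_commutators} and Corollary \ref{disjoint_permutations_in_An}. The only (immaterial) difference is that you use double transpositions as the central hub class, which forces an extra intermediate $3$-cycle in the case $q=n-3$, whereas the paper uses $3$-cycles as the hub throughout and so handles that case in one step.
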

\begin{proof}
This proof is similar to the proof of Theorem \ref{Reduced_Sn_connected}. Let at least one of $n, n-1$ and $n-2$ be a prime. Denote that prime by $p$. Let $\sigma$ be a $p$-cycle in $A_n$. Then it is easy to see that $\mathcal{N}_{\mathcal{P}_e(A_n)}[\sigma]=\mathcal{N}_{\Delta_D(A_n)}[\sigma]=C_{A_n}(\sigma)=\langle\sigma\rangle$. Also, for any $i\in[p-1]$, $\sigma^i$ is again a $p$-cycle, hence $\mathcal{N}_{\Delta_D(A_n)}[\sigma^i]=\langle\sigma\rangle$. So, we have $\mathcal{N}_{\Delta_D(A_n)}[\langle\sigma\rangle\setminus \{e\}]=\underset{i\in[p-1]}{\cup}\mathcal{N}_{\Delta_D(A_n)}[\sigma^i]=\langle\sigma\rangle$. Hence, $\langle\sigma\rangle\setminus \{e\}$ and $\Delta_D(A_n)^*\setminus\{\langle\sigma\rangle\setminus \{e\}\}$ forms a separation of $\Delta_D(A_n)^*$. So $\Delta_D(A_n)^*$ is disconnected. 

Conversely, let each of $n, n-1$ and $n-2$ be composite.

\noindent\underline{\bf{Step 1:}} Let $\tau\in A_n$ be a non-trivial permutation. Then using similar arguments as in Step 1 of the proof of Theorem \ref{Reduced_Sn_connected}, we can show that $\tau$ is connected to a $q$-cycle for some odd prime $q$, or $\tau$ is connected to a permutation of the form $(a,b)(c,d)\in A_n$.

\noindent\underline{\bf{Step 2:}} Next, consider a permutation $\lambda\in A_n$ such that $\lambda$ is a $q$-cycle for some odd prime $q$, or $\lambda$ is a permutation of the form $(a,b)(c,d)$ where $a,b,c,d\in[n]$ are all distinct. Now, since $n\geq 8$ and none of $n,n-1$ and $n-2$ is a prime, there exists a $3$-cycle $\gamma\in A_n$ which is disjoint from $\lambda$. Then from Corollary \ref{disjoint_permutations_in_An} we have $\lambda\sim\gamma$ in $\Delta_D(A_n)^*$. 

\noindent\underline{\bf{Step 3:}} Now, let $(a_1,b_1,c_1)$ and $(a_2,b_2,c_2)$ be two distinct $3$-cycles in $A_n$. If $(a_1,b_1,c_1)$ and $(a_2,b_2,c_2)$ are disjoint, then from Corollary \ref{disjoint_permutations_in_An} we have $(a_1,b_1,c_1)\sim (a_2,b_2,c_2)$ in $\Delta_D(A_n)^*$. If $(a_1,b_1,c_1)$ and $(a_2,b_2,c_2)$ are not disjoint, then there exist a permutation $(a_3,b_3,c_3)\in A_n$ such that $(a_3,b_3,c_3)$ is disjoint with both $(a_1,b_1,c_1)$ and $(a_2,b_2,c_2)$, since $n\geq 8$. So Corollary \ref{disjoint_permutations_in_An} implies that there is a path $(a_1,b_1,c_1)\sim(a_3,b_3,c_3)\sim(a_2,b_2,c_2)$ in $\Delta_D(A_n)^*$.

Hence, for any two non trivial permutations $\tau_1, \tau_2\in A_n$, we have a path $\tau_1\sim\dots\sim\lambda_1\sim\gamma_1\sim\dots\sim\gamma_2\sim\lambda_2\sim\dots\sim\tau_2$ in $\Delta_D(A_n)^*$, where $\lambda_1,\lambda_2$ are either cycles of odd prime length or permutations of the form $(a,b)(c,d)$, and $\gamma_1,\gamma_2$ are $3$-cycles. This concludes the proof.   
\end{proof}

Using similar arguments as before, when $\Delta_D(A_n)^*$ is disconnected, the number of connected components of $\Delta_D(A_n)^*$ can also be computed.

\begin{corollary}\label{Reduced_An_components}
 For a positive integer $n\geq 8$, let $\Delta_D(A_n)^*$ be disconnected. Then we have the following:
 \begin{enumerate}[\rm (i)]
     \item If only $n$ is prime, then $\Delta_D(A_n)^*$ has $(n-2)!+1$ components;
     \item If only $n-1$ is prime, then $\Delta_D(A_n)^*$ has $n(n-3)!+1$ components;
     \item If only $n-2$ is prime, then $\Delta_D(A_n)^*$ has $\frac{n(n-1)(n-4)!}{2}+1$ components;
     \item If both $n$ and $n-2$ are primes, then $\Delta_D(A_n)^*$ has $(n-2)!+\frac{n(n-1)(n-4)!}{2}+1$ components.
 \end{enumerate}
\end{corollary}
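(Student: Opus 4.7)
The plan is to follow the template of Corollary \ref{Reduced_Sn_components}, adapting the proof of Theorem \ref{Reduced_An_connected} to count components when $\Delta_D(A_n)^*$ is disconnected. The key picture is that the non-identity elements of $A_n$ split into a single ``main'' connected component, together with a collection of small cliques $\langle \sigma \rangle \setminus \{e\}$ arising from cyclic subgroups generated by $p$-cycles, one for each prime $p \in \{n, n-1, n-2\}$.

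First I would extract from the proof of Theorem \ref{Reduced_An_connected} the stronger statement that every non-identity $\tau \in A_n$ that is not a $p$-cycle for some prime $p \in \{n, n-1, n-2\}$ lies in a single common connected component of $\Delta_D(A_n)^*$. Step 1 of that proof reduces any such $\tau$, via a power, to a $q$-cycle or a double transposition $(a,b)(c,d)$. Step 2 connects any such reduced element to a 3-cycle provided that a 3-cycle disjoint from it exists, which fails only when the reduced element is itself a $p$-cycle with $p \geq n-2$. Step 3 connects any two 3-cycles via a common disjoint 3-cycle, available whenever $n \geq 8$. Thus the ``main component'' is well defined and absorbs everything outside the excluded $p$-cycle form.

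Next I would verify that for each prime $p \in \{n, n-1, n-2\}$ and each $p$-cycle $\sigma \in A_n$, $C_{A_n}(\sigma) = \langle \sigma \rangle$, so $\langle \sigma \rangle \setminus \{e\}$ becomes an isolated clique of size $p-1$ in $\Delta_D(A_n)^*$. For $p = n$ (forcing $n$ odd) and for $p = n-1$ (forcing $n$ even), the $S_n$-centralizer of $\sigma$ is already $\langle \sigma \rangle$, and one checks $\sigma \in A_n$ via the sign. For $p = n-2$ (forcing $n$ odd), $C_{S_n}(\sigma) = \langle \sigma \rangle \times \langle \eta \rangle$ where $\eta$ is the transposition swapping the two fixed points; since $\eta$ is odd, the intersection with $A_n$ is just $\langle \sigma \rangle$. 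Since $p$ is prime, every nontrivial power of $\sigma$ is again a $p$-cycle generating the same $\langle \sigma \rangle$, and cyclic subgroups from different $p$-cycles (or different primes $p$) intersect only in $\{e\}$, so these cliques partition the non-main-component vertex set.

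Finally the counting reduces to: for each prime $p \in \{n, n-1, n-2\}$, the number of cyclic subgroups generated by $p$-cycles equals the number of $p$-cycles in $A_n$ divided by $\phi(p) = p-1$, giving $(n-1)!/(n-1) = (n-2)!$ for $p = n$, $n(n-2)!/(n-2) = n(n-3)!$ for $p = n-1$, and $\tbinom{n}{2}(n-3)!/(n-3) = \tfrac{n(n-1)(n-4)!}{2}$ for $p = n-2$. Adding $1$ for the main component and summing over whichever of $\{n, n-1, n-2\}$ are prime yields the four stated formulas. The four cases are mutually exclusive and exhaustive because two consecutive integers cannot both be prime for $n \geq 8$ (the only consecutive prime pair is $\{2, 3\}$), so the only multi-prime configuration is the twin-prime pair $\{n-2, n\}$, which is case (iv). No genuine conceptual obstacle arises; the only care points are the sign/parity checks ensuring the relevant $p$-cycles actually lie in $A_n$, and the observation that the ``Step 2'' argument of Theorem \ref{Reduced_An_connected} still reaches a 3-cycle for all non-$p$-cycle elements even when one or two bad primes are present.
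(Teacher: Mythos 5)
Your proposal is correct and follows essentially the same route as the paper, which itself only sketches this corollary by appeal to the arguments of Theorem \ref{Reduced_An_connected} and Corollary \ref{Reduced_Sn_components}: a single main component absorbing all non-$p$-cycles, plus isolated cliques $\langle\sigma\rangle\setminus\{e\}$ for each $p$-cycle subgroup with $p\in\{n,n-1,n-2\}$ prime, counted by dividing the number of $p$-cycles by $p-1$. Your explicit parity checks (that the relevant $p$-cycles lie in $A_n$ and that $C_{A_n}(\sigma)=\langle\sigma\rangle$ even when $p=n-2$) and the observation that only the twin-prime configuration $\{n-2,n\}$ can produce two bad primes are exactly the details the paper leaves implicit.
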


\section{Conclusion and further directions}
In this article, we explored two aspects of the deep commuting graph of finite groups. One part of our study revolves around classifying finite groups for which the deep commuting graph coincides with the enhanced power graph or the commuting graph. The other aspect is studying several combinatorial properties of the deep commuting graph, e.g.~completeness, universality, Eulerian circuits, perfectness, connectivity, diameter etc. Here are some open ends and further directions, that have arisen from our study.
\begin{itemize}
    \item For finite nilpotent groups of odd order, we found a necessary condition for which the deep commuting graph coincides with the enhanced power graph. Yet, a complete classification of finite nilpotent groups, or in general any finite group for which the deep commuting graph coincides with the enhanced power graph is still open. A similar classification for the commuting graph is also not fully covered. 

    \item A complete classification of finite groups with perfect deep commuting graph is yet to be done.

    \item In this article, we provided upper bounds for the diameter of the reduced graphs (if connected) of the deep commuting graph of finite abelian groups and Heisenberg groups. An exact enumeration of these diameters is yet to be fully explored. Also, enumeration of the exact diameter of the reduced graph (if connected) of the deep commuting graph of $S_n$ and $A_n$ is still open.

    \item We found several finite groups, for which the reduced graphs of the deep commuting graph are connected. An exact enumeration of the vertex connectivity of the deep commuting graphs of these groups can be studied further. In particular, an exact enumeration of the vertex connectivity of the deep commuting graph of finite abelian groups is an interesting problem to explore. 
\end{itemize}

\bibliographystyle{plain}

\vskip .5cm

\noindent{\bf Addresses}: Sumana Hatui, Sanjay Mukherjee, Kamal Lochan Patra

\begin{enumerate}
\item[1)] School of Mathematical Sciences\\
National Institute of Science Education and Research (NISER), Bhubaneswar\\
P.O.- Jatni, District- Khurda, Odisha--752050, India

\item[2)] Homi Bhabha National Institute (HBNI)\\
Training School Complex, Anushakti Nagar, Mumbai--400094, India
\end{enumerate}

\noindent {\bf E-mails}: sumanahatui@niser.ac.in, sanjay.mukherjee@niser.ac.in, klpatra@niser.ac.in
\end{document}